\def\R{{\mathbb R}}
\def\N{\mathbb{N}}
\def\C{\mathbb{C}}
\def\Z{\mathbb{Z}}
\def\D{\mathbb{D}}
\def\T{\mathbb{T}}
\def\ii{\mathrm{i}}
\newtheorem{prop}{\bf Proposition}[section]
\newtheorem{thm}[prop]{\bf Theorem}
\newtheorem{cor}[prop]{\bf Corollary}
\newtheorem{lem}[prop]{\bf Lemma}
\newtheorem{rmk}[prop]{\it Remark}
\newtheorem*{thmA}{\bf Theorem A}
\newtheorem*{thmB}{\bf Theorem B}
\newtheorem*{thmC}{\bf Theorem C}
\begin{document}

\title{Radial Schur multipliers on some generalisations of trees}

\author{Ignacio Vergara}
%
%
\subjclass[2010]{Primary 46L07; Secondary 05C63, 47B10, 47B35, 30H25}
%
%
%
\keywords{Completely bounded multipliers, infinite graphs, Hankel matrices, trace class operators}

\begin{abstract}
We give a characterisation of radial Schur multipliers on finite products of trees. The equivalent condition is that a certain generalised Hankel matrix involving the discrete derivatives of the radial function is a trace class operator. This extends Haagerup, Steenstrup and Szwarc's result for trees. The same condition can be expressed in terms of Besov spaces on the torus. We also prove a similar result for products of hyperbolic graphs and provide a sufficient condition for a function to define a radial Schur multiplier on a finite dimensional CAT(0) cube complex.
\end{abstract}

\maketitle

\section{Introduction}
For any nonempty set $X$, let $\ell_2(X)$ be the Hilbert space of square-summable complex-valued functions on $X$. For each bounded operator $T\in\mathcal{B}(\ell_2(X))$, we may define its matrix coefficients by
\begin{align*}
T_{x,y}=\langle T\delta_y,\delta_x\rangle,\quad\forall x,y\in X.
\end{align*}
Observe that these coefficients completely determine the operator $T$. We say that a function $\phi:X\times X\to\C$ is a Schur multiplier on $X$ if the map
\begin{align*}
M_\phi : (T_{x,y})_{x,y\in X}\mapsto (\phi(x,y)T_{x,y})_{x,y\in X}
\end{align*}
defines a bounded operator on $\mathcal{B}(\ell_2(X))$. We refer the reader to \cite[Chapter 5]{Pis} for more details. The following result, due essentially to Grothendieck \cite{Gro}, gives a very useful characterisation of Schur multipliers. See \cite[Theorem 5.1]{Pis} for a proof.

\begin{thm}[Grothendieck]\label{ThmGro}
Let $X$ be a nonempty set, $\phi:X\times X\to\C$ a function, and $C\geq 0$ a constant. The following are equivalent:
\begin{itemize}
\item[(i)] The function $\phi$ is a Schur multiplier and $\|M_\phi\|\leq C$.
\item[(ii)] The function $\phi$ is a Schur multiplier and the operator $M_\phi$ is completely bounded, with $\|M_\phi\|_{cb}\leq C$.
\item[(iii)] There exist a Hilbert space $\mathcal{H}$ and bounded functions $P,Q:X\to\mathcal{H}$ such that
\begin{align*}
\phi(x,y)=\langle P(x),Q(y)\rangle,\quad\forall x,y\in X,
\end{align*}
and
\begin{align*}
\left(\sup_{x\in X}\|P(x)\|\right)\left(\sup_{y\in X}\|Q(y)\|\right)\leq C.
\end{align*}
\end{itemize}
\end{thm}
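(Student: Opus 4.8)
The plan is to prove the three conditions equivalent in the cycle $(iii)\Rightarrow(ii)\Rightarrow(i)\Rightarrow(iii)$; the first two implications are formal, and essentially all the work sits in $(i)\Rightarrow(iii)$, which is also what forces the non-trivial equality $\|M_\phi\|=\|M_\phi\|_{cb}$. For $(iii)\Rightarrow(ii)$ I would fix a factorisation $\phi(x,y)=\langle P(x),Q(y)\rangle$ and an orthonormal basis $(e_i)_{i\in I}$ of $\mathcal H$, set $P_i(x)=\langle P(x),e_i\rangle$ and $Q_i(y)=\langle Q(y),e_i\rangle$, and check by a direct rearrangement that, for every $n$, every $T\in\mathcal B(\ell_2(X))\otimes M_n$ with operator blocks $T_{x,y}\in M_n$, and all $\xi,\eta\in\ell_2(X)\otimes\C^n$,
\begin{align*}
\big\langle(M_\phi\otimes\mathrm{id}_{M_n})(T)\,\xi,\eta\big\rangle=\sum_{i\in I}\langle T\xi^{(i)},\eta^{(i)}\rangle,\qquad \xi^{(i)}_y=\overline{Q_i(y)}\,\xi_y,\ \ \eta^{(i)}_x=\overline{P_i(x)}\,\eta_x .
\end{align*}
Cauchy--Schwarz (applied to each term and then to the resulting sum) bounds the right-hand side by $\|T\|\,(\sum_i\|\xi^{(i)}\|^2)^{1/2}(\sum_i\|\eta^{(i)}\|^2)^{1/2}$, and since $\sum_i\|\xi^{(i)}\|^2=\sum_y\|Q(y)\|^2\|\xi_y\|^2\le(\sup_y\|Q(y)\|)^2\|\xi\|^2$, with the analogous estimate for $\eta$, this is at most $C\|T\|\,\|\xi\|\,\|\eta\|$; as $n$ is arbitrary, $\|M_\phi\|_{cb}\le C$. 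The implication $(ii)\Rightarrow(i)$ is immediate from $\|M_\phi\|\le\|M_\phi\|_{cb}$.

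The heart of the matter is $(i)\Rightarrow(iii)$. Normalise $C=1$ and write $\Phi=[\phi(x,y)]$. The key reformulation is that, for $X$ finite, a factorisation with $\sup_x\|P(x)\|\le1$ and $\sup_y\|Q(y)\|\le1$ exists if and only if $\Phi$ admits a positive semidefinite completion
\begin{align*}
\begin{pmatrix}G&\Phi\\ \Phi^*&H\end{pmatrix}\ge0,\qquad G_{xx}\le1,\ \ H_{yy}\le1 ,
\end{align*}
since any positive semidefinite $2|X|\times2|X|$ matrix of this block form is the Gram matrix of vectors $P(x),Q(y)$ with $\|P(x)\|^2=G_{xx}$, $\|Q(y)\|^2=H_{yy}$ and $\langle P(x),Q(y)\rangle=\phi(x,y)$. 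The general case will then follow from the finite case by compactness: for each finite $F\subseteq X$ the set of such completions of $[\phi(x,y)]_{x,y\in F}$ is nonempty, convex and compact (positivity bounds the trace), these sets form an inverse system under restriction of matrices, so the inverse limit is nonempty, and a point of it assembles to a positive semidefinite kernel on $(X\sqcup X)\times(X\sqcup X)$ with off-diagonal block $\phi$ and diagonal $\le1$, which is the Gram matrix of the desired $P$ and $Q$.

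It then remains to settle the finite case, so suppose $X$ is finite and $\|M_\phi\|\le1$. I would consider the semidefinite program
\begin{align*}
t^*=\sup\Big\{t\ge0:\ \exists\ G,H\ \text{Hermitian},\ \begin{pmatrix}G&t\Phi\\ t\Phi^*&H\end{pmatrix}\ge0,\ G_{xx}\le1,\ H_{yy}\le1\Big\} ,
\end{align*}
noting that rescaling a feasible triple by $1/t$ shows that $t^*\ge1$ already yields the required completion, so it suffices to prove $t^*\ge1$. The program is strictly feasible (take $t=0$, $G=H=\tfrac12 I$), so strong duality gives that $t^*$ equals the value of its dual, which after eliminating variables becomes: minimise $\sum_x\lambda_x+\sum_y\mu_y$ over $\lambda,\mu\ge0$ and a contraction $U$ subject to a normalisation constraint which, after substituting the standard factorisation $R=\mathrm{diag}(\sqrt\lambda)\,U\,\mathrm{diag}(\sqrt\mu)$ of the off-diagonal block, forces $\mathrm{tr}(R\Phi)$ to have real part $-\tfrac12$. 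For any dual-feasible $(\lambda,\mu,U)$ one rewrites $\mathrm{tr}(R\Phi)=\langle M_\phi(T)\xi,\eta\rangle$ with $T$ a transpose of $U$ (so $\|T\|\le1$), $\xi=(\sqrt{\lambda_x})_x$ and $\eta=(\sqrt{\mu_y})_y$; then $\tfrac12\le|\langle M_\phi(T)\xi,\eta\rangle|\le\|M_\phi\|\,\|T\|\,\|\xi\|\,\|\eta\|\le(\sum_x\lambda_x)^{1/2}(\sum_y\mu_y)^{1/2}\le\tfrac12\big(\sum_x\lambda_x+\sum_y\mu_y\big)$, whence $\sum_x\lambda_x+\sum_y\mu_y\ge1$ and $t^*\ge1$, as required.

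The computations in $(iii)\Rightarrow(ii)$ and the compactness reduction are routine. The step I expect to be the main obstacle is the finite-dimensional duality above: choosing the right convex program — equivalently, performing the right Hahn--Banach separation — so that its dual object is \emph{precisely} a test operator to which the hypothesis $\|M_\phi\|\le1$ can be applied, after which the conjugation and sign bookkeeping is mechanical. It is worth stressing that only the boundedness of $M_\phi$ is used here, so this argument simultaneously delivers the equality $\|M_\phi\|=\|M_\phi\|_{cb}$.
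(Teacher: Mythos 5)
Your proposal is correct in outline and in its key computations, but note that the paper does not actually prove Theorem \ref{ThmGro}: it is quoted with a reference to \cite[Theorem 5.1]{Pis}, so the comparison is with the standard proofs in the literature rather than with an argument in this article. The easy directions coincide: your identity $\langle(M_\phi\otimes\mathrm{id})(T)\xi,\eta\rangle=\sum_i\langle T\xi^{(i)},\eta^{(i)}\rangle$ with the Cauchy--Schwarz bookkeeping is exactly how (iii)$\Rightarrow$(ii) is usually done, and (ii)$\Rightarrow$(i) is trivial. For the hard direction (i)$\Rightarrow$(iii), the classical treatments use either a Hahn--Banach separation against the pointwise-compact convex set of kernels $\langle P(x),Q(y)\rangle$ with $\sup\|P\|\sup\|Q\|\le 1$ (Pisier), or Smith's lemma upgrading boundedness to complete boundedness followed by the representation theorem for completely bounded bimodule maps (Paulsen). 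Your route — the reformulation of the factorisation as a positive semidefinite completion $\bigl(\begin{smallmatrix}G&\Phi\\ \Phi^*&H\end{smallmatrix}\bigr)\ge 0$ with unit diagonal bound, the semidefinite program in $t$, and its dual in $(\lambda,\mu,U)$ — is a concrete finite-dimensional instance of the same convex duality: the dual variables $R=\mathrm{diag}(\sqrt\lambda)\,U\,\mathrm{diag}(\sqrt\mu)$ are precisely the separating functionals of Pisier's argument, and your pairing $\mathrm{tr}(R^*\Phi)=\langle M_\phi(T)\xi,\eta\rangle$ with $T$ the entrywise conjugate (or transpose) of $U$, followed by AM--GM, is the correct way to bring in the hypothesis $\|M_\phi\|\le 1$; I checked that the dual you describe is indeed what the Lagrangian computation gives. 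What your version buys is a self-contained, elementary argument that also yields $\|M_\phi\|=\|M_\phi\|_{cb}$ without invoking cb machinery, at the cost of having to justify strong duality and the passage to infinite $X$ by an inverse-limit/finite-intersection compactness argument (which is fine, since PSD matrices with diagonal $\le 1$ have all entries bounded by $1$).

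Two small points you should tidy up, neither of which is a genuine gap: your Slater point $(t,G,H)=(0,\tfrac12 I,\tfrac12 I)$ is not strictly feasible for the constraint $t\ge 0$, so either invoke the refined Slater condition for polyhedral constraints or simply take $t=\varepsilon$ small, where $\bigl(\begin{smallmatrix}\tfrac12 I&\varepsilon\Phi\\ \varepsilon\Phi^*&\tfrac12 I\end{smallmatrix}\bigr)\succ 0$; and you should say a word on why $t^*<\infty$ (for $\phi\not\equiv 0$, positivity forces $t^2|\phi(x,y)|^2\le G_{xx}H_{yy}\le 1$) and why feasibility at some $t\ge 1$ follows from $t^*\ge 1$ (extract a limit of feasible $(G,H)$ along $t\uparrow t^*$, using the same entrywise bounds).
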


Recall that $T\in\mathcal{B}(\mathcal{H})$ is said to be completely bounded if
\begin{align*}
\|T\|_{cb}=\sup_{n\geq 1}\left\|\text{id}_n\otimes T:M_n(\C)\otimes\mathcal{H}\to M_n(\C)\otimes\mathcal{H}\right\| < \infty.
\end{align*}
Motivated by the equivalence (i)$\iff$(ii) in the previous theorem, we shall write
\begin{align*}
\|\phi\|_{cb}=\|M_\phi\|.
\end{align*}
The set of Schur multipliers on $X$ becomes a Banach space with this norm. Moreover, it is a Banach algebra under pointwise multiplication. Our main interest in the present article is the case when $X$ is (the set of vertices of) an infinite graph. Every connected graph can be endowed with its combinatorial distance $d:X\times X\to\N$. That is, for all $x,y\in X$, $d(x,y)$ is the length of the shortest path between $x$ and $y$. Let $\N$ denote the natural numbers $\{0,1,2,...\}$. We say that a function $\phi:X\times X\to\C$ is radial if there exists $\dot{\phi}:\N\to\C$ such that
\begin{align}\label{phi=dotphi}
\phi(x,y)=\dot{\phi}(d(x,y)),\quad\forall x,y\in X.
\end{align}
Conversely, we say that $\dot{\phi}$ defines a radial function $\phi:X\times X\to\C$ if \eqref{phi=dotphi} holds for $\phi$. We also define the discrete derivatives
\begin{align*}
\mathfrak{d}_1\dot{\phi}(n) &= \dot{\phi}(n)-\dot{\phi}(n+1),\\
\mathfrak{d}_2\dot{\phi}(n) &= \dot{\phi}(n)-\dot{\phi}(n+2),\quad\forall n\in\N,
\end{align*}
and by induction, $\mathfrak{d}_j^{m+1}\dot{\phi}(n)= \mathfrak{d}_j(\mathfrak{d}_j^m\dot{\phi})(n)$ for $j=1,2$ and $m\geq 1$. These higher order derivatives admit the following expression,
\begin{align}\label{hoderiv}
\mathfrak{d}_j^m\dot{\phi}(n)=\sum_{k=0}^m\tbinom{N}{k}(-1)^k\dot{\phi}(n+jk).
\end{align}

Let $\mathcal{T}_d$ be the $d$-homogeneous tree. Recall that a Hankel matrix is an infinite matrix of the form $(a_{i+j})_{i,j\in\N}$, where $(a_n)$ is a sequence of complex numbers. Haagerup, Steenstrup and Szwarc \cite{HaaSteSzw} proved that a function $\dot{\phi}:\N\to\C$ defines a radial Schur multiplier on $\mathcal{T}_d$ ($3\leq d\leq\infty$) if and only if the Hankel matrix
\begin{align*}
H=(\mathfrak{d}_2\dot{\phi}(i+j))_{i,j\in\N}
\end{align*}
belongs to the trace class $S_1(\ell_2(\N))$. See \cite[\S 2.4]{Mur} for details on trace class operators. Moreover, they show that the associated Schur multiplier $\phi$ satisfies
\begin{align*}
\|\phi\|_{cb}=|c_+|+|c_-| + \begin{cases}
\left(1-\frac{1}{d-1}\right)\left\|\left(1-\frac{1}{d-1}\tau\right)^{-1}H\right\|_{S_1}, & \text{if } 3\leq d <\infty,\\
\|H\|_{S_1}, & \text{if } d=\infty,
\end{cases}
\end{align*}
where
\begin{align}\label{c+-}
c_\pm=\frac{1}{2}\lim_{n\to\infty}\dot{\phi}(2n) \pm \frac{1}{2}\lim_{n\to\infty}\dot{\phi}(2n+1),
\end{align}
and $\tau:S_1(\ell_2(\N))\to S_1(\ell_2(\N))$ is defined by $\tau(A)=SAS^*$, where $S$ is the forward shift operator on $\ell_2(\N)$. In particular, for $3\leq d <\infty$,
\begin{align*}
\|\phi\|_{cb} &\geq \frac{d-2}{d} \|H\|_{S_1} + |c_+|+|c_-|.
\end{align*}
Since any tree $\mathcal{T}$ of minimum degree $d\geq 3$ admits isometric embeddings
\begin{align*}
\mathcal{T}_d\hookrightarrow  \mathcal{T} \hookrightarrow  \mathcal{T}_\infty,
\end{align*}
and since, by Theorem \ref{ThmGro}, the restriction of a Schur multiplier to a subset is again a Schur multiplier, a corollary of their result is the following.

\begin{thm}[Haagerup--Steenstrup--Szwarc]\label{thmHSS}
Let $\dot{\phi}:\N\to\C$ be a function. Then $\dot{\phi}$ defines a radial Schur multiplier on any tree of minimum degree $d\geq 3$ if and only if the Hankel matrix
\begin{align*}
H=(\mathfrak{d}_2\dot{\phi}(i+j))_{i,j\in\N}
\end{align*}
is an element of $S_1(\ell_2(\N))$. In that case, the following limits exist
\begin{align*}
\lim_{n\to\infty}\dot{\phi}(2n),\quad \lim_{n\to\infty}\dot{\phi}(2n+1),
\end{align*}
and the corresponding Schur multiplier $\phi$ satisfies
\begin{align}\label{boundsphiH1}
\frac{d-2}{d} \|H\|_{S_1} + |c_+|+|c_-|\leq\|\phi\|_{cb} \leq \|H\|_{S_1} + |c_+|+|c_-|,
\end{align}
where
$c_+$ and $c_-$ are defined as in \eqref{c+-}.
\end{thm}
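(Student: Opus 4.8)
The plan is to deduce the statement from the precise result of Haagerup, Steenstrup and Szwarc for the homogeneous trees $\mathcal{T}_d$ and $\mathcal{T}_\infty$ recalled above, using only two ingredients: the existence of isometric embeddings $\mathcal{T}_d \hookrightarrow \mathcal{T} \hookrightarrow \mathcal{T}_\infty$ for every tree $\mathcal{T}$ of minimum degree $d\geq 3$, and the restriction principle for Schur multipliers, which follows from the equivalence (i)$\iff$(iii) in Theorem \ref{ThmGro}: if $\psi$ is a Schur multiplier on a set $Y$ and $Z\subseteq Y$, then $\psi|_{Z\times Z}$ is a Schur multiplier with $\|\psi|_{Z\times Z}\|_{cb}\leq\|\psi\|_{cb}$ (write $\psi(x,y)=\langle P(x),Q(y)\rangle$ and note $\sup_{Z}\|P\|\leq\sup_{Y}\|P\|$, similarly for $Q$). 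The point that makes this work is that the Hankel matrix $H$, the constants $c_\pm$ and the radial function $\phi$ itself depend only on $\dot\phi$ and on the combinatorial distance, and the combinatorial distance on a subtree is the restriction of the ambient one; hence an isometric embedding of trees carries the radial multiplier determined by $\dot\phi$ to the radial multiplier determined by $\dot\phi$.

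\emph{Sufficiency and the upper bound.} Assume $H\in S_1(\ell_2(\N))$. Applying the Haagerup--Steenstrup--Szwarc theorem on $\mathcal{T}_\infty$, the limits $\lim_n\dot\phi(2n)$ and $\lim_n\dot\phi(2n+1)$ exist, $\dot\phi$ defines a radial Schur multiplier $\phi_\infty$ on $\mathcal{T}_\infty$, and $\|\phi_\infty\|_{cb}=\|H\|_{S_1}+|c_+|+|c_-|$. Fix a tree $\mathcal{T}$ of minimum degree $d\geq 3$ and an isometric embedding $\iota:\mathcal{T}\hookrightarrow\mathcal{T}_\infty$. Since $d_{\mathcal{T}}=d_{\mathcal{T}_\infty}\circ(\iota\times\iota)$, the radial function $\phi$ defined by $\dot\phi$ on $\mathcal{T}$ equals the restriction of $\phi_\infty$ to $\iota(\mathcal{T})$, so by the restriction principle $\phi$ is a Schur multiplier with $\|\phi\|_{cb}\leq\|\phi_\infty\|_{cb}=\|H\|_{S_1}+|c_+|+|c_-|$, which is the right-hand inequality in \eqref{boundsphiH1}.

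\emph{Necessity and the lower bound.} Conversely, suppose $\dot\phi$ defines a radial Schur multiplier $\phi$ on some tree $\mathcal{T}$ of minimum degree $d\geq 3$. Construct an isometric embedding $j:\mathcal{T}_d\hookrightarrow\mathcal{T}$ by choosing a base vertex $o\in\mathcal{T}$ and, using that every vertex of $\mathcal{T}$ has degree $\geq d$, inductively selecting $d$ edges at $o$ and $d-1$ new edges at each subsequent vertex; the resulting connected subgraph is a subtree isomorphic to $\mathcal{T}_d$, and a connected subgraph of a tree is isometrically embedded. As before, restricting $\phi$ along $j$ produces the radial Schur multiplier determined by $\dot\phi$ on $\mathcal{T}_d$, with cb-norm $\leq\|\phi\|_{cb}$. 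The Haagerup--Steenstrup--Szwarc theorem on $\mathcal{T}_d$ then forces $H\in S_1(\ell_2(\N))$, ensures the two limits exist, and gives $\tfrac{d-2}{d}\|H\|_{S_1}+|c_+|+|c_-|\leq\|\phi|_{\mathcal{T}_d}\|_{cb}\leq\|\phi\|_{cb}$, the left-hand inequality in \eqref{boundsphiH1}.

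\emph{Main obstacle.} Granting the homogeneous-tree case, the only genuine points are the two embedding constructions and the identification of a restricted radial multiplier with the radial multiplier on the smaller tree, both of which are elementary; so this deduction is really a repackaging of \cite{HaaSteSzw}. All the analytic difficulty sits in the Haagerup--Steenstrup--Szwarc computation of $\|\phi\|_{cb}$ on $\mathcal{T}_d$ (decomposition of radial functions on trees into elementary multipliers together with trace-class Hankel operator estimates), which we use here as a black box and do not attempt to reprove.
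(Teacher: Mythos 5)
Your proposal is correct and is essentially the paper's own argument: the paper presents this theorem precisely as a corollary of the Haagerup--Steenstrup--Szwarc computation on $\mathcal{T}_d$ and $\mathcal{T}_\infty$, obtained via the isometric embeddings $\mathcal{T}_d\hookrightarrow\mathcal{T}\hookrightarrow\mathcal{T}_\infty$ and the restriction principle from Theorem \ref{ThmGro}. Your added details (the explicit construction of the subtree isomorphic to $\mathcal{T}_d$ and the observation that radiality is preserved under isometric embeddings) are exactly the routine points the paper leaves implicit.
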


We extend this result to finite products of trees.

\begin{thmA}
Let $N\geq 1$ and let $\dot{\phi}:\N \to\C$ be a bounded function. Then $\dot{\phi}$ defines a radial Schur multiplier on any product of $N$ infinite trees $T_1,...,T_N$ of minimum degrees $d_1,...,d_N\geq 3$ if and only if the generalised Hankel matrix
\begin{align}\label{genHank}
H=\left(\tbinom{N+i-1}{N-1}^{\frac{1}{2}}\tbinom{N+j-1}{N-1}^{\frac{1}{2}}\mathfrak{d}_2^N\dot{\phi}(i+j)\right)_{i,j\in\N}
\end{align}
is an element of $S_1(\ell_2(\N))$. In that case, the following limits exist
\begin{align*}
\lim_{n\to\infty}\dot{\phi}(2n),\quad \lim_{n\to\infty}\dot{\phi}(2n+1),
\end{align*}
and the corresponding Schur multiplier $\phi$ satisfies
\begin{align}\label{boundsphiH}
\left[\prod_{i=1}^N\frac{d_i-2}{d_i}\right] \|H\|_{S_1} + |c_+|+|c_-|\leq\|\phi\|_{cb} \leq \|H\|_{S_1} + |c_+|+|c_-|,
\end{align}
where $c_+$ and $c_-$ are defined as in \eqref{c+-}.
\end{thmA}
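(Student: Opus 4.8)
The plan is to build the multiplier on a product $T = T_1 \times \dots \times T_N$ out of the one-variable (tree) case via a tensor/iteration argument, and conversely to extract the trace-class condition by restricting to a well-chosen sub-configuration. For the \emph{sufficiency} direction, the key observation is that the combinatorial distance on a product is the $\ell_1$-sum $d(x,y) = \sum_{i=1}^N d_i(x_i,y_i)$, so a radial function on $T$ is a function of $N$ tree-distances. The strategy is to write $\dot\phi(d(x,y))$ as an integral/sum of products of radial functions on the factors. Concretely, one wants a decomposition of the form $\dot\phi(n_1 + \dots + n_N) = \sum_k \prod_{i=1}^N \psi_k^{(i)}(n_i)$ where each $\psi_k^{(i)}$ defines a radial Schur multiplier on $T_i$ with controlled norm, and then use that Schur multipliers on the $X_i$ give Schur multipliers on $\prod X_i$ by Grothendieck's theorem (Theorem~\ref{ThmGro}): if $\phi_i(x_i,y_i) = \langle P_i(x_i), Q_i(y_i)\rangle$ then $\prod_i \phi_i(x_i,y_i) = \langle \otimes_i P_i(x), \otimes_i Q_i(y)\rangle$, so $\|\prod_i \phi_i\|_{cb} \le \prod_i \|\phi_i\|_{cb}$. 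The generalised Hankel matrix with the binomial weights $\binom{N+i-1}{N-1}^{1/2}$ should arise precisely because $\mathfrak{d}_2^N$ is the $N$-fold iterate, and the weights count the number of ways to write $i$ as an ordered sum $n_1 + \dots + n_N$ (a ``stars and bars'' factor $\binom{N+i-1}{N-1}$), which is exactly what appears when one convolves $N$ copies of the tree kernel. So the main analytic input is: given $H \in S_1$ as in \eqref{genHank}, produce a factorisation of $H$ through the $N$-fold tensor product of the tree Hankel operators, with $S_1$ norm controlled by $\|H\|_{S_1}$, and read off the functions $\psi_k^{(i)}$ from Theorem~\ref{thmHSS}.

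More precisely, I would first reduce to the homogeneous case by the same embedding trick used in the excerpt: $\mathcal{T}_{d_i} \hookrightarrow T_i \hookrightarrow \mathcal{T}_\infty$ isometrically, hence $\prod_i \mathcal{T}_{d_i} \hookrightarrow \prod_i T_i \hookrightarrow \prod_i \mathcal{T}_\infty$ isometrically, and radiality is preserved. So it suffices to prove: (a) if $H \in S_1$ then $\dot\phi$ defines a radial multiplier on $\prod_i \mathcal{T}_\infty$ with $\|\phi\|_{cb} \le \|H\|_{S_1} + |c_+| + |c_-|$ (upper bound), and (b) if $\dot\phi$ defines a radial multiplier on $\prod_i \mathcal{T}_{d_i}$ then $H \in S_1$ with $\big[\prod_i \frac{d_i-2}{d_i}\big]\|H\|_{S_1} \le \|\phi\|_{cb}$ (lower bound, giving necessity). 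For (a), after subtracting the constant-at-infinity part (which contributes $|c_+|+|c_-|$ via the parity-splitting as in the tree case), one is left with a $\dot\phi$ for which the relevant derivative sequence is summable enough; I would express $\dot\phi$ in terms of $\mathfrak{d}_2^N\dot\phi$ by the discrete analogue of Taylor's formula with integral remainder, plug in the known $P,Q$ functions from the $\mathcal{T}_\infty$ case of Haagerup--Steenstrup--Szwarc for each factor, and tensor them; the binomial weights in $H$ are designed so that the resulting Hilbert-space-valued functions are bounded exactly when $\|H\|_{S_1} < \infty$, and the norm bookkeeping matches.

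For (b), the necessity direction, I would restrict the multiplier on $\prod_i \mathcal{T}_{d_i}$ to a judiciously chosen subset on which the product structure ``linearises'' the distance, so that the restricted multiplier is governed by a single tree-like Hankel operator but with the $N$-fold derivative $\mathfrak{d}_2^N$ and the binomial multiplicities appearing. A natural choice: pick geodesic rays $\gamma_i$ in each $T_i$ and consider vertices of the form $(\gamma_1(a_1), \dots, \gamma_N(a_N))$; the distance between two such points is $\sum_i |a_i - b_i|$, and summing over all compositions of a fixed total gives the combinatorial factor. Alternatively, one embeds a single tree $\mathcal{T}_{d}$ ``diagonally'' into the product in a distance-distorting but controlled way. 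The constant $\prod_i \frac{d_i - 2}{d_i}$ strongly suggests that each factor contributes its own $\frac{d_i-2}{d_i}$ exactly as in \eqref{boundsphiH1}, so the cleanest route is an induction on $N$: assuming the result for $N-1$ factors, restrict the multiplier on $T_1 \times \dots \times T_N$ to $\{x_N^0\} \times T_2 \times \dots \times T_N$ fibres and to $T_1 \times \{\text{point}\}$ slices and combine, using that the Hankel operator for the product is, up to the binomial reweighting, a ``composition'' of the $N=1$ Hankel operator with the $(N-1)$-variable one. I expect the main obstacle to be exactly this necessity half: controlling the $S_1$ norm of the generalised (binomially-weighted) Hankel operator from below by $\|\phi\|_{cb}$, because the naive restrictions only see parts of the Hankel structure, and one needs to see all of $\mathfrak{d}_2^N\dot\phi(i+j)$ with the correct weights simultaneously. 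The reformulation in terms of Besov spaces on the torus mentioned in the abstract is probably the tool that makes this tractable — identifying $\|H\|_{S_1}$ with a Besov-norm of a function built from $\dot\phi$ and then using known lower bounds for Schur multiplier norms in terms of such Besov norms — but carrying that identification through the product/iteration is where the real work lies.
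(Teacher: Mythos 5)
There is a genuine gap, and it sits in both halves, most seriously in the necessity direction. For sufficiency, your plan hinges on an unproduced decomposition $\dot\phi(n_1+\cdots+n_N)=\sum_k\prod_i\psi_k^{(i)}(n_i)$ with each $\psi_k^{(i)}$ a radial multiplier on the $i$-th tree and $\sum_k\prod_i\|\psi_k^{(i)}\|_{cb}$ controlled by $\|H\|_{S_1}$; you give no construction, and none is needed. The paper does not factor the function through one-variable multipliers at all: it passes (via the unitary $V\delta_i=\binom{N+i-1}{N-1}^{-1/2}\sum_{|m|=i}\delta_m$, which is exactly your stars-and-bars intuition, Lemma \ref{Lem_NN_N}) to a single operator $T=\bigl(\mathfrak{d}_2^N\dot\phi(|m|+|n|)\bigr)_{m,n\in\N^N}$ on $\ell_2(\N^N)$, factors it once as $T=A^*B$ in $S_2$, and builds $P(x)=\sum\delta_{\omega_{x_1}(k_1)}\otimes\cdots\otimes\delta_{\omega_{x_N}(k_N)}\otimes Be_{(k_1,\dots,k_N)}$ directly on the product, recovering $\tilde\phi$ by an $N$-fold telescoping argument (this also needs the existence of the limits $\lim\dot\phi(2n)$, $\lim\dot\phi(2n+1)$, which in Theorem A must be deduced from boundedness plus $H\in S_1$ — Lemmas \ref{elem_lem} and \ref{lema_nconv} — rather than assumed away).

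For necessity, your proposed restrictions would fail. Restricting to a grid $\{(\gamma_1(a_1),\dots,\gamma_N(a_N))\}$ of geodesic rays throws away precisely the branching of the trees, and branching (minimum degree $\geq 3$) is what forces the trace-class Hankel condition even when $N=1$: on a single geodesic a radial kernel is Toeplitz-type and the multiplier condition is far weaker than $H\in S_1$, so no lower bound of the required form can come from such a subset. Likewise, slicing $T_1\times\cdots\times T_N$ by fixing coordinates only sees one-variable derivatives $\mathfrak{d}_2$ of shifted functions, never the joint $N$-fold derivative with the binomial weights; the Remark after Proposition \ref{Prop_mult_rad} shows exactly why coordinatewise information is not enough. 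The paper's actual argument is of a different nature: it takes the Haagerup--Steenstrup--Szwarc isometry $U_i$ on each factor, forms the C${}^*$-algebra they generate inside $\mathcal{B}(\ell_2(X))$, applies a Wold-type decomposition for $N$ double commuting isometries (Proposition \ref{thmwald}), decomposes the functional $f_\phi=\gamma\circ M_\phi$ accordingly, proves that all mixed components vanish using the existence of the limits, reads off $c_\pm$ from the unitary part and a trace-class $T$ from the pure shift part, and finally uses the trace identity of Lemma \ref{lemTT'} to convert $\mathrm{Tr}(S(m,n)T)$ into $\mathrm{Tr}(S^m(S^*)^nT')$, which is where the sharp constant $\prod_i\frac{d_i-2}{d_i}$ comes from. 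The Besov reformulation you invoke is only Peller's repackaging of the condition $H\in S_1$ and provides no lower bound on $\|\phi\|_{cb}$, so it cannot substitute for this operator-algebraic step.
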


The proof of Theorem A uses the same ideas as \cite{HaaSteSzw}; however, some new considerations must be made in order adapt them to products. Observe that we have added the hypothesis that $\dot{\phi}$ is bounded. In Theorem \ref{thmHSS}, this is a consequence of the fact that the Hankel matrix $H$ is of trace class, but this is no longer true in Theorem $A$, as the function $\dot{\phi}(n)=n$ shows. 

We point out that Theorem A can be stated in terms of Besov spaces. Indeed, by a theorem of Peller \cite{Pel}, the condition $H\in S_1(\ell_2(\N))$ is equivalent to the fact that the analytic function
\begin{align*}
z\mapsto(1-z^2)^N\sum_{n\geq 0}\dot{\phi}(n)z^n
\end{align*}
belongs to the Besov space $B_1^N(\T)$ (see Section \ref{Sect_incl} for a definition).

We also obtain a similar result for products of hyperbolic graphs. Using arguments inspired by \cite{HaaSteSzw} and \cite{Oza}, Mei and de la Salle \cite{MeidlS} showed that a sufficient condition for a function $\dot{\phi}:\N \to\C$ to define a radial Schur multiplier on a hyperbolic graph of bounded degree is that the Hankel matrix
\begin{align*}
H=(\mathfrak{d}_1\dot{\phi}(i+j))_{i,j\in\N}
\end{align*}
belongs to $S_1(\ell_2(\N))$. Moreover, the following estimate holds:
\begin{align*}
\|\phi\|_{cb}\leq C\|H\|_{S_1} +|c|,
\end{align*}
where $c=\lim_{n\to\infty}\dot{\phi}(n)$, and $C$ is a constant depending on the graph, which is given by a construction by Ozawa \cite{Oza}. Furthermore, by the characterisation of radial Herz-Schur multipliers on some free products of groups proved by Wysocza\'nski \cite{Wys}, this condition turns out to be also necessary. This follows from the particular case of the hyperbolic group $(\Z/3\Z)\ast(\Z/3\Z)\ast(\Z/3\Z)$. More precisely, these results together yield the following.

\begin{thm}[Wysocza\'nski, Mei--de la Salle]
Let $\dot{\phi}:\N \to\C$ be a function. Then $\dot{\phi}$ defines a radial Schur multiplier on every hyperbolic graph with bounded degree if and only if the Hankel matrix
\begin{align*}
H=(\mathfrak{d}_1\dot{\phi}(i+j))_{i,j\in\N}
\end{align*}
is an element of $S_1(\ell_2(\N))$. Moreover, in that case, $\dot{\phi}(n)$ converges to some $c\in\C$, and there exists $C>0$ depending only on the graph, such that
\begin{align*}
\|\phi\|_{cb}\leq C\|H\|_{S_1} +|c|.
\end{align*}
\end{thm}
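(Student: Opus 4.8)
The plan is to combine two opposite-direction results that are essentially already in the literature, so the ``proof'' is really a matter of identifying the correct statements and checking that they fit together. For the sufficiency direction (if $H\in S_1$, then $\dot\phi$ is a radial Schur multiplier), I would invoke the theorem of Mei and de la Salle \cite{MeidlS}: given a hyperbolic graph $X$ of bounded degree, one uses Ozawa's construction \cite{Oza} to produce, for the metric space $(X,d)$, a kernel decomposition that is controlled in terms of a Hankel matrix built from $\mathfrak{d}_1\dot\phi$. The key point to extract from their argument is the explicit estimate $\|\phi\|_{cb}\le C\|H\|_{S_1}+|c|$, where $c=\lim_n\dot\phi(n)$ (which exists because $H\in S_1$ forces $\mathfrak{d}_1\dot\phi\in\ell_1$, hence $\dot\phi$ is Cauchy), and $C$ depends only on the graph through Ozawa's constant. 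I would simply cite this and state the constant.

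For the necessity direction, the idea is to exhibit one particular hyperbolic graph of bounded degree for which being a radial Schur multiplier forces $H\in S_1$, and then observe that a function which is a radial Schur multiplier on \emph{every} such graph is in particular a radial Schur multiplier on this one. The natural candidate is the Cayley graph of the free product $\Gamma=(\Z/3\Z)\ast(\Z/3\Z)\ast(\Z/3\Z)$ with respect to the standard generators: this group is virtually free, hence hyperbolic, and its Cayley graph has bounded degree. Here I would invoke Wysocza\'nski's characterisation \cite{Wys} of radial Herz--Schur multipliers on free products of finite groups: a radial function $\dot\phi$ defines a Herz--Schur (equivalently, on the Cayley graph, a radial Schur) multiplier precisely when the Hankel matrix $(\mathfrak{d}_1\dot\phi(i+j))_{i,j}$ is trace class, with an associated norm estimate of the form $\|\phi\|_{cb}\ge c_0\|H\|_{S_1}$ for some absolute $c_0>0$ coming from that free product. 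Combined with the observation that radial Schur multipliers on the Cayley graph of $\Gamma$ coincide with radial Herz--Schur multipliers on $\Gamma$ (since $\Gamma$ acts transitively on itself by isometries), this gives necessity, together with the lower bound implicit in the ``moreover'' clause — although the statement as given only records the upper bound, so strictly speaking one only needs the qualitative direction of Wysocza\'nski's theorem.

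Concretely, the steps in order: (1) recall that $H\in S_1$ implies $\mathfrak{d}_1\dot\phi\in\ell_1$, so $\dot\phi(n)\to c$ for some $c\in\C$; (2) cite Mei--de la Salle \cite{MeidlS} together with Ozawa \cite{Oza} for the sufficiency and the bound $\|\phi\|_{cb}\le C\|H\|_{S_1}+|c|$, noting that the boundedness of the degree is used to obtain a uniform $C$; (3) for necessity, take $X$ to be the Cayley graph of $(\Z/3\Z)^{\ast 3}$, note it is hyperbolic with bounded degree, identify radial Schur multipliers on $X$ with radial Herz--Schur multipliers on the group, and apply Wysocza\'nski \cite{Wys} to conclude $H\in S_1$; (4) assemble the two directions into the ``if and only if''. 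The only genuine subtlety — and the step I expect to require the most care — is step (3): one must be sure that Wysocza\'nski's notion of a radial multiplier on the free product, which is phrased in terms of the block length $|g|=\sum_i|g_i|$ on the free product, coincides with the combinatorial graph distance to the identity in the Cayley graph with the chosen generating set, and that his trace-class condition is stated with the \emph{first} discrete derivative $\mathfrak{d}_1$ (rather than $\mathfrak{d}_2$, which would be the tree case). Since each free factor $\Z/3\Z$ contributes generators $a$ and $a^2=a^{-1}$ at distance $1$, the word length is indeed the block length, so the identification goes through; I would spell this out in a sentence or two and leave the rest to the cited references.
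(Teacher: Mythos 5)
Your proposal is correct and matches how the paper itself obtains this statement: it is presented as a known result assembled from Mei--de la Salle's sufficiency argument (via Ozawa's construction, giving $\|\phi\|_{cb}\le C\|H\|_{S_1}+|c|$ with $C$ depending only on the graph) and Wysocza\'nski's characterisation applied to the single hyperbolic graph given by the Cayley graph of $(\Z/3\Z)\ast(\Z/3\Z)\ast(\Z/3\Z)$, where block length equals word length so radial Herz--Schur and radial Schur multipliers coincide. Your handling of the convergence of $\dot{\phi}(n)$ and of the $\mathfrak{d}_1$ versus $\mathfrak{d}_2$ distinction is exactly the care the paper's discussion presupposes.
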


We also extend this characterisation to products.

\begin{thmB}
Let $N\geq 1$ and let $\dot{\phi}:\N \to\C$ be a bounded function. Then $\dot{\phi}$ defines a radial Schur multiplier on every product of $N$ hyperbolic graphs with bounded degree $X_1,...,X_N$  if and only if the generalised Hankel matrix
\begin{align*}
H= \left(\tbinom{N+i-1}{N-1}^{\frac{1}{2}}\tbinom{N+j-1}{N-1}^{\frac{1}{2}}\mathfrak{d}_1^N\dot{\phi}(i+j)\right)_{i,j\in\N}
\end{align*}
is an element of $S_1(\ell_2(\N))$. Moreover, in that case, $\dot{\phi}(n)$ converges to some limit $c\in\C$, and there exists $C>0$ depending only on the graphs $X_1,...,X_N$, such that
\begin{align*}
\|\phi\|_{cb}\leq C\|H\|_{S_1} +|c|.
\end{align*}
\end{thmB}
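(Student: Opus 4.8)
The plan is to reduce Theorem B to Theorem A together with the single-factor hyperbolic case (Wysoczański–Mei–de la Salle), by exploiting the same structural dichotomy that underlies both: a product $X_1\times\cdots\times X_N$ of hyperbolic graphs with bounded degree carries an intermediate geometry between the product of trees and the product of the hyperbolic graphs themselves, at least as far as radial functions of the $\ell_1$-type distance are concerned. Concretely, I would first establish the sufficiency direction. The starting point is the factorisation of $\mathfrak d_1^N\dot\phi$ built into the generalised Hankel matrix: writing $\dot\psi = \mathfrak d_1^N\dot\phi$ and using the binomial identity \eqref{hoderiv}, one expresses $\dot\phi$ as an (infinite) linear combination of "elementary" profiles whose associated kernels are known to be completely bounded on a single hyperbolic graph with norm controlled by Ozawa's constant. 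The key analytic input is exactly the argument of Mei–de la Salle adapted in the product setting: one uses that on each factor $X_i$ the function $(x,y)\mapsto r^{d_i(x,y)}$ (for $0<r<1$, $d_i$ the combinatorial distance on $X_i$) is a Schur multiplier with completely bounded norm bounded independently of $r$ by a constant $C_i$ coming from Ozawa's construction, and that the kernel on the product factors through these via $r^{d(x,y)}=\prod_i r^{d_i(x_i,y_i)}$, where $d=\sum_i d_i$ is the $\ell_1$ product metric. Then one integrates/sums against the appropriate spectral measure: the weights $\binom{N+i-1}{N-1}^{1/2}$ arise precisely because, after $N$ applications of $\mathfrak d_1$, reconstructing $\dot\phi$ requires convolving with the $N$-fold "ramp" sequence, whose generating function is $(1-z)^{-N}=\sum_i\binom{N+i-1}{N-1}z^i$, and the $S_1$-norm of $H$ controls the total mass of this reconstruction. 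This yields $\|\phi\|_{cb}\le C\|H\|_{S_1}+|c|$ with $C=\prod_i C_i$ (or a similar product), and in particular shows $\dot\phi$ is bounded and $\dot\phi(n)\to c$.

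For the necessity direction I would again invoke the strategy from the introduction: since each $X_i$ of bounded degree contains, or is contained in, a suitable tree-like model, it suffices to produce one specific product of hyperbolic graphs on which the radial multiplier condition forces $H\in S_1$. The natural candidate, mirroring the $N=1$ case, is the $N$-fold product of Cayley graphs of $(\Z/3\Z)\ast(\Z/3\Z)\ast(\Z/3\Z)$ (each a hyperbolic graph of bounded degree). On a single such factor, Wysoczański's characterisation of radial Herz–Schur multipliers on free products gives that the relevant $\ell_1$-Hankel matrix must be trace class; the product version should follow by the same tensor/amplification bookkeeping used to pass from Theorem \ref{thmHSS} to Theorem A — i.e. restricting the radial multiplier $\phi$ to a geodesically convex sub-configuration where the distances in the $N$ factors can be prescribed independently, and then extracting the $N$-dimensional "derivative" structure. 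This is where the weights $\binom{N+i-1}{N-1}$ re-enter from the other side: the multiplicity with which a given value $n=\sum_i n_i$ of the total distance is realised as an ordered $N$-tuple $(n_1,\dots,n_N)$ is exactly $\binom{N+n-1}{N-1}$, so the trace-class norm on the product splits as a weighted trace-class norm in one variable with precisely these weights.

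The main obstacle I anticipate is the sufficiency estimate in the product setting — specifically, showing that the completely bounded norms on the individual hyperbolic factors can be combined \emph{multiplicatively} while keeping the $\ell_1$-control, rather than merely that the product kernel is a Schur multiplier. On trees this is clean because the Haagerup–Steenstrup–Szwarc analysis gives an honest integral representation of $r^{d(x,y)}$ with positive kernel; for hyperbolic graphs one only has Ozawa's bounded family of "radial" representations, and one must verify that the tensor product of $N$ such representations still admits the spectral-measure decomposition against which the $N$-th discrete derivative $\mathfrak d_1^N\dot\phi$ is integrated, with the $\binom{N+i-1}{N-1}^{1/2}$-weighted $S_1$-norm emerging as the total variation. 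Equivalently, one must check that the reconstruction $\dot\phi(n)=\sum_{k}\binom{N+\cdot}{\cdot}\,(\cdots)\,\mathfrak d_1^N\dot\phi(\cdot)$ converges and is compatible with the factorised multiplier structure; I would handle this by first treating finitely supported $\mathfrak d_1^N\dot\phi$ (where everything is a finite sum of tensor products of elementary hyperbolic multipliers) and then passing to the limit using the $S_1$-bound, exactly as in the proof of Theorem A, the only genuinely new point being the replacement of $\mathfrak d_2$ by $\mathfrak d_1$ and of the explicit tree constant $\frac{d-2}{d}$ by Ozawa's implicit constant $C$.
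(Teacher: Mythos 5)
Your sufficiency mechanism --- representing $\dot\phi$ as a superposition of the kernels $r^{d(x,y)}$ (or of finitely many ``elementary'' hyperbolic multipliers) with total mass controlled by $\|H\|_{S_1}$ --- does not exist for the class of functions in the statement, so this direction has a genuine gap. Trace-class membership of the generalised Hankel matrix is equivalent (Peller) to the Besov condition $(1-z)^N\sum_{n\geq0}\dot\phi(n)z^n\in B_1^N(\T)$, which is strictly weaker than any weighted-$\ell_1$ or integral-representation condition of the kind your ``spectral measure'' would require: for instance $\sum_n(n+1)^N|\mathfrak{d}_1^N\dot\phi(n)|<\infty$ is sufficient but far from necessary for $H\in S_1$. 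Hence ``the $S_1$-norm emerging as the total variation'' cannot be made literal, and treating finitely supported $\mathfrak{d}_1^N\dot\phi$ first and passing to the limit would only yield a strictly weaker sufficient condition unless the mechanism is changed. What actually works (and is what Mei--de la Salle do, and what the paper does in Lemma \ref{Lem_multi_hyp}) is a Hilbert-space factorisation: one forms the operator $T$ on $\ell_2(\N^N)$ with entries $\sum_{I\subset[N]}(-1)^{|I|}\tilde\phi(m+n+\chi^I)$, writes $T=A^*B$ with $A,B$ Hilbert--Schmidt, sets $P(x)=\sum_k \eta_{k_1}^+(x_1)\otimes\cdots\otimes\eta_{k_N}^+(x_N)\otimes Be_k$ (and similarly $Q$ with $\eta^-$ and $A$) using Ozawa's vectors from Theorem \ref{thmOzawa}, and recovers $\tilde\phi$ from property (c) by telescoping in each coordinate. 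The binomial weights then appear only through the unitary identification of the radial subspace of $\ell_2(\N^N)$ with $\ell_2(\N)$ (Lemma \ref{Lem_NN_N}), $\tbinom{N+i-1}{N-1}$ being the number of $m\in\N^N$ with $|m|=i$, not through convolving with $(1-z)^{-N}$; and the uniform cb-boundedness of $r^{d(x,y)}$ is a corollary of the theorem, not an ingredient strong enough to prove it.

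For necessity you pick the right test space, the $N$-fold product of the Cayley graph $X$ of $(\Z/3\Z)\ast(\Z/3\Z)\ast(\Z/3\Z)$, but ``the same tensor/amplification bookkeeping as for Theorem A'' is not available there: $X^N$ is not a product of trees, a radial function on it is not multi-radial on any tree product, and Wysocza\'nski's theorem is only used to get existence of the limit $c$. Two ingredients are missing from your sketch. First, the multi-radial necessity statement for products of trees (Proposition \ref{Prop_mult_rad}), whose proof is a substantial operator-algebraic argument with double-commuting isometries, a generalised Wold decomposition and the functional $f_\phi$; merely ``prescribing the $N$ distances independently on a sub-configuration'' does not produce the operator $T$ or its $S_1$ bound. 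Second, the transfer from $X^N$ to $\mathcal{T}_3^N$: via Serre's tree one has $\Gamma(G)=\Psi(X)\sqcup f(\Psi(X))$ with $d_{\Gamma(G)}(\Psi(x),\Psi(y))=2\,d(x,y)$, and one transports the Grothendieck factorisation of $\phi$, tensoring with $\ell_2(\{0,1\}^N)$ to keep track of parities, to obtain a multi-radial multiplier on $\mathcal{T}_3^N$ supported on even distances; the doubling of distances, followed by compression with the isometry $V\delta_n=\delta_{2n}$, is exactly what converts the tree condition (step-two differences $\mathfrak{d}_2$) into the claimed $\mathfrak{d}_1^N$-condition. Your proposal never explains how $\mathfrak{d}_1$, rather than $\mathfrak{d}_2$, is to be extracted, and without this doubling trick (or some substitute for it) there is no route from the tree result to the stated Hankel matrix.
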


Observe that, once again, we must make the assumption that $\dot{\phi}$ is bounded. The argument that we use to show that the condition $H\in S_1$ is sufficient is a mix of the proof of Theorem A with the ideas of \cite{MeidlS}. In order to prove that this condition is also necessary, we use some tools from \cite{Wys}, but since we only deal with one particular hyperbolic graph, the proof gets reduced to studying a very specific product of homogeneous trees, and then applying some elements of the proof of Theorem A. Again, the characterisation given by Theorem B is also equivalent to
\begin{align*}
(1-z)^N\sum_{n\geq 0}\dot{\phi}(n)z^n\in B_1^N(\T).
\end{align*}

We also show that, as a consequence of Theorem B, groups acting properly on products of hyperbolic graphs with bounded degrees are weakly amenable. A countable discrete group $\Gamma$ is said to be weakly amenable if there exists a sequence of finitely supported functions $\varphi_n:\Gamma\to\C$ converging pointwise to 1 and such that the functions $\tilde{\varphi}_n:\Gamma\times\Gamma\to\C$ given by $\tilde{\varphi}_n(s,t)=\varphi_n(s^{-1}t)$ are Schur multipliers on $\Gamma$ satisfying
\begin{align*}
\sup_{n}\|\tilde{\varphi}_n\|_{cb} < \infty.
\end{align*}
As far as we know, this result is new; however, we do not know if it allows us to obtain new examples of weakly amenable groups.

Finally, we deal with multipliers on finite dimensional CAT(0) cube complexes. A cube complex $X$ is a polyhedral complex in which each cell is isometric to the Euclidean cube $[0,1]^n$ for some $n\in\N$, and the gluing maps are isometries. The dimension of $X$ is the maximum of all such $n$. The CAT(0) condition is defined in terms of the metric on $X$ induced by the Euclidean metric on each cube. It also admits a combinatorial characterisation proved by Gromov \cite{Grom} by what is sometimes referred to as the \textit{link condition}. However, thanks to a very nice result of Chepoi \cite{Che}, we may define CAT(0) cube complexes as those whose 1-skeleton is a median graph. See Section \ref{Sec_SCCCC} for a definition of median graphs. They generalise trees, in the sense that trees are exactly the 1-dimensional CAT(0) cube complexes. Furthermore, a product of $N$ trees defines an $N$-dimensional CAT(0) cube complex. However, the class of finite dimensional CAT(0) cube complexes is far more general. Indeed, Chepoi and Hagen \cite{CheHag} gave an example of a uniformly locally finite CAT(0) cube complex of dimension 5 that cannot be embedded in a finite product of trees.

CAT(0) cube complexes have been widely studied and remain an object of great interest in geometric group theory. We refer the reader to \cite[\S 2]{GueHig} and the references therein for a presentation close to the spirit of the present paper. 

Observe that Theorem A provides a necessary condition for a function to define a radial Schur multiplier on every $N$-dimensional CAT(0) cube complex; however, we do not know whether this condition is also sufficient. The following result asserts that another (stronger) condition is sufficient.

\begin{thmC}
Let $X$ be (the 0-skeleton of) a CAT(0) cube complex of dimension $N<\infty$. Let $\phi:X\times X\to\C$ be a radial function with $\phi=\dot{\phi}\circ d$, and such that the generalised Hankel matrix
\begin{align*}
H=\left(\tbinom{N+i-1}{N-1}^{\frac{1}{2}}\tbinom{N+j-1}{N-1}^{\frac{1}{2}}\mathfrak{d}_2\dot{\phi}(i+j)\right)_{i,j\in\N}
\end{align*}
belongs to $S_1(\ell_2(\N))$. Then the following limits exist:
\begin{align*}
&\lim_{n\to\infty}\dot{\phi}(2n),& & \lim_{n\to\infty}\dot{\phi}(2n+1),
\end{align*}
and $\phi$ defines a radial Schur multiplier on $X$ of norm at most
\begin{align*}
M\|H\|_{S_1} + |c_+| + |c_-|,
\end{align*}
where $c_+$ and $c_-$ are defined as in \eqref{c+-}, and $M>0$ is a constant depending only on the dimension $N$.
\end{thmC}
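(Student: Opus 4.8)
The plan is to follow the Haagerup--Steenstrup--Szwarc strategy, in the form developed for Theorem~A, replacing the confluent point of a tree by the median of a CAT(0) cube complex and keeping careful track of the combinatorics of intervals in an $N$-dimensional median graph --- it is this last point that forces the binomial weights in $H$. Fix once and for all a base vertex $o\in X$, and recall that the $1$-skeleton of a finite dimensional CAT(0) cube complex is a median graph whose $i$-cubes correspond to $i$-tuples of pairwise crossing hyperplanes, so that dimension $\le N$ means exactly that no $N+1$ hyperplanes pairwise cross.

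\emph{Reduction to a decaying function.} Since $\tbinom{N+i-1}{N-1}^{1/2}\ge 1$ for every $i$, conjugating $H$ by the diagonal contraction $\operatorname{diag}\!\big(\tbinom{N+i-1}{N-1}^{-1/2}\big)$ shows that the unweighted Hankel matrix $(\mathfrak{d}_2\dot\phi(i+j))_{i,j}$ lies in $S_1$, with $S_1$-norm at most $\|H\|_{S_1}$. By Theorem~\ref{thmHSS} the limits $\lim_n\dot\phi(2n)$ and $\lim_n\dot\phi(2n+1)$ exist; defining $c_\pm$ by \eqref{c+-} and
\begin{align*}
\dot\phi_0(n)=\dot\phi(n)-c_+-(-1)^n c_-,
\end{align*}
we get $\dot\phi_0(n)\to 0$ along both parities and $\mathfrak{d}_2\dot\phi_0=\mathfrak{d}_2\dot\phi$. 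The constant function $1$ is a Schur multiplier of norm $1$, and, since median graphs are bipartite, $(x,y)\mapsto(-1)^{d(x,y)}$ is a Schur multiplier of norm $1$. Hence it suffices to prove that $\phi_0=\dot\phi_0\circ d$ is a Schur multiplier on $X$ with $\|\phi_0\|_{cb}\le M\|H\|_{S_1}$ for some $M=M(N)$, since then $\|\phi\|_{cb}\le\|\phi_0\|_{cb}+|c_+|+|c_-|$.

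\emph{A bilinear formula via medians.} For $x,y\in X$ let $m=m(o,x,y)$ be the median of the three points. Using that a hyperplane separates $o$ from $m$ iff it separates $o$ from both $x$ and $y$, one gets $d(x,y)=d(m,x)+d(m,y)$, $d(o,x)=d(o,m)+d(m,x)$, $d(o,y)=d(o,m)+d(m,y)$, and $I(o,x)\cap I(o,y)=I(o,m)$, where $I(\cdot,\cdot)$ denotes the median interval. Writing $p=d(m,x)$, $q=d(m,y)$ and telescoping $\dot\phi_0(n)=\sum_{l\ge 0}\mathfrak{d}_2\dot\phi(n+2l)$ (convergent because $\dot\phi_0\to 0$) gives
\begin{align}\label{eq:phi0}
\phi_0(x,y)=\sum_{l\ge 0}\mathfrak{d}_2\dot\phi\big((p+l)+(q+l)\big).
\end{align}
Now take a Schmidt decomposition $H=\sum_k\lambda_k\,\xi_k\otimes\eta_k$ with $(\xi_k),(\eta_k)$ orthonormal systems in $\ell_2(\N)$, $\lambda_k\ge 0$ and $\sum_k\lambda_k=\|H\|_{S_1}$, and set $\alpha_k(l)=\tbinom{N+l-1}{N-1}^{-1/2}\xi_k(l)$, $\beta_k(l)=\tbinom{N+l-1}{N-1}^{-1/2}\eta_k(l)$, so that $\|\alpha_k\|,\|\beta_k\|\le 1$. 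The role of the binomial weights is that $\big(S^{*p}\alpha_k\big)(l)=\tbinom{N+p+l-1}{N-1}^{-1/2}\xi_k(p+l)$, where $S^*$ is the backward shift on $\ell_2(\N)$; substituting the Schmidt decomposition into \eqref{eq:phi0} and using $H_{i,j}=\tbinom{N+i-1}{N-1}^{1/2}\tbinom{N+j-1}{N-1}^{1/2}\mathfrak{d}_2\dot\phi(i+j)$ yields
\begin{align}\label{eq:phi0shift}
\phi_0(x,y)=\sum_k\lambda_k\,\big\langle S^{*\,p}\alpha_k,\ S^{*\,q}\beta_k\big\rangle .
\end{align}

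\emph{Decoupling and the norm estimate.} It remains to rewrite the right-hand side of \eqref{eq:phi0shift} as $\langle P(x),Q(y)\rangle$ with $P$ depending only on $x$; the obstruction is that $p$ and $q$ depend jointly on $x,y$ through $m$, exactly as in a tree, where the resolution in \cite{HaaSteSzw} (and in the proof of Theorem~A) is to sum over the common part $[o,x]\cap[o,y]=[o,m]$ of the two geodesics and telescope so that only the confluent point contributes. Here the common part is the interval $I(o,m)$, a CAT(0) cube complex of dimension $\le N$: one attaches to each vertex $c\in I(o,x)$ a vector $a^x_k(c)$ built from $S^{*\,d(c,x)}\alpha_k$ and from signed binomial coefficients indexed by the cubes of $I(o,x)$ at $c$ (and, symmetrically, $b^y_k(c)$ for $y$), the signs being arranged so that
\begin{align*}
P(x)=\bigoplus_k\sqrt{\lambda_k}\sum_{c\in I(o,x)}\delta_c\otimes a^x_k(c),\qquad Q(y)=\bigoplus_k\sqrt{\lambda_k}\sum_{c\in I(o,y)}\delta_c\otimes b^y_k(c)
\end{align*}
satisfy $\langle P(x),Q(y)\rangle=\sum_k\lambda_k\sum_{c\in I(o,m)}\langle a^x_k(c),b^y_k(c)\rangle$, with the inner sum collapsing onto $c=m$ and reproducing \eqref{eq:phi0shift}; Theorem~\ref{ThmGro} then exhibits $\phi_0$ as a Schur multiplier. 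Two combinatorial facts about $N$-dimensional median graphs are needed, and this is where the dimension enters: first, that these signed sums over the cubes of an $N$-dimensional interval telescope with multiplicities bounded in terms of $N$ only (ultimately because every interval has rank $\le N$); second, the matching volume bound that the number of vertices $c\in I(o,x)$ with $d(c,x)=i$ is at most $\tbinom{N+i-1}{N-1}$. Granting these, grouping $\|P(x)\|^2=\sum_k\lambda_k\sum_{c\in I(o,x)}\|a^x_k(c)\|^2$ along the fibres $\{d(c,x)=i\}$ and using that the factors $\tbinom{N+i-1}{N-1}^{-1/2}$ are already built into $\alpha_k$ leaves $\sup_x\|P(x)\|^2\le M'(N)\|H\|_{S_1}$, and likewise for $Q$, whence $\|\phi_0\|_{cb}\le M'(N)\|H\|_{S_1}$ and $M=M'(N)$ works. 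The main obstacle is the first of these two facts: designing the signed cube sums so that they telescope uniformly over all $N$-dimensional CAT(0) cube complexes, not merely over products of $N$ trees (for which one recovers the argument behind Theorem~A).
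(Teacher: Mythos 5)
Your reductions are fine and agree with the paper's: conjugating $H$ by the diagonal $\operatorname{diag}\big(\tbinom{N+i-1}{N-1}^{-1/2}\big)$ to get the unweighted Hankel matrix in $S_1$, invoking Theorem \ref{thmHSS} for the existence of the limits, subtracting $c_++(-1)^nc_-$ and using Lemma \ref{lemoddeven}, and feeding a factorisation of $H$ into vectors carrying the weights $\tbinom{N+l-1}{N-1}^{-1/2}$ is exactly how the constant $\|H\|_{S_1}$ and the binomial weights enter. But the heart of the proof is missing: you never construct the vectors $a^x_k(c)$, never specify the signs, and never prove that the signed sums over cubes collapse onto the median — you explicitly call this ``the main obstacle'' and leave it as an assertion. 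In the paper this step is not designed from scratch but imported from Mizuta: one works with the sets $A(x,k)$ and the polytopes $\mathcal{A}(x,k)$ attached to an \emph{infinite} geodesic ray $\omega_o$, forms $P_k(x)=\sum_l f_k^{(l)}(x)$ and $Q_k(x)=\sum_l(-1)^l f_k^{(l)}(x)$, and the whole decoupling rests on the nontrivial identity of Proposition \ref{prodPkQk} (implicit in \cite{Miz}), which says that $\langle P_{k_1}(x),Q_{k_2}(y)\rangle$ equals $1$ precisely when $k_i=d(x_i,m(x,y))+j$ for a common $j\ge 0$ and $0$ otherwise. Without that identity (or a genuinely new replacement for it), your formula $\langle P(x),Q(y)\rangle=\sum_k\lambda_k\langle S^{*p}\alpha_k,S^{*q}\beta_k\rangle$ is not established, so the proof of the multiplier property is incomplete.

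Two further points make the gap concrete. First, your setup uses a fixed base vertex $o$, intervals $I(o,x)$ and the median $\mu(o,x,y)$, whereas the paper (and Mizuta, and already Haagerup--Steenstrup--Szwarc for trees) uses an infinite ray and the ``median at infinity'' of Lemma \ref{lemmedian}. This is not cosmetic: with a finite base point the common region $I(o,m)$ has a second endpoint at $o$, and a telescoping/collapsing scheme must also kill the boundary contribution there (in the tree case the naive base-point analogue produces an extra non-radial term of the form $\dot{\phi}_0(d(o,x)+d(o,y)+2)$); the infinite ray is what makes the tail vanish using $\dot{\phi}_0\to 0$. Second, your norm estimate only invokes the vertex count $\#\{c\in I(o,x):d(c,x)=i\}\le\tbinom{N+i-1}{N-1}$, but since your vectors are indexed by cubes/polytopes rather than vertices, you also need a uniform bound (depending only on $N$) on how many such cells contain a given vertex; this is exactly the content of the paper's new Lemmas \ref{cardBi}, \ref{cardpoly} and \ref{boundPk}, which produce the constant $M$ in the statement, and it is absent from your sketch.
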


Observe that that the matrix $H$ is not the same as that of Theorem A. Indeed, it involves only the first derivative $\mathfrak{d}_2\dot{\phi}$, regardless of the dimension $N$. The reason for this is that the proof of Theorem C uses a construction of Mizuta \cite{Miz} that allows us to adapt the arguments of \cite{HaaSteSzw} in this more general context. In fact, Mizuta used this construction to study a very particular family of radial Schur multipliers, in order to show that groups acting properly on finite dimensional CAT(0) cube complexes are weakly amenable. This was proved independently by Guentner and Higson \cite{GueHig} using uniformly bounded representations.

Once again, Theorem C may be stated in terms of Besov spaces. Namely, if the analytic function
\begin{align*}
(1-z^2)\sum_{n\geq 0}\dot{\phi}(n)z^n
\end{align*}
belongs to $B_1^N(\T)$, then $\dot{\phi}$ defines a radial Schur multiplier on any $N$-dimensional CAT(0) cube complex.

This paper is organised as follows. In Section \ref{sectMulti} we define multi-radial multipliers and establish an intermediate result for finite product of trees. Using this, we prove Theorem A in Section \ref{sectRadTree}. Section \ref{sectSCPHG} is devoted to products of hyperbolic graphs and the \textit{if} part of Theorem B. We also prove the consequence concerning weak amenability. We deal with the \textit{only if} part in Section \ref{sectNCCG} by studying the Cayley graph of $(\Z/3\Z)\ast(\Z/3\Z)\ast(\Z/3\Z)$. Theorem C is proved in Section \ref{Sec_SCCCC}. Finally, in Section \ref{Sect_incl}, we prove the characterisations in terms of Besov spaces, and we show some relations between the conditions in Theorems A, B and C. In particular, we prove that they become stronger as $N$ increases. 

\section{Multi-radial multipliers on products of trees}\label{sectMulti}

In this section we prove a more general form of Theorem A by studying what we will call multi-radial multipliers. These objects will also be useful in the proof of Theorem B. Let $X=X_1\times\cdots\times X_N$ be a product of $N$ graphs. Observe that the combinatorial distance in this case is given by
\begin{align*}
d(x,y)=\sum_{i=1}^Nd_i(x_i,y_i)\quad\forall x,y\in X,
\end{align*}
where $x=(x_i)_{i=1}^N$, $y=(y_i)_{i=1}^N$, and $d_i$ is the combinatorial distance on $X_i$. We say that $\phi:X\times X\to\C$ is a multi-radial function if there exists $\tilde{\phi}:\N^N\to\C$ such that
\begin{align*}
\phi(x,y)=\tilde{\phi}(d(x_1,y_1),...,d(x_N,y_N)),\quad\forall x,y\in X.
\end{align*}
In order to precisely state the main result of this section, we need to fix some notation. For any integer $N\geq 1$, let $[N]$ denote the set $\{1,...,N\}$. For each $I\subset[N]$, we define a vector $\chi^I\in\{0,1\}^N$ by
\begin{align}\label{chi_I}
\chi^I_i=\begin{cases} 1 & \text{if } i\in I \\
0 & \text{if } i\notin I.
\end{cases}
\end{align}
We shall also write, for $n=(n_1,...,n_N)\in\N^N$, $|n|=n_1+\cdots+n_N$. Our goal is to prove the following characterisation of multi-radial multipliers in terms of operators on $\ell_2(\N^N)$.

\begin{prop}\label{Prop_mult_rad}
Let $N\geq 1$ and let $\tilde{\phi}:\N^N \to\C$ be a function such that the limits
\begin{align}\label{l_0l_1}
l_0&=\lim_{\substack{|n|\to\infty \\ |n|\text{ even}}}\tilde{\phi}(n) & l_1&=\lim_{\substack{|n|\to\infty \\ |n|\text{ odd}}}\tilde{\phi}(n)
\end{align}
exist. Then $\tilde{\phi}$ defines a multi-radial Schur multiplier on any product of $N$ infinite trees of minimum degrees $d_1,...,d_N\geq 3$ if and only if the operator $T=(T_{n,m})_{m,n\in\N^N}$ given by
\begin{align}\label{genHank_N}
T_{m,n}=\sum_{I\subset[N]}(-1)^{|I|} \tilde{\phi}(m+n+2\chi^I),\quad\forall m,n\in\N^N
\end{align}
is an element of $\in S_1(\ell_2(\N^N))$. In that case
\begin{align*}
\left[\prod_{i=1}^N\frac{d_i-2}{d_i}\right] \|T\|_{S_1} + |c_+|+|c_-|\leq\|\phi\|_{cb} \leq \|T\|_{S_1} + |c_+|+|c_-|,
\end{align*}
where $c_\pm=l_0\pm l_1$.
\end{prop}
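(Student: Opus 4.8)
The plan is to reduce the multi-radial case on a product of trees to the known one-variable statement (Theorem~\ref{thmHSS}) applied coordinate-by-coordinate, together with the Grothendieck-type factorisation of Theorem~\ref{ThmGro}. First I would set up, for each tree $T_i$ of minimum degree $d_i$, the standard geometric data used in \cite{HaaSteSzw}: for each pair of vertices $x_i,y_i$ lying at distance $n_i$ there is a canonical geodesic, and one records the sequence of ``midpoints'' or the pair of half-segments $\xi_i(x_i,y_i),\eta_i(x_i,y_i)$ living in an auxiliary Hilbert space built from the edge set of $T_i$. On a product, the distance splits as $d(x,y)=\sum_i d_i(x_i,y_i)$, so a multi-radial function $\tilde\phi$ evaluated at $(n_1,\dots,n_N)$ should be resolved by an $N$-fold ``layer-cake'' decomposition: write $\tilde\phi$ in terms of its finite differences $\mathfrak d_2$ applied in each variable separately, which is exactly what the alternating sum over $I\subset[N]$ in \eqref{genHank_N} encodes. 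The identity
\begin{align*}
\tilde\phi(n)=\sum_{k\in\N^N, k\leq n}\ \Big(\text{telescoped second differences}\Big) + (\text{boundary terms involving } l_0,l_1)
\end{align*}
is the combinatorial heart, and the boundary terms are precisely what produce the rank-one pieces contributing $|c_+|+|c_-|$.

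The key steps, in order: (1) Prove the \emph{sufficiency} direction. Assume $T\in S_1(\ell_2(\N^N))$ and factor $T=\sum_k \lambda_k\, a_k\otimes \overline{b_k}$ as a sum of rank-one operators with $\sum_k|\lambda_k|\,\|a_k\|\,\|b_k\|$ close to $\|T\|_{S_1}$. For each vertex pair $(x,y)$ in the product, use the tree geometry in each coordinate to build Hilbert-space vectors $P(x),Q(y)$ — roughly, a tensor product over $i\in[N]$ of the HSS half-geodesic vectors, paired against the spectral data $a_k,b_k$ — so that $\langle P(x),Q(y)\rangle=\phi(x,y)$. The bookkeeping that makes the norms multiply out to $\|T\|_{S_1}+|c_+|+|c_-|$ is where the binomial weights $\binom{N+i-1}{N-1}^{1/2}$ enter: they count the number of ways a given total ``radius'' $i$ is distributed among the $N$ coordinates, and including them symmetrically in rows and columns is exactly what turns the $\ell_2(\N^N)$ operator $T$ into the one-variable generalised Hankel matrix of Theorem~A after summing along anti-diagonals. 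Then invoke Theorem~\ref{ThmGro}(iii)$\Rightarrow$(i) to conclude $\|\phi\|_{cb}\le\|T\|_{S_1}+|c_+|+|c_-|$. (2) Prove the \emph{necessity} direction and the lower bound. Here I would fix homogeneous trees $\mathcal T_{d_i}$ inside the given trees and exploit homogeneity: the product $\mathcal T_{d_1}\times\cdots\times\mathcal T_{d_N}$ has a large automorphism group acting with finitely many orbits on pairs of vertices at a given multi-distance. Averaging a putative bounded Schur multiplier over these automorphisms (a standard center-of-mass / compactness argument, as in \cite{HaaSteSzw}) produces, from the factorisation $\phi(x,y)=\langle P(x),Q(y)\rangle$, a trace-class operator whose associated $\ell_2(\N^N)$ matrix is $T$ up to the rank-one $c_\pm$ corrections, with $\|T\|_{S_1}\le \big[\prod_i \tfrac{d_i}{d_i-2}\big]\,\big(\|\phi\|_{cb}-|c_+|-|c_-|\big)$. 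The factor $\prod_i\frac{d_i-2}{d_i}$ is the product of the one-tree constants $\frac{d_i-2}{d_i}$ from \eqref{boundsphiH1}, and it emerges because the orbital integral in coordinate $i$ contributes a geometric series with ratio $\frac{1}{d_i-1}$.

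(3) Check that the limits $l_0,l_1$ in \eqref{l_0l_1} genuinely exist under the hypothesis $T\in S_1$ — this is where boundedness of $\tilde\phi$ must be used, since (unlike in the one-variable tree case) trace-class-ness of $T$ alone does not force $\tilde\phi$ to be bounded, hence not to converge; one shows the ``even'' and ``odd'' subsequences are Cauchy by estimating partial sums of the series reconstructing $\tilde\phi$ from its second differences via the $S_1$ norm of $T$. (4) Finally, verify the operator $T$ is well-defined and bounded enough for these manipulations — i.e.\ that the alternating sum \eqref{genHank_N} really defines an $\ell_2(\N^N)$-operator, not merely a formal matrix — which again uses the convergence of $\tilde\phi$ along even/odd diagonals to control the tail.

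The main obstacle I expect is step~(1), specifically engineering the factorisation so that the norms multiply \emph{exactly} (not just up to a dimensional constant), because naively tensoring the HSS vectors over the $N$ coordinates forces one to decompose $T$ compatibly with the tensor structure $\ell_2(\N)^{\otimes N}$, whereas $T$ is a priori just an abstract trace-class operator on $\ell_2(\N^N)$ with no reason to respect that grading. Reconciling these — presumably by first proving the estimate for operators of the special form $\bigotimes_i H_i$ (products of one-variable Hankel matrices), then using density of their linear span in $S_1$ together with the sharpness of the one-variable bounds — is the technical crux that the phrase ``some new considerations must be made in order to adapt them to products'' in the introduction is pointing at. The lower bound in step~(2) is comparatively routine once the averaging machinery of \cite{HaaSteSzw} is in place, since it only needs the \emph{product} of the separate one-tree constants rather than any joint sharpness.
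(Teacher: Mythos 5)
The sufficiency half of your plan is essentially the paper's (Lemma \ref{Lem_suf_mult_rad}): one factors $T=A^*B$ with $A,B$ Hilbert--Schmidt on $\ell_2(\N^N)$ and pairs tensor products of the delta functions along the geodesics $\omega_{x_i}$ with $Be_k$, $Ae_m$, so that orthogonality in each tree plus an $N$-fold telescoping (using $l_0=l_1=0$ after subtracting $c_++(-1)^{|n|}c_-$ and invoking the bipartite multiplier of Lemma \ref{lemoddeven}) gives $\langle P(x),Q(y)\rangle=\tilde\phi(\vec d(x,y))$. However, the ``technical crux'' you anticipate is not there: no decomposition of $T$ compatible with the tensor grading $\ell_2(\N)^{\otimes N}$ is needed, and the binomial weights $\binom{N+i-1}{N-1}^{1/2}$ play no role in this proposition at all --- they only enter later (Lemma \ref{Lem_NN_N}) when Proposition \ref{Prop_mult_rad} is converted into Theorem A. Also note a misreading in your step (3): in this proposition the existence of $l_0,l_1$ is a \emph{hypothesis}, and it cannot be deduced from $T\in S_1$ even for bounded $\tilde\phi$; the remark following the proposition gives $\tilde\phi(n_1,n_2)=f(n_1)+g(n_2)$, for which $T=0$ identically while $f,g$ are arbitrary bounded functions.

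The genuine gap is your necessity argument. ``Averaging over automorphisms of $\mathcal T_{d_1}\times\cdots\times\mathcal T_{d_N}$'' gives no mechanism for producing a trace-class operator from cb-boundedness of $M_\phi$, and it is not how \cite{HaaSteSzw} (or this paper) proceed. The actual proof builds, on each homogeneous tree, the isometry $U_i$ shifting toward a fixed end, forms $\mathcal A=C^*(U_1)\otimes_{\min}\cdots\otimes_{\min}C^*(U_N)$, shows $M_\phi$ acts diagonally on the elements $U(m,n)$, and composes with a product state to get a functional $f_\phi$ with $f_\phi(U(m,n))=\tilde\phi(m+n)$ and $\|f_\phi\|\le\|\phi\|_{cb}$ (Lemma \ref{lemfphi}). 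The genuinely new ingredient --- which your sketch does not touch --- is the Wold-type decomposition for $N$ double commuting isometries (Proposition \ref{thmwald}, after S\l oci\'nski), which splits $f_\phi$ into $2^N$ pieces $f_\phi^I$; the hypothesized limits $l_0,l_1$ are used to show every piece with $0<|I|<N$ vanishes and to identify $f_\phi^\varnothing$ with $c_++(-1)^{|m|+|n|}c_-$ via a measure on $\T$, while the piece $I=[N]$, living on a shift part $\ell_2(\N^N)\otimes Y_{[N]}$, produces the trace-class operator. Finally, the constant $\prod_i d_i/(d_i-2)$ does not arise from a geometric series in an orbital integral but from the $S_1$-norm of the correction map $T\mapsto\prod_i(1-\tfrac1{q_i})^{-1}(I-\tfrac{\tau_i}{q_i})T$ (Lemma \ref{lemTT'}) needed to turn $\mathrm{Tr}(S(m,n)T)$ into $\mathrm{Tr}(S^m(S^*)^nT')$, whose entries then reproduce the alternating sum defining $T$. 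Without these steps (or a worked-out substitute showing how trace-classness and the cross-term cancellation are obtained), the ``only if'' direction and the lower bound are unproved.
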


\begin{rmk}
In Proposition \ref{Prop_mult_rad}, we make the assumption that the limits $l_0$ and $l_1$ exist, whereas in Theorem A this is a consequence. The reason for this is that multi-radial functions can have completely different behaviours on each coordinate. Take for example $N=2$ and define $\tilde{\phi}(n_1,n_2)=f(n_1)+g(n_2)$, where $f,g:\N\to\C$ are any bounded functions. Then
\begin{align*}
T_{m,n}&=f(m_1+n_1)+g(m_2+n_2)-f(m_1+n_1+2)-g(m_2+n_2)\\
&\quad-f(m_1+n_1)-g(m_2+n_2+2)+f(m_1+n_1+2)+g(m_2+n_2+2)\\
&=0,
\end{align*}
but for $\tilde{\phi}$ to define a Schur multiplier, $f$ and $g$ should at least satisfy the characterisation given by Theorem \ref{thmHSS}.
\end{rmk}

\subsection{Proof of sufficiency}
We begin with a general observation about bipartite graphs. A graph is said to be bipartite if it does not contain any odd-length cycles. This is the case of products of trees, and more generally, median graphs.

\begin{lem}\label{lemoddeven}
Let $X$ be a connected bipartite graph. Then the function $\chi:X\times X\to\{-1,1\}$ given by
\begin{align*}
\chi(x,y)=(-1)^{d(x,y)}
\end{align*}
is a Schur multiplier of norm 1.
\end{lem}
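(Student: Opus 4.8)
The plan is to exploit the defining property of bipartite graphs: a connected graph is bipartite precisely when its vertex set splits as $X = A \sqcup B$ with every edge joining $A$ to $B$. Equivalently, fixing a basepoint $o \in X$, the parity of $d(o,x)$ is well defined (independent of the choice of geodesic, since all paths between two vertices of a bipartite graph have the same length parity), and $A$, $B$ are the two parity classes. The key identity is then that, for all $x,y \in X$, the number $d(x,y)$ has the same parity as $d(o,x) + d(o,y)$; this follows by concatenating a geodesic from $x$ to $o$ with one from $o$ to $y$ and using that the graph is bipartite (any closed walk has even length).

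Granting this, I would set $\varepsilon : X \to \{-1,1\}$, $\varepsilon(x) = (-1)^{d(o,x)}$, so that
\begin{align*}
\chi(x,y) = (-1)^{d(x,y)} = \varepsilon(x)\varepsilon(y), \quad \forall x,y \in X.
\end{align*}
This is a factorisation of the form required by Theorem \ref{ThmGro}(iii) with the one-dimensional Hilbert space $\mathcal{H} = \C$, taking $P(x) = \varepsilon(x)$ and $Q(y) = \varepsilon(y)$ (both real-valued), and $\left(\sup_{x}\|P(x)\|\right)\left(\sup_{y}\|Q(y)\|\right) = 1$. Hence $\chi$ is a Schur multiplier with $\|M_\chi\| \leq 1$.

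For the reverse inequality, I would note that for any matrix unit $E_{x,y}$ (the rank-one operator $\delta_x \otimes \overline{\delta_y}$), which has operator norm $1$, one has $M_\chi(E_{x,y}) = \chi(x,y) E_{x,y}$, of norm $|\chi(x,y)| = 1$; therefore $\|M_\chi\| \geq 1$. Combining the two bounds gives $\|M_\chi\| = 1$, i.e. $\|\chi\|_{cb} = 1$.

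There is essentially no serious obstacle here; the only point requiring a word of care is the well-definedness of the parity function $\varepsilon$, which rests on connectedness together with the bipartite hypothesis (absence of odd cycles). Everything else is an immediate application of Theorem \ref{ThmGro} and the trivial lower bound coming from matrix units.
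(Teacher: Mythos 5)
Your proof is correct and follows essentially the same route as the paper: the paper also factorises $\chi(x,y)=(-1)^{d(x,x_0)}(-1)^{d(y,x_0)}$ via a fixed basepoint, using that $d(x,x_0)+d(y,x_0)+d(x,y)$ is even in a bipartite graph, and applies Theorem \ref{ThmGro}. Your matrix-unit argument for $\|M_\chi\|\geq 1$ is just a spelled-out version of the paper's remark that the supremum norm is dominated by the multiplier norm.
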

\begin{proof}
Fix $x_0\in X$ and define $P:X\to\C$ by $P(x)=(-1)^{d(x,x_0)}$. Hence, for all $x,y\in X$,
\begin{align*}
\langle P(x),P(y)\rangle=(-1)^{d(x,x_0)+d(y,x_0)}.
\end{align*}
Observe that $d(x,x_0)+d(y,x_0)+d(x,y)$ is even because it is the length of a cycle. Thus
\begin{align*}
(-1)^{d(x,x_0)+d(y,x_0)}=(-1)^{d(x,y)} = \chi(x,y).
\end{align*}
Therefore, by Theorem \ref{ThmGro}, $\chi$ is a Schur multiplier and $\|\chi\|_{cb}\leq 1$. The other inequality follows from the fact that the supremum norm is bounded above by the cb norm.
\end{proof}

We shall fix $N\geq 1$, and consider, for each $i=1,...,N$, an infinite tree $X_i$ endowed with the combinatorial distance $d_i$. We do not make any assumptions on the degrees. We follow the same strategy as in \cite{HaaSteSzw}. For each $i\in\{1,...,N\}$, fix an infinite geodesic $\omega_0^{(i)}:\N\to X_i$. This means that $\omega_0^{(i)}$ is injective and for each $n\in\N$, $\omega_0^{(i)}(n)$ is adjacent to $\omega_0^{(i)}(n+1)$. Observe that for each $x\in X_i$ there exists a unique infinite geodesic $\omega_x:\N\to X_i$ such that $\omega_x(0)=x$, and which eventually flows with $\omega_0^{(i)}$. The latter may be expressed as $|\omega_x\Delta\omega_0^{(i)}|<\infty$, viewing the geodesics as subsets of $X_i$. Moreover, for any $x,y\in X_i$, let
\begin{align}
k_0&=\min\{k\in\N\, :\, \omega_x(k)\in \omega_y\},\label{k_0m_0}\\
m_0&=\min\{m\in\N\, :\, \omega_y(m)\in \omega_x\},\notag
\end{align}
and observe that $k_0+m_0=d(x,y)$. Then, for all $k,m\in\N$,
\begin{align*}
\omega_x(k)=\omega_y(l) \iff \exists\, j\in\N,\ k=k_0+j,\ m=m_0+j.
\end{align*}

With all these observations, we may now give the proof of the \textit{if} part of Proposition \ref{Prop_mult_rad}.

\begin{lem}\label{Lem_suf_mult_rad}
Let $N\geq 1$ and let $\tilde{\phi}:\N^N \to\C$ be a function such that the limits
\begin{align*}
l_0&=\lim_{\substack{|n|\to\infty \\ |n|\text{ even}}}\tilde{\phi}(n) & l_1&=\lim_{\substack{|n|\to\infty \\ |n|\text{ odd}}}\tilde{\phi}(n)
\end{align*}
exist, and such that the operator $T=(T_{n,m})_{m,n\in\N^N}$ given by
\begin{align*}
T_{n,m}=\sum_{I\subset[N]}(-1)^{|I|} \tilde{\phi}(m+n+2\chi^I),\quad\forall m,n\in\N^N,
\end{align*}
where $\chi^I$ is defined as in \eqref{chi_I}, is an element of $\in S_1(\ell_2(\N^N))$. Then $\tilde{\phi}$ defines a multi-radial Schur multiplier on $X=X_1\times\cdots\times X_N$ of norm at most $\|T\|_{S_1} + |c_+|+|c_-|$.
\end{lem}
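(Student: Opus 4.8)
The plan is to follow the Haagerup--Steenstrup--Szwarc strategy of writing $\tilde\phi\circ d$ as an absolutely convergent combination of elementary Schur multipliers indexed by the singular value decomposition of $T$, but now carried out coordinate-by-coordinate across the $N$ tree factors. First I would reduce to the case $l_0=l_1=0$: after subtracting the constant $c_+/2\cdot 1 + c_-/2\cdot\chi$, where $\chi(x,y)=(-1)^{d(x,y)}$ is the norm-one multiplier from Lemma 1.6 (the product of trees is bipartite), the remaining function still has the same operator $T$ but now has both limits equal to zero, and it will suffice to bound its cb norm by $\|T\|_{S_1}$. This accounts for the $|c_+|+|c_-|$ term.

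Next, for each factor $X_i$ I would use the geodesic bookkeeping set up just before the lemma: fix the base geodesic $\omega_0^{(i)}$, and for $x_i,y_i\in X_i$ let $k_0^{(i)},m_0^{(i)}$ be as in \eqref{k_0m_0}, so that $d_i(x_i,y_i)=k_0^{(i)}+m_0^{(i)}$ and the two geodesics $\omega_{x_i},\omega_{y_i}$ agree exactly from those indices on. The key algebraic identity to establish is that, writing $a_i=k_0^{(i)}$, $b_i=m_0^{(i)}$, one has
\begin{align*}
\tilde\phi(d_1(x_1,y_1),\dots,d_N(x_N,y_N)) = \sum_{p,q\in\N^N} T_{p,q}\,\prod_{i=1}^N \mathbf{1}_{[p_i,\infty)}(a_i)\,\mathbf{1}_{[q_i,\infty)}(b_i),
\end{align*}
which is obtained by iterating in each coordinate the telescoping relation $\tilde\phi(\dots) = \sum$ over $I$ of $(-1)^{|I|}\tilde\phi(\cdot+2\chi^I)$ summed against the indicator structure — this is exactly the $N$-fold Abel summation that inverts the definition \eqref{genHank_N} of $T$, and it is here that the vanishing of $l_0,l_1$ is needed to kill the boundary terms. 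Each summand factorises as $T_{p,q}$ times $\langle P_p(x),Q_q(y)\rangle$ where $P_p(x)=\prod_i \mathbf{1}_{[p_i,\infty)}(k_0^{(i)}(x))$ takes values in $\{0,1\}$, hence $\|P_p\|_\infty,\|Q_q\|_\infty\le 1$; but wait — this naive factorisation gives $\sum_{p,q}|T_{p,q}|$, not $\|T\|_{S_1}$, so instead I would feed $T$ through its SVD $T=\sum_n s_n\,|\xi_n\rangle\langle\eta_n|$ with $\sum_n s_n = \|T\|_{S_1}$, and for each $n$ form the Schur multiplier $\psi_n(x,y)=\langle P(x),\xi_n\rangle\overline{\langle Q(y),\eta_n\rangle}$ where $P(x)=(P_p(x))_{p\in\N^N}\in\ell_\infty(\N^N)$. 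The point, exactly as in \cite{HaaSteSzw}, is that $P(x)$ actually lies in $\ell_2(\N^N)$ with a norm controlled by a \emph{polynomial} in the finitely many "branch indices" — more precisely $\|P(x)\|^2 = \prod_i (k_0^{(i)}(x)+1)$ — which is why the binomial weights $\binom{N+i-1}{N-1}^{1/2}$ appear in Theorem A: one renormalises $T$ by those weights so that the corresponding $P,Q$ become genuinely bounded in $\ell_2$. So the honest statement is $\langle P(x), A\,P(y)\rangle$ with $A$ a weighted version of $T$; I would track the weights carefully so that the estimate closes with constant exactly $1$ in front of $\|T\|_{S_1}$.

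Finally, assembling: $\phi(x,y)=\sum_n s_n \psi_n(x,y)$ with each $\|\psi_n\|_{cb}\le 1$ by Theorem 1.1 (iii) (Grothendieck), so $\|\phi\|_{cb}\le \sum_n s_n = \|T\|_{S_1}$, and adding back the two constants gives the claimed bound $\|T\|_{S_1}+|c_+|+|c_-|$. The main obstacle I anticipate is the second step: getting the $N$-fold summation-by-parts identity to converge and to reproduce $\tilde\phi$ exactly, including verifying that the "tail" contributions vanish under the hypothesis that $l_0,l_1$ exist (and equal $0$ after the reduction) — the subtlety is that convergence of $|n|\to\infty$ through even resp. odd $|n|$ is weaker than coordinatewise convergence, so one must argue that along the relevant partial sums the parity of $|n|$ is eventually constant, which it is because adding $2\chi^I$ does not change parity. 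A secondary technical point is making the interchange of the (infinite) SVD sum with the (infinite) geodesic sum rigorous, which follows from absolute convergence once the $\ell_2$-bounds on $P,Q$ are in hand.
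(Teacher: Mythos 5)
Your skeleton (reduce to $l_0=l_1=0$ by subtracting $c_++(-1)^{d(x,y)}c_-$ via the bipartite multiplier of Lemma \ref{lemoddeven}, then exploit $T\in S_1$ through a factorisation and Theorem \ref{ThmGro}(iii), with a coordinatewise telescoping at the end) is the paper's strategy, but your central step fails in two ways. First, the ``key algebraic identity'' $\tilde\phi(\vec{d}(x,y))=\sum_{p,q}T_{p,q}\prod_i\mathbf{1}_{[p_i,\infty)}(a_i)\mathbf{1}_{[q_i,\infty)}(b_i)$, i.e.\ a sum of $T_{p,q}$ over a full quadrant, is false: for $N=1$ and $\tilde\phi(n)=r^n$ one has $T_{p,q}=(1-r^2)r^{p+q}$, so the quadrant sum over $p\geq a,\ q\geq b$ equals $\tfrac{1+r}{1-r}\,r^{a+b}$ (and the sum over $p\leq a,\ q\leq b$ already fails at $a=b=0$, giving $T_{0,0}=1-r^2\neq\tilde\phi(0)$), whereas the identity that actually inverts \eqref{genHank_N} is the \emph{diagonal} resummation $\tilde\phi(\vec{d}(x,y))=\sum_{j\in\N^N}T_{a+j,\,b+j}$ with $a_i=k_0^{(i)}$, $b_i=m_0^{(i)}$, which telescopes coordinate by coordinate using $l_0=l_1=0$. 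Second, your factorising vectors $P_p(x)=\prod_i\mathbf{1}_{[p_i,\infty)}(k_0^{(i)})$ are not functions of $x$ alone: $k_0^{(i)}$ in \eqref{k_0m_0} is defined from the pair $(x_i,y_i)$, so these $P_p,Q_q$ cannot be fed into Grothendieck's criterion at all. The device that simultaneously separates the variables and selects the diagonal is the one the paper uses: write $T=A^*B$ with $\|T\|_{S_1}=\|A\|_{S_2}\|B\|_{S_2}$ and set $P(x)=\sum_k\delta_{\omega_{x_1}(k_1)}\otimes\cdots\otimes\delta_{\omega_{x_N}(k_N)}\otimes Be_k$ and $Q(y)=\sum_m\delta_{\omega_{y_1}(m_1)}\otimes\cdots\otimes\delta_{\omega_{y_N}(m_N)}\otimes Ae_m$; the factors $\langle\delta_{\omega_{x_i}(k_i)},\delta_{\omega_{y_i}(m_i)}\rangle$ vanish unless $(k_i,m_i)=(k_0^{(i)}+j_i,\,m_0^{(i)}+j_i)$, which is exactly the diagonal-ray condition.

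Your proposed repair, reweighting by $\binom{N+i-1}{N-1}^{1/2}$ ``so that $P,Q$ become bounded,'' rests on a misconception and is not available here: those weights in Theorem A come from Lemma \ref{Lem_NN_N}, i.e.\ from counting the lattice points with $|m|=i$ when passing from $\ell_2(\N^N)$ to $\ell_2(\N)$, while the present lemma concerns the unweighted operator $T$ on $\ell_2(\N^N)$ and must yield the constant $1$ in front of $\|T\|_{S_1}$. In the correct construction no weights are needed: since the vertices $\omega_{x_i}(k_i)$, $k_i\in\N$, are pairwise distinct, the summands of $P(x)$ are orthogonal and $\|P(x)\|^2=\sum_n\|Be_n\|^2=\|B\|_{S_2}^2$ for every $x$ (similarly for $Q$), so Theorem \ref{ThmGro} gives $\|\phi\|_{cb}\leq\|A\|_{S_2}\|B\|_{S_2}=\|T\|_{S_1}$ directly. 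Your remarks on parity (adding $2\chi^I$ preserves the parity of $|n|$, so the hypothesis on limits along even/odd $|n|$ suffices for the telescoping) and the reduction to $l_0=l_1=0$ are fine up to the normalisation of $c_\pm$, but as written the proof does not close.
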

\begin{proof}
We treat first the case $l_0=l_1=0$. Let $S_2(\ell_2(\N^N))$ be the space of Hilbert--Schmidt operators on $\ell_2(\N^N)$ (see \cite[\S 2.4]{Mur} for details). Using the polar decomposition, we can find $A,B\in S_2(\ell_2(\N^N))$ such that $T=A^*B$ and $\|T\|_{S_1}=\|A\|_{S_2}\|B\|_{S_2}$. Define now, for each $x=(x_i)_{i=1}^N\in X$,
\begin{align*}
P(x)=\sum_{k_1,...,k_N=0}^\infty \delta_{\omega_{x_1}(k_1)}\otimes\cdots\otimes\delta_{\omega_{x_N}(k_N)}\otimes Be_{(k_1,...,k_N)}
\end{align*}
and
\begin{align*}
Q(x)=\sum_{m_1,...,m_N=0}^\infty \delta_{\omega_{x_1}(m_1)}\otimes\cdots\otimes\delta_{\omega_{x_N}(m_N)}\otimes Ae_{(m_1,...,m_N)},
\end{align*}
where $\{e_n\}_{n\in\N^N}$ is the canonical orthonormal basis of $\ell_2(\N^N)$. Observe that
\begin{align*}
\|P(x)\|^2=\sum_{n\in\N^N}^\infty \| Be_{n}\|^2=\|B\|_{S_2}^2,\quad\forall x\in X.
\end{align*}
Similarly, $\|Q(y)\|^2= \|A\|_{S_2}^2$ for all $y\in X$. Now,
\begin{align*}
\langle P(x),Q(y)\rangle =\sum_{\substack{k_1,...,k_N=0\\ m_1,...,m_N=0}}^{\infty} \left(\prod_{i=1}^N\left\langle\delta_{\omega_{x_i}(k_i)},\delta_{\omega_{y_i}(m_i)}\right\rangle\right)
\left\langle A^*Be_{(k_1,...,k_N)}, e_{(m_1,...,m_N)}\right\rangle.
\end{align*}
Recall that
\begin{align*}
\omega_{x_i}(k_i)=\omega_{y_i}(m_i) \iff \exists\, j_i\in\N,\ k_i=k_0+j_i,\ m_i=m_0+j_i,
\end{align*}
where $k_0$ and $m_0$ are defined as in \eqref{k_0m_0}, and satisfy $k_0+m_0=d_i(x_i,y_i)$. Moreover, in that case, since $T=A^*B$,
\begin{align*}
\langle A^*B e_{(k_1,...,k_N)}, e_{(m_1,...,m_N)}\rangle=\sum_{I\subset[N]}(-1)^{|I|} \tilde{\phi}\left(\vec{d}(x,y)+2(j_1,...,j_N)+2\chi^I\right),
\end{align*}
where
\begin{align*}
\vec{d}(x,y)=(d_1(x_1,y_1),...,d_N(x_N,y_N))\in\N^N.
\end{align*}
Thus
\begin{align*}
\langle P(x),Q(y)\rangle =\sum_{j_1,...,j_N=0}^{\infty} \sum_{I\subset[N]}(-1)^{|I|} \tilde{\phi}\left(\vec{d}(x,y)+2(j_1,...,j_N)+2\chi^I\right).
\end{align*}
Now observe that this expression corresponds to $N$ telescoping series. Indeed, fixing $j_1,...,j_{N-1}$ and defining
\begin{align*}
a_j=\sum_{\substack{I\subset[N] \\ N\notin I}}(-1)^{|I|} \tilde{\phi}\left(\vec{d}(x,y)+2(j_1,...,j_{N-1},j)+2\chi^I\right),
\end{align*}
we obtain
\begin{align*}
\sum_{j_N=0}^{\infty} \sum_{I\subset[N]}(-1)^{|I|} \tilde{\phi}\left(\vec{d}(x,y)+2(j_1,...,j_N)+2\chi^I\right)=\sum_{j=0}^{\infty}a_j-a_{j+1}
\end{align*}
and this equals $a_0$ because we have assumed $l_0=l_1=0$. Hence
\begin{align*}
\langle P(x),Q(y)\rangle=\sum_{j_1,...,j_{N-1}=0}^{\infty} \sum_{\substack{I\subset[N] \\ N\notin I}}(-1)^{|I|} \tilde{\phi}\left(\vec{d}(x,y)+2(j_1,...,j_{N-1},0)+2\chi^I\right).
\end{align*}
Repeating this argument for the variables $j_1,...,j_{N-1}$, we get
\begin{align*}
\langle P(x),Q(y)\rangle =\tilde{\phi}\left(d_1(x_1,y_1),...,d_N(x_N,y_N)\right).
\end{align*}
We conclude that this defines a multi-radial Schur multiplier $\phi$ on $X$, and
\begin{align*}
\|\phi\|_{cb}\leq \|A\|_{S_2}\|B\|_{S_2}=\|T\|_{S_1}.
\end{align*}
Now we drop the assumption $l_0=l_1=0$. Define
\begin{align*}
\tilde{\psi}(n)=\tilde{\phi}(n)-\left(c_++(-1)^{|n|}c_-\right),\quad\forall n\in\N^N,
\end{align*}
where $c_\pm=\frac{1}{2}(l_0 \pm l_1)$. Observe that
\begin{align*}
\lim_{|n|\to\infty}\tilde{\psi}(n)=0,
\end{align*}
and
\begin{align*}
\sum_{I\subset[N]}(-1)^{|I|} \tilde{\psi}(n+2\chi^I) = \sum_{I\subset[N]}(-1)^{|I|} \tilde{\phi}(n+2\chi^I),\quad\forall n\in\N^N.
\end{align*}
Hence, by the previous arguments, we know that $\tilde{\psi}$ defines a multi-radial Schur multiplier on $X$ of norm at most $\|T\|_{S_1}$. Using Lemma \ref{lemoddeven}, we conclude that $\phi$ is a Schur multiplier, and
\begin{align*}
\|\phi\|_{cb} \leq \|T\|_{S_1} + |c_+|+|c_-|.
\end{align*}
\end{proof}

\subsection{Double commuting isometries}

Now we will deal with the \textit{only if} part of Proposition \ref{Prop_mult_rad}. Once again, we follow the strategy of \cite{HaaSteSzw}. Their argument is based on the study of a certain isometry on the $\ell_2$ space of a homogeneous tree. In our case, we need to consider $N$ copies ($N\geq 1$) of that isometry and analyse them together, although they act independently, in some sense. For this purpose, we shall need some preliminaries on double commuting isometries.

Given an isometry $V$ on a Hilbert space $\mathcal{H}$, the Wold--von Neumann theorem gives a decomposition of $\mathcal{H}$ as a direct sum of two subspaces such that, on one of them $V$ acts as a unitary, and on the other it is a unilateral shift. We will need an extension of this result for $N$ double commuting isometries. Given $V_1,...,V_N$ isometries on a Hilbert space $\mathcal{H}$, we say that they double commute if, for all $i,j\in\{1,...,N\}$ with $i\neq j$, $V_i$ commutes with both $V_j$ and $V_j^*$. Let us first state the Wold--von Neumann theorem. For a proof, see e.g. \cite[Theorem 3.5.17]{Mur}.

\begin{thm}[Wold--von Neumann]
Let $V$ be an isometry on a Hilbert space $\mathcal{H}$. Then $\mathcal{H}$ admits a decomposition
\begin{align*}
\mathcal{H}=\mathcal{H}_u\oplus\mathcal{H}_s\ ,
\end{align*}
where 
\begin{align*}
\mathcal{H}_u &= \bigcap_{n\in\N}V^n\mathcal{H}, &
\mathcal{H}_s &=\bigoplus_{n\in\N}V^n\left(\text{Ker } V^*\right).
\end{align*}
\end{thm}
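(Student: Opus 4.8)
The plan is to verify directly that the two closed subspaces $\mathcal{H}_u=\bigcap_{n\in\N}V^n\mathcal{H}$ and $\mathcal{H}_s=\bigoplus_{n\in\N}V^n(\operatorname{Ker}V^*)$ are orthogonal, $V$-invariant, and together span $\mathcal{H}$, and then to identify how $V$ acts on each piece. First I would record the basic structural fact that, since $V$ is an isometry, $V^n$ is an isometry for every $n$, so each $V^n\mathcal{H}$ is a closed subspace, the chain $\mathcal{H}\supset V\mathcal{H}\supset V^2\mathcal{H}\supset\cdots$ is decreasing, and $\mathcal{H}_u$ is indeed closed. Likewise $P:=I-VV^*$ is the orthogonal projection onto $\operatorname{Ker}V^*=(\operatorname{Ran}V)^\perp$, so the spaces $V^n(\operatorname{Ker}V^*)$ are mutually orthogonal: for $m<n$, $V^n(\operatorname{Ker}V^*)\subset V^{m+1}\mathcal{H}\perp V^m(\operatorname{Ker}V^*)$ because $V^m$ is isometric and $\operatorname{Ker}V^*\perp V\mathcal{H}$. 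Hence the (Hilbert space) direct sum defining $\mathcal{H}_s$ makes sense, and $\mathcal{H}_s\perp\mathcal{H}_u$ since $\mathcal{H}_u\subset V^{n+1}\mathcal{H}\perp V^n(\operatorname{Ker}V^*)$ for all $n$.

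The heart of the argument is the decomposition $\mathcal{H}=\mathcal{H}_u\oplus\mathcal{H}_s$. I would prove this by showing, by induction on $n$, the finite orthogonal decomposition
\begin{align*}
\mathcal{H}=V^{n}\mathcal{H}\;\oplus\;\bigoplus_{k=0}^{n-1}V^{k}(\operatorname{Ker}V^*).
\end{align*}
The base case $n=1$ is exactly $\mathcal{H}=\operatorname{Ran}V\oplus(\operatorname{Ran}V)^\perp=V\mathcal{H}\oplus\operatorname{Ker}V^*$. For the inductive step one applies $V^{n}$ (which is isometric, so preserves orthogonal direct sums) to the identity $\mathcal{H}=V\mathcal{H}\oplus\operatorname{Ker}V^*$, obtaining $V^{n}\mathcal{H}=V^{n+1}\mathcal{H}\oplus V^{n}(\operatorname{Ker}V^*)$, and substitutes. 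Taking $n\to\infty$ in this finite identity, the tail $V^n\mathcal{H}$ shrinks to $\mathcal{H}_u$; the point is that any $h\in\mathcal{H}$ decomposes as $h=h_n+s_n$ with $h_n\in V^n\mathcal{H}$ and $s_n\in\bigoplus_{k<n}V^k(\operatorname{Ker}V^*)$, and $\{s_n\}$ is a Cauchy sequence (its increments are mutually orthogonal with summable squared norms, being bounded by $\|h\|^2$), so $s_n\to s\in\mathcal{H}_s$ and $h_n=h-s_n\to h-s$, which lies in every $V^m\mathcal{H}$ (each $V^m\mathcal{H}$ is closed and contains $h_n$ for $n\ge m$), hence in $\mathcal{H}_u$. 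Therefore $h=(h-s)+s\in\mathcal{H}_u+\mathcal{H}_s$, and combined with the orthogonality already established this gives $\mathcal{H}=\mathcal{H}_u\oplus\mathcal{H}_s$.

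Finally I would check the invariance and the nature of the action: $V\mathcal{H}_u=\mathcal{H}_u$ because $V(\bigcap_n V^n\mathcal{H})\subset\bigcap_n V^{n+1}\mathcal{H}=\mathcal{H}_u$, and conversely any element of $\mathcal{H}_u\subset V\mathcal{H}$ is $Vh$ with $h\in V^{n-1}\mathcal{H}$ for all $n$ (using injectivity of $V$), so $h\in\mathcal{H}_u$; hence $V|_{\mathcal{H}_u}$ is a surjective isometry, i.e.\ unitary. On $\mathcal{H}_s$, $V$ maps $V^k(\operatorname{Ker}V^*)$ isometrically onto $V^{k+1}(\operatorname{Ker}V^*)$, which exhibits $V|_{\mathcal{H}_s}$ as the unilateral shift of multiplicity $\dim\operatorname{Ker}V^*$. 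The only genuinely delicate point is the convergence argument in the last step of the decomposition — making sure the ``leftover'' vectors $h_n\in V^n\mathcal{H}$ converge to something lying in the \emph{intersection} rather than merely in each $V^n\mathcal{H}$ separately — but this is handled cleanly by the orthogonality of the increments and Bessel's inequality as indicated above.
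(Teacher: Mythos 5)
Your proof is correct and complete. Note that the paper does not prove this statement at all: it is quoted as a classical result with a pointer to Murphy's book (Theorem 3.5.17 there), so there is no in-paper argument to compare against. Your route — the finite orthogonal decomposition $\mathcal{H}=V^{n}\mathcal{H}\oplus\bigoplus_{k=0}^{n-1}V^{k}(\operatorname{Ker}V^*)$ obtained by telescoping $\mathcal{H}=V\mathcal{H}\oplus\operatorname{Ker}V^*$ under the isometry $V^{n}$, followed by the Bessel/orthogonal-increments limit argument to show the leftover vectors $h_n$ converge into the intersection $\mathcal{H}_u$ — is the classical Wold argument and is airtight; in particular you correctly use that $V\mathcal{H}$ and each $V^n\mathcal{H}$ are closed (isometries have closed range), which is what makes both the base case and the passage to the limit work. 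The textbook proof the paper cites reaches the same conclusion slightly differently, by computing the orthogonal complement of $\bigoplus_n V^n(\operatorname{Ker}V^*)$ directly: $x$ is orthogonal to every $V^nL$ (with $L=\operatorname{Ker}V^*$) iff $(V^*)^n x\in V\mathcal{H}$ for all $n$, which iterates to $x=V^{n+1}(V^*)^{n+1}x\in V^{n+1}\mathcal{H}$, identifying that complement with $\mathcal{H}_u$ without the finite approximation step. The two arguments are of comparable length; yours has the small bonus of exhibiting explicitly the unitary and shift behaviour of $V$ on the two summands (more than the quoted statement asks for), and the one delicate point you flag — that the limit of the $h_n$ lands in the intersection — is indeed handled correctly by closedness of each $V^m\mathcal{H}$.
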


S\l oci\'nski \cite{Slo} generalised this decomposition to a certain type of commuting pairs. In particular, his result applies to double commuting isometric pairs $(V_1,V_2)$. Inductively, we can obtain a similar decomposition for double commuting isometries $V_1,...,V_N$. We present the proof here for the sake of completeness. But first we need to fix some notations. Recall that $[N]$ stands for the set $\{1,...,N\}$. Given $I=\{i_1,...,i_k\}\subset [N]$ with $i_1<\cdots < i_k$, and $p=(p_1,...,p_k)\in\N^I$, put
\begin{align*}
V_I^p=V_{i_1}^{p_1}\cdots V_{i_k}^{p_k}.
\end{align*}
We also write $I^c=[N]\setminus I$.

\begin{prop}\label{thmwald}
Let $V_1,..., V_N$ be double commuting isometries on a Hilbert space $\mathcal{H}$. Define, for each nonempty $I\subset [N]$,
\begin{align*}
\mathcal{W}_I=\bigcap_{i\in I}\text{Ker }V_i^*.
\end{align*}
Then $\mathcal{H}$ admits a decomposition
\begin{align*}
\mathcal{H}=\bigoplus_{I\subset [N]} \mathcal{H}_I,
\end{align*}
where 
\begin{align*}
\mathcal{H}_I=\bigoplus_{p\in\N^I}V_I^p\left(\bigcap_{q\in\N^{I^c}}V_{I^c}^q\mathcal{W}_I\right),
\end{align*}
for all nonempty $I\subsetneq[N]$, and
\begin{align*}
\mathcal{H}_\varnothing &=\bigcap_{q\in\N^N}V_{[N]}^q\mathcal{H}, & \mathcal{H}_{[N]} &=\bigoplus_{p\in\N^N}V_{[N]}^p\left(\mathcal{W}_{[N]}\right).
\end{align*}
\end{prop}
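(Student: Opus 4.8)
The plan is to induct on $N$, using S\l oci\'nski's two-variable decomposition as the base case ($N=2$; $N=1$ being the classical Wold--von Neumann theorem). Suppose the result holds for $N-1$ double commuting isometries. Given $V_1,\dots,V_N$ double commuting on $\mathcal{H}$, first apply the Wold--von Neumann theorem to $V_N$ alone, obtaining $\mathcal{H}=\mathcal{H}_u\oplus\mathcal{H}_s$ with $\mathcal{H}_u=\bigcap_{n}V_N^n\mathcal{H}$ and $\mathcal{H}_s=\bigoplus_{n}V_N^n(\operatorname{Ker}V_N^*)$. The crucial observation is that, because $V_1,\dots,V_{N-1}$ commute with both $V_N$ and $V_N^*$, both summands $\mathcal{H}_u$ and $\mathcal{H}_s$ are invariant under each $V_i$ and each $V_i^*$ ($i<N$); hence $V_1|_{\mathcal{H}_u},\dots,V_{N-1}|_{\mathcal{H}_u}$ are again double commuting isometries on $\mathcal{H}_u$, and likewise on $\operatorname{Ker}V_N^*$ (which is $V_i$- and $V_i^*$-invariant for the same reason, since $V_N^*V_i=V_iV_N^*$ forces $V_i$ to map $\operatorname{Ker}V_N^*$ into itself).

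Next I would apply the inductive hypothesis to the $N-1$ double commuting isometries $V_1|_{\mathcal{H}_u},\dots,V_{N-1}|_{\mathcal{H}_u}$ on $\mathcal{H}_u$, and separately to $V_1|_{\operatorname{Ker}V_N^*},\dots,V_{N-1}|_{\operatorname{Ker}V_N^*}$ on $\operatorname{Ker}V_N^*$. This produces decompositions indexed by subsets $J\subset[N-1]$. For $\mathcal{H}_u$: since $V_N$ acts as a unitary there, one has $\mathcal{H}_u=\bigcap_{q\in\N^N}V_{[N]}^q\mathcal{H}_u$ identifications, and the pieces $(\mathcal{H}_u)_J$ with $J\subsetneq[N-1]$ combine with the already-performed $V_N$-intersection to give exactly the desired $\mathcal{H}_I$ for $I\subset[N-1]$ (i.e. $N\notin I$), after checking that $\bigcap_{q\in\N^{I^c}}V_{I^c}^q\mathcal{W}_I$ computed in $\mathcal{H}$ agrees with the one computed inside $\mathcal{H}_u$ intersected appropriately with the extra $V_N$-direction. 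For $\mathcal{H}_s=\bigoplus_{n\in\N}V_N^n(\operatorname{Ker}V_N^*)$: applying the inductive decomposition inside $\operatorname{Ker}V_N^*$ gives pieces indexed by $J\subset[N-1]$, and tensoring (direct-summing) with the $V_N^n$ shift direction turns each such $J$-piece into the $\mathcal{H}_I$ with $I=J\cup\{N\}$, since $\operatorname{Ker}V_N^*=\mathcal{W}_{\{N\}}$ and $V_N^n(\cdots)$ supplies the $p_N$-index in $V_I^p$. One then bookkeeps that $\mathcal{W}_I=\bigcap_{i\in I}\operatorname{Ker}V_i^*$ is unchanged by passing to the subspace, using again that all the relevant kernels and ranges are mutually reducing.

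The main obstacle is the careful index-juggling needed to verify that the two families of subspaces obtained from the two inductive applications reassemble precisely into the single formula stated, in particular matching $\bigcap_{q\in\N^{I^c}}V_{I^c}^q\mathcal{W}_I$ in the ambient space with the iterated construction; this requires repeatedly invoking that for $i\neq j$ the subspaces $\operatorname{Ker}V_i^*$, $V_i^n\mathcal{H}$, and their intersections are invariant under $V_j$ and $V_j^*$, so that all the intersections and direct sums involved commute with one another. A clean way to organise this is to prove the stronger statement that for each $I$ the projection onto $\mathcal{H}_I$ is the product over $i\in I$ of the Wold projections onto the shift part of $V_i$ times the product over $i\notin I$ of the projections onto the unitary part of $V_i$ — all these projections commute by double commutativity, so $\sum_{I\subset[N]}P_I=\prod_{i}(P_i^s+P_i^u)=\mathrm{id}$, giving orthogonality and exhaustiveness simultaneously, and then identifying the range of each $P_I$ with the displayed formula is a direct computation using the single-variable Wold--von Neumann description of each factor.
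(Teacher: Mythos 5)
Your argument is correct, and its core is the same induction the paper uses: apply the Wold--von Neumann theorem to the last isometry, observe that both Wold summands are reducing for the remaining $V_i$ and $V_i^*$ by double commutation, and invoke the inductive hypothesis there. The two points where you deviate are worth recording. First, you decompose $\text{Ker}\,V_N^*$ and only afterwards re-layer with the shifts $V_N^n$, whereas the paper applies the inductive hypothesis to the whole shift part $\mathcal{H}^s=\bigoplus_n V_N^n(\text{Ker}\,V_N^*)$; the paper's route then hinges on the exchange identity $V_N^n(\text{Ker}\,V_N^*)\cap\text{Ker}\,V_i^*=V_N^n(\text{Ker}\,V_N^*\cap\text{Ker}\,V_i^*)$, proved from $V_iV_N=V_NV_i$ and $V_i^*V_N=V_NV_i^*$, while your variant trades that identity for the bookkeeping you acknowledge (checking that the subspaces $\bigcap_{q}V_{I^c}^q\mathcal{W}_I$ computed inside $\text{Ker}\,V_N^*$, respectively inside $\mathcal{H}_u$, coincide with the ambient ones) --- the same content, just relocated. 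Second, your closing suggestion is a genuinely different organisation: since $P_i^u$ is the strong limit of the commuting projections $V_i^n(V_i^*)^n$ and $P_i^s=I-P_i^u$, double commutation makes all $P_i^u,P_i^s$ commute, so setting $P_I=\prod_{i\in I}P_i^s\prod_{i\notin I}P_i^u$ yields orthogonality and $\sum_{I\subset[N]}P_I=\mathrm{id}$ in one stroke, something the paper obtains only implicitly through the induction; what remains, identifying the range of $P_I$ with $\bigoplus_{p\in\N^I}V_I^p\bigl(\bigcap_{q\in\N^{I^c}}V_{I^c}^q\mathcal{W}_I\bigr)$ via, e.g., the strongly convergent expansion $\prod_{i\in I}P_i^s=\sum_{p\in\N^I}V_I^p\bigl(\prod_{i\in I}(1-V_iV_i^*)\bigr)(V_I^*)^p$, uses exactly the same kernel/range exchanges as above, so neither route avoids that verification. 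Either write-up, once the bookkeeping is carried out, is a complete proof; the paper's is closer to S\l oci\'nski's original induction, while your projection formulation is more symmetric in the $N$ variables.
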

\begin{proof}
We proceed by induction on $N$. For $N=1$, this is exactly the Wold--von Neumann decomposition. Suppose now that we have such a decomposition for some $N\geq 1$ and consider $V_1,..., V_{N+1}$ double commuting isometries on a Hilbert space $\mathcal{H}$. Then, again by the Wold--von Neumann decomposition,
\begin{align*}
\mathcal{H}=\mathcal{H}^u\oplus \mathcal{H}^s,
\end{align*}
where 
\begin{align*}
\mathcal{H}^u=\bigcap_{n\geq 0}V_{N+1}^n\mathcal{H},
\end{align*}
and
\begin{align*}
\mathcal{H}^s=\bigoplus_{n\geq 0}V_{N+1}^n\left(\text{Ker }V_{N+1}^*\right).
\end{align*}
Since $V_1,..., V_{N+1}$ are double commuting isometries, $\mathcal{H}^u$ and $\mathcal{H}^s$ are invariant subspaces for $V_1,...,V_N$ and $V_1^*,...,V_N^*$. Hence, by the induction hypothesis,
\begin{align*}
\mathcal{H}^u &=\bigoplus_{I\subset [N]} \mathcal{H}_I^u, & \mathcal{H}^s &=\bigoplus_{I\subset [N]} \mathcal{H}_I^s.
\end{align*}
For $I\neq\varnothing,[N]$, 
\begin{align*}
\mathcal{H}_I^u &=\bigoplus_{p\in\N^I}V_I^p\left(\bigcap_{q\in\N^{I^c}}V_{I^c}^q\left(\mathcal{H}^u\cap\bigcap_{i\in I}\text{Ker }V_i^*\right)\right)\\
&= \bigoplus_{p\in\N^I}V_I^p\left(\bigcap_{q\in\N^{I^c}}V_{I^c}^q\left(\bigcap_{n\geq 0}V_{N+1}^n\left(\bigcap_{i\in I}\text{Ker }V_i^*\right)\right)\right)\\
&= \bigoplus_{p\in\N^I}V_I^p\left(\bigcap_{q\in\N^{J}}V_{J}^{q}\left(\bigcap_{i\in [N+1]\setminus J}\text{Ker }V_i^*\right)\right),
\end{align*}
where $J=I^c\cup\{N+1\}$. We have also
\begin{align}
\mathcal{H}_I^s &=\bigoplus_{p\in\N^I}V_I^p\left(\bigcap_{q\in\N^{I^c}}V_{I^c}^q\left(\mathcal{H}^s\cap\bigcap_{i\in I}\text{Ker }V_i^*\right)\right)\notag\\
&= \bigoplus_{p\in\N^I}V_I^p\left(\bigcap_{q\in\N^{I^c}}V_{I^c}^q\left(\bigoplus_{n\geq 0}V_{N+1}^n\left(\text{Ker }V_{N+1}^*\right)\cap\bigcap_{i\in I}\text{Ker }V_i^*\right)\right).\label{dec_H_I_s}
\end{align}
Now observe that, if $x\in V_{N+1}^n\left(\text{Ker }V_{N+1}^*\right)\cap\text{Ker }V_i^*$, then $x=V_{N+1}^nu$ with $u\in\text{Ker }V_{N+1}^*$. This yields
\begin{align*}
V_i^*u=V_i^*(V_{N+1}^*)^n x=(V_{N+1}^*)^nV_i^* x=0,
\end{align*}
which implies that $x\in V_{N+1}^n\left(\text{Ker }V_{N+1}^*\cap\text{Ker }V_i^*\right)$. Conversely, if $x\in V_{N+1}^n\left(\text{Ker }V_{N+1}^*\cap\text{Ker }V_i^*\right)$, then the fact that $V_{N+1}$ and $V_i^*$ commute implies that $x\in\text{Ker }V_i^*$. Applying this argument to all $i\in I$, we see from \eqref{dec_H_I_s} that
\begin{align*}
\mathcal{H}_I^s &= \bigoplus_{p\in\N^I}V_I^p\left(\bigcap_{q\in\N^{I^c}}V_{I^c}^q\left(\bigoplus_{n\geq 0}V_{N+1}^n\left(\text{Ker }V_{N+1}^*\cap\bigcap_{i\in I}\text{Ker }V_i^*\right)\right)\right)\\
&= \bigoplus_{p\in\N^K}V_K^p\left(\bigcap_{q\in\N^{I^c}}V_{I^c}^q\left(\bigcap_{i\in K}\text{Ker }V_i^*\right)\right),
\end{align*}
where $K=I\cup\{N+1\}$. This proves the result for $I\neq\varnothing,[N]$. The cases $I=\varnothing$ and $I=[N]$ are simpler and follow analogously.
\end{proof}

\begin{cor}\label{corwald}
Let $V_1,..., V_N$ be double commuting isometries on a Hilbert space $\mathcal{H}$. Then $\mathcal{H}$ admits a decomposition
\begin{align}\label{H=+H_I}
\mathcal{H}=\bigoplus_{I\subset [N]} \mathcal{H}_I,
\end{align}
where 
\begin{align*}
\mathcal{H}_I &\cong \ell_2\left(\N^I\right) \otimes Y_I,
\end{align*}
with $Y_I=\bigcap_{q\in\N^{I^c}}V_{I^c}^q\mathcal{W}_I$ for $0<|I|<N$, $Y_\varnothing=\bigcap_{q\in\N^N}V_{[N]}^q\mathcal{H}$, and $Y_{[N]}=\mathcal{W}_{[N]}$. Moreover, if $i\in I$, then $V_i$ acts as a one coordinate shift on $\ell_2(\N^I)$, and if $i\notin I$, then $V_i$ acts as a unitary on $Y_I$.
\end{cor}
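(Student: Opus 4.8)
The plan is to deduce the corollary directly from Proposition~\ref{thmwald} by reorganising the index set of the direct sum there. In Proposition~\ref{thmwald}, the piece $\mathcal{H}_I$ is written as
\[
\mathcal{H}_I=\bigoplus_{p\in\N^I}V_I^p\left(\bigcap_{q\in\N^{I^c}}V_{I^c}^q\mathcal{W}_I\right)
\]
for $0<|I|<N$, with the two boundary cases handled separately. First I would set $Y_I=\bigcap_{q\in\N^{I^c}}V_{I^c}^q\mathcal{W}_I$ (and $Y_\varnothing$, $Y_{[N]}$ as in the statement), so that $\mathcal{H}_I=\bigoplus_{p\in\N^I}V_I^p(Y_I)$. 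The task is then to check that the map $e_p\otimes y\mapsto V_I^p y$, for $p\in\N^I$ and $y\in Y_I$, extends to a unitary $\ell_2(\N^I)\otimes Y_I\to\mathcal{H}_I$.

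The key step is showing this map is an isometry onto its range, i.e.\ that the subspaces $V_I^p(Y_I)$ for distinct $p\in\N^I$ are mutually orthogonal and that $V_I^p$ restricted to $Y_I$ is isometric. Isometry of $V_I^p$ on all of $\mathcal{H}$ is immediate since it is a product of isometries. For orthogonality, I would argue coordinate by coordinate using the double commuting relations: if $i\in I$, then any vector in $Y_I$ lies in $\operatorname{Ker}V_i^*$, so the standard Wold argument gives that $V_i^{p}(\operatorname{Ker}V_i^*)\perp V_i^{p'}(\operatorname{Ker}V_i^*)$ for $p\neq p'$; and since for $j\in I$, $j\neq i$, the operator $V_j$ commutes with both $V_i$ and $V_i^*$, applying the remaining factors of $V_I^p$ preserves the relevant kernel conditions, so one can peel off one coordinate at a time. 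This is essentially the same bookkeeping already carried out inside the proof of Proposition~\ref{thmwald} (the computation showing $V_{N+1}^n(\operatorname{Ker}V_{N+1}^*)\cap\operatorname{Ker}V_i^*=V_{N+1}^n(\operatorname{Ker}V_{N+1}^*\cap\operatorname{Ker}V_i^*)$), so I would invoke that style of argument rather than redo it in full.

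Once the identification $\mathcal{H}_I\cong\ell_2(\N^I)\otimes Y_I$ is in place, the description of the action of the $V_i$ is read off directly. If $i\in I$, then under the identification $V_i$ sends $e_p\otimes y$ to $e_{p+\epsilon_i}\otimes y$ (where $\epsilon_i$ is the $i$-th basis vector of $\N^I$), i.e.\ it is the one-coordinate forward shift on $\ell_2(\N^I)$ tensored with the identity on $Y_I$ — this is immediate from the definition of the unitary and the definition $V_I^p V_i = V_I^{p+\epsilon_i}$, which uses that the $V_j$, $j\in I$, commute with each other. If $i\notin I$, then $V_i$ commutes with every $V_j$, $j\in I$, and with $V_{I^c}^q$, hence it maps $Y_I$ into itself; moreover $V_i$ is \emph{surjective} on $Y_I$ because $Y_I\subseteq V_i\mathcal{H}$ (indeed $Y_I=\bigcap_q V_{I^c}^q\mathcal{W}_I$ and $V_i$ appears as one of the factors $V_{I^c}^q$ can absorb), and an isometry that is onto is unitary; so $V_i$ acts as $\mathrm{id}_{\ell_2(\N^I)}\otimes U_i$ for a unitary $U_i$ on $Y_I$. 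The boundary cases $I=\varnothing$ and $I=[N]$ are the classical Wold pieces and require no extra work.

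The main obstacle I anticipate is purely organisational rather than mathematical: making the peeling-off-one-coordinate argument for mutual orthogonality of the $V_I^p(Y_I)$ clean, since one must track simultaneously that the appropriate kernel/range conditions survive each factor, using the double commuting relations in the right order. Since all the needed relations were already exhibited in the proof of Proposition~\ref{thmwald}, I would keep this short and refer back to that computation, treating the corollary essentially as a restatement packaged in tensor-product language.
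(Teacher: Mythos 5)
Your proposal is correct and follows essentially the same route as the paper: both deduce the corollary from Proposition \ref{thmwald} by identifying $\mathcal{H}_I=\bigoplus_{p\in\N^I}V_I^p(Y_I)$ with $\ell_2(\N^I)\otimes Y_I$ via $e_p\otimes y\mapsto V_I^p y$ and then reading off the shift action for $i\in I$ and the unitary action for $i\notin I$ from the double commuting relations. Your write-up is if anything slightly more explicit (the orthogonality check and the surjectivity of $V_i$ on $Y_I$, which in the paper are left at the level of ``$V_iV_i^*x=x$ for $x\in Y_I$'' and the orthogonal sum already asserted in the proposition), so there is no substantive difference.
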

\begin{proof}
By construction, if $i\notin I$, then $V_iV_i^*x=x$ for all $x\in Y_I$. This proves that $V_i$ acts as a unitary on $Y_I$. For the shift part, we shall only consider the case $I=[N]$ since it illustrates well all the other cases. We have 
\begin{align*}
\mathcal{H}_{[N]} =\bigoplus_{p\in\N^N}V_{[N]}^p\left(\mathcal{W}_{[N]}\right).
\end{align*}
Then we can define an isomorphism $\ell_2\left(\N^N\right) \otimes\left(\mathcal{W}_{[N]}\right) \cong \mathcal{H}_{[N]}$ by identifying
\begin{align*}
\delta_{n_1}\otimes\delta_{n_2}\otimes\cdots\otimes\delta_{n_N}\otimes w\ \longleftrightarrow\ V_1^{n_1}\cdots V_N^{n_N} w,
\end{align*}
for all $(n_1,...,n_N)\in\N^N$ and $w\in\mathcal{W}_{[N]}$. Then the action of $V_1$ on $\mathcal{H}_{[N]}$ is given by
\begin{align*}
V_1(\delta_{n_1}\otimes\delta_{n_2}\otimes\cdots\otimes\delta_{n_N}\otimes w) &\longleftrightarrow V_1^{n_1+1}V_2^{n_2}\cdots V_N^{n_N} w\\
&\longleftrightarrow \delta_{n_1+1}\otimes\delta_{n_2}\otimes\cdots\otimes\delta_{n_N}\otimes w,
\end{align*}
which corresponds to the forward shift operator on the first coordinate. For $V_2,...,V_N$, the argument is analogous.
\end{proof}

\subsection{Proof of necessity}
From now on, we shall fix $X=X_1\times\cdots\times X_N$, where $X_i$ is a $(q_i+1)$-regular tree ($2\leq q_i < \infty$, $i=1,...,N$). Our aim is to prove the following result.

\begin{lem}\label{Lem_nec_mult_rad}
Let $\phi:X\times X\to\C$ be a multi-radial Schur multiplier with
\begin{align*}
\phi(x,y)=\tilde{\phi}(d(x_1,y_1),...,d(x_N,y_N)),\qquad\forall\ x,y\in X,
\end{align*}
and such that the limits
\begin{align*}
l_0&=\lim_{\substack{|n|\to\infty \\ |n|\text{ even}}}\tilde{\phi}(n) & l_1&=\lim_{\substack{|n|\to\infty \\ |n|\text{ odd}}}\tilde{\phi}(n)
\end{align*}
exist. Then the operator $T=(T_{n,m})_{m,n\in\N^N}$ given by
\begin{align*}
T_{n,m}=\sum_{I\subset[N]}(-1)^{|I|} \tilde{\phi}(m+n+2\chi^I),\quad\forall m,n\in\N^N,
\end{align*}
where $\chi^I$ is defined as in \eqref{chi_I}, is an element of $\in S_1(\ell_2(\N^N))$ of norm at most
\begin{align*}
\left[\prod_{i=1}^N\frac{q_i+1}{q_i-1}\right]\left(\|\phi\|_{cb}-|c_+|-|c_-|\right),
\end{align*}
where $c_\pm=l_0\pm l_1$.
\end{lem}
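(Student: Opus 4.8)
The plan is to mimic the single-tree argument of Haagerup--Steenstrup--Szwarc, but carried out simultaneously in the $N$ coordinates using the double commuting isometry machinery established above. First I would construct, for each $i \in [N]$, the relevant isometry $V_i$ on $\ell_2(X_i)$ built from the geodesic flow toward the end $\omega_0^{(i)}$: concretely, $V_i$ should be an operator that shifts $\delta_{\omega_x(k)}$ in a way that encodes moving one step closer to the boundary, normalised so that it is a genuine isometry (this is where the factor involving $q_i$ enters, via the branching of the tree). Viewing $V_1,\dots,V_N$ as operators on $\ell_2(X) = \ell_2(X_1)\otimes\cdots\otimes\ell_2(X_N)$ (each acting on its own tensor leg), they automatically double commute, so Corollary \ref{corwald} applies and decomposes $\ell_2(X) = \bigoplus_{I\subset[N]}\mathcal{H}_I$ with $\mathcal{H}_I \cong \ell_2(\N^I)\otimes Y_I$, where the $V_i$ with $i \in I$ act as coordinate shifts and the $V_i$ with $i\notin I$ act as unitaries.

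Next I would express the Schur multiplier condition analytically. Since $\phi$ is a multi-radial Schur multiplier, by Theorem \ref{ThmGro} there are bounded $P,Q : X\to\mathcal{K}$ with $\tilde\phi(\vec d(x,y)) = \langle P(x),Q(y)\rangle$ and $\|P\|_\infty\|Q\|_\infty \le \|\phi\|_{cb}$. The key point — exactly as in \cite{HaaSteSzw} — is to average $P,Q$ over the (transitive, or large enough) automorphism group actions on each tree coordinate, or rather to use the ends structure, to produce from $\phi$ a bounded operator on $\ell_2(\N^N)$ whose matrix coefficients recover the alternating sums $T_{m,n} = \sum_{I}(-1)^{|I|}\tilde\phi(m+n+2\chi^I)$. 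Concretely, I expect that applying the multiplier $M_\phi$ to a suitable rank-one-type operator built from the geodesics, then compressing to the shift part $\mathcal{H}_{[N]} \cong \ell_2(\N^N)\otimes Y_{[N]}$ and taking a partial trace / evaluating against a unit vector in $Y_{[N]}$, produces precisely the Hankel-type operator $T$ up to the telescoping that converts $\tilde\phi$ into its iterated discrete second derivative $\sum_I(-1)^{|I|}\tilde\phi(\cdot+2\chi^I)$. The boundedness (indeed trace-class membership) of $T$ then follows because $M_\phi$ has cb-norm $\|\phi\|_{cb}$ and the compression/partial-trace operations are contractive, while the constant $\prod_i \frac{q_i+1}{q_i-1}$ accounts for the normalisation defect of the $V_i$ relative to true shifts. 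The subtraction of $|c_+|+|c_-|$ comes from first peeling off the rank-$\le 2$ part corresponding to $l_0,l_1$ via Lemma \ref{lemoddeven}, reducing to the case $l_0=l_1=0$ where $\tilde\phi\to 0$ and the telescoping sums converge absolutely.

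The main obstacle, I expect, is the bookkeeping in the telescoping/averaging step: in the single-tree case one uses that a fixed element of $\ell_2$ of the tree, when hit by $M_\phi$ and suitably projected, gives a Hankel matrix $(\mathfrak d_2\dot\phi(i+j))$; here one must check that the $N$ independent geodesic flows interact correctly so that the resulting object is indexed by $\N^N$ with matrix entries given by the full alternating sum over subsets $I\subset[N]$, and not some partial version. This is essentially the combinatorial identity that $N$ nested telescoping operations (one per coordinate, each a discrete second difference in steps of $2$) produce $\sum_{I\subset[N]}(-1)^{|I|}\tilde\phi(\cdot+2\chi^I)$ — the same identity used in the reverse direction in the proof of Lemma \ref{Lem_suf_mult_rad}. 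I would also need to be careful that the unitary parts $Y_I$ for $I\subsetneq[N]$ do not contribute: this should follow because on those pieces some $V_i$ is unitary, so the corresponding discrete-derivative direction collapses, killing the contribution after telescoping (using $l_0=l_1=0$), leaving only the $I=[N]$ summand $\mathcal{H}_{[N]}$. Finally, the constant: tracking the operator norms carefully, each coordinate contributes a factor $\frac{q_i+1}{q_i-1}$ (equivalently $\big(1-\tfrac{1}{q_i}\big)^{-1}\cdot\big(1+\tfrac1{q_i}\big)$-type bound from the tree's Poisson-boundary normalisation), which multiply to give the stated product, matching the lower bound $\prod_i\frac{d_i-2}{d_i}$ in Proposition \ref{Prop_mult_rad} upon setting $d_i = q_i+1$.
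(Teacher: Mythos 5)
Your overall orientation (the tree isometries normalised by $q_i$, the multi-variable Wold decomposition, the telescoping that produces $\sum_I(-1)^{|I|}\tilde\phi(\cdot+2\chi^I)$, and the source of the factor $\prod_i\tfrac{q_i+1}{q_i-1}$) matches the paper, but the central mechanism that actually yields a trace-class operator is missing. The entries $T_{n,m}=\sum_I(-1)^{|I|}\tilde\phi(m+n+2\chi^I)$ require evaluating $\tilde\phi$ along whole spheres, i.e.\ against the non-compact operators $U(m,n)=\prod_i U_{i,m_i,n_i}$; this is an averaging operation, not a compression of $M_\phi$ applied to a single rank-one-type operator, and no choice of bounded compressions of such an object will produce a Hankel-type matrix in $(m,n)\in\N^N\times\N^N$ with controlled $S_1$-norm. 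The paper instead proceeds through a functional: $M_\phi$ leaves $\mathcal{A}=C^*(U_1)\otimes_{\min}\cdots\otimes_{\min}C^*(U_N)$ invariant with $M_\phi(U(m,n))=\tilde\phi(m+n)U(m,n)$ (Lemma \ref{lemMphiU}); composing with the product state furnished by Coburn's theorem gives a bounded functional $f_\phi$ on $\mathcal{A}$ with $f_\phi(U(m,n))=\tilde\phi(m+n)$ and $\|f_\phi\|\le\|\phi\|_{cb}$ (Lemma \ref{lemfphi}); and the Wold decomposition of Corollary \ref{corwald} is applied not to the concrete isometries on $\ell_2(X)$ (as you propose) but to $\pi(U_i)$ in the universal representation, where $f_\phi$ is a vector functional. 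That is what gives $f_\phi=\sum_I f_\phi^I$ with $\|f_\phi\|=\sum_I\|f_\phi^I\|$ (Lemma \ref{lem_f=sum_I}), lets the limit hypothesis kill all terms with $0<|I|<N$ and identify $f_\phi^{\varnothing}(U(m,n))=c_++(-1)^{|m|+|n|}c_-$, and realises the $[N]$-term as $\text{Tr}(S(m,n)T)$ with $T$ a partial trace of the rank-one operator built from $\xi^{[N]},\eta^{[N]}$, hence trace class with $\|T\|_{S_1}\le\|\xi^{[N]}\|\,\|\eta^{[N]}\|$ (Lemma \ref{lemphi=Tr}). On the concrete space $\ell_2(X)$ the functional $\gamma\circ M_\phi$ is not a vector functional, so your compression/partial-trace step has no way to deliver the $S_1$ bound; this is the genuine gap.

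A second, quantitative gap concerns the constant. Peeling off the parity part first via Lemma \ref{lemoddeven} only gives $\|\phi-(c_++(-1)^{d}c_-)\|_{cb}\le\|\phi\|_{cb}+|c_+|+|c_-|$, hence at best a bound of the form $\prod_i\tfrac{q_i+1}{q_i-1}\bigl(\|\phi\|_{cb}+|c_+|+|c_-|\bigr)$, with the wrong sign in front of $|c_+|+|c_-|$; the stated subtraction is exactly what the lower bound in Proposition \ref{Prop_mult_rad} needs, and in the paper it comes from the norm additivity $\|f_\phi\|=\sum_I\|f_\phi^I\|$ together with the identification of $f_\phi^{\varnothing}$ with the measure $c_+\delta_1+c_-\delta_{-1}$, whose total variation is $|c_+|+|c_-|$. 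Finally, the factor $\prod_i\tfrac{q_i+1}{q_i-1}$ does not appear as a "normalisation defect" heuristic but through the explicit passage from $\text{Tr}(S(m,n)T)$ to $\text{Tr}(S^m(S^*)^nT')$ with $T'=\bigl[\prod_i(1-\tfrac1{q_i})^{-1}(I-\tfrac{\tau_i}{q_i})\bigr]T$ (Lemma \ref{lemTT'}), followed by the combinatorial computation in the proof of Lemma \ref{Lem_nec_mult_rad} identifying $\sum_I(-1)^{|I|}\tilde\phi(m+n+2\chi^I)$ with the matrix entry $T'_{n,m}$; these steps would need to be carried out to turn your sketch into a proof.
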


Recall that on each $X_i$ we may fix an infinite geodesic $\omega_0^{(i)}:\N\to X_i$, and for each $x\in X_i$ there exists a unique infinite geodesic $\omega_x:\N\to X_i$ such that $\omega_x(0)=x$, and $|\omega_x(\N)\Delta\omega_0^{(i)}(\N)|<\infty$. We define $U_i:\ell_2(X_i)\to\ell_2(X_i)$ by
\begin{align*}
U_i\delta_x=\frac{1}{\sqrt{q_i}}\sum_{\substack{ y\in X_i\\ \omega_y(1)=x}}\delta_y,\quad \forall x\in X_i.
\end{align*}
Since $\langle U_i\delta_x,U_i\delta_y\rangle=0$ if $x\neq y$, this extends to an isometry on $\ell_2(X_i)$ whose adjoint is given by
\begin{align*}
U_i^*\delta_y=\frac{1}{\sqrt{q_i}}\delta_{\omega_y(1)}.
\end{align*}
Consider now the C${}^*$-algebra generated by $U_i$,
\begin{align*}
C^*(U_i)=\overline{\text{span}}\{U_i^{m}(U_i^*)^n\ :\ m,n\in\N\}\subset \ell_2(X_i).
\end{align*}
Let 
\begin{align*}
\mathcal{A}=C^*(U_1)\otimes_{\text{min}}\cdots\otimes_{\text{min}} C^*(U_N)
\end{align*}
denote their minimal tensor product. Since we have an explicit faithful representation of $C^*(U_i)$ on $\ell_2(X_i)$, $\mathcal{A}$ is naturally a subalgebra of $\mathcal{B}(\ell_2(X_1)\otimes\cdots\otimes\ell_2(X_N))\cong\mathcal{B}(\ell_2(X))$ (see \cite[\S 11.3]{KadRin} for details). This implies that $U_1,...,U_N$ extend to double commuting isometries on $\ell_2(X)\cong\ell_2(X_1)\otimes\cdots\otimes\ell_2(X_N)$ in the natural way (with a slight abuse of notation):
\begin{align*}
U_1\delta_{(x_1,...,x_N)}=\left(U_1\delta_{x_1}\right)\otimes\delta_{x_2}\otimes\cdots\otimes\delta_N,
\end{align*}
and so forth. Define now, for $i\in\{1,...,N\}$, $m,n\in\N$,
\begin{align}
U_{i,m,n} &=\left(1-\frac{1}{q_i}\right)^{-1}\left(U_i^{m}(U_i^*)^{n}-\frac{1}{q_i} U_i^*U_i^{m}(U_i^*)^{n}U_i\right)\label{Uimini} \\
&= \begin{cases} \left(1-\frac{1}{q_i}\right)^{-1}\left(U_i^{m}(U_i^*)^{n}-\frac{1}{q_i} U_i^{m-1}(U_i^*)^{n-1}\right) & \text{if } m,n\geq 1 \\
U_i^{m}(U_i^*)^{n} & \text{if }\min\{m,n\}=0,
\end{cases}
\end{align}
and observe that we also have
\begin{align}\label{C(U)gen}
C^*(U_i)=\overline{\text{span}}\{U_{i,m,n}\ :\ m,n\in\N\}.
\end{align}
Since $U_1,...,U_N$ are double commuting operators, there is no ambiguity in defining
\begin{align}\label{Uvec}
U(m,n)=\prod_{i=1}^N U_{i,m_i,n_i}\in\mathcal{B}(\ell_2(X)),
\end{align}
for $m=(m_1,...,m_N)$, $n=(n_1,...,n_N)\in\N^N$.

\begin{lem}\label{lemUneq0}
Let $m,n\in\N^N$ and $x,y\in X$. If
\begin{align*}
\langle U(m,n)\delta_y,\delta_x\rangle \neq 0,
\end{align*}
then
\begin{align*}
d(x_i,y_i)=m_i+n_i,\qquad \forall\,i\in\{1,...,N\}.
\end{align*}
In particular, this implies that $d(x,y)=|m|+|n|$.
\end{lem}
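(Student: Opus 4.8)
The plan is to reduce the statement to a single factor and then to a computation about the operators $U_{i,m_i,n_i}$ on $\ell_2(X_i)$. Since $U(m,n)=\prod_{i=1}^N U_{i,m_i,n_i}$ acts on $\ell_2(X)\cong\ell_2(X_1)\otimes\cdots\otimes\ell_2(X_N)$ as the tensor product $U_{1,m_1,n_1}\otimes\cdots\otimes U_{N,m_N,n_N}$ (the factors act on different legs and double commute), we have
\begin{align*}
\langle U(m,n)\delta_y,\delta_x\rangle=\prod_{i=1}^N\langle U_{i,m_i,n_i}\delta_{y_i},\delta_{x_i}\rangle .
\end{align*}
So the left-hand side is nonzero only if each factor $\langle U_{i,m_i,n_i}\delta_{y_i},\delta_{x_i}\rangle$ is nonzero, and it suffices to prove: for a single $(q+1)$-regular tree $X_i$, if $\langle U_{i,m,n}\delta_{y},\delta_{x}\rangle\neq 0$ then $d(x,y)=m+n$. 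The final assertion $d(x,y)=|m|+|n|$ then follows by summing over $i$, using that $d$ on the product is the sum of the $d_i$.

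For the single-factor claim I would work directly from the definition \eqref{Uimini}. First handle the easy case $\min\{m,n\}=0$, where $U_{i,m,n}=U_i^m(U_i^*)^n$. Using $U_i^*\delta_z=\frac{1}{\sqrt{q_i}}\delta_{\omega_z(1)}$ one gets $(U_i^*)^n\delta_y=q_i^{-n/2}\delta_{\omega_y(n)}$, i.e. it moves $y$ a distance $n$ along its geodesic toward the boundary; and $U_i^m\delta_w=q_i^{-m/2}\sum_{\omega_z(m)=w}\delta_z$ is supported on vertices $z$ with $\omega_z(m)=w$, i.e. at distance exactly $m$ from $w$ on the ``$w$ lies $m$ steps out along $\omega_z$'' side. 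Hence $\langle U_i^m(U_i^*)^n\delta_y,\delta_x\rangle\neq 0$ forces $\omega_x(m)=\omega_y(n)$, and a short geodesic argument in the tree (the two geodesics $\omega_x,\omega_y$ meet and then flow together, so $d(x,y)=k_0+m_0$ with $k_0\le m$, $m_0\le n$, and equality of the meeting point at parameters $m,n$ forces $k_0=m$, $m_0=n$) gives $d(x,y)=m+n$. For the case $m,n\geq 1$ we have $U_{i,m,n}=(1-q_i^{-1})^{-1}\bigl(U_i^m(U_i^*)^n-q_i^{-1}U_i^{m-1}(U_i^*)^{n-1}\bigr)$; both terms $U_i^m(U_i^*)^n$ and $U_i^{m-1}(U_i^*)^{n-1}$ have matrix coefficient $\langle\,\cdot\,\delta_y,\delta_x\rangle$ supported (by the previous paragraph) on pairs with $d(x,y)=m+n$ and $d(x,y)=m+n-2$ respectively, but the point is that on the diagonal $d(x_i,y_i)=m+n$ only the first term contributes, so nonvanishing of the coefficient of $U_{i,m,n}$ still forces $d(x,y)=m+n$ (indeed if $d(x,y)=m+n-2$ the first term vanishes and the coefficient is $-q_i^{-1}(1-q_i^{-1})^{-1}$ times a nonzero number, which is fine — the claim only asserts $d(x_i,y_i)=m_i+n_i$ when the coefficient of $U_{i,m,n}$ is nonzero, so I must double-check that the $d=m+n-2$ case is actually excluded).

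The step I expect to require the most care is exactly this last point: verifying that the matrix coefficient of the combination defining $U_{i,m,n}$ vanishes unless $d(x_i,y_i)=m_i+n_i$, rather than merely being supported on $d\in\{m+n-2,m+n\}$. The resolution is the observation recorded just before \eqref{Uimini}, namely $U_{i,m,n}=(1-q_i^{-1})^{-1}(U_i^m(U_i^*)^n-q_i^{-1}U_i^*U_i^m(U_i^*)^nU_i)$: conjugating by the isometry $U_i$ (so replacing $x$ by $\omega_x(1)$-preimages and $y$ by $\omega_y(1)$-preimages) shows the second term's matrix coefficient at a pair $(x,y)$ with $d(x,y)=m+n$ equals the first term's coefficient at a pair with distance $(m+1)+(n+1)=m+n+2$, which is zero; hence on the locus $d(x_i,y_i)=m_i+n_i$ the two pieces do not cancel. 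Combined with the tensor-factorisation in the first paragraph, this yields the lemma. I would also remark that, conversely, for $d(x,y)<m+n$ the coefficient can be nonzero, which is why the lemma is phrased as an implication in one direction only.
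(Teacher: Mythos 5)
Your first step---factorising $\langle U(m,n)\delta_y,\delta_x\rangle$ as $\prod_i\langle U_{i,m_i,n_i}\delta_{y_i},\delta_{x_i}\rangle$ and summing the distances---is exactly what the paper does; the paper then settles the single-factor statement by quoting \cite[Lemma 2.6]{HaaSteSzw}, which you instead try to reprove, and this is where your argument has a genuine gap. Your claim that $U_i^{m}(U_i^*)^{n}$ has matrix coefficients supported on pairs with $d(x,y)=m+n$ is false once $m,n\geq 1$: one has $\langle U_i^{m}(U_i^*)^{n}\delta_y,\delta_x\rangle\neq 0$ if and only if $\omega_x(m)=\omega_y(n)$, and by the characterisation recorded before \eqref{k_0m_0} this means $m=k_0+j$, $n=m_0+j$ for some $j\geq 0$, i.e.\ $d(x,y)=m+n-2j$ for \emph{any} admissible $j$, not only $j=0$ (concretely, $\langle U_i^{m}(U_i^*)^{m}\delta_x,\delta_x\rangle=q_i^{-m}\neq 0$ although $d(x,x)=0$). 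Your parenthetical geodesic argument ``equality of the meeting point at parameters $m,n$ forces $k_0=m$, $m_0=n$'' is only valid when $\min\{m,n\}=0$. Consequently the asserted supports of the two terms of $U_{i,m,n}$ ($d=m+n$ and $d=m+n-2$, respectively) are wrong, and your own deduction that at $d(x,y)=m+n-2$ the coefficient of $U_{i,m,n}$ is a nonzero multiple of $-q_i^{-1}(1-q_i^{-1})^{-1}$ would, if correct, refute the lemma you are proving; the same goes for your closing remark that the coefficient of $U_{i,m,n}$ can be nonzero when $d(x,y)<m+n$.

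The point you flagged for a double-check is precisely the heart of the proof, and the conjugation trick you propose does not supply it: it only shows that the subtracted term vanishes at pairs with $d(x,y)=m+n$ (so there is no cancellation there), whereas what must be proved is that the coefficient of $U_{i,m,n}$ vanishes at \emph{all} pairs with $d(x,y)<m+n$. The correct mechanism is exact cancellation off the locus $j=0$: if $\omega_x(m)=\omega_y(n)$ with $j=m-k_0=n-m_0\geq 1$, then also $\omega_x(m-1)=\omega_y(n-1)$, and the two coefficients are $q_i^{-(m+n)/2}$ and $q_i^{-(m+n)/2+1}$, so $U_i^{m}(U_i^*)^{n}-\tfrac{1}{q_i}U_i^{m-1}(U_i^*)^{n-1}$ has coefficient $q_i^{-(m+n)/2}-\tfrac{1}{q_i}\,q_i^{-(m+n)/2+1}=0$; only $j=0$, i.e.\ $d(x,y)=m+n$ with $k_0=m$ and $m_0=n$, survives, with value $(1-q_i^{-1})^{-1}q_i^{-(m+n)/2}$. (A sanity check: for $q_i=2$, $m=n=1$, $U_{i,1,1}=2U_iU_i^*-I$ and $\langle U_{i,1,1}\delta_x,\delta_x\rangle=2\cdot\tfrac12-1=0$.) With this computation inserted---or simply by invoking \cite[Lemma 2.6]{HaaSteSzw} as the paper does---your tensor-factorisation reduction does yield the lemma.
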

\begin{proof}
Observe that $\langle U(m,n)\delta_y,\delta_x\rangle \neq 0$ if and only if
\begin{align*}
\langle U_{i,m_i,n_i}\delta_{y_i},\delta_{x_i}\rangle \neq 0,\qquad \forall\,i\in\{1,...,N\},
\end{align*}
where $x=(x_1,...,x_N)$ and $y=(y_1,...,y_N)$. So, by \cite[Lemma 2.6]{HaaSteSzw}, we have $d(x_i,y_i)=m_i+n_i$.
\end{proof}

\begin{lem}\label{lemMphiU}
Let $\phi:X\times X\to\C$ be a multi-radial Schur multiplier. Then $\mathcal{A}$ is invariant under $M_{\phi}$. Moreover, for all $m,n\in\N^N$
\begin{align}\label{MphiU}
M_\phi(U(m,n))=\tilde{\phi}(m+n)U(m,n).
\end{align}
\end{lem}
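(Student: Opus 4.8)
The plan is to prove both claims by a density argument, using the fact that $\mathcal{A}$ is generated (as a closed subspace) by the elementary tensors $U(m,n)$, $m,n\in\N^N$, together with the identity \eqref{C(U)gen} and the boundedness of $M_\phi$. So the heart of the matter is to establish the single operator identity \eqref{MphiU}. Once that is known, invariance of $\mathcal{A}$ under $M_\phi$ follows: the span of the $U(m,n)$ is dense in $\mathcal{A}$ by \eqref{Uvec}, \eqref{C(U)gen} and the definition of the minimal tensor product, $M_\phi$ sends each $U(m,n)$ into $\mathbb{C}\,U(m,n)\subset\mathcal{A}$, and $M_\phi$ is bounded on $\mathcal{B}(\ell_2(X))$ (being a Schur multiplier), hence continuous; so it maps $\overline{\mathrm{span}}\{U(m,n)\}=\mathcal{A}$ into $\mathcal{A}$.

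To prove \eqref{MphiU}, I would compute matrix coefficients. Fix $m,n\in\N^N$ and $x,y\in X$. By definition of $M_\phi$,
\begin{align*}
\langle M_\phi(U(m,n))\delta_y,\delta_x\rangle=\phi(x,y)\langle U(m,n)\delta_y,\delta_x\rangle=\tilde{\phi}(d_1(x_1,y_1),\dots,d_N(x_N,y_N))\langle U(m,n)\delta_y,\delta_x\rangle.
\end{align*}
If $\langle U(m,n)\delta_y,\delta_x\rangle=0$ there is nothing to check. If it is nonzero, then Lemma \ref{lemUneq0} gives $d_i(x_i,y_i)=m_i+n_i$ for every $i$, hence $(d_1(x_1,y_1),\dots,d_N(x_N,y_N))=m+n$, so the factor $\tilde{\phi}(d_1(x_1,y_1),\dots,d_N(x_N,y_N))$ equals the constant $\tilde{\phi}(m+n)$. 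Therefore
\begin{align*}
\langle M_\phi(U(m,n))\delta_y,\delta_x\rangle=\tilde{\phi}(m+n)\langle U(m,n)\delta_y,\delta_x\rangle=\langle\tilde{\phi}(m+n)U(m,n)\delta_y,\delta_x\rangle
\end{align*}
for all $x,y\in X$. Since the matrix coefficients of a bounded operator on $\ell_2(X)$ determine it uniquely (as noted in the introduction), this proves \eqref{MphiU}.

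The only genuinely delicate point is the very first sentence of the invariance claim, namely that $U(m,n)\in\mathcal{A}$ and, more importantly, that $\overline{\mathrm{span}}\{U(m,n):m,n\in\N^N\}=\mathcal{A}$; I expect this to be the main obstacle, though a mild one. It follows by combining three facts: first, $U(m,n)=\prod_i U_{i,m_i,n_i}$ is (the image under the representation of) the elementary tensor $U_{1,m_1,n_1}\otimes\cdots\otimes U_{N,m_N,n_N}$; second, by \eqref{C(U)gen} the operators $\{U_{i,m,n}:m,n\in\N\}$ have dense span in $C^*(U_i)$; and third, in the minimal tensor product $C^*(U_1)\otimes_{\min}\cdots\otimes_{\min}C^*(U_N)$ the algebraic tensor product of dense subspaces is dense. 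Hence finite linear combinations of the $U(m,n)$ are dense in $\mathcal{A}$, which is what the density argument above requires. Everything else is routine bookkeeping with matrix coefficients.
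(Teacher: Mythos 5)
Your proof is correct and follows essentially the same route as the paper: the identity \eqref{MphiU} is verified on matrix coefficients using the multi-radial property of $\phi$ together with Lemma \ref{lemUneq0}, and invariance of $\mathcal{A}$ then follows from \eqref{C(U)gen} and density of the span of the $U(m,n)$ in the minimal tensor product. You have merely written out in detail the density and continuity bookkeeping that the paper leaves implicit.
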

\begin{proof}
Using the fact that $\phi$ is multi-radial, together with Lemma \ref{lemUneq0}, we get
\begin{align*}
\langle M_\phi(U(m,n))\delta_y,\delta_x\rangle 
&= \tilde{\phi}(d(x_1,y_1),...,d(x_N,y_N)) \langle U(m,n)\delta_y,\delta_x\rangle \\
&= \tilde{\phi}(m+n)\langle U(m,n)\delta_y,\delta_x\rangle.
\end{align*}
This proves (\ref{MphiU}). Moreover, by (\ref{C(U)gen}), this also shows that $\mathcal{A}$ is invariant under $M_{\phi}$.
\end{proof}

\begin{lem}\label{lemfphi}
Let $\phi:X\times X\to\C$ be a multi-radial Schur multiplier. There exists a bounded linear functional $f_\phi:\mathcal{A}\to \C$ satisfying
\begin{align}\label{f_phi=phi}
f_\phi(U(m,n))=\tilde{\phi}(m+n),\qquad \forall\,m,n\in\N,
\end{align}
and $\|f_\phi\|\leq\|\phi\|_{cb}$.
\end{lem}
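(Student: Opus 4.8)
The plan is to build $f_\phi$ by composing the Schur multiplier $M_\phi$ with a suitable state-like functional on $\mathcal{B}(\ell_2(X))$ that extracts the "diagonal" behaviour of the operators $U(m,n)$. Concretely, fix a basepoint $o=(o_1,\dots,o_N)\in X$ where $o_i=\omega_0^{(i)}(0)$, and consider the vector functional $\psi\mapsto\langle\psi\,\delta_o,\delta_o\rangle$ on $\mathcal{B}(\ell_2(X))$; this is a norm-one functional. However, $\langle U(m,n)\delta_o,\delta_o\rangle$ need not equal $1$ (it vanishes unless $m=n=0$ and even then needs checking), so I would instead use a limiting/averaging construction. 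The cleanest route is the one from \cite{HaaSteSzw}: define, for a sequence of vertices $z_k$ marching out along a fixed geodesic ray, the functionals $\psi\mapsto\langle\psi\,\delta_{z_k},\delta_{z_k}\rangle$, and take a weak-$*$ (Banach) limit. One checks that for the generators $U(m,n)$ this limit exists along the full sequence and equals $1$ for every $m,n\in\N^N$; this uses the explicit formula for $U_i$ and the fact that, far out along the geodesic, the "shift minus $\tfrac1{q_i}$ backward-shift" normalisation in \eqref{Uimini} is designed precisely so that the diagonal matrix entry stabilises to $1$. Call this limit functional $\mathrm{ev}:\mathcal{B}(\ell_2(X))\to\C$; it is linear and $\|\mathrm{ev}\|\le 1$.

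Then I would set $f_\phi=\mathrm{ev}\circ M_\phi|_{\mathcal A}$. By Lemma \ref{lemMphiU}, $M_\phi$ maps $\mathcal A$ into $\mathcal A$ and $M_\phi(U(m,n))=\tilde\phi(m+n)U(m,n)$, so
\begin{align*}
f_\phi(U(m,n))=\mathrm{ev}\big(\tilde\phi(m+n)U(m,n)\big)=\tilde\phi(m+n)\,\mathrm{ev}(U(m,n))=\tilde\phi(m+n),
\end{align*}
which is \eqref{f_phi=phi}. For the norm bound, $f_\phi$ is a composition of $\mathrm{ev}$ (norm $\le 1$) with $M_\phi$ restricted to $\mathcal A$; since $\|M_\phi\|_{cb}=\|\phi\|_{cb}$ and in particular $\|M_\phi\|\le\|\phi\|_{cb}$ as a bounded map on $\mathcal{B}(\ell_2(X))$, we get $\|f_\phi\|\le\|\phi\|_{cb}$. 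Linearity and boundedness of $f_\phi$ on $\mathcal A$ are immediate; the formula \eqref{f_phi=phi} determines $f_\phi$ on the dense span $\overline{\mathrm{span}}\{U(m,n)\}$ by \eqref{C(U)gen} (tensored over the $N$ factors), so $f_\phi$ is the unique such functional, though uniqueness is not needed for the statement.

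The main obstacle is verifying that $\mathrm{ev}(U(m,n))=1$ for all $m,n\in\N^N$, i.e. computing $\lim_k\langle U(m,n)\delta_{z_k},\delta_{z_k}\rangle$. This reduces, via the tensor-product structure $U(m,n)=\prod_i U_{i,m_i,n_i}$ and the product geodesic $z_k=(\omega_0^{(1)}(k),\dots,\omega_0^{(N)}(k))$, to the one-variable computation $\lim_k\langle U_{i,m_i,n_i}\,\delta_{\omega_0^{(i)}(k)},\delta_{\omega_0^{(i)}(k)}\rangle=1$, which is exactly the content underlying \cite[Lemma 2.6]{HaaSteSzw} together with the normalisation in \eqref{Uimini}; one needs the geodesic ray point $\omega_0^{(i)}(k)$ to have, for $k$ large, all the "right" neighbour structure so that the coefficients from $U_i^{m}(U_i^*)^n$ and the correction term $\tfrac1{q_i}U_i^{m-1}(U_i^*)^{n-1}$ combine to give $1$. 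Alternatively, one can bypass the limit entirely by invoking the conditional expectation from $\mathcal{B}(\ell_2(X))$ onto the (abelianised) diagonal of $\mathcal A$ and the canonical trace on $C^*(U_i)$ arising from the boundary representation, but the weak-$*$ limit argument is the most self-contained and matches the reference.
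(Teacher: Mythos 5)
Your overall skeleton (compose $M_\phi$, which preserves $\mathcal{A}$ and satisfies $M_\phi(U(m,n))=\tilde\phi(m+n)U(m,n)$ by Lemma \ref{lemMphiU}, with a norm-one functional $\gamma$ on $\mathcal{A}$ such that $\gamma(U(m,n))=1$) is exactly the paper's strategy, and the norm estimate $\|f_\phi\|\leq\|\gamma\|\,\|M_\phi\|\leq\|\phi\|_{cb}$ is fine. But the heart of the lemma is the existence of such a $\gamma$, and your construction of it fails. If $z_k$ is a vertex and $\mathrm{ev}$ is any weak-$*$ limit of the vector functionals $\psi\mapsto\langle\psi\,\delta_{z_k},\delta_{z_k}\rangle$, then $\mathrm{ev}(U(m,n))=0$ for every $(m,n)\neq(0,0)$, not $1$: by Lemma \ref{lemUneq0} (which is the content of \cite[Lemma 2.6]{HaaSteSzw} that you invoke), $\langle U(m,n)\delta_y,\delta_x\rangle\neq 0$ forces $d(x_i,y_i)=m_i+n_i$ for all $i$, so all diagonal entries of $U(m,n)$ vanish unless $m=n=0$. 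Concretely, in one variable $\langle U_i^m(U_i^*)^n\delta_z,\delta_z\rangle=0$ unless $m=n$, and for $m=n\geq 1$ it equals $q_i^{-m}$, so the normalisation \eqref{Uimini} gives $\langle U_{i,m,m}\delta_z,\delta_z\rangle=(1-\tfrac1{q_i})^{-1}(q_i^{-m}-\tfrac1{q_i}q_i^{-(m-1)})=0$ at every vertex $z$; the correction term does not ``combine to give $1$'' — it combines to give $0$. With your $\mathrm{ev}$ the composition yields $f_\phi(U(m,n))=\tilde\phi(m+n)\,\mathrm{ev}(U(m,n))=0$ for $(m,n)\neq(0,0)$, so \eqref{f_phi=phi} is lost. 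The desired functional is singular (it must kill the compacts), so it cannot be reached by vector states at delta functions of vertices at all; if one insists on vector states one would need spread-out unit vectors along the ray that are approximate fixed vectors of $U_i^*$ (e.g.\ normalised geometric sums $c\sum_j q_i^{-j/2}\delta_{\omega_0^{(i)}(j)}$), which is a different argument from the one you sketch.

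The paper instead gets $\gamma$ algebraically: since each $U_i$ is a proper isometry, Coburn's theorem gives a $\ast$-homomorphism $\rho_i:C^*(U_i)\to C(\T)$ with $\rho_i(U_i)(z)=z$; composing with the character of evaluation at $1\in\T$ yields a state $\gamma_i$ with $\gamma_i\bigl(U_i^m(U_i^*)^n\bigr)=1$ for all $m,n$, hence $\gamma_i(U_{i,m,n})=1$ after the normalisation \eqref{Uimini}; then $\gamma=\gamma_1\otimes\cdots\otimes\gamma_N$ is a product state on the minimal tensor product $\mathcal{A}$ with $\gamma(U(m,n))=1$, and $f_\phi=\gamma\circ M_\phi$ finishes the proof. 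Your alternative suggestion (a conditional expectation onto a diagonal plus a ``canonical trace'') also does not produce the needed functional: a tracial-type functional on $C^*(U_i)$ factoring through $C(\T)$ would integrate $z^{m-n}$ and give $\delta_{m,n}$ rather than the constant value $1$; what is essential is a \emph{character} at the point $1$ of the quotient circle, which is precisely the Coburn argument.
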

\begin{proof}
By Coburn's theorem (cf. \cite[Theorem 3.5.18]{Mur}), together with \cite[Theorem 3.5.11]{Mur}, there exists a $\ast$-homomorphism $\rho_i:C^*(U_i)\to C(\T)$ such that $\rho_i(U_i)(z)=z$ for all $z\in\T$. Let $\gamma_0:C(\T)\to\C$ be the pure state given by
\begin{align*}
\gamma_0(f)=f(1),\qquad\forall\,f\in C(\T).
\end{align*}
Then $\gamma_i=\gamma_0\circ\rho_i$ is a state on $C^*(U_i)$ and
\begin{align*}
\gamma_i(U_i^m(U_i^*)^n)=1,\qquad\forall\,m,n\in\N.
\end{align*}
Let $\gamma:\mathcal{A}\to \C$ be the product state $\gamma_1\otimes\cdots\otimes\gamma_N$, which is uniquely defined by
\begin{align*}
\gamma(W_1\otimes\cdots\otimes W_2)=\gamma_1(W_1)\cdots\gamma_N(W_N).
\end{align*}
See \cite[\S 11.3]{KadRin} for details. Then
\begin{align*}
\gamma(U_1^{m_1}(U_1^*)^{n_1}\otimes\cdots\otimes U_N^{m_N}(U_N^*)^{n_N})=1,\qquad\forall\,m_1,n_1,...,m_N,n_N\in\N.
\end{align*}
Finally, define $f_\phi:\mathcal{A}\to \C$ by
\begin{align*}
f_\phi(W)=\gamma(M_\phi(W)),\qquad\forall\,W\in \mathcal{A}.
\end{align*}
This map satisfies
\begin{align*}
\|f_\phi\|\leq\|M_\phi\|=\|\phi\|_{cb},
\end{align*}
and by Lemma \ref{lemMphiU},
\begin{align*}
f_\phi(U(m,n))=\tilde{\phi}(m+n)\gamma(U(m,n))=\tilde{\phi}(m+n).
\end{align*}
\end{proof}

Now we need to introduce some notation. Observe that we may write $\ell_2(\N^N)\cong \ell_2(\N_1)\otimes\cdots\otimes\ell_2(\N_N)$, where $\N_i$ is a copy of the natural numbers. If $S_i$ denotes the forward shift operator on $\ell_2(\N_i)$, then we can repeat the previous arguments for $S_i$ instead of $U_i$. In particular, $S_1,...,S_N$ extend to double commuting isometries on $\ell_2(\N^N)$, and we can define $S_{i,m_i,n_i}$ and
\begin{align}\label{Svec}
S(m,n)=\prod_{i=1}^N S_{i,m_i,n_i}\in\mathcal{B}(\ell_2(\N^N)),
\end{align}
as in \eqref{Uimini} and \eqref{Uvec}. We call $S_i$ the forward shift operator on the $i$-th coordinate. Now, for every non-empty $I\subset[N]$ and for every $m\in\N^N$, let $m_I$ be the projection of $m$ on $\N^I$, and define the operator
\begin{align*}
S_I^{m_I}=\prod_{i\in I}S_i^{m_i}\in\mathcal{B}(\ell_2(\N^I)).
\end{align*}
Similarly,
\begin{align*}
(S_I^*)^{m_I}=\prod_{i\in I}(S_i^*)^{m_i}.
\end{align*}
We can define $U_I^{m_I}, (U_I^*)^{m_I}\in \mathcal{B}\left(\ell_2\left(\prod_{i\in I}T_i\right)\right)$ analogously. 

\begin{lem}\label{lem_f=sum_I}
Let $f:\mathcal{A}\to\C$ be a bounded linear functional. Then there exists a family of bounded linear forms $\{f^I\}_{I\subset[N]}$ on $\mathcal{A}$ such that
\begin{align}\label{f=sumfI}
f=\displaystyle\sum_{I\subset[N]}f^I\quad\text{and}\quad \|f\|=\displaystyle\sum_{I\subset[N]}\left\|f^I\right\|.
\end{align}
Moreover, let $m,n\in\N^N$ and $I,J\subset[N]$.
\begin{itemize}
\item[(i)] If $J\cap I =\varnothing$, then
\begin{align*}
f^I\left(U\left(m+k\chi^J,n+k\chi^J\right)\right) = f^I\left(U\left(m,n\right)\right),\quad\forall k\in\N.
\end{align*}
\item[(ii)] If $J\cap I\neq\varnothing$, then
\begin{align*}
f^I\left(U\left(m+k\chi^J,n+k\chi^J\right)\right)\xrightarrow[k\to\infty]{} 0.
\end{align*}
\end{itemize}
\end{lem}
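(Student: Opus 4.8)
The strategy is to exploit the tensor product structure $\mathcal{A}=C^*(U_1)\otimes_{\min}\cdots\otimes_{\min}C^*(U_N)$ together with the Wold--von Neumann / Słociński decomposition (Corollary \ref{corwald}) applied to the double commuting isometries $U_1,\dots,U_N$ on $\ell_2(X)$. By Corollary \ref{corwald} we have $\ell_2(X)=\bigoplus_{I\subset[N]}\mathcal{H}_I$, and on each summand $\mathcal{H}_I\cong\ell_2(\N^I)\otimes Y_I$ the isometry $U_i$ acts as a one-coordinate shift when $i\in I$ and as a \emph{unitary} when $i\notin I$. Let $E_I\in\mathcal{B}(\ell_2(X))$ denote the orthogonal projection onto $\mathcal{H}_I$; since each $\mathcal{H}_I$ is invariant under all $U_i$ and $U_i^*$, it is invariant under $\mathcal{A}$, so $E_I$ commutes with $\mathcal{A}$ and $W\mapsto WE_I=E_IWE_I$ is a $*$-homomorphism of $\mathcal{A}$. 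The plan is to set $f^I(W)=f(WE_I)$ for each $I\subset[N]$. Then $f=\sum_I f^I$ is immediate from $\sum_I E_I=1$, and $\|f^I\|\le\|f\|$ each. For the sharp additivity $\|f\|=\sum_I\|f^I\|$, the point is that the ranges $\mathcal{H}_I$ are mutually orthogonal, so for a norm-one element one can split an approximate-norm vector across the blocks; more precisely, given unit vectors witnessing $\|f^I\|$ on disjoint blocks one combines them with suitable phases to witness $\sum_I\|f^I\|\le\|f\|$. (Alternatively, use that $\mathcal{A}\ni W\mapsto(WE_I)_I$ realizes $\mathcal{A}$ inside $\bigoplus_I\mathcal{B}(\mathcal{H}_I)$ and $f$ as a sum of functionals supported on the orthogonal blocks.)

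For the two itemized properties, the key computation is the action of $U(m+k\chi^J,n+k\chi^J)$ on the block $\mathcal{H}_I$. Recall $U(m,n)=\prod_{i=1}^N U_{i,m_i,n_i}$, and note that on $\mathcal{H}_I$ the factor $U_{i,m_i,n_i}$ depends only on whether $i\in I$: if $i\notin I$ then $U_i$ is unitary on the $Y_I$-tensor-leg, so $U_i^*U_i=1$ there and by \eqref{Uimini} one checks $U_{i,m_i,n_i}$ restricted to $\mathcal{H}_I$ equals $U_i^{m_i}(U_i^*)^{n_i}=U_i^{m_i-n_i}$ (a unitary depending only on $m_i-n_i$); if $i\in I$ then $U_i$ is a pure shift on the $i$-th coordinate of $\ell_2(\N^I)$, isomorphic to the model shift $S_i$.

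For \textbf{(i)}, when $J\cap I=\varnothing$ every index $i\in J$ lies in $I^c$, so replacing $m_i\mapsto m_i+k$, $n_i\mapsto n_i+k$ for $i\in J$ does not change $m_i-n_i$, hence does not change the restriction of $U_{i,m_i,n_i}$ to $\mathcal{H}_I$; therefore $U(m+k\chi^J,n+k\chi^J)E_I=U(m,n)E_I$, which gives $f^I(U(m+k\chi^J,n+k\chi^J))=f^I(U(m,n))$. For \textbf{(ii)}, pick $i_0\in J\cap I$; then on $\mathcal{H}_I$ the $i_0$-th factor is (a copy of) $S_{i_0,m_{i_0}+k,n_{i_0}+k}$, and by \eqref{Uimini}--\eqref{C(U)gen} this is $(1-q_{i_0}^{-1})^{-1}\big(S_{i_0}^{m_{i_0}+k}(S_{i_0}^*)^{n_{i_0}+k}-q_{i_0}^{-1}S_{i_0}^{m_{i_0}+k-1}(S_{i_0}^*)^{n_{i_0}+k-1}\big)$ for $k$ large. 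Each $S_{i_0}^{a}(S_{i_0}^*)^{b}$ is the rank-structure operator sending $\delta_j\mapsto\delta_{j-b+a}$ for $j\ge b$ and $0$ otherwise; as $k\to\infty$ these tend to $0$ in the weak operator topology, and since they are uniformly bounded and the functional $f$ (via $\gamma\circ M_\phi$, or more generally any bounded functional on $\mathcal{A}$) factors through a fixed trace-class-type pairing on the shift leg, one gets $f^I(U(m+k\chi^J,n+k\chi^J))\to 0$. The cleanest way to say this without leaning on the specific form of $f$: any bounded functional on $C^*(S_{i_0})$ restricted to the span of the $S_{i_0,a,b}$ is given by pairing against a trace-class operator plus a Haar/toral part annihilating the compacts, and $S_{i_0,m_{i_0}+k,n_{i_0}+k}\to0$ weak-$*$ in $C^*(S_{i_0})^*$-duality modulo the compacts precisely because the "off-diagonal'' shift pieces escape to infinity; a soft argument via the isomorphism $C^*(S)/\mathcal{K}\cong C(\T)$ and the fact that $S_{i_0,a,b}$ maps to $z^{a-b}$ with an extra $(1-q^{-1})^{-1}(1-q^{-1})=1$ factor — wait, here we must be careful, so I will instead argue directly on the compact part.

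\textbf{Main obstacle.} The genuinely delicate point is proving the \emph{equality} $\|f\|=\sum_I\|f^I\|$ (the inequality $\le$ is routine) and, more subtly, part (ii): showing the limit is $0$ for an \emph{arbitrary} bounded functional $f$ on $\mathcal{A}$, not just for $f_\phi=\gamma\circ M_\phi$. I expect the resolution to be: decompose $C^*(S_i)=\C 1\oplus\mathcal{K}(\ell_2(\N_i))$ as a vector space via Coburn's theorem, so that $S_{i,a,b}=1$ on the "$z^{a-b}$'' symbol level but $S_{i,a,b}-(\text{unitary part})$ is compact and, crucially, the compact correction for $S_{i,m_i+k,n_i+k}$ is supported on coordinates $\ge n_i+k$, hence tends to $0$ in norm... no — in the \emph{weak} operator topology, with uniformly bounded norm; combined with $f$ restricted to a separable C${}^*$-subalgebra being weak-$*$ continuous on bounded sets when paired appropriately, one extracts the vanishing. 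If a fully soft argument is unavailable, one falls back on the explicit description of functionals on $\mathcal{A}$ coming from the proof context (they arise from states composed with $M_\phi$, pairing against trace-class operators on the $\ell_2(\N^I)$ legs as in Lemma \ref{Lem_suf_mult_rad}), for which (ii) reduces to the trivial fact that $\langle S^a(S^*)^b\xi,\eta\rangle\to0$ as $b\to\infty$ for fixed $\xi,\eta\in\ell_2(\N)$. I would present the argument at the level of generality actually needed downstream, i.e. allowing $f$ to be an arbitrary bounded functional but extracting the two properties from the block structure above, and invoke the separability/weak-$*$ argument for (ii).
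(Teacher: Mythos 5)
Your overall plan — decompose the Hilbert space via Corollary \ref{corwald} and split $f$ block by block — is the right general idea, but you apply the decomposition in the wrong representation, and this is fatal. On $\ell_2(X)$ the isometries $U_i$ are \emph{pure}: $(U_i^*)^n\delta_y=q_i^{-n/2}\delta_{\omega_y(n)}$, so $(U_i^*)^n\to 0$ strongly and $\bigcap_n U_i^n\ell_2(X_i)=\{0\}$; hence the Wold blocks of the concrete representation collapse, $E_{[N]}=1$ and $E_I=0$ for $I\neq[N]$. Your recipe $f^I(W)=f(WE_I)$ (which, as written, is not even defined, since $E_I$ lies in the commutant and $WE_I\notin\mathcal{A}$, so one would first need a Hahn--Banach extension of $f$) therefore degenerates to $f^{[N]}=f$ and $f^I=0$ otherwise. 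Property (ii) would then assert that $f(U(m+k\chi^J,n+k\chi^J))\to 0$ for \emph{every} bounded functional $f$ on $\mathcal{A}$, which is false: the product state $\gamma$ of Lemma \ref{lemfphi} satisfies $\gamma(U(m,n))=1$ for all $m,n$. The paper's proof avoids this by passing to the \emph{universal} representation: any bounded functional is there a vector functional, $f=\langle\pi(\cdot)\xi,\eta\rangle$ with $\|f\|=\|\xi\|\,\|\eta\|$; one applies Corollary \ref{corwald} to $\pi(U_1),\dots,\pi(U_N)$, where both unitary and shift blocks genuinely occur, sets $f^I=\langle\pi_I(\cdot)\xi^I,\eta^I\rangle$, gets the norm additivity from Cauchy--Schwarz on the components, and gets (i), (ii) because on $\mathcal{H}_I$ the $i$-th factor of $\pi_I(U(m,n))$ is either a unitary power depending only on $m_i-n_i$ or a shift whose powers $S^{m_i+k}(S^*)^{n_i+k}$ tend to $0$ in the weak operator topology, which pairs to $0$ against the \emph{vector} functional $f^I$.

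This also exposes the second gap, which you yourself flag as the ``main obstacle'': your fallback arguments for (ii) for an arbitrary bounded functional do not work. A bounded functional on a C${}^*$-algebra need not be WOT- or weak-$*$ continuous on bounded sets (states factoring through the Coburn quotient $C^*(S)/\mathcal{K}\cong C(\T)$ are explicit counterexamples), and even the specific functional $f_\phi=\gamma\circ M_\phi$ is not given by a trace-class pairing — $\gamma$ kills the compacts — so restricting the lemma to $f_\phi$ does not rescue the argument either. The mechanism that makes (ii) true is precisely that each $f^I$ is constructed as a vector functional in a representation where the Wold-type decomposition is nontrivial, i.e.\ the universal representation device; moreover the explicit form $f^{[N]}=\langle\pi_{[N]}(\cdot)\xi^{[N]},\eta^{[N]}\rangle$ is used again in the proof of Lemma \ref{lemphi=Tr}, so the construction itself, not just the statement, is needed downstream.
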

\begin{proof}
Let $(\pi,\mathcal{H})$ be the universal representation of $\mathcal{A}$. Then there exist $\xi,\eta\in\mathcal{H}$ such that
\begin{align*}
f(A)=\langle\pi(A)\xi,\eta\rangle,\qquad \forall\ A \in \mathcal{A},
\end{align*}
and $\|f\|=\|\xi\|\|\eta\|$. On the other hand, since $U_1,...,U_N$ are double commuting isometries, we can use Corollary \ref{corwald} to get a decomposition $\mathcal{H}=\bigoplus_{I\subset[N]}\mathcal{H}_I$ with
\begin{align*}
\mathcal{H}_I &\cong \ell_2\left(\N^I\right) \otimes Y_I.
\end{align*}
And so
\begin{align*}
f(A)=\sum_{I\subset[N]}\langle\pi_I(A)\xi^I,\eta^I\rangle,
\end{align*}
where $\pi_I$ is the restriction of $\pi$ to $\mathcal{H}_I$, and $\xi^I$ (resp. $\eta^I$) is the projection of $\xi$ (resp. $\eta$) on $\mathcal{H}_I$. Define, for each $I\subset[N]$,
\begin{align*}
f^I(A)=\langle\pi_I(A)\xi^I,\eta^I\rangle,\quad\forall A\in\mathcal{A}.
\end{align*}
Hence,
\begin{align*}
\|f\|&\leq \sum_{I\subset[N]}\left\|f^I\right\|\\
& \leq \sum_{I\subset[N]}\left\|\xi^I\right\|\left\|\eta^I\right\|\\
&\leq \left(\sum_{I\subset[N]}\left\|\xi^I\right\|^2\right)^{\frac{1}{2}}\left(\sum_{I\subset[N]}\left\|\eta^I\right\|^2\right)^{\frac{1}{2}}\\
& =\|\xi\|\|\eta\|\\
& = \|f\|.
\end{align*}
and so we obtain \eqref{f=sumfI}. Now recall that $\pi_I(U_i)$ is a unitary for $i\in I^c$. Hence, for all $m,n\in\N^N$ and $I\subset[N]$, $\pi_I(U(m,n))$ equals
\begin{align*}
\left(\prod_{i\in I^c}\pi_I(U_i)^{m_i-n_i}\right) 
\left(\prod_{i\in I}\left(1-\tfrac{1}{q_i}\right)^{-1}\pi_I\left(U_i^{m_i}(U_i^*)^{n_i}-\tfrac{1}{q_i}U_i^*U_i^{m_i}(U_i^*)^{n_i}U_i\right)\right).
\end{align*}
This shows that, if $J\cap I=\varnothing$, then
\begin{align*}
\pi_I(U(m+k\chi^J,n+k\chi^J))= \pi_I(U(m,n)),\quad\forall k\in\N,
\end{align*}
which proves (i). Now suppose that $J\cap I\neq\varnothing$ and observe that, for all $k\geq 1$, $\pi_I(U(m+k\chi^J,n+k\chi^J))$ can be written as the product of the three following expressions
\begin{align*}
&\left(\prod_{i\in I^c}\pi_I(U_i)^{m_i-n_i}\right)\\
& \left(\prod_{i\in I\cap J^c}\left(1-\tfrac{1}{q_i}\right)^{-1}\pi_I\left(U_i^{m_i}(U_i^*)^{n_i}-\tfrac{1}{q_i}U_i^*U_i^{m_i}(U_i^*)^{n_i}U_i\right)\right)\\
& \left(\prod_{i\in I\cap J}\left(1-\tfrac{1}{q_i}\right)^{-1}\pi_I\left(U_i^{m_i+k}(U_i^*)^{n_i+k}-\tfrac{1}{q_i}U_i^{m_i+k-1}(U_i^*)^{n_i+k-1}\right)\right).
\end{align*}
Recall that $\pi_I(U_i)$ is a shift if $i\in I$, which implies that $\pi_I\left(U_i^{m_i+k}(U_i^*)^{n_i+k}\right)$ converges to $0$ in the weak operator topology (even more, this is true for the strong operator topology), hence the same holds for the last term above. Therefore $\pi_I(U(m+k\chi^J,n+k\chi^J))$ converges to $0$ in WOT, which yields
\begin{align*}
f^I(U(m+k\chi^J,n+k\chi^J))=\left\langle\pi_I(U(m+k\chi^J,n+k\chi^J))\xi^I,\eta^I\right\rangle\xrightarrow[k\to\infty]{} 0.
\end{align*}
This proves (ii).
\end{proof}

\begin{lem}\label{lemphi=Tr}
Let $\phi:X\times X\to\C$ be a multi-radial Schur multiplier with
\begin{align*}
\phi(x,y)=\tilde{\phi}(d(x_1,y_1),...,d(x_N,y_N)),\qquad\forall\ x,y\in X,
\end{align*}
and assume that the following limits exist:
\begin{align*}
l_0&=\lim_{\substack{|n|\to\infty \\ |n|\text{ even}}}\tilde{\phi}(n), & l_1&=\lim_{\substack{|n|\to\infty \\ |n|\text{ odd}}}\tilde{\phi}(n).
\end{align*}
Then, there exists a trace-class operator $T\in S_1(\ell_2(\N^N))$ such that for all $m,n\in\N^N$,
\begin{align}\label{phi=c+c+TrSmnT}
\tilde{\phi}(m+n) = c_++(-1)^{|m|+|n|}c_-+\text{Tr}\left(S(m, n) T\right),
\end{align}
where $S(m, n)$ is defined as in \eqref{Svec}, and $c_\pm=\frac{1}{2}( l_0\pm l_1)$. Moreover,
\begin{align*}
\|T\|_{S_1}+|c_+|+|c_-|\leq \|\phi\|_{cb}.
\end{align*}
\end{lem}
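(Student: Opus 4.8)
The plan is to build the operator $T$ out of the functional $f_\phi$ from Lemma~\ref{lemfphi} by transporting it to $\mathcal{B}(\ell_2(\N^N))$ along the map $U(m,n)\mapsto S(m,n)$, and then to identify the "unitary parts" of $f_\phi$ with the constants $c_\pm$. First I would apply Lemma~\ref{lemfphi} to obtain the bounded functional $f_\phi$ on $\mathcal{A}$ with $f_\phi(U(m,n))=\tilde\phi(m+n)$ and $\|f_\phi\|\le\|\phi\|_{cb}$, and then Lemma~\ref{lem_f=sum_I} to decompose $f_\phi=\sum_{I\subset[N]}f_\phi^I$ with $\|f_\phi\|=\sum_I\|f_\phi^I\|$. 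The idea is that $f_\phi^{[N]}$, which lives on the "fully shifted" summand $\mathcal{H}_{[N]}\cong\ell_2(\N^N)\otimes\mathcal{W}_{[N]}$, gives rise to the trace-class operator $T$, while the other pieces $f_\phi^I$ with $I\subsetneq[N]$ contribute only the constants. Concretely, on $\mathcal{H}_{[N]}$ the operators $U_i$ act as the coordinate shifts $S_i$, so $\pi_{[N]}(U(m,n))$ is (a unitary-free version of) $S(m,n)\otimes\mathrm{id}_{\mathcal{W}_{[N]}}$; evaluating $f_\phi^{[N]}$ against the vectors $\xi^{[N]},\eta^{[N]}$ and contracting out the $\mathcal{W}_{[N]}$ factor produces a trace-class $T$ with $\mathrm{Tr}(S(m,n)T)=f_\phi^{[N]}(U(m,n))$ and $\|T\|_{S_1}\le\|\xi^{[N]}\|\,\|\eta^{[N]}\|=\|f_\phi^{[N]}\|$. (Here I am using that $S(m,n)$ is a finite-rank-type combination of $S_i^{m_i}(S_i^*)^{n_i}$, hence a genuine bounded operator, and that the trace pairing $A\mapsto\mathrm{Tr}(AT)$ realises exactly the rank-one functional $A\mapsto\langle A\otimes\mathrm{id}\,\xi^{[N]},\eta^{[N]}\rangle$ after the partial-trace identification of $\ell_2(\N^N)\otimes\mathcal{W}_{[N]}$-operators with trace-class operators on $\ell_2(\N^N)$.)

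Next I would handle the remaining summands. For $I\subsetneq[N]$, part (ii) of Lemma~\ref{lem_f=sum_I} with $J=[N]\setminus I\neq\varnothing$ (so $J\cap I=\varnothing$; and for the complementary indices $J=[N]$, $J\cap I\neq\varnothing$) should let me show that $f_\phi^I(U(m,n))$ depends only on $|m|+|n|$ modulo the parity, and stabilises; more precisely, using (i) along directions inside $I^c$ and (ii) along directions meeting $I$, the value $f_\phi^I(U(m,n))$ is unchanged under adding $\chi^{I^c}$-multiples and tends to $0$ under adding $\chi^{[N]}$-multiples, which forces $f_\phi^I(U(m,n))$ to be a function of $|m|+|n|$ alone that is constant on each parity class. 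Summing the stabilised values over all $I\subsetneq[N]$ and separating the even and odd cases gives two constants; comparing with the known limits $l_0,l_1$ of $\tilde\phi$ (and using that $\mathrm{Tr}(S(m,n)T)\to 0$ as $|m|+|n|\to\infty$ along each parity class, since $S(m,n)\to 0$ in WOT and $T\in S_1$) identifies these two constants as $c_++c_-$ and $c_+-c_-$ exactly, i.e. the even-class sum is $l_0$ and the odd-class sum is $l_1$. This yields $\tilde\phi(m+n)=c_++(-1)^{|m|+|n|}c_-+\mathrm{Tr}(S(m,n)T)$.

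Finally, for the norm estimate, from $\|f_\phi\|=\sum_I\|f_\phi^I\|$ and $\|f_\phi\|\le\|\phi\|_{cb}$ I get $\|T\|_{S_1}+\sum_{I\subsetneq[N]}\|f_\phi^I\|\le\|\phi\|_{cb}$, so it suffices to bound $|c_+|+|c_-|$ by $\sum_{I\subsetneq[N]}\|f_\phi^I\|$. Since $c_\pm$ were obtained as limits of combinations $\frac12(l_0\pm l_1)$ of the stabilised values $\sum_{I\subsetneq[N]}f_\phi^I(U(m,n))$, and evaluating $f_\phi^I$ on the contractive elements $U(m,n)$ gives $|f_\phi^I(U(m,n))|\le\|f_\phi^I\|$, taking limits along the two parity classes gives $|l_0|\le\sum_{I\subsetneq[N]}\|f_\phi^I\|$ and $|l_1|\le\sum_{I\subsetneq[N]}\|f_\phi^I\|$; one then combines these (using the freedom to choose the approach direction independently for the even and odd classes, or a more careful pairing argument) to get $|c_+|+|c_-|\le\sum_{I\subsetneq[N]}\|f_\phi^I\|$, completing the proof.

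\medskip

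I expect the main obstacle to be the third paragraph: showing $|c_+|+|c_-|\le\sum_{I\subsetneq[N]}\|f_\phi^I\|$ rather than merely $\max(|c_+|,|c_-|)\le\cdots$ or a factor-of-$2$ loss. The naive bound $|l_j|\le\sum_I\|f_\phi^I\|$ does not immediately add up. The fix should be to work at the level of states: the constant part of $f_\phi$ restricted to a suitable abelian sub-$C^*$-algebra (generated by the images of the $U_i$ where they act as unitaries) is itself a measure on a torus, and $|c_+|+|c_-|$ is the total variation of its even/odd spectral parts, which is dominated by the norm of that restricted functional, i.e. by $\sum_{I\subsetneq[N]}\|f_\phi^I\|$. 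Making this precise — i.e. choosing test elements in $\mathcal{A}$ (signed combinations of $U(m,n)$'s adapted to parities) on which $f_\phi^{[N]}$ vanishes in the limit while $\sum_{I\subsetneq[N]}f_\phi^I$ reproduces $\pm c_+\pm c_-$ with the right signs — is the delicate point, and it is where the argument of \cite{HaaSteSzw} for $N=1$ needs genuine adaptation to the multi-index setting.
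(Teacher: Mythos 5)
Your overall architecture matches the paper's: Lemma~\ref{lemfphi} gives the functional $f_\phi$, Lemma~\ref{lem_f=sum_I} splits it as $\sum_I f_\phi^I$, the piece $f_\phi^{[N]}$ produces the trace-class operator $T$ via $\xi^{[N]}=\sum f_\lambda\otimes e_\lambda$, $\eta^{[N]}=\sum g_\lambda\otimes e_\lambda$, and the constants come from the unitary part, with the bound $|c_+|+|c_-|$ obtained by a Riesz-representation/measure argument on the spectrum of a unitary (your sketched ``fix'' in the last paragraph is essentially what the paper does with $V=\pi_\varnothing(U_1)$ and $\mu=c_+\delta_1+c_-\delta_{-1}$, so that part, though only sketched, is on the right track and needs only the $I=\varnothing$ piece, not all $I\subsetneq[N]$).

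The genuine gap is in your second paragraph, and it concerns what the paper treats as the heart of the proof: the intermediate pieces $f_\phi^I$ with $\varnothing\neq I\subsetneq[N]$. You claim that invariance under diagonal shifts in the $I^c$ directions (property (i)) together with decay under diagonal shifts along $\chi^{[N]}$ (property (ii)) ``forces $f_\phi^I(U(m,n))$ to be a function of $|m|+|n|$ alone that is constant on each parity class.'' This is a non sequitur: a functional of the form $\langle\pi_I(\cdot)\xi^I,\eta^I\rangle$, where $\pi_I(U_i)$ is a shift for $i\in I$ and a unitary for $i\notin I$, can take values behaving like $r^{\,m_{i_0}+n_{i_0}}$ for some $i_0\in I$ and $0<r<1$; such values are invariant under $I^c$-diagonal shifts and tend to $0$ along $\chi^{[N]}$, yet depend on the individual coordinates and not on the parity of $|m|+|n|$. (Note also that if your claim were true it would force each such $f_\phi^I(U(m,n))$ to vanish, since adding $2k\chi^{[N]}$ preserves parity.) What is actually needed — and what the paper proves — is that $f_\phi^I=0$ for every $\varnothing\neq I\subsetneq[N]$, by induction on $|I|$: one takes the limit of $\tilde\phi(m+n+2k\chi^{J})$ with $J=\tilde I^{\,c}$, so that only $f_\phi^\varnothing$ and $f_\phi^{\tilde I}$ (and, inductively, already-vanishing smaller pieces) survive, and compares with the limit $c_++(-1)^{|m|+|n|}c_-$; this crucially uses that $l_0,l_1$ are limits along \emph{all} directions with $|n|\to\infty$, not just the full diagonal. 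Without this step, $\tilde\phi(m+n)-c_+-(-1)^{|m|+|n|}c_-$ equals $\sum_{\varnothing\neq I}f_\phi^I(U(m,n))$, and the pieces with $\varnothing\neq I\subsetneq[N]$ are of mixed type (trace-class in the $I$ variables tensored with a measure in the $I^c$ variables), so they cannot in general be absorbed into $\mathrm{Tr}(S(m,n)T)$ for a single $T\in S_1(\ell_2(\N^N))$; your plan as written does not rule them out.
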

\begin{proof}
By Lemma \ref{lemfphi}, There exists a bounded linear function $f_\phi:\mathcal{A}\to\C$ such that
\begin{align}\label{phi(n+m)=f_phi}
\tilde{\phi}(m+n)=f_\phi(U(m,n)),\quad\forall m,n\in\N^N,
\end{align}
and $\|f_\phi\|\leq\|\phi\|_{cb}$. Consider the decomposition
\begin{align*}
f_\phi=\sum_{I\subset[N]}f_\phi^I
\end{align*}
given by Lemma \ref{lem_f=sum_I}. Take $J=[N]$ and observe that \eqref{phi(n+m)=f_phi} together with (i) and (ii) in Lemma \ref{lem_f=sum_I} imply that
\begin{align*}
\lim_{k\to\infty}\tilde{\phi}(m+n+2k\chi^J) = \lim_{k\to\infty}\sum_{I\subset[N]}f_\phi^I(U(m+k\chi^J,n+k\chi^J))
= f_\phi^\varnothing(U(m,n)),
\end{align*}
for all $m,n\in\N^N$. So, by the hypothesis, 
\begin{align*}
f_\phi^\varnothing(U(m,n))=c_++(-1)^{|m|+|n|}c_-.
\end{align*}
Now we shall prove by induction on $|I|\in\{1,...,N-1\}$ that $f_\phi^I=0$. Consider first $I=\{i_0\}$ and $J=I^c$. Observe that $I$ is the only nonempty subset of $[N]$ satisfying $I\cap J=\varnothing$. Then by the same arguments as before,
\begin{align*}
c_++(-1)^{|m|+|n|}c_- = \lim_{k\to\infty}\tilde{\phi}(m+n+2k\chi^J) = f_\phi^\varnothing(U(m,n)) + f_\phi^I(U(m,n)),
\end{align*}
which proves that $f_\phi^I(U(m,n))=0$. Since the vector space spanned by $\{U(m,n)\,|\, m,n\in\N\}$ is dense in $\mathcal{A}$, this shows that $f_\phi^I=0$ for all $I$ with $|I|=1$. Now suppose that this holds for $|I|\in\{1,..,l-1\}$ with $l<N$. Take $\tilde{I}\subset[N]$ with $|\tilde{I}|=l$ and $J=I^c$. Then, again by the same arguments,
\begin{align*}
c_++(-1)^{|m|+|n|}c_- &= \lim_{k\to\infty}\tilde{\phi}(m+n+2k\chi^J)\\
& =\lim_{k\to\infty}\sum_{\substack{I\subset[N] \\ |I|\geq l}}f_\phi^I(U(m+k\chi^J,n+k\chi^J))\\
&= f_\phi^\varnothing(U(m,n)) + f_\phi^{\tilde{I}}(U(m,n)),
\end{align*}
which yields $f_\phi^{\tilde{I}}(U(m,n))=0$. We conclude that
\begin{align*}
\tilde{\phi}(m+n)=c_++(-1)^{|m|+|n|}c_- + f_\phi^{[N]}(U(m,n)),\quad\forall m,n\in\N^N.
\end{align*}
Now recall the notations of Corollary \ref{corwald}, and that $f_\phi^I$ was defined as $\langle\pi_I(\cdot)\xi^I,\eta^I\rangle$ in the proof of Lemma \ref{lem_f=sum_I}. Write
\begin{align*}
\xi^{[N]} &= \sum_{\lambda\in\Lambda}f_\lambda\otimes e_\lambda, & \eta^{[N]} &= \sum_{\lambda\in\Lambda}g_\lambda\otimes e_\lambda,
\end{align*}
where $(e_\lambda)_{\lambda\in\Lambda}$ is an orthonormal basis of $Y_{[N]}$, and $f_\lambda, g_\lambda$ are elements of $\ell_2(\N^N)$ such that
\begin{align*}
\|\xi^{[N]}\|^2 &= \sum_{\lambda\in\Lambda}\|f_\lambda\|^2, & \|\eta^{[N]}\|^2 &= \sum_{\lambda\in\Lambda}\| g_\lambda\|^2.
\end{align*}
We have
\begin{align*}
\langle\pi_{[N]}(U^m(U^*)^n)\xi^{[N]},\eta^{[N]}\rangle 
= \sum_{\lambda\in\Lambda} \left\langle S_{[N]}^m\left(S_{[N]}^*\right)^nf_\lambda,g_\lambda\right\rangle
= \text{Tr}\left(S_{[N]}^m\left(S_{[N]}^*\right)^n T\right),
\end{align*}
where
\begin{align*}
T= \sum_{\lambda\in\Lambda} f_\lambda\odot g_\lambda,
\end{align*}
and $f_\lambda\odot g_\lambda \in S_1(\ell_2(\N^N))$ is the rank 1 operator defined by
\begin{align}\label{rank1op}
(f_\lambda\odot g_\lambda) h = \langle h, g_\lambda\rangle f_\lambda,\qquad\forall\,h\in \ell_2(\N^N).
\end{align}
Hence,
\begin{align*}
f_\phi^{[N]}(U(m,n))=\text{Tr}\left(S(m, n) T\right).
\end{align*}
This proves \eqref{phi=c+c+TrSmnT}. Moreover,
\begin{align*}
\|T\|_{S_1} \leq \sum_{\lambda\in \Lambda} \|f_\lambda\| \|g_\lambda\| 
\leq \left(\sum_{\lambda\in \Lambda} \|f_\lambda\|^2\right)^{\frac{1}{2}} \left(\sum_{\lambda\in \Lambda}\|g_\lambda\|^2\right)^\frac{1}{2} 
= \|\xi^{[N]}\| \|\eta^{[N]}\|.
\end{align*}
On the other hand, since $V=\pi_\varnothing(U_1)$ is a unitary, the C${}^*$-algebra $C^*(V)$ is isomorphic to $C(\sigma(V))$, where $\sigma(V)\subset\T$ is the spectrum of $V$. Hence, by the Riesz representation theorem, there exists a complex measure $\mu$ on $\T$ with supp($\mu)\subset\sigma(V)$ such that
\begin{align*}
\left\langle V^k\xi^{\varnothing},\eta^\varnothing\right\rangle = \int_{\T}z^k\,d\mu(z),\quad\forall k\in\Z,
\end{align*}
and $\|\mu\|\leq \|\xi^{\varnothing}\|\|\eta^\varnothing\|$. Furthermore, let $\nu$ be the complex measure on $\T$ given by $\nu=c_+\delta_1+c_-\delta_{-1}$. Then
\begin{align*}
\int_{\T}z^k\,d\nu(z) &= c_++(-1)^kc_- = f_\phi^{\varnothing}(U_1^k) = \int_{\T}z^k\,d\mu(z),& &\forall k\geq 0,\\
\int_{\T}z^k\,d\nu(z) &= c_++(-1)^kc_- = f_\phi^{\varnothing}((U_1^*)^{-k}) = \int_{\T}z^k\,d\mu(z),& &\forall k < 0,
\end{align*}
which implies that $\mu=\nu$ and therefore $\|\mu\|=|c_+|+|c_-|$. We conclude that
\begin{align*}
\|T\|_{S_1}+|c_+|+|c_-|\leq \|\xi^{[N]}\| \|\eta^{[N]}\| + \|\xi^{\varnothing}\|\|\eta^\varnothing\| = \|f_\phi\| \leq \|\phi\|_{cb}.
\end{align*}
\end{proof}

\begin{lem}\label{lemTT'}
Consider, for each $i=1,...,N$, the operator $\tau_i:\mathcal{B}(\ell_2(\N^N))\to\mathcal{B}(\ell_2(\N^N))$ defined by
\begin{align*}
\tau_i(T)=S_iTS_i^*.
\end{align*}
Then for every $T\in S_1(\ell_2(\N^N))$, the operator
\begin{align*}
T'=\left[\prod_{i=1}^N\left(1-\frac{1}{q_i}\right)^{-1}\left(I-\frac{\tau_i}{q_i}\right)\right] T
\end{align*}
is again an element of $S_1(\ell_2(\N^N))$, and 
\begin{align*}
\|T'\|_{S_1} \leq\left[\prod_{i=1}^N\frac{q_i+1}{q_i-1}\right]\|T\|_{S_1}.
\end{align*}
Moreover, it satisfies
\begin{align}\label{TrT=TrT'}
\text{Tr}\left(S(m, n) T\right) = \text{Tr}\left(S^m(S^*)^n T'\right),\quad\forall m,n\in\N^N.
\end{align}
\end{lem}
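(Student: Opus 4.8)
The plan is to handle the norm estimate and the trace identity \eqref{TrT=TrT'} separately; both are essentially bookkeeping with the cyclicity of the trace once the right commutation relations are in place.

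For the norm bound I would first recall that $S_1$ is a two-sided ideal of $\mathcal{B}(\ell_2(\N^N))$, so that for an isometry $S_i$ the map $\tau_i(T)=S_iTS_i^*$ sends $S_1$ into $S_1$ with $\|\tau_i(T)\|_{S_1}\le\|T\|_{S_1}$. Hence $R_i:=\left(1-\tfrac1{q_i}\right)^{-1}\left(I-\tfrac1{q_i}\tau_i\right)$ is a bounded operator on $S_1$ of norm at most $\left(1-\tfrac1{q_i}\right)^{-1}\left(1+\tfrac1{q_i}\right)=\tfrac{q_i+1}{q_i-1}$. Because $S_1,\dots,S_N$ double commute, the maps $\tau_i$, and therefore the $R_i$, commute pairwise, so $T'=R_N\cdots R_1T$ is well defined independently of the order, lies in $S_1$, and satisfies $\|T'\|_{S_1}\le\prod_{i=1}^N\tfrac{q_i+1}{q_i-1}\,\|T\|_{S_1}$.

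For the trace identity, the key is a single-coordinate computation: if $Z\in\mathcal{B}(\ell_2(\N^N))$ commutes with $S_i$ and $W\in S_1$, then
\begin{align*}
\text{Tr}\left(S_{i,m_i,n_i}\,Z\,W\right)=\text{Tr}\left(S_i^{m_i}(S_i^*)^{n_i}\,Z\,R_i(W)\right).
\end{align*}
To prove this I would expand $S_{i,m_i,n_i}=\left(1-\tfrac1{q_i}\right)^{-1}\big(S_i^{m_i}(S_i^*)^{n_i}-\tfrac1{q_i}S_i^*S_i^{m_i}(S_i^*)^{n_i}S_i\big)$ as in \eqref{Uimini}; in the second summand I commute the rightmost $S_i$ of $S_{i,m_i,n_i}$ past $Z$, then use the cyclicity of the trace to move the leading $S_i^*$ to the very end, turning $S_i^{m_i}(S_i^*)^{n_i}Z S_iW S_i^*$ into $S_i^{m_i}(S_i^*)^{n_i}Z\,\tau_i(W)$; collecting the two summands gives the identity. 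Then I would iterate it: writing $S(m,n)=\prod_{j=1}^N S_{j,m_j,n_j}$ and using that factors carrying distinct indices commute, after peeling off coordinates $1,\dots,i-1$ I can bring $S_{i,m_i,n_i}$ to the front, apply the single-coordinate identity with $Z$ the product of the remaining (commuting) factors and $W=R_{i-1}\cdots R_1 T$, and continue. After $N$ steps this yields $\text{Tr}\big(S^m(S^*)^n\,R_N\cdots R_1 T\big)=\text{Tr}(S^m(S^*)^nT')$, which is \eqref{TrT=TrT'}.

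I do not expect a serious obstacle here; the one thing to be careful about is that $S_i$ commutes with $S_j$ and $S_j^*$ for $j\ne i$ but \emph{not} with its own adjoint, so every rearrangement must rely only on honest trace cyclicity together with commutation of operators carrying distinct indices, and only on the isometry relation $S_i^*S_i=I$ (never $S_iS_i^*=I$). It is also worth recording that the two expressions for $S_{i,m,n}$ in \eqref{Uimini} agree for all $m,n$, including when $\min\{m,n\}=0$, so the computation is uniform in $m,n\in\N$.
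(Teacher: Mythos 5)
Your proof is correct and takes essentially the same route as the paper: the norm bound via contractivity of $\tau_i$ on $S_1$ (the paper even notes $\tau_i$ is an isometry there, but contractivity suffices), and the trace identity via the same single-coordinate computation combining trace cyclicity with $S_iTS_i^*=\tau_i(T)$ and commutation of distinct-index factors. The only cosmetic difference is bookkeeping: the paper runs an induction on the number of coordinates, applying the $(k+1)$-st factor to $T$ first and invoking the inductive hypothesis, whereas you peel coordinates from the front with an auxiliary commuting operator $Z$ — the underlying computation is identical, and there are no gaps.
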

\begin{proof}
First observe that $\tau_i$ is an injective $\ast$-homomorphism, hence it is an isometry on $\mathcal{B}(\ell_2(\N^N))$. Furthermore, it is also an isometry on $S_1(\ell_2(\N^N))$. Indeed, if $U|T|$ is the polar decomposition of $T\in S_1(\ell_2(\N^N))$, then $\tau_i(U)\tau_i(|T|)$ is the polar decomposition of $\tau_i(T)$. Therefore
\begin{align*}
\|\tau_i(T)\|_{S_1}=\text{Tr}(\tau_i(|T|))=\text{Tr}(|T|)=\|T\|_{S_1}.
\end{align*}
Thus
\begin{align}
\|T'\|_{S_1} &=\left[\prod_{i=1}^N\left(1-\frac{1}{q_i}\right)^{-1}\right]\left\|\left[\prod_{i=1}^N\left(I-\frac{\tau_i}{q_i}\right)\right]T\right\|_{S_1}\notag\\
&\leq\left[\prod_{i=1}^N\left(1-\frac{1}{q_i}\right)^{-1}\right] \left[\prod_{i=1}^N\left(1+\frac{1}{q_i}\right)\right]\|T\|_{S_1}\notag \\
&=\left[\prod_{i=1}^N\frac{q_i+1}{q_i-1}\right]\|T\|_{S_1}.\label{boundT'}
\end{align}
Finally, in order to obtain \eqref{TrT=TrT'}, we shall prove by induction on $k\in\{1,...,N\}$ that for all $T\in S_1(\ell_2(\N^N))$,
\begin{align}
\text{Tr}&\left(\left[\prod_{i=1}^k\left(S_i^{m_i}(S_i^*)^{n_i}-\frac{1}{q_i} S_i^*S_i^{m_i}(S_i^*)^{n_i}S_i\right)\right] T\right)\notag \\
&\qquad= \text{Tr}\left(\left[\prod_{i=1}^k S_i^{m_i}(S_i^*)^{n_i}\right] \left[\prod_{i=1}^k\left(I-\frac{\tau_i}{q_i}\right)\right] T\right).\label{hypindTr}
\end{align}
Recall the identity $\text{Tr}(AB)=\text{Tr}(BA)$. Then, for $k=1$, we have
\begin{align}
\text{Tr}&\left(\left(S_1^{m_1}(S_1^*)^{n_1}-\frac{1}{q_1} S_1^*S_1^{m_1}(S_1^*)^{n_1}S_1\right) T\right)\notag\\
&\qquad = \text{Tr}\left(S_1^{m_1}(S_1^*)^{n_1}T\right) -\frac{1}{q_1}\text{Tr}\left(S_1^{m_1}(S_1^*)^{n_1}S_1 TS_1^*\right)\notag\\
&\qquad = \text{Tr}\left(S_1^{m_1}(S_1^*)^{n_1}T-\frac{1}{q_1}S_1^{m_1}(S_1^*)^{n_1}\tau_1(T)\right)\notag\\
&\qquad = \text{Tr}\left(S_1^{m_1}(S_1^*)^{n_1}\left(I-\frac{\tau_1}{q_1}\right)T\right).\label{compk=1}
\end{align}
Now suppose that \eqref{hypindTr} is true for some $k\in\{1,...,N-1\}$, and define
\begin{align*}
\tilde{T}=\left(S_{k+1}^{m_{k+1}}(S_{k+1}^*)^{n_{k+1}}-\frac{1}{q_{k+1}} S_{k+1}^*S_{k+1}^{m_{k+1}}(S_{k+1}^*)^{n_{k+1}}S_{k+1}\right)T.
\end{align*}
Then
\begin{align*}
\text{Tr}&\left(\left[\prod_{i=1}^{k+1}\left(S_i^{m_i}(S_i^*)^{n_i}-\frac{1}{q_i} S_i^*S_i^{m_i}(S_i^*)^{n_i}S_i\right)\right] T\right)\\
&\qquad=\text{Tr}\left(\left[\prod_{i=1}^{k}\left(S_i^{m_i}(S_i^*)^{n_i}-\frac{1}{q_i} S_i^*S_i^{m_i}(S_i^*)^{n_i}S_i\right)\right] \tilde{T}\right)\\
&\qquad= \text{Tr}\left(\left[\prod_{i=1}^k S_i^{m_i}(S_i^*)^{n_i}\right] \left[\prod_{i=1}^k\left(I-\frac{\tau_i}{q_i}\right)\right] \tilde{T}\right).
\end{align*}
Repeating the computation \eqref{compk=1} for $S_{k+1}$ instead of $S_1$, this equals
\begin{align*}
\text{Tr}\left(\left[\prod_{i=1}^{k+1} S_i^{m_i}(S_i^*)^{n_i}\right] \left[\prod_{i=1}^{k+1}\left(I-\frac{\tau_i}{q_i}\right)\right] T\right).
\end{align*}
We have proven \eqref{hypindTr}. Setting $k=N$ and multiplying by $\prod_{i=1}^N\left(1-\frac{1}{q_i}\right)^{-1}$, we obtain \eqref{TrT=TrT'}.
\end{proof}

Now we are ready to prove Lemma \ref{Lem_nec_mult_rad}.

\begin{proof}[Proof of Lemma \ref{Lem_nec_mult_rad}]
By Lemmas \ref{lemphi=Tr} and \ref{lemTT'}, there exists $T'\in S_1(\ell_2(\N^N))$ such that
\begin{align*}
\tilde{\phi}(m+n) = c_++(-1)^{|m|+|n|}c_-+\text{Tr}\left(S^m(S^*)^nT'\right),\quad\forall m,n\in\N^N,
\end{align*}
and
\begin{align*}
\|T'\|_{S_1} \leq\left[\prod_{i=1}^N\frac{q_i+1}{q_i-1}\right]\left(\|\phi\|_{cb}-|c_+|-|c_-|\right).
\end{align*}
Hence, for all $m,n\in\N^N$ and $I\subset[N]$,
\begin{align*}
\tilde{\phi}(m+n+2\chi^I)  = c_++(-1)^{|m|+|n|}c_-+\text{Tr}\left(S^{m+\chi^I}(S^*)^{n+\chi^I}T'\right).
\end{align*}
Now observe that for each $k\in\{0,...,N\}$ there are $\binom{N}{k}$ subsets $I\subset[N]$ of cardinality $k$. Thus
\begin{align*}
\sum_{I\subset[N]}(-1)^{|I|}\left(c_++(-1)^{|m|+|n|}c_-\right) = \left(c_++(-1)^{|m|+|n|}c_-\right)\sum_{k=0}^N\tbinom{N}{k}(-1)^{k}=0.
\end{align*}
On the other hand,
\begin{align*}
\text{Tr}\left(S^{m+\chi^I}(S^*)^{n+\chi^I}T'\right) &= \sum_{q\in\N^N} \langle T'\delta_q, S^{n+\chi^I}(S^*)^{m+\chi^I}\delta_q\rangle\\
&= \sum_{q\geq m+\chi^I} \langle T'\delta_q, \delta_{q-m+n}\rangle\\
&= \sum_{q\geq \chi^I} T_{n+q,m+q}',
\end{align*}
where $q\geq \chi^I$ stands for the inequality in each coordinate.
Thus,
\begin{align*}
\sum_{I\subset[N]}(-1)^{|I|} \tilde{\phi}(m+n+2\chi^I)  &= \sum_{I\subset[N]}\sum_{q\geq \chi^I} (-1)^{|I|} T_{n+q,m+q}'\\
&= \sum_{q\in\N^N} T_{n+q,m+q}' \sum_{\substack{I\subset[N] \\ q\geq \chi^I}} (-1)^{|I|}.
\end{align*}
Finally, observe that
\begin{align*}
\sum_{\substack{I\subset[N] \\ q\geq \chi^I}} (-1)^{|I|}=\sum_{k=0}^r \tbinom{r}{k}(-1)^k,
\end{align*}
where $r$ is the cardinality of the set $\{i\in[N]\, :\ q_i>0\}$. Hence, this sum equals $0$ for all $q\in\N^N$, with the exception of $q=0$, for which the sum is $1$. We conclude that
\begin{align*}
\sum_{I\subset[N]}(-1)^{|I|} \tilde{\phi}(m+n+2\chi^I)  = T_{n,m}'.
\end{align*}
\end{proof}

To end this section, we give the proof of Proposition \ref{Prop_mult_rad} using Lemmas \ref{Lem_suf_mult_rad} and \ref{Lem_nec_mult_rad}.

\begin{proof}[Proof of Proposition \ref{Prop_mult_rad}]
Fix $N\geq 1$ and $X_1,...,X_N$ infinite trees of minimum degrees $d_1,...,d_N\geq 3$. Put $X=X_1\times\cdots\times X_N$ and let $\tilde{\phi}:\N^N\to\C$ be a function such that the limits $l_0$ and $l_1$ \eqref{l_0l_1} exist. Suppose first that
\begin{align*}
T=\left(\sum_{I\subset[N]}(-1)^{|I|} \tilde{\phi}(m+n+2\chi^I) \right)_{m,n\in\N^N}
\end{align*}
belongs to $S_1(\ell_2(\N^N))$. Then, by Lemma \ref{Lem_suf_mult_rad}, $\tilde{\phi}$ defines a multi-radial Schur multiplier $\phi$ on $X$, and
\begin{align*}
\|\phi\|_{cb} \leq \|T\|_{S_1} + |c_+|+|c_-|.
\end{align*}
Conversely, assume that $\tilde{\phi}$ defines a Schur multiplier on $X$. Let $\mathcal{T}_d$ be the $d$-regular tree. Then there is an isometric embedding
\begin{align*}
\mathcal{T}_{d_1}\times\cdots\times\mathcal{T}_{d_N} \hookrightarrow X.
\end{align*}
Hence, by restriction, $\tilde{\phi}$ defines a multi-radial Schur multiplier on $\mathcal{T}_{d_1}\times\cdots\times\mathcal{T}_{d_N}$ of norm at most $\|\phi\|_{cb}$. Thus, by Lemma \ref{Lem_nec_mult_rad}, $T$ is an element of $S_1(\ell_2(\N))$ of norm at most
\begin{align*}
\left[\prod_{i=1}^N\frac{d_i}{d_i-2}\right]\left(\|\phi\|_{cb}-|c_+|-|c_-|\right),
\end{align*}
which completes the proof.
\end{proof}

\section{Radial multipliers on finite products of trees}\label{sectRadTree}

In this section, we return to radial multipliers and show how Proposition \ref{Prop_mult_rad} implies Theorem A. First, we prove a general fact about trace-class operators that will provide a relation between the generalised Hankel matrices \eqref{genHank} and \eqref{genHank_N}.

\begin{lem}\label{Lem_NN_N}
Let $N\geq 1$ and let $(a_n)_{n\in\N}$ be a sequence of complex numbers. Define the following matrices
\begin{align*}
H&=\left(\tbinom{N+i-1}{N-1}^{\frac{1}{2}}\tbinom{N+j-1}{N-1}^{\frac{1}{2}}a_{i+j}\right)_{i,j\in\N},&
T&=\left(a_{|m|+|n|}\right)_{m,n\in\N^N}.
\end{align*}
Then $H$ belongs to $S_1(\ell_2(\N))$ if and only if $T$ belongs to $S_1(\ell_2(\N^N))$. Moreover, in that case,
\begin{align*}
\|H\|_{S_1(\ell_2(\N))}=\|T\|_{S_1(\ell_2(\N^N))}.
\end{align*}
\end{lem}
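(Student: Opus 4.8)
The plan is to realise $\ell_2(\N)$ as the subspace of $\ell_2(\N^N)$ of vectors depending only on $|m|$, and to see $T$ and $H$ as the same operator in the two pictures. The bridge is the elementary counting identity (stars and bars): for each $k\in\N$ the number of $m\in\N^N$ with $|m|=k$ equals $\binom{N+k-1}{N-1}$; this is exactly the source of the binomial weights in $H$. First I would set $f_k=\sum_{|m|=k}\delta_m\in\ell_2(\N^N)$; these are pairwise orthogonal with $\|f_k\|^2=\binom{N+k-1}{N-1}$, so the normalised vectors $e_k=\binom{N+k-1}{N-1}^{-1/2}f_k$ form an orthonormal system. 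Let $\iota:\ell_2(\N)\to\ell_2(\N^N)$ be the isometry with $\iota\delta_k=e_k$; a one-line computation gives $\iota^*\delta_m=\binom{N+|m|-1}{N-1}^{-1/2}\delta_{|m|}$, and $P=\iota\iota^*$ is the orthogonal projection onto $V:=\overline{\operatorname{span}}\{e_k\}$, which coincides with the closed subspace of $\xi\in\ell_2(\N^N)$ that depend only on $|m|$.

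The computational heart is the pair of matrix identities $\iota^*T\iota=H$ and $\iota H\iota^*=T$. For the first: whenever $T$ is bounded, $\langle Te_k,e_j\rangle$ unwinds, using $T_{m,n}=a_{|m|+|n|}$ together with the counting identity, to $\binom{N+j-1}{N-1}^{1/2}\binom{N+k-1}{N-1}^{1/2}a_{j+k}=H_{j,k}$, so $\iota^*T\iota$ is the operator with matrix $H$. For the second: given a bounded operator $H$ with that matrix, the formula for $\iota^*\delta_m$ shows that the $(m,n)$ entry of $\iota H\iota^*$ is $\binom{N+|m|-1}{N-1}^{-1/2}\binom{N+|n|-1}{N-1}^{-1/2}H_{|m|,|n|}=a_{|m|+|n|}$, i.e.\ $\iota H\iota^*$ is the operator with matrix $T$. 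I would also note that $T$ is supported on $V$ from both sides, i.e.\ $T=PTP$: indeed $(T\xi)(m)$ depends only on $|m|$, so $\operatorname{ran}T\subseteq V$; and $P\xi$ has the same sum over each set $\{|n|=k\}$ as $\xi$, so $TP=T$. Hence $T=\iota(\iota^*T\iota)\iota^*$.

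With these in place the proof is immediate and there is no real obstacle. If $T\in S_1(\ell_2(\N^N))$, then $\iota^*T\iota\in S_1(\ell_2(\N))$ has matrix $H$, and conjugation by the isometry $\iota$ preserves the trace norm (via $T=\iota(\iota^*T\iota)\iota^*$), so $H$ defines a trace-class operator with $\|H\|_{S_1}=\|T\|_{S_1}$. Conversely, if $H\in S_1(\ell_2(\N))$, then $\iota H\iota^*\in S_1(\ell_2(\N^N))$ has matrix $T$ and the same trace norm. The only thing needing care is the bookkeeping of the weights $\binom{N+k-1}{N-1}^{\pm1/2}$ coming from the normalisation of the $f_k$, and verifying the formula for $\iota^*\delta_m$ so that the matrix of $\iota H\iota^*$ comes out exactly as $T$.
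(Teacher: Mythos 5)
Your proof is correct and follows essentially the same route as the paper: the paper uses the very same isometry (its $V$ is your $\iota$, viewed as a unitary onto the subspace of functions of $|m|$), the same matrix computations identifying $V^*TV$ with $H$, and the same observation that $T$ and $T^*$ have range in that subspace. The only cosmetic difference is that the paper gets $\|V^*TV\|_{S_1}=\|T\|_{S_1}$ via the polar decomposition, whereas you use $T=\iota H\iota^*$ and the fact that conjugation by an isometry preserves the trace norm; both are fine.
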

\begin{proof}
Consider the following closed subspace of $\ell_2(\N^N)$,
\begin{align*}
E=\{f\in\ell_2(\N^N)\ :\ \exists\ \dot{f}:\N\to\C,\ f(m)=\dot{f}(|m|)\},
\end{align*}
and define $V:\ell_2(\N)\to E$ by
\begin{align*}
V\delta_i=\tbinom{N+i-1}{N-1}^{-\frac{1}{2}}\sum_{|m|=i}\delta_{m}.
\end{align*}
Since $\tbinom{N+i-1}{N-1}$ is exactly the cardinal of the set $\{m\in\N^N\,:\, |m|=i\}$, and since $\langle V\delta_i,V\delta_j\rangle=0$ for $i\neq j$, $V$ extends to a unitary from $\ell_2(\N)$ to $E$. Suppose first that $T\in S_1(\ell_2(\N^N))$, and observe that both the ranges of $T$ and $T^*$ are contained in $E$. Hence, if $T=U|T|$ is the polar decomposition of $T$, then the same holds for the ranges of $U$ and $|T|$. Thus we can write
\begin{align*}
V^*TV=V^*UVV^*|T|V,
\end{align*}
and this is the polar decomposition of $V^*TV$. Thus
\begin{align*}
\|V^*TV\|_{S_1}=\text{Tr}(V^*|T|V)=\text{Tr}(|T|)=\|T\|_{S_1}.
\end{align*}
Finally,
\begin{align*}
\langle V^*TV\delta_j,\delta_i\rangle &=\langle TV\delta_j,V\delta_i\rangle \\
&= \tbinom{N+i-1}{N-1}^{-\frac{1}{2}}\tbinom{N+j-1}{N-1}^{-\frac{1}{2}}\sum_{|p|=i}\sum_{|q|=j}\langle T\delta_{q},\delta_{p}\rangle\\
&= \tbinom{N+i-1}{N-1}^{-\frac{1}{2}}\tbinom{N+j-1}{N-1}^{-\frac{1}{2}}\sum_{|p|=i}\sum_{|q|=j} a_{|p|+|q|}\\
&= \tbinom{N+i-1}{N-1}^{\frac{1}{2}}\tbinom{N+j-1}{N-1}^{\frac{1}{2}}a_{i+j}.
\end{align*}
Hence $V^*TV=H$, which proves one direction of the equivalence and the equality of the norms. Since $V$ is a unitary, the argument for the other direction is analogous.
\end{proof}

The previous result links the characterisations given by Theorem A and Proposition \ref{Prop_mult_rad}. Since the existence of the limits $l_0$ and $l_1$ is one of the hypotheses of Proposition \ref{Prop_mult_rad}, we need a way to prove that they exist under the assumptions of Theorem A. This will be given by Lemma \ref{lema_nconv} below, which relies on the following elementary fact, whose proof we include for the reader's convenience.

\begin{lem}\label{elem_lem}
Let $(a_n)_{n\in\N}$ be a bounded sequence of complex numbers such that the sequence of differences $(a_n - a_{n+1})$ converges to $a\in\C$. Then $a=0$.
\end{lem}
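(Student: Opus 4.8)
The plan is to argue by contradiction. Suppose $a \neq 0$; by considering $a_n/a$ we may as well assume $a$ is real and positive, or more simply, just work with $\text{Re}(a)$ and $\text{Im}(a)$ separately, so without loss of generality assume $a > 0$. The hypothesis says $a_n - a_{n+1} \to a$, so there exists $n_0$ such that $\text{Re}(a_n - a_{n+1}) \geq a/2$ for all $n \geq n_0$. Summing this telescoping estimate from $n_0$ to $n_0 + k - 1$ gives $\text{Re}(a_{n_0}) - \text{Re}(a_{n_0+k}) \geq ka/2$, hence $\text{Re}(a_{n_0+k}) \leq \text{Re}(a_{n_0}) - ka/2 \to -\infty$ as $k \to \infty$. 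This contradicts the boundedness of $(a_n)$.

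The only genuine step is handling the complex value of $a$ cleanly. I would phrase it as: write $a = |a|e^{i\theta}$ and replace $a_n$ by $b_n = e^{-i\theta}a_n$, which is still bounded and whose consecutive differences converge to $|a| \geq 0$; then it suffices to show $|a| = 0$. With $b_n$ real-part dominated below by a fixed positive slope, the telescoping sum forces $\text{Re}(b_n) \to -\infty$, contradicting boundedness. So $|a| = 0$, i.e. $a = 0$.

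There is essentially no obstacle here; the lemma is a soft boundedness-versus-linear-growth argument. The only thing to be careful about is not assuming $(a_n)$ itself converges (it need not — e.g. $a_n = (-1)^n$ shows differences don't converge, but this is exactly excluded by the hypothesis; and a sequence like $a_n = \sin(\log n)$ has differences tending to $0$ without the sequence converging, which is fine for this lemma since then $a = 0$). The proof should be three or four lines.

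\begin{proof}
Suppose, for contradiction, that $a\neq 0$. Write $a=|a|e^{\ii\theta}$ and set $b_n=e^{-\ii\theta}a_n$. Then $(b_n)$ is still bounded, and $b_n-b_{n+1}\to|a|>0$. Hence there exists $n_0\in\N$ such that
\begin{align*}
\mathrm{Re}(b_n-b_{n+1})\geq\frac{|a|}{2}\qquad\text{for all }n\geq n_0.
\end{align*}
Summing this inequality over $n=n_0,n_0+1,\dots,n_0+k-1$, the left-hand side telescopes and we obtain
\begin{align*}
\mathrm{Re}(b_{n_0})-\mathrm{Re}(b_{n_0+k})\geq\frac{k|a|}{2},
\end{align*}
so that $\mathrm{Re}(b_{n_0+k})\leq\mathrm{Re}(b_{n_0})-k|a|/2\to-\infty$ as $k\to\infty$. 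This contradicts the boundedness of $(b_n)$. Therefore $a=0$.
\end{proof}
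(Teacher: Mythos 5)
Your proof is correct and follows essentially the same route as the paper: assume $a\neq 0$, pick $n_0$ past which the differences stay within $|a|/2$ of $a$, telescope, and contradict boundedness by forcing linear growth. The only cosmetic difference is that you rotate by $e^{-\ii\theta}$ and track real parts, whereas the paper keeps $a$ complex and uses the estimate $|a_{n_0}-a_{n_0+k}-ka|\leq k|a|/2$ together with the reverse triangle inequality to get $|a_{n_0}-a_{n_0+k}|\geq k|a|/2$; both are equally valid.
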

\begin{proof}
Suppose that $a\neq 0$. Then, there exists $n_0\in\N$ such that
\begin{align*}
|a_{n}-a_{n+1}-a|\leq \frac{|a|}{2},\quad\forall n\geq n_0.
\end{align*}
Thus, for all $k\geq 1$,
\begin{align*}
|a_{n_0}-a_{n_0+k}-ka|\leq \sum_{n=0}^{k-1} |a_{n_0+n}-a_{n_0+n+1}-a| \leq k\frac{|a|}{2},
\end{align*}
which implies that
\begin{align*}
|a_{n_0}-a_{n_0+k}| \geq k\frac{|a|}{2},\quad\forall k\geq 1.
\end{align*}
This contradicts the boundedness of $(a_n)$. Therefore, $a=0$.
\end{proof}

Recall the definition of the discrete derivative for a sequence of complex numbers $(a_n)_{n\in\N}$.
\begin{align*}
(\mathfrak{d}_1^0a)_n &= a_n,\\
(\mathfrak{d}_1^{m+1}a)_n &= (\mathfrak{d}_1^ma)_n-(\mathfrak{d}_1^ma)_{n+1},\quad\forall m,n\in\N.
\end{align*}

\begin{lem}\label{lema_nconv}
Let $m\geq 1$ and let $(a_n)_{n\in\N}$ be a bounded sequence of complex numbers such that
\begin{align*}
\sum_{n\geq 0}\tbinom{m+n-1}{m-1}\left|(\mathfrak{d}_1^m a)_n\right| < \infty.
\end{align*}
Then $(a_n)$ converges.
\end{lem}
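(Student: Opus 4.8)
The plan is to reduce the general case to the base case $m=1$, which is handled by Lemma \ref{elem_lem}, by an induction on $m$ that peels off one discrete derivative at a time. The key observation is the combinatorial identity relating the weighted $\ell_1$-sums of consecutive discrete derivatives: if we set $b_n=(\mathfrak{d}_1 a)_n=a_n-a_{n+1}$, then $(\mathfrak{d}_1^{m-1}b)_n=(\mathfrak{d}_1^m a)_n$, and one has the Pascal-type relation $\binom{(m-1)+n-1}{(m-1)-1}\le\binom{m+n-1}{m-1}$ (indeed $\binom{m+n-1}{m-1}=\sum_{k=0}^{n}\binom{(m-1)+k-1}{(m-1)-1}$, so in particular each term of the $(m-1)$-sum is dominated by the corresponding term of the $m$-sum). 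Hence the hypothesis
\begin{align*}
\sum_{n\ge 0}\tbinom{m+n-1}{m-1}\left|(\mathfrak{d}_1^m a)_n\right|<\infty
\end{align*}
implies $\sum_{n\ge 0}\binom{(m-1)+n-1}{(m-1)-1}\left|(\mathfrak{d}_1^{m-1}b)_n\right|<\infty$. If we can also show that $(b_n)$ is bounded, then the induction hypothesis (applied to the sequence $(b_n)$ with exponent $m-1$) gives that $(b_n)=(a_n-a_{n+1})$ converges; since $(a_n)$ is bounded, Lemma \ref{elem_lem} forces the limit to be $0$, and then $(a_n)$ is a bounded sequence whose consecutive differences form a summable series (being the tail of a convergent one — or more directly, $\sum|b_n|<\infty$ will already follow), so $(a_n)$ converges as $a_n=a_0-\sum_{k=0}^{n-1}b_k$.

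First I would set up the induction carefully. For the base case $m=1$ the hypothesis reads $\sum_{n\ge 0}|a_n-a_{n+1}|<\infty$, so $(a_n)$ is Cauchy and converges. For the inductive step, assume the statement for $m-1\ge 1$. Put $b_n=a_n-a_{n+1}$. The main points to check are: (a) $(b_n)$ is bounded — this is immediate from boundedness of $(a_n)$, with $|b_n|\le 2\sup_k|a_k|$; (b) the weighted summability passes from $a$ to $b$ — this is the combinatorial estimate above, using $(\mathfrak{d}_1^{m-1}b)_n=(\mathfrak{d}_1^m a)_n$ and $\binom{(m-1)+n-1}{m-2}\le\binom{m+n-1}{m-1}$ for all $n$; (c) conclude by the induction hypothesis that $(b_n)$ converges, then invoke Lemma \ref{elem_lem} to see $b_n\to 0$. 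Finally, to upgrade "$b_n\to 0$" to "$(a_n)$ converges", note that the induction hypothesis applied at level $m-1$ actually gives summability of $(\mathfrak{d}_1^{m-2}b)$-weighted sums and, iterating down, eventually $\sum_n|b_n|<\infty$; alternatively, one can simply observe that the case $m-1$ of the lemma is being applied to $(b_n)$, and rerun the argument one more level so that at the bottom one genuinely has $\sum|b_n|<\infty$, whence $a_n=a_0-\sum_{k<n}b_k$ converges.

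The step I expect to be the main (though still modest) obstacle is verifying the binomial inequality $\binom{(m-1)+n-1}{m-2}\le\binom{m+n-1}{m-1}$ uniformly in $n$, i.e.\ making sure the weights in the induction hypothesis are genuinely dominated by the given weights — this is where one must be careful with the indexing of the $\mathfrak{d}_1$-iterates and the off-by-one in the binomial coefficients. It follows from the hockey-stick identity $\binom{m+n-1}{m-1}=\sum_{k=0}^{n}\binom{(m-1)+k-1}{m-2}$, which shows each summand on the right with index $k=n$ is $\le$ the left side; but one should double-check the case $m=2$ (where $\binom{(m-1)+n-1}{m-2}=\binom{n}{0}=1$ and $\binom{m+n-1}{m-1}=\binom{n+1}{1}=n+1$) to confirm the convention. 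Everything else is a routine telescoping/Cauchy argument together with direct appeals to Lemmas \ref{elem_lem}.
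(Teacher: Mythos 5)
Your reduction step has a genuine gap. Passing from $a$ to $b=\mathfrak{d}_1 a$ and using $\tbinom{m+n-2}{m-2}\le\tbinom{m+n-1}{m-1}$ is fine, but the induction hypothesis applied to $b$ only concludes that $(b_n)$ \emph{converges}, and Lemma \ref{elem_lem} then gives $b_n=a_n-a_{n+1}\to 0$. That is strictly weaker than what you need: a bounded sequence whose consecutive differences tend to $0$ need not converge (e.g.\ $a_n=\sin(\log(n+1))$). Your proposed upgrades do not close this gap. The conclusion of the lemma at level $m-1$ says nothing about summability of $(\mathfrak{d}_1^{m-2}b)$-weighted sums or of $(b_n)$ itself, so you cannot ``iterate down'' to $\sum_n|b_n|<\infty$. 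And even if you strengthen the induction statement so that it carries weighted summability of all lower-order derivatives, the weights you inherit through your peeling step are one binomial degree too weak: from the level-$(m-1)$ statement for $b$ you only get $\sum_n\tbinom{i+n-2}{i-2}|(\mathfrak{d}_1^{i}a)_n|<\infty$ for $2\le i\le m$, never the weight $(n+1)$ on $\mathfrak{d}_1^{2}a$ (equivalently $\sum_n|(\mathfrak{d}_1 a)_n|<\infty$) that your final telescoping $a_n=a_0-\sum_{k<n}b_k$ requires. At the very bottom of your recursion you do get $\sum_n|(\mathfrak{d}_1^{m}a)_n|<\infty$, hence $\mathfrak{d}_1^{m-1}a\to 0$, but at every level above that you are again only in the situation ``differences tend to zero''.

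The paper closes the gap by keeping the \emph{same} sequence and lowering the order of the derivative. From the level-$(m+1)$ hypothesis one first gets that $((\mathfrak{d}_1^{m}a)_n)$ is Cauchy, hence convergent, and Lemma \ref{elem_lem} applied to the bounded sequence $((\mathfrak{d}_1^{m-1}a)_n)$ shows the limit is $0$; this is what allows one to write $(\mathfrak{d}_1^{m}a)_n=\sum_{j\ge 0}(\mathfrak{d}_1^{m+1}a)_{n+j}$ as a tail series. Plugging this into the weighted sum and re-summing, using $\tbinom{m+n-1}{m-1}\le\tbinom{m+n+j-1}{m-1}$ and $(k+1)\tbinom{m+k-1}{m-1}=\frac{m(k+1)}{m+k}\tbinom{m+k}{m}\le m\tbinom{m+k}{m}$, yields $\sum_n\tbinom{m+n-1}{m-1}|(\mathfrak{d}_1^{m}a)_n|<\infty$, i.e.\ the level-$m$ hypothesis for $a$ itself, after which the induction closes. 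This tail-sum and re-summation estimate, which consumes exactly one degree of the binomial weight per step, is the key idea missing from your argument; the Pascal-type domination you propose only handles the trivial relabelling $\mathfrak{d}_1^{m-1}b=\mathfrak{d}_1^{m}a$.
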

\begin{proof}
We proceed by induction on $m$. For $m=1$, we have
\begin{align*}
\sum_{n\geq 0}\left|a_{n}-a_{n+1}\right| < \infty.
\end{align*}
By the triangle inequality, this implies that $(a_n)_{n\in\N}$ is a Cauchy sequence. Hence, it converges. Now suppose that the result is true for some $m\geq 1$, and take $(a_n)_{n\in\N}$ such that
\begin{align}\label{bddsumm+1}
\sum_{n\geq 0}\tbinom{m+n}{m}\left|(\mathfrak{d}_1^{m+1} a)_n\right| < \infty.
\end{align}
In particular, the series
\begin{align*}
\sum_{n\geq 0}\left|(\mathfrak{d}_1^{m} a)_n-(\mathfrak{d}_1^{m} a)_{n+1}\right|
\end{align*}
converges. Therefore, $((\mathfrak{d}_1^{m} a)_n)$ converges. Moreover, since $(\mathfrak{d}_1^{m} a)_n=(\mathfrak{d}_1^{m-1} a)_n-(\mathfrak{d}_1^{m-1} a)_{n+1}$, and since $(a_n)$ is bounded, Lemma \ref{elem_lem} implies that $((\mathfrak{d}_1^{m} a)_n)$ must converge to $0$. Thus
\begin{align*}
(\mathfrak{d}_1^{m} a)_n=\sum_{j\geq 0} (\mathfrak{d}_1^{m} a)_{n+j}-(\mathfrak{d}_1^{m} a)_{n+j+1},\quad\forall n\in\N.
\end{align*}
Hence,
\begin{align*}
\left| (\mathfrak{d}_1^{m} a)_n \right| \leq \sum_{j\geq 0} \left| (\mathfrak{d}_1^{m+1} a)_{n+j}\right|,
\end{align*}
which in turn implies
\begin{align*}
\sum_{n\geq 0}\tbinom{m+n-1}{m-1}\left| (\mathfrak{d}_1^{m} a)_n \right| 
&\leq \sum_{n\geq 0}\tbinom{m+n-1}{m-1}\sum_{j\geq 0} \left| (\mathfrak{d}_1^{m+1} a)_{n+j}\right|\\
&\leq \sum_{n\geq 0}\sum_{j\geq 0}\tbinom{m+n+j-1}{m-1} \left| (\mathfrak{d}_1^{m+1} a)_{n+j}\right|\\
&= \sum_{k\geq 0}(k+1)\tbinom{m+k-1}{m-1} \left| (\mathfrak{d}_1^{m+1} a)_{k}\right|\\
&= \sum_{k\geq 0}\frac{m(k+1)}{m+k}\tbinom{m+k}{m} \left| (\mathfrak{d}_1^{m+1} a)_{k}\right|.
\end{align*}
Since $\frac{m(k+1)}{m+k}\leq m$, by \eqref{bddsumm+1}, this is finite. Therefore, by the induction hypothesis, $(a_n)$ converges.
\end{proof}

\begin{proof}[Proof of Theorem A]
Fix $N\geq 1$ and $X_1,...,X_N$ infinite trees of minimum degrees $d_1,...,d_N\geq 3$. Put $X=X_1\times\cdots\times X_N$ and let $\dot{\phi}:\N\to\C$ be a bounded function. Assume first that the matrix
\begin{align*}
H=\left(\tbinom{N+i-1}{N-1}^{\frac{1}{2}}\tbinom{N+j-1}{N-1}^{\frac{1}{2}}\mathfrak{d}_2^N\dot{\phi}(i+j)\right)_{i,j\in\N}
\end{align*}
belongs to $S_1(\ell_2(\N))$. Then its diagonal defines an element of $\ell_1(\N)$, so, by Lemma \ref{lema_nconv} applied to $a_n=\dot{\phi}(2n)$, the following limit exists
\begin{align*}
\lim_{n\to\infty}\dot{\phi}(2n).
\end{align*}
Moreover, if $S$ denotes the forward shift operator on $\ell_2(\N)$, then $HS$ is also of trace class, and
\begin{align*}
(HS)_{i,j}=\tbinom{N+i-1}{N-1}^{\frac{1}{2}}\tbinom{N+j}{N-1}^{\frac{1}{2}}\mathfrak{d}_2^N\dot{\phi}(i+j+1).
\end{align*}
Since $\binom{N+j-1}{N-1}\leq\binom{N+j}{N-1}$, we can apply again Lemma \ref{lema_nconv} to the sequence $a_n=\dot{\phi}(2n+1)$ to get the existence of
\begin{align*}
\lim_{n\to\infty}\dot{\phi}(2n+1).
\end{align*}
Now define $\tilde{\phi}:\N^N\to\C$ by $\tilde{\phi}(n)=\dot{\phi}(|n|)$. Then
\begin{align*}
\sum_{I\subset[N]}(-1)^{|I|} \tilde{\phi}(m+n+2\chi^I)
&=\sum_{I\subset[N]}(-1)^{|I|} \dot{\phi}\left(|m|+|n|+2|I|\right)\\
&=\sum_{k=0}^N\tbinom{N}{k}(-1)^{k} \dot{\phi}\left(|m|+|n|+2k\right)\\
&= \mathfrak{d}_2^N \dot{\phi}\left(|m|+|n|\right).
\end{align*}
So by Lemma \ref{Lem_NN_N}, the operator
\begin{align*}
T=\left(\sum_{I\subset[N]}(-1)^{|I|} \tilde{\phi}(m+n+2\chi^I)\right)_{m,n\in\N^N}
\end{align*} 
belongs to $S_1(\ell_2(\N^N))$. Hence, the (multi-)radial function $\phi(x,y)=\dot{\phi}(d(x,y))$ satisfies the hypotheses of Proposition \ref{Prop_mult_rad}, and therefore it is a Schur multiplier satisfying
\begin{align}\label{estimTphiT}
\left[\prod_{i=1}^N\frac{d_i-2}{d_i}\right] \|T\|_{S_1} + |c_+|+|c_-|\leq\|\phi\|_{cb} \leq \|T\|_{S_1} + |c_+|+|c_-|,
\end{align}
where
\begin{align*}
c_\pm=\frac{1}{2}\lim_{n\to\infty}\dot{\phi}(2n) \pm \frac{1}{2}\lim_{n\to\infty}\dot{\phi}(2n+1).
\end{align*}
Conversely, assume that $\dot{\phi}$ defines a radial Schur multiplier on $X$. By restriction, it defines a radial Schur multiplier on the tree $X_1$, so by Theorem \ref{thmHSS}, the limits $\lim_{n\to\infty}\dot{\phi}(2n)$ and $\lim_{n\to\infty}\dot{\phi}(2n+1)$ exist. Therefore, by Proposition \ref{Prop_mult_rad} and the same computations as before, the operator $T$ is an element of $S_1(\ell_2(\N^N))$ satisfying the estimates \eqref{estimTphiT}.
\end{proof}

\section{Sufficient condition for a product of hyperbolic graphs}\label{sectSCPHG}

Now we turn to products of hyperbolic graphs and prove the \textit{if} part of Theorem B. We also show that groups acting properly on such graphs are weakly amenable.

\subsection{Sufficient condition}
We shall fix now $N\geq 1$ and a family $X_1,...,X_N$ of hyperbolic graphs with bounded degree. Define $X=X_1\times\cdots\times X_N$. The following result proves the first part of Theorem B.

\begin{prop}\label{prop_suff_prod_hyp}
Let $\dot{\phi}:\N\to\C$ be a bounded function such that the generalised Hankel matrix
\begin{align*}
H=\left(\tbinom{N+i-1}{N-1}^{\frac{1}{2}}\tbinom{N+j-1}{N-1}^{\frac{1}{2}}\mathfrak{d}_1^N\dot{\phi}(i+j)\right)_{i,j\in\N}
\end{align*}
belongs to $S_1(\ell_2(\N))$. Then $\dot{\phi}$ defines a radial Schur multiplier $\phi$ on $X$. Moreover, $\dot{\phi}(n)$ converges to some $c\in\C$, and there exists $C>0$ depending only on $X$, such that
\begin{align*}
\|\phi\|_{cb}\leq C\|H\|_{S_1} +|c|.
\end{align*}
\end{prop}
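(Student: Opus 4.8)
The plan is to reduce to the case where $\dot\phi$ vanishes at infinity, to transport the problem to $\ell_2(\N^N)$ by means of the auxiliary multi-radial function $\tilde\phi(n)=\dot\phi(|n|)$, and then to run the tensor-and-telescoping construction of Lemma \ref{Lem_suf_mult_rad}, replacing the exact geodesic vectors $\delta_{\omega_{x_i}(k_i)}$ by the approximate geodesic vectors furnished by Ozawa's construction \cite{Oza} for bounded-degree hyperbolic graphs, as in \cite{MeidlS}.

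First I would establish convergence of $\dot\phi$. Since $H\in S_1(\ell_2(\N))$, its diagonal lies in $\ell_1$, and likewise for $HS$, where $S$ is the forward shift (recall $S_1$ is a two-sided ideal). Reading off the two diagonals and using $\binom{N+i-1}{N-1}\le\binom{N+i}{N-1}$ together with $\binom{N+2i-1}{N-1},\ \binom{N+2i}{N-1}\le 2^{N-1}\binom{N+i-1}{N-1}$, one obtains
\begin{align*}
\sum_{n\ge 0}\tbinom{N+n-1}{N-1}\left|\mathfrak{d}_1^N\dot\phi(n)\right| < \infty .
\end{align*}
Because $\dot\phi$ is bounded, Lemma \ref{lema_nconv} applied with $m=N$ and $a_n=\dot\phi(n)$ gives $\dot\phi(n)\to c$ for some $c\in\C$. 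Replacing $\dot\phi$ by $\dot\phi-c$ does not change $H$ (the operator $\mathfrak{d}_1^N$ annihilates constants) and changes $\phi$ only by the constant Schur multiplier of norm $|c|$, so I may assume $c=0$. Setting $\tilde\phi(n)=\dot\phi(|n|)$ on $\N^N$ and letting $T=(T_{m,n})_{m,n\in\N^N}$ be given by $T_{m,n}=\sum_{I\subset[N]}(-1)^{|I|}\tilde\phi(m+n+\chi^I)=\mathfrak{d}_1^N\dot\phi(|m|+|n|)$, Lemma \ref{Lem_NN_N} applied to $a_k=\mathfrak{d}_1^N\dot\phi(k)$ shows $T\in S_1(\ell_2(\N^N))$ with $\|T\|_{S_1}=\|H\|_{S_1}$; fix $A,B\in S_2(\ell_2(\N^N))$ with $T=A^*B$ and $\|A\|_{S_2}\|B\|_{S_2}=\|T\|_{S_1}$ via the polar decomposition.

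Now for the core step. For each $X_i$, Ozawa's construction \cite{Oza} (in the form used in \cite{MeidlS}) provides a Hilbert space $\mathcal{K}_i$, a constant $C_i\ge 1$ depending only on $X_i$, and maps $\beta_i^{(k)}:X_i\to\mathcal{K}_i$ ($k\in\N$), bounded by $\sqrt{C_i}$, whose inner products $\langle\beta_i^{(k)}(x_i),\beta_i^{(m)}(y_i)\rangle$ play, up to a bounded hyperbolicity error, the role that $\langle\delta_{\omega_{x_i}(k)},\delta_{\omega_{y_i}(m)}\rangle$ plays in a tree: they are essentially supported on, and telescope along, the pairs $(k,m)$ lying on a common geodesic with $k+m$ comparable to $d_i(x_i,y_i)$. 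Mimicking Lemma \ref{Lem_suf_mult_rad}, I would put
\begin{align*}
P(x)&=\sum_{k\in\N^N}\beta_1^{(k_1)}(x_1)\otimes\cdots\otimes\beta_N^{(k_N)}(x_N)\otimes Be_k,\\
Q(y)&=\sum_{m\in\N^N}\beta_1^{(m_1)}(y_1)\otimes\cdots\otimes\beta_N^{(m_N)}(y_N)\otimes Ae_m,
\end{align*}
so that $\|P(x)\|\le(C_1\cdots C_N)^{1/2}\|B\|_{S_2}$ and $\|Q(y)\|\le(C_1\cdots C_N)^{1/2}\|A\|_{S_2}$. Expanding $\langle P(x),Q(y)\rangle$, the $i$-th tensor leg collapses the double sum over $(k_i,m_i)$ into a single sum over $j_i\ge0$, and what remains is the same $N$-fold telescoping series as in Lemma \ref{Lem_suf_mult_rad}, which sums, using $\tilde\phi(n)\to0$, to $\tilde\phi(d_1(x_1,y_1),\dots,d_N(x_N,y_N))=\dot\phi(d(x,y))=\phi(x,y)$. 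By Theorem \ref{ThmGro}, $\phi$ is a Schur multiplier with $\|\phi\|_{cb}\le C\|H\|_{S_1}$, where $C=C_1\cdots C_N$, and restoring the constant yields the asserted bound.

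The main obstacle is precisely the words ``up to a bounded hyperbolicity error'': Ozawa's vectors do not reproduce the geodesic indicator exactly, so in each factor the telescoping does not close on the nose. For a single graph, \cite{MeidlS} handle this by pre-composing $\dot\phi$ with a bounded ``thickening'' that smooths the indicator over the error window before telescoping, at the cost of the constant. The delicate task is to do this in all $N$ factors at once while keeping the resulting constant of the form $C_1\cdots C_N$ — a function of the individual $C_i$ — rather than letting it grow uncontrollably with $N$. The cleanest route, which I would adopt, is to run Ozawa's reduction factor by factor first, comparing each $X_i$ with a homogeneous tree up to the constant $C_i$ and thereby reducing to a finite product of trees, and only afterwards to invoke the $\mathfrak{d}_1$-version of the product-of-trees construction (Lemma \ref{Lem_suf_mult_rad}), so that the multi-radial telescoping is applied in the exact tree setting where it does close. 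The appearance of $\mathfrak{d}_1^N$ instead of $\mathfrak{d}_2^N$ is exactly the price of this reduction: the factors $X_i$ need not be bipartite, so parity of distances is not rigid and the stronger first derivative is forced.
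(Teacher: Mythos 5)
Your overall skeleton is the paper's: prove $\dot\phi(n)\to c$ from trace-class summability and Lemma \ref{lema_nconv}, reduce to $c=0$, pass to $\tilde\phi(n)=\dot\phi(|n|)$ and the operator $T$ with $\|T\|_{S_1}=\|H\|_{S_1}$ via Lemma \ref{Lem_NN_N}, factor $T=A^*B$, and run the tensor-and-telescoping construction with Ozawa's vectors in place of the geodesic delta functions (this is exactly Lemma \ref{Lem_multi_hyp} in the paper). But the way you propose to execute the core step contains a genuine error. You claim that Ozawa's vectors reproduce the geodesic pairing only ``up to a bounded hyperbolicity error,'' that the telescoping therefore does not close, and that Mei--de la Salle repair this by a ``thickening.'' That is not what happens: Ozawa's Proposition 10 (Theorem \ref{thmOzawa}(c) in the paper) is an \emph{exact} identity, $\sum_{k=0}^n\langle\eta_k^+(x),\eta_{n-k}^-(y)\rangle$ equals $1$ if $d(x,y)\leq n$ and $0$ otherwise; all hyperbolicity is absorbed into the construction of the $\eta_k^\pm$. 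Regrouping the double sum over $(k_i,m_i)$ by $n_i=k_i+m_i$ and using (c), the inner product $\langle P(x),Q(y)\rangle$ becomes $\sum_{n_i\geq d_i(x_i,y_i)}\sum_{I\subset[N]}(-1)^{|I|}\tilde\phi(n+\chi^I)$, and the $N$-fold $\mathfrak{d}_1$-telescoping closes on the nose because $\tilde\phi\to 0$. No smoothing is needed; this exact cancellation (increments of $1$ in $n_i$, rather than the increments of $2$ coming from matched geodesic rays in a tree) is also the real reason $\mathfrak{d}_1^N$ appears instead of $\mathfrak{d}_2^N$.

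The fallback you actually adopt — ``run Ozawa's reduction factor by factor, comparing each $X_i$ with a homogeneous tree up to the constant $C_i$, thereby reducing to a finite product of trees, and then invoke a $\mathfrak{d}_1$-version of Lemma \ref{Lem_suf_mult_rad}'' — is not a valid argument. Ozawa's construction does not compare a hyperbolic graph with a tree at the level of radial Schur multipliers, and no such reduction is available: hyperbolic graphs do not embed isometrically into finite products of trees in general, and quasi-isometric embeddings distort distances, so they do not transport radial multipliers. Moreover Lemma \ref{Lem_suf_mult_rad} is intrinsically a $\mathfrak{d}_2$ statement (the tree geometry forces increments of $2$); a ``$\mathfrak{d}_1$-version on trees'' would itself have to be proved by the Ozawa-type argument you are trying to avoid, so this route is circular. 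A secondary point: your bound $\|P(x)\|\leq (C_1\cdots C_N)^{1/2}\|B\|_{S_2}$ ignores that the vectors $\eta_k^\pm(x)$ are only orthogonal when $|k-l|\geq 2$; one must split each index into even and odd parts, which is where the paper's constant $(2C_0)^N$ comes from. The correct proof is your first plan carried out literally with Theorem \ref{thmOzawa}, not the tree reduction.
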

As in \cite{MeidlS}, the main tool that we will use is this remarkable construction by Ozawa.

\begin{thm}{\cite[Proposition 10]{Oza}}\label{thmOzawa}
Let $X$ be a hyperbolic graph with bounded degree. Then there is a Hilbert space $\mathcal{H}$, a constant $C_0>0$ and functions $\eta_k^+,\eta_k^-:X\to\mathcal{H}$ such that
\begin{itemize}
\item[a)] $\left\langle\eta_k^\pm(x),\eta_l^\pm(x)\right\rangle=0$ for all $x\in X$ and $k,l\in\N$ such that $|k-l|\geq 2$.
\item[b)] $\left\|\eta_k^\pm(x)\right\|^2\leq C_0$ for all $x\in X$ and all $k\in\N$.
\item[c)] For all $n\in\N$ and all $x,y\in X$,
\begin{align*}
\sum_{k=0}^n\left\langle\eta_k^+(x),\eta_{n-k}^-(y)\right\rangle
=\left\{\begin{array}{ll}
1 & \text{if } d(x,y)\leq n\\ 0 & \text{otherwise}.
\end{array}\right.
\end{align*}
\end{itemize}
\end{thm}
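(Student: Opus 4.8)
The statement is due to Ozawa, and the plan is to reproduce his construction. The strategy is to treat first the case of a tree, where everything is explicit, and then to reduce a general $\delta$-hyperbolic graph to that case by means of the thin-triangles condition. Throughout I fix $\delta\in\N$ so that $X$ satisfies the four-point/thin-triangles inequality with constant $\delta$, and $D<\infty$ bounding the degree of every vertex. The Hilbert space will be $\mathcal{H}=\ell_2(\mathcal{C})$ for a fixed countable set $\mathcal{C}$ of \emph{local markers}: pairs $(v,\beta)$ consisting of a vertex $v\in X$ and a bounded amount of local data $\beta$ (an orientation at $v$, i.e.\ the choice of a neighbour, together with the isomorphism type of the ball $B(v,R)$ for a radius $R=R(\delta)$). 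Morally, $\eta_k^+(x)$ is an appropriately normalised combination of those markers $(v,\beta)$ with $d(x,v)$ in a graded window around $k$ and with $\beta$ pointing away from $x$, and $\eta_k^-(y)$ is the same object with the roles of the two orientation conventions exchanged; property (c) will then express that, along a geodesic from $x$ to $y$, the contributions of the markers telescope to $1$.

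\emph{The tree case.} When $\delta=0$ geodesics are unique: for $x,y$ with $d(x,y)=d$ and $0\le k\le d$ there is a single vertex $v_k(x,y)$ on $[x,y]$ with $d(x,v_k)=k$, and for $k>d$ the vertices at distance $k$ from $x$ that also satisfy $d(v,y)=|k-d|$ branch off $[x,y]$ in a controlled way. One takes $\eta_k^+(x)$ to be an appropriately normalised sum of the oriented edges whose endpoint farther from $x$ lies at distance $k$ from $x$, oriented away from $x$. Then (b) holds (one can arrange $C_0=1$), (a) is immediate because the edges recorded at graded levels $k$ and $l$ are distinct once $|k-l|\ge 2$, and (c) becomes a counting identity: for $n\ge d$ the geodesic $[x,y]$ together with the subtrees hanging off it contributes exactly $1$ to $\sum_{k=0}^n\langle\eta_k^+(x),\eta_{n-k}^-(y)\rangle$ once the normalisations and the opposite orientation conventions of $\eta^+$ and $\eta^-$ are chosen correctly, while for $n<d$ every term vanishes by the triangle inequality $d(x,y)\le d(x,v)+d(v,y)$.

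\emph{Passing to hyperbolic graphs.} Here the single vertex $v_k(x,y)$ is replaced by the set $A_k(x,y)=\{v\in X: d(x,v)=k,\ d(v,y)=d(x,y)-k\}$ of points sitting at position $k$ on \emph{some} geodesic from $x$ to $y$. Thin triangles provide a constant $D_0=D_0(\delta)$ with $\text{diam}\,A_k(x,y)\le D_0$ for all $x,y$ and $k$, and the fellow-travelling property shows that markers attached to positions $k$ and $l$ lie at mutual distance comparable to $|k-l|$. This is what forces the gap in (a) to be $2$ rather than $1$: the extra unit absorbs the $O(\delta)$ wobble of coarse geodesics. The bound (b), with $C_0$ depending only on $\delta$ and $D$, comes from bounded degree, since the ball $B(v,R(\delta))$ contains at most $D^{R(\delta)}$ markers. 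One then defines $\eta_k^\pm(x)$ exactly as in the tree case but summing over these coarse markers — configurations $(v,\beta)$ with $d(x,v)$ in $\{k-1,k\}$ that lie on a geodesic issuing from $x$, recorded with the $R(\delta)$-ball around $v$ and the orientation away from $x$.

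\emph{The main obstacle.} The heart of the proof is property (c) for $n\ge d(x,y)$. A marker $(v,\beta)$ can contribute to $\langle\eta_k^+(x),\eta_{n-k}^-(y)\rangle$ only if it is simultaneously a $k$-th marker from $x$ (pointing away from $x$) and an $(n-k)$-th marker from $y$ (pointing away from $y$); by the stability of geodesics in a hyperbolic space this forces $v$ to lie within $O(\delta)$ of a geodesic $[x,y]$ and pins down its position on it up to $O(\delta)$, so that only $O(1)$ values of $k$, determined by $n$ and $d(x,y)$, contribute at all. What remains is a uniform bookkeeping argument: one must check that, no matter how the coarse geodesic between $x$ and $y$ meanders, the normalised contributions of the markers along it sum to exactly $1$, independently of $x$, $y$ and $n$. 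This last step is precisely where Ozawa's particular choice of normalisation and of the orientation data is used, and it is the part that does not reduce to a one-line computation; by contrast the case $n<d(x,y)$, as noted above, follows at once from the triangle inequality. (The Grothendieck-type consequence — that the resulting multiplier is completely bounded — is not part of this statement; it is obtained later from (a), (b), (c) when these vectors are combined with a trace-class factorisation of the Hankel matrix.)
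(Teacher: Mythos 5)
This theorem is quoted in the paper as \cite[Proposition 10]{Oza} and is not proved there, so your outline has to stand on its own. It does not: it is a plan rather than a proof, and the step you yourself flag as ``the part that does not reduce to a one-line computation'' --- namely that the sum in (c) equals \emph{exactly} $1$ for every $n\ge d(x,y)$ --- is precisely the content of the proposition. Nothing in your text pins down the normalisation, the precise definition of the vectors $\eta_k^{\pm}$, or the combinatorial identity that forces the exact value $1$; ``uniform bookkeeping argument'' and ``once the normalisations \dots are chosen correctly'' are placeholders for the entire difficulty. Note also that an exact value (not merely a uniform bound) is indispensable for the way the theorem is used in Lemma \ref{Lem_multi_hyp}, where (c) is what converts $\langle P(x),Q(y)\rangle$ into a telescoping sum recovering $\tilde{\phi}$; an ``up to $O(\delta)$'' version of (c) would be useless there.

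The tree case, which you present as settled, is already not. If $\eta_k^+(x)$ is a normalised sum over objects at distance $k$ from $x$ ``oriented away from $x$'', then $\langle\eta_k^+(x),\eta_{n-k}^-(y)\rangle$ counts vertices $v$ with $d(x,v)=k$ and $d(v,y)=n-k$; for $n>d(x,y)$ such $v$ branch off the geodesic $[x,y]$, and their number depends on the (possibly non-constant) degrees along the way. Getting the total to be exactly $1$ for every $n\ge d(x,y)$ --- for \emph{both} parities of $n-d(x,y)$, in a tree that need not be regular --- requires a specific choice of weights and a genuine identity; this is exactly what Ozawa's construction supplies and what your sketch omits. (For comparison, the naive choice $\eta_k^{\pm}(x)=\delta_{\omega_x(k)}$ used in Lemma \ref{Lem_suf_mult_rad} yields $1$ only when $n-d(x,y)$ is even, which is why Theorem A involves $\mathfrak{d}_2$ while Theorem B involves $\mathfrak{d}_1$.) Without these two ingredients the proposal is a heuristic for why such vectors might exist, not a proof that they do.
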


Like we did in Section \ref{sectMulti}, we shall obtain first a more general result involving multi-radial multipliers.

\begin{lem}\label{Lem_multi_hyp}
Let $N\geq 1$ and let $\tilde{\phi}:\N^N \to\C$ be a function such that the limit
\begin{align*}
c=\lim_{|n|\to\infty}\tilde{\phi}(n)
\end{align*}
exists, and such that the operator $T=(T_{n,m})_{m,n\in\N^N}$ given by
\begin{align*}
T_{n,m}=\sum_{I\subset[N]}(-1)^{|I|} \tilde{\phi}(m+n+\chi^I),\quad\forall m,n\in\N^N,
\end{align*}
where $\chi^I$ is defined as in \eqref{chi_I}, is an element of $\in S_1(\ell_2(\N^N))$. Then $\tilde{\phi}$ defines a multi-radial Schur multiplier $\phi$ on $X$, and there exists $C>0$ depending only on $X$, such that
\begin{align*}
\|\phi\|_{cb}\leq C\|T\|_{S_1} +|c|.
\end{align*}
\end{lem}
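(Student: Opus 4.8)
\textbf{Proof plan for Lemma \ref{Lem_multi_hyp}.}

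The strategy is to imitate the proof of Lemma \ref{Lem_suf_mult_rad}, replacing the explicit geodesic/shift structure of trees by Ozawa's functions $\eta_k^\pm$ from Theorem \ref{thmOzawa}, exactly as in \cite{MeidlS} but now handled coordinatewise over the $N$ factors. First I would reduce to the case $c=0$ by subtracting the constant $c$ from $\tilde\phi$: this does not change the operator $T$ (since $\sum_{I\subset[N]}(-1)^{|I|}=0$), and the constant function $c$ is trivially a Schur multiplier of cb-norm $|c|$, so it suffices to produce the multiplier for $\tilde\psi=\tilde\phi-c$ with norm $\le C\|T\|_{S_1}$, and then add $|c|$ at the end.

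Assume now $c=0$. Using the polar decomposition factor $T=A^*B$ with $A,B\in S_2(\ell_2(\N^N))$ and $\|A\|_{S_2}\|B\|_{S_2}=\|T\|_{S_1}$. For each factor $X_i$ fix Ozawa's data $\mathcal{H}_i$, $C_0^{(i)}$, and $\eta_{k}^{i,\pm}:X_i\to\mathcal{H}_i$. The plan is to define, for $x=(x_1,\dots,x_N)\in X$,
\begin{align*}
P(x)&=\sum_{k\in\N^N}\eta_{k_1}^{1,+}(x_1)\otimes\cdots\otimes\eta_{k_N}^{N,+}(x_N)\otimes Be_k,\\
Q(x)&=\sum_{m\in\N^N}\eta_{m_1}^{1,-}(x_1)\otimes\cdots\otimes\eta_{m_N}^{N,-}(x_N)\otimes Ae_m.
\end{align*}
The key points to check are: (1) these series converge and $\|P(x)\|,\|Q(x)\|$ are uniformly bounded. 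Here, unlike in the tree case, the $\eta_k^\pm$ are not orthonormal, only \emph{almost} orthogonal (property (a): inner products vanish once indices differ by $\ge2$), so one gets $\|P(x)\|^2\le \big(\prod_i 2C_0^{(i)}\big)^{?}\,\|B\|_{S_2}^2$ up to a dimensional constant, by estimating $\sum_{k,k'}\prod_i|\langle\eta_{k_i}^{i,+}(x_i),\eta_{k'_i}^{i,+}(x_i)\rangle|\,|\langle Be_k,Be_{k'}\rangle|$ and using that for each coordinate the almost-orthogonality restricts $|k_i-k'_i|\le1$, then Cauchy--Schwarz; this produces the constant $C$ depending only on the $C_0^{(i)}$, i.e.\ only on $X$. (2) Compute $\langle P(x),Q(y)\rangle$. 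Expanding, it equals
\begin{align*}
\sum_{k,m\in\N^N}\Big(\prod_{i=1}^N\langle\eta_{k_i}^{i,+}(x_i),\eta_{m_i}^{i,-}(y_i)\rangle\Big)\langle A^*Be_k,e_m\rangle,
\end{align*}
and since $A^*B=T$ and $T_{k,m}=\sum_{I\subset[N]}(-1)^{|I|}\tilde\phi(k+m+\chi^I)$, this becomes an $N$-fold sum that, using property (c) one coordinate at a time, telescopes: grouping $\sum_{k_N,m_N}$ and applying (c) with $n=k_N+m_N$ and with $n=k_N+m_N+1$ (which is where the terms $\chi^I$ with $N\in I$ enter with sign $-1$) collapses the $N$-th coordinate to the indicator $\mathbf 1\{d(x_N,y_N)\le k_N+m_N\}$ differences, and iterating over $i=N,N-1,\dots,1$ leaves exactly $\tilde\phi(d(x_1,y_1),\dots,d(x_N,y_N))=\phi(x,y)$, using $\tilde\phi(n)\to0$ so the telescoping tails vanish. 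Then Theorem \ref{ThmGro} gives that $\phi$ is a Schur multiplier with $\|\phi\|_{cb}\le C\|T\|_{S_1}$, and undoing the reduction $c\to0$ yields the claimed bound $\|\phi\|_{cb}\le C\|T\|_{S_1}+|c|$.

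The main obstacle I anticipate is step (2): the bookkeeping of the telescoping identity across $N$ coordinates when combined with the $\chi^I$-alternating sum defining $T$. In the tree case (Lemma \ref{Lem_suf_mult_rad}) the orthogonality of the $\delta$'s forced $k_i=k_0+j_i$, $m_i=m_0+j_i$ exactly, and the telescoping was in the $j_i$ variables with increment $2$; here property (c) gives a \emph{cumulative} sum $\sum_{k=0}^n$ rather than a point evaluation, so one must verify that the alternating sum $\sum_{I}(-1)^{|I|}\tilde\phi(\cdot+\chi^I)$ is precisely the discrete object whose iterated cumulative sums reproduce $\tilde\phi$ at the distance vector — essentially an $N$-fold discrete Newton--Leibniz computation. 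The second, more technical, obstacle is obtaining a constant $C$ in step (1) that genuinely depends only on $X$ (and $N$) and not on $A,B$; this requires carefully exploiting that property (a) limits the interaction in each coordinate to a band of width $3$, and then splitting the resulting banded bilinear form via Cauchy--Schwarz so that $\|A\|_{S_2},\|B\|_{S_2}$ appear to the first power. Everything else is a routine transcription of the tree argument.
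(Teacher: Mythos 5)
Your plan is correct and follows essentially the same route as the paper: reduce to $c=0$ (the constant drops out of $T$ since $\sum_{I\subset[N]}(-1)^{|I|}=0$), factor $T=A^*B$ with $A,B\in S_2$, build $P,Q$ from tensor products of Ozawa's $\eta^\pm_{k_i}$ with $Be_k$, $Ae_m$, use property (c) coordinatewise together with the fact that $\langle A^*Be_k,e_m\rangle$ depends only on $k+m$, and collapse the resulting sum over $n\geq \vec d(x,y)$ by the same $N$-fold telescoping as in the tree lemma. The only cosmetic difference is in bounding $\|P(x)\|$: the paper splits into $2^N$ parity classes (orthogonal within each class by property (a)) to get $(2C_0)^N$, whereas your banded Cauchy--Schwarz estimate yields a slightly larger but equally admissible constant depending only on $X$.
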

\begin{proof}
The proof follows the same lines as that of Lemma \ref{Lem_suf_mult_rad}. Assume first that $c=0$. Take $A,B\in S_2(\ell_2(\N^N))$ such that $T=A^*B$ and $\|T\|_{S_1}=\|A\|_{S_2}\|B\|_{S_2}$. Consider, for each $i=1,...,N$, the functions $\eta_k^\pm$ given by Theorem \ref{thmOzawa}. Observe that these functions are not the same for different hyperbolic graphs; however, we shall make no distinction in the notation since they are defined in different spaces and will not interact with each other. Furthermore, we can let $C_0$ be the maximum of the $N$ constants given by the theorem. Define now, for each $x=(x_i)_{i=1}^N\in X$,
\begin{align*}
P(x)=\sum_{k_1,...,k_N=0}^\infty \eta_{k_1}^+(x_1)\otimes\cdots\otimes\eta_{k_N}^+(x_N)\otimes Be_{(k_1,...,k_N)}
\end{align*}
and
\begin{align*}
Q(x)=\sum_{l_1,...,l_N=0}^\infty \eta_{l_1}^-(x_1)\otimes\cdots\otimes\eta_{l_N}^-(x_N)\otimes Ae_{(l_1,...,l_N)},
\end{align*}
where $\{e_n\}_{n\in\N^N}$ is the canonical orthonormal basis of $\ell_2(\N^N)$. Observe that, for all $x\in X$,
\begin{align*}
P(x)=\sum_{j_1,...,j_N=0}^1\sum_{m_1,...,m_N=0}^\infty \eta_{2m_1+j_1}^+(x_1)\otimes \cdots\otimes &\eta_{2m_N+j_N}^+(x_N)\\
&\otimes Be_{(2m_1+j_1,...,2m_N+j_N)}.
\end{align*}
Then, using  parts (a) and (b) of Theorem \ref{thmOzawa},
\begin{align*}
\|P(x)\|^2 
&\leq 2^N\sum_{j_1,...,j_N=0}^1\left\|\sum_{m_1,...,m_N=0}^\infty \eta_{2m_1+j_1}^+(x_1)\otimes\cdots\otimes\eta_{2m_N+j_N}^+(x_N)\right.\\
&\qquad\qquad\qquad\qquad\qquad\qquad\qquad\qquad\qquad\otimes  Be_{(2m_1+j_1,...,2m_N+j_N)}\Bigg\|^2\\
&=2^N\sum_{j_1,...,j_N=0}^1\sum_{m_1,...,m_N=0}^\infty \left\|\eta_{2m_1+j_1}^+(x_1)\right\|^2\cdots\left\|\eta_{2m_N+j_N}^+(x_N)\right\|^2\\
&\qquad\qquad\qquad\qquad\qquad\qquad\qquad\qquad\qquad \left\|Be_{(2m_1+j_1,...,2m_N+j_N)}\right\|^2\\
&\leq (2C_0)^N\sum_{n\in\N^N}^\infty \left\|Be_n\right\|^2\\
&= (2C_0)^N\|B\|_{S_2}^2.
\end{align*}
Similarly, we obtain $\|Q(y)\|^2\leq (2C_0)^N \|A\|_{S_2}^2$ for all $y\in X$. Now, we may write $\langle P(x),Q(y)\rangle$ as
\begin{align*}
\sum_{n_1=0}^\infty\sum_{m_1=0}^{n_1}\cdots\sum_{n_N=0}^\infty\sum_{m_N=0}^{n_N}
\langle\eta_{m_1}^+(x_1),\eta_{n_1-m_1}^-(&y_1)\rangle\cdots\langle\eta_{m_N}^+(x_N),\eta_{n_N-m_N}^-(y_N)\rangle\\
&\langle Be_{(m_1,...,m_N)},Ae_{(n_1-m_1,...,n_N-m_N)}\rangle.
\end{align*}
So, using Theorem \ref{thmOzawa}(c) and the fact that $T=A^*B$,
\begin{align*}
\langle P(x),Q(y)\rangle =\sum_{\substack{n_1=d(x_1,y_1)\\ \vdots\\ n_N=d(x_N,y_N)}}^\infty  \sum_{I\subset[N]}(-1)^{|I|}\tilde{\phi}((n_1,...,n_N)+\chi^I)
\end{align*}
By the same inductive argument as in the proof of Lemma \ref{Lem_suf_mult_rad}, together with the fact that $c=0$, one shows that this equals $\tilde{\phi}(d(x_1,y_1),...,d(x_N,y_N))$. We conclude that $\tilde{\phi}$ defines a multi-radial Schur multiplier $\phi$ such that 
\begin{align*}
\|\phi\|_{cb}\leq (2C_0)^N \|A\|_{S_2}\|B\|_{S_2}\leq (2C_0)^N\|T\|_{S_1}.
\end{align*}
In the general case, we use the previous argument for $\phi-c$ and conclude that
\begin{align*}
\|\phi\|_{cb}\leq (2C_0)^N\|T\|_{S_1} +|c|.
\end{align*}
\end{proof}

\begin{proof}[Proof of Proposition \ref{prop_suff_prod_hyp}]
Since $H$ is of trace class, its diagonal belongs to $\ell_1(\N)$, which implies, by Lemma \ref{lema_nconv}, that $c=\lim_n \dot{\phi}(n)$ exists. By Lemma \ref{Lem_NN_N}, the multi-radial function $\tilde{\phi}(n)=\dot{\phi}(|n|)$ satisfies the hypotheses of Lemma \ref{Lem_multi_hyp}, which yields the conclusion.
\end{proof}

\subsection{Bounded sequences of multipliers}
To end this section, we show how Proposition \ref{prop_suff_prod_hyp} allows us to prove that groups acting properly on finite products of hyperbolic graphs of bounded degree are weakly amenable. The idea of the proof was essentially devised by Haagerup \cite{Haa} for the free group $\mathbb{F}_2$, and it was later exploited in \cite{Oza} and \cite{Miz} for hyperbolic groups and CAT(0) cubical groups respectively. 

First, we give a proof of the formula \eqref{hoderiv}.

\begin{lem}\label{lemsumbinom}
Let $(a_n)_{n\in\N}$ be a sequence of complex numbers, then for all $n\in\N$ and $m\geq 1$,
\begin{align*}
(\mathfrak{d}_1^m a)_n &= \sum_{j=0}^{m}\tbinom{m}{j}(-1)^j a_{n+j}.
\end{align*}
\end{lem}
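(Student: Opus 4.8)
The plan is a routine induction on $m$, using only the recursive definition $(\mathfrak{d}_1^{m+1}a)_n=(\mathfrak{d}_1^m a)_n-(\mathfrak{d}_1^m a)_{n+1}$ together with Pascal's rule $\tbinom{m}{j}+\tbinom{m}{j-1}=\tbinom{m+1}{j}$. For the base case $m=1$, the claimed identity reads $(\mathfrak{d}_1 a)_n=(\mathfrak{d}_1^0 a)_n-(\mathfrak{d}_1^0 a)_{n+1}=a_n-a_{n+1}$, which matches $\tbinom{1}{0}a_n-\tbinom{1}{1}a_{n+1}$.

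For the inductive step, assume the formula holds for some $m\geq 1$. I would then expand
\begin{align*}
(\mathfrak{d}_1^{m+1}a)_n=(\mathfrak{d}_1^m a)_n-(\mathfrak{d}_1^m a)_{n+1}=\sum_{j=0}^{m}\tbinom{m}{j}(-1)^j a_{n+j}-\sum_{j=0}^{m}\tbinom{m}{j}(-1)^j a_{n+1+j},
\end{align*}
re-index the second sum by replacing $j$ with $j-1$ so that it runs over $1\leq j\leq m+1$, and add the two sums term by term. The coefficient of $a_n$ is $\tbinom{m}{0}=\tbinom{m+1}{0}$; the coefficient of $a_{n+m+1}$ is $(-1)^{m+1}\tbinom{m}{m}=(-1)^{m+1}\tbinom{m+1}{m+1}$; and for $1\leq j\leq m$ the coefficient of $a_{n+j}$ is $(-1)^j\bigl(\tbinom{m}{j}+\tbinom{m}{j-1}\bigr)=(-1)^j\tbinom{m+1}{j}$ by Pascal's rule. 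This gives exactly $\sum_{j=0}^{m+1}\tbinom{m+1}{j}(-1)^j a_{n+j}$, closing the induction.

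There is essentially no obstacle here; the only point requiring mild care is the bookkeeping of the two endpoint terms $a_n$ and $a_{n+m+1}$ when merging the sums after the index shift. Once the lemma is in place, formula \eqref{hoderiv} follows immediately by applying it to the sequence $k\mapsto\dot{\phi}(n+jk)$ for fixed $n$ and $j$, since $\mathfrak{d}_j^m\dot{\phi}(n)=(\mathfrak{d}_1^m b)_0$ with $b_k=\dot{\phi}(n+jk)$.
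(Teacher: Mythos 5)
Your proof is correct and follows essentially the same route as the paper: induction on $m$ with Pascal's rule $\tbinom{m}{j}+\tbinom{m}{j-1}=\tbinom{m+1}{j}$ after an index shift. The only (immaterial) difference is that you apply the induction hypothesis to $a$ and then expand the outer difference $(\mathfrak{d}_1^m a)_n-(\mathfrak{d}_1^m a)_{n+1}$, whereas the paper writes $\mathfrak{d}_1^{m+1}a=\mathfrak{d}_1^m(\mathfrak{d}_1 a)$ and applies the hypothesis to the sequence $\mathfrak{d}_1 a$; the resulting computation is the same.
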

\begin{proof}
We proceed by induction on $m$. For $m=1$ it is just the definition of $\mathfrak{d}_1a$. Now suppose that the formula holds for some $m\geq 1$. Then
\begin{align*}
(\mathfrak{d}_1^{m+1} a)_n &= (\mathfrak{d}_1^{m}(\mathfrak{d}_1a))_n\\
&=\sum_{j=0}^{m}\tbinom{m}{j}(-1)^j\left(a_{n+j}-a_{n+j+1}\right)\\
&= \sum_{j=0}^{m}\tbinom{m}{j}(-1)^ja_{n+j} - \sum_{j=1}^{m+1}\tbinom{m}{j-1}(-1)^{j-1}a_{n+j}\\
&= a_n+ \sum_{j=1}^{m}(-1)^j\left(\tbinom{m}{j}+\tbinom{m}{j-1}\right)a_{n+j} - (-1)^{m}a_{n+m+1}\\
&= a_n + \sum_{j=1}^{m}(-1)^j\tbinom{m+1}{j}a_{n+j} + (-1)^{m+1}a_{n+m+1}\\
&= \sum_{j=0}^{m+1}(-1)^j\tbinom{m+1}{j}a_{n+j}.
\end{align*}
\end{proof}

As before, we fix $N\geq 1$ and $X$ a product of $N$ hyperbolic graphs with bounded degrees.

\begin{lem}
For all $r\in(0,1)$, the function $\phi_r(x,y)=r^{d(x,y)}$ is a Schur multiplier on $X$. Moreover, $\|\phi_r\|_{cb}\leq C$, where $C$ is the constant given by Proposition \ref{prop_suff_prod_hyp}.
\end{lem}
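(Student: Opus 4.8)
The plan is to reduce the statement to Proposition~\ref{prop_suff_prod_hyp} by checking that the explicit radial function $\dot\phi_r(n) = r^n$ satisfies its hypotheses. First I would compute the relevant generalised Hankel matrix. Since $\mathfrak{d}_1 r^n = r^n - r^{n+1} = (1-r)r^n$, iterating gives $\mathfrak{d}_1^N r^n = (1-r)^N r^n$, so the matrix in question is
\begin{align*}
H_r = \left((1-r)^N\,\tbinom{N+i-1}{N-1}^{\frac{1}{2}}\tbinom{N+j-1}{N-1}^{\frac{1}{2}}\,r^{i+j}\right)_{i,j\in\N}.
\end{align*}
The key point is that this is a rank-one operator: writing $v = \big(\tbinom{N+i-1}{N-1}^{\frac{1}{2}} r^i\big)_{i\in\N}$, we have $H_r = (1-r)^N\, v\odot \bar v$ in the notation of \eqref{rank1op}, and $v\in\ell_2(\N)$ because $\tbinom{N+i-1}{N-1}$ grows only polynomially in $i$ while $r^{2i}$ decays geometrically. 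Hence $H_r\in S_1(\ell_2(\N))$ with $\|H_r\|_{S_1} = (1-r)^N\|v\|_2^2 < \infty$.

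Next I would invoke Proposition~\ref{prop_suff_prod_hyp} directly with $\dot\phi = \dot\phi_r$: its hypothesis that the generalised Hankel matrix lies in $S_1(\ell_2(\N))$ is exactly what we just verified, so $\dot\phi_r$ defines a radial Schur multiplier $\phi_r$ on $X$ with
\begin{align*}
\|\phi_r\|_{cb} \leq C\|H_r\|_{S_1} + |c|,
\end{align*}
where $c = \lim_{n\to\infty} r^n = 0$ (here $C$ is the constant from Proposition~\ref{prop_suff_prod_hyp}, which depends only on $X$, in fact $C=(2C_0)^N$ from the proof of Lemma~\ref{Lem_multi_hyp}). Since $c=0$, this already gives $\|\phi_r\|_{cb}\leq C\|H_r\|_{S_1}$.

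The only remaining point — and the one requiring a small amount of care — is to replace the bound $C\|H_r\|_{S_1} = C(1-r)^N\|v\|_2^2$ by the clean estimate $\|\phi_r\|_{cb}\leq C$ stated in the lemma; that is, to check $\|H_r\|_{S_1}\leq 1$. Here $\|v\|_2^2 = \sum_{i\geq 0}\tbinom{N+i-1}{N-1} r^{2i} = (1-r^2)^{-N}$, using the standard generating-function identity $\sum_{i\geq 0}\tbinom{N+i-1}{N-1} t^i = (1-t)^{-N}$ with $t = r^2$. Therefore
\begin{align*}
\|H_r\|_{S_1} = (1-r)^N (1-r^2)^{-N} = \left(\frac{1-r}{1-r^2}\right)^N = \frac{1}{(1+r)^N} \leq 1,
\end{align*}
since $r\in(0,1)$. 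Combining this with the previous display yields $\|\phi_r\|_{cb}\leq C\|H_r\|_{S_1}\leq C$, as claimed. I do not anticipate any genuine obstacle here: the entire argument is a direct application of Proposition~\ref{prop_suff_prod_hyp} together with the elementary rank-one computation and the binomial generating-function identity; the main thing to get right is the bookkeeping that makes $\|H_r\|_{S_1}$ come out to exactly $(1+r)^{-N}$.
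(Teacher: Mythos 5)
Your proposal is correct and follows essentially the same route as the paper: both identify the generalised Hankel matrix as the rank-one positive operator $(1-r)^N f\odot f$ with $f(i)=\tbinom{N-1+i}{N-1}^{1/2}r^i$, compute its trace norm via the generating-function identity $\sum_{j\geq 0}\tbinom{N-1+j}{N-1}r^{2j}=(1-r^2)^{-N}$ to get $\|H\|_{S_1}=(1+r)^{-N}\leq 1$, and then apply Proposition \ref{prop_suff_prod_hyp} with $c=\lim_n r^n=0$. No gaps.
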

\begin{proof}
First observe that
\begin{align*}
\sum_{k=0}^N\tbinom{N}{k}(-1)^kr^{i+j+k} = (1-r)^Nr^{i+j},
\end{align*}
for all $i,j\in\N$. Define $f\in\ell_2(\N)$ by $f(i)=\tbinom{N-1+i}{N-1}^{\frac{1}{2}}r^{i}$, and let $H$ be the positive rank-1 operator given by $H=(1-r)^Nf\odot f$, which is defined as in \eqref{rank1op}. Then we have
\begin{align*}
H_{i,j}=\tbinom{N-1+i}{N-1}^{\frac{1}{2}}\tbinom{N-1+j}{N-1}^{\frac{1}{2}}(1-r)^Nr^{i+j},\quad\forall i,j\in\N,
\end{align*}
and
\begin{align*}
\|H\|_{S_1}=\text{Tr}(H)=(1-r)^N\sum_{j\geq 0}\tbinom{N-1+j}{N-1}r^{2j}.
\end{align*}
Observe that $\displaystyle\sum_{j\geq 0}\tbinom{N-1+j}{N-1}z^{2j}$ is the power series around 0 of the analytic function $(1-z^2)^{-N}$. Hence
\begin{align*}
\|H\|_{S_1}=\frac{(1-r)^N}{(1-r^2)^N}=\frac{1}{(1+r)^N}< 1.
\end{align*}
The result follows from Proposition \ref{prop_suff_prod_hyp}.
\end{proof}

\begin{lem}
For every $n\in\N$, define $\varphi_n:X\times X\to\C$ by
\begin{align*}
\varphi_n(x,y)=\begin{cases}
1& \text{if } d(x,y)=n\\
0& \text{otherwise}.
\end{cases}
\end{align*}
There exists a constant $C>0$ such that, for every $n\in\N$,
\begin{align}\label{linearbound}
\|\varphi_n\|_{cb}\leq C(n+1)^N.
\end{align}
\end{lem}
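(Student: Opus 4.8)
The plan is to apply Proposition~\ref{prop_suff_prod_hyp} directly to the (bounded, finitely supported) function $\dot{\varphi}_n$, whose limit at infinity is $c=0$, and then to estimate by hand the trace norm of the associated generalised Hankel matrix, which turns out to be supported on a bounded number of anti-diagonals.

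First I would compute the iterated discrete derivative. By Lemma~\ref{lemsumbinom},
\[
\mathfrak{d}_1^N\dot{\varphi}_n(k)=\sum_{j=0}^N\tbinom{N}{j}(-1)^j\dot{\varphi}_n(k+j),
\]
and since $\dot{\varphi}_n(k+j)=1$ precisely when $j=n-k$, this equals $(-1)^{n-k}\tbinom{N}{n-k}$ when $n-N\le k\le n$ and vanishes otherwise. Hence the matrix
\[
H=\Big(\tbinom{N+i-1}{N-1}^{\frac12}\tbinom{N+j-1}{N-1}^{\frac12}\mathfrak{d}_1^N\dot{\varphi}_n(i+j)\Big)_{i,j\in\N}
\]
is supported on the anti-diagonals $\{(i,j):i+j=m\}$ with $\max(0,n-N)\le m\le n$; in particular $H$ has finite rank, so $H\in S_1(\ell_2(\N))$.

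Next I would bound $\|H\|_{S_1}$ via a rank-one decomposition. Grouping the nonzero entries by the value $m=i+j$ gives
\[
H=\sum_{m=\max(0,n-N)}^{n}\ \sum_{i=0}^{m}H_{i,m-i}\,\big(\delta_i\odot\delta_{m-i}\big),
\]
with $\delta_i\odot\delta_{m-i}$ as in \eqref{rank1op}, of trace norm $1$, so by the triangle inequality $\|H\|_{S_1}\le\sum_{m}\sum_{i=0}^{m}|H_{i,m-i}|$. Since $0\le i,m-i\le m$ and $k\mapsto\tbinom{N+k-1}{N-1}$ is nondecreasing, while $\tbinom{N}{n-m}\le 2^N$, each entry satisfies $|H_{i,m-i}|\le 2^N\tbinom{N+m-1}{N-1}$; and $\tbinom{N+m-1}{N-1}=\frac{(m+1)\cdots(m+N-1)}{(N-1)!}$ is a polynomial of degree $N-1$ in $m$, hence at most $C_N(m+1)^{N-1}$ for a constant depending only on $N$. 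Summing over the at most $m+1\le n+1$ values of $i$ and the at most $N+1$ values of $m\le n$ yields $\|H\|_{S_1}\le C_N'(n+1)^N$ with $C_N'$ depending only on $N$. Finally Proposition~\ref{prop_suff_prod_hyp} applies (with $c=0$), giving $\|\varphi_n\|_{cb}\le C\|H\|_{S_1}\le CC_N'(n+1)^N$, which is the claimed estimate after renaming the constant.

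The only delicate part is the bookkeeping in the last step — tracking the support of $H$ and controlling the binomial factors uniformly in $n$ — but this is elementary; the genuine content is entirely absorbed into Proposition~\ref{prop_suff_prod_hyp}, i.e.\ into Ozawa's construction (Theorem~\ref{thmOzawa}). One could instead try to extract $\varphi_n$ as a Taylor/Cauchy coefficient of $r\mapsto\phi_r$ over a circle of radius $1-1/n$, but that would first require re-proving the preceding lemma for complex $r$, so the direct Hankel-matrix route is preferable.
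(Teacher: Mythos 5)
Your argument is correct: the computation of $\mathfrak{d}_1^N\dot{\varphi}_n$ via Lemma~\ref{lemsumbinom}, the observation that the resulting matrix is supported on at most $N+1$ anti-diagonals, the $\ell_1$-type bound $\|H\|_{S_1}\le\sum_{i,j}|H_{i,j}|$ (which is even exact on each single anti-diagonal), and the final appeal to Proposition~\ref{prop_suff_prod_hyp} with $c=0$ all go through, and they give $\|H\|_{S_1}\le C_N(n+1)^N$ uniformly in $n$, including the small values $n<N$ that the paper treats by enlarging the constant. The route is essentially the paper's: the paper also differentiates $\dot{\varphi}_n$, decomposes along anti-diagonals (the matrices $D^{(l)}$, with $\|D^{(l)}\|_{S_1}=l+1$), obtains the bound $2^N(n+1)^N$, and then invokes Proposition~\ref{prop_suff_prod_hyp}. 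The one genuine difference is the weight: the paper estimates the matrix $\bigl((1+i+j)^{N-1}\mathfrak{d}_1^N\dot{\varphi}_n(i+j)\bigr)_{i,j}$ and then passes to the binomial-weighted matrix required by Proposition~\ref{prop_suff_prod_hyp} via Lemma~\ref{lemH123}, i.e.\ via Peller's theorem and fractional integration, whereas you bound the binomial-weighted matrix directly by monotonicity of $k\mapsto\tbinom{N+k-1}{N-1}$ along an anti-diagonal. Your version is therefore slightly more self-contained (no appeal to Lemma~\ref{lemH123} is needed for this lemma), at the cost of a marginally cruder entrywise estimate; both yield the same order $(n+1)^N$, and in both cases the analytic content is entirely carried by Proposition~\ref{prop_suff_prod_hyp}, i.e.\ Ozawa's construction.
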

\begin{proof}
Write $\varphi_n(x,y)=\dot{\varphi}_n(d(x,y))$. Then
\begin{align}\label{sum_varphi_n}
\sum_{k=0}^N\tbinom{N}{k}(-1)^k\dot{\varphi}_n(i+j+k)=\begin{cases}
\tbinom{N}{n-i-j}(-1)^{n-i-j} & \text{if } n-N\leq i+j\leq n\\
0 & \text{otherwise}.
\end{cases}
\end{align}
Define
\begin{align*}
H_n= \left((1+i+j)^{N-1}\mathfrak{d}_1^N\dot{\varphi}_n(i+j)\right)_{i,j\in\N},
\end{align*}
and let $D^{(l)}$ ($l\in\N$) be the Hankel matrix given by
\begin{align*}
D^{(l)}_{i,j}=\begin{cases}
1 & \text{if } i+j=l\\
0 & \text{otherwise}.
\end{cases}
\end{align*}
Then, by \eqref{sum_varphi_n}, $H_n$ belongs to the linear span of $\left(D^{(l)}\right)_{l\in\N}$. In particular, for $n\geq N$,
\begin{align*}
H_n=\displaystyle\sum_{l=n-N}^n(l+1)^{N-1} \tbinom{N}{n-l}(-1)^{n-l}D^{(l)}.
\end{align*}
Observe that $\left(D^{(l)}\right)^*D^{(l)}$ is a diagonal matrix whose first $l+1$ diagonal entries are 1 and the rest are 0. Hence
\begin{align*}
\|D^{(l)}\|_{S_1}=\text{Tr}\left(\left(\left(D^{(l)}\right)^*D^{(l)}\right)^{\frac{1}{2}}\right)=\text{Tr}\left(\left(D^{(l)}\right)^*D^{(l)}\right)=l+1.
\end{align*}
Therefore,
\begin{align*}
\|H\|_{S_1}&\leq \sum_{k=0}^N \tbinom{N}{k}(n-k+1)^N\\
&\leq (n+1)^N\sum_{k=0}^N \tbinom{N}{k}\\
&= 2^N(1+n)^N.
\end{align*}
This, together with Lemma \ref{lemH123} (with $\alpha+\beta=N-1$) and Proposition \ref{prop_suff_prod_hyp}, proves the result for $n\geq N$. Taking $C$ big enough, we obtain the estimate \eqref{linearbound} for all $n$.
\end{proof}

These two results have the following consequence. For a proof, see e.g. \cite[Theorem 3]{Miz}.

\begin{cor}
Let $\Gamma$ be a countable discrete group acting properly by isometries on a finite product of hyperbolic graphs with bounded degrees. Then $\Gamma$ is weakly amenable.
\end{cor}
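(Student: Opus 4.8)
The plan is to run Haagerup's classical approximation scheme, in the form used by Ozawa \cite{Oza} and Mizuta \cite{Miz}. Let $X=X_1\times\cdots\times X_N$ be the product of hyperbolic graphs of bounded degree on which $\Gamma$ acts properly by isometries, fix a base point $o\in X$, and set $|g|=d(o,g\cdot o)$ for $g\in\Gamma$. Since each $X_i$ has bounded degree, $X$ is locally finite, so every ball $B(o,R)$ is finite; properness of the action then forces $\{g\in\Gamma:|g|\le R\}$ to be finite for every $R\ge0$ (this set is contained in $\{g:g\cdot B(o,R)\cap B(o,R)\neq\varnothing\}$), and in particular each sphere $\Sigma_n=\{g\in\Gamma:|g|=n\}$ is finite.

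I would then transfer Schur multipliers from $X$ to $\Gamma$ along the orbit map $\iota:\Gamma\to X$, $g\mapsto g\cdot o$. If $\psi$ is a Schur multiplier on $X$, writing $\psi(x,y)=\langle P(x),Q(y)\rangle$ as in Theorem \ref{ThmGro} and composing $P$ and $Q$ with $\iota$ shows that $(s,t)\mapsto\psi(\iota(s),\iota(t))$ is a Schur multiplier on $\Gamma$ of cb-norm at most $\|\psi\|_{cb}$; and if $\psi$ is radial, $\psi=\dot\psi\circ d$, then since the action is isometric one has $d(\iota(s),\iota(t))=d(o,s^{-1}t\cdot o)=|s^{-1}t|$, so the pulled-back multiplier has the form $(s,t)\mapsto\dot\psi(|s^{-1}t|)$ appearing in the definition of weak amenability. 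Applying this to the Schur multipliers $\phi_r(x,y)=r^{d(x,y)}$, $r\in(0,1)$, and to the distance-$n$ indicators $\varphi_n$ constructed above yields functions $\varphi_r(g)=r^{|g|}$ and $\mathbf{1}_{\Sigma_n}$ on $\Gamma$ whose associated Schur multipliers on $\Gamma$ have cb-norm at most $C$ and at most $C(n+1)^N$ respectively, $C$ denoting the constant of Proposition \ref{prop_suff_prod_hyp} (enlarged if necessary to absorb the constant of the $\varphi_n$ estimate).

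Finally I would truncate: for $r\in(0,1)$ and $m\in\N$ put $\varphi_{r,m}=\sum_{n=0}^{m}r^n\mathbf{1}_{\Sigma_n}$, which is finitely supported on $\Gamma$ since each $\Sigma_n$ is finite. Because $\sum_n(n+1)^Nr^n<\infty$, the series $\sum_n r^n\mathbf{1}_{\Sigma_n}$ converges absolutely, in the Banach space of Schur multipliers on $\Gamma$, to $\varphi_r$, so that
\[
\|\widetilde{\varphi_{r,m}}\|_{cb}\le\|\widetilde{\varphi_r}\|_{cb}+\sum_{n>m}r^n\,\|\widetilde{\mathbf{1}_{\Sigma_n}}\|_{cb}\le C+C\sum_{n>m}(n+1)^N r^n.
\]
For each fixed $r<1$ this tail tends to $0$ as $m\to\infty$, so one may choose $m=m(r)$ making it $\le1$ with $m(r)\to\infty$ as $r\to1$; taking $r_k\uparrow1$ and $\varphi_k=\varphi_{r_k,m(r_k)}$ gives finitely supported functions with $\sup_k\|\widetilde{\varphi_k}\|_{cb}\le C+1$ and $\varphi_k(g)=r_k^{|g|}\to1$ for every $g$ (once $m(r_k)\ge|g|$), which is exactly weak amenability. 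The only genuinely delicate point is the simultaneous control of the two parameters — letting $r\to1$ to force pointwise convergence to $1$ while letting the truncation level grow fast enough to absorb the polynomial factor $(n+1)^N$ in the tail without destroying either finiteness of support or the pointwise limit; everything else is a direct consequence of the estimates $\|\phi_r\|_{cb}\le C$ and $\|\varphi_n\|_{cb}\le C(n+1)^N$ established above together with the elementary transfer of multipliers along the orbit map.
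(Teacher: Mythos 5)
Your argument is correct and is essentially the proof the paper has in mind: it combines the two multiplier estimates $\|\phi_r\|_{cb}\leq C$ and $\|\varphi_n\|_{cb}\leq C(n+1)^N$ via the standard Haagerup--Ozawa--Mizuta truncation scheme, which is exactly what the paper delegates to the cited reference \cite[Theorem 3]{Miz}. Your additional details (finiteness of spheres from properness and bounded degree, transfer of radial multipliers to $\Gamma$ along the orbit map using Theorem \ref{ThmGro}, and the choice of the truncation level $m(r)$) are all sound.
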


\begin{rmk}
Using Theorem A instead of Proposition \ref{prop_suff_prod_hyp}, the same arguments show that if a group acts properly on a finite product of trees, then it is weakly amenable. Here we do not make any assumptions on the degrees.
\end{rmk}

\section{Necessary condition for products of the Cayley graph of $(\Z/3\Z)\ast(\Z/3\Z)\ast(\Z/3\Z)$}\label{sectNCCG}

Now we shall finish the proof of Theorem B by studying a very particular hyperbolic graph. The main tool here is the tree of Serre. For any group which is a free product of other groups, Serre \cite{Ser} constructed a certain tree with some very nice properties. We describe it now, following the presentation of \cite[\S 4]{Wys}.

Let $(G_i)_{i\in I}$ be a family of groups and let $G=\ast_{i\in I} G_i$ be their free product. We define the tree $\Gamma(G)$ in the following way:
\begin{itemize}
\item[(i)] The set of vertices $X$ consists of two disjoints subsets $X_0$ and $X_1$, where
\begin{align*}
X_0 & =G,
\\ X_1 & = \{gG_i\, :\, g\in G,\ i\in I\}.
\end{align*}
So the vertices are all the elements of $G$ and all the cosets with respect to the subgroups $G_i$, $i\in I$.
\item[(ii)] The edges are all the pairs $\{g, gG_i\}$, with $g\in G$ and $i\in I$.
\end{itemize}
This implies that the elements of $X_0$ have all degree $|I|$, and an element $gG_i\in X_1$ has degree $|G_i|$. From now on, we shall fix $I=\{1,2,3\}$, and $G_i=\Z/3\Z$ for all $i$. Hence,
\begin{align*}
G=(\Z/3\Z)\ast(\Z/3\Z)\ast(\Z/3\Z).
\end{align*}
Observe that, in this case, $\Gamma(G)$ is the 3-homogeneous tree $\mathcal{T}_3$. Let $X$ be the Cayley graph of $G$ with generating set $G_1\cup G_2 \cup G_3$. Then $X$ is a hyperbolic graph. Let $d$ and $d_{\Gamma(G)}$ denote the combinatorial distances of $X$ and $\Gamma(G)$ respectively. Then, the previous construction gives a map $\Psi:X\hookrightarrow\Gamma(G)$ satisfying
\begin{align}\label{dGamma=2d}
d_{\Gamma(G)}(\Psi(x),\Psi(y))=2 d(x,y),\quad\forall x,y\in X.
\end{align}
Furthermore, let $f:\Gamma(G)\to\Gamma(G)$ be the automorphism described as follows. We view the empty word $e$ as the root of $\Gamma(G)$:
\begin{align*}
\definecolor{wrwrwr}{rgb}{0.3803921568627451,0.3803921568627451,0.3803921568627451}\definecolor{rvwvcq}{rgb}{0.08235294117647059,0.396078431372549,0.7529411764705882}\begin{tikzpicture}[line cap=round,line join=round,>=triangle 45,x=1cm,y=1cm]\clip(-15.507487288888886,3.2663555999999736) rectangle (-2.208253600000002,8.792194355555488);\draw [line width=2pt,color=wrwrwr] (-8.627909955555552,7.872416266666665)-- (-12.674576622222219,6.725749599999998);\draw [line width=2pt,color=wrwrwr] (-8.634576622222221,6.725749599999998)-- (-8.627909955555552,7.872416266666665);\draw [line width=2pt,color=wrwrwr] (-8.634576622222221,6.725749599999998)-- (-9.65457662222222,5.705749599999997);\draw [line width=2pt,color=wrwrwr] (-8.627909955555552,7.872416266666665)-- (-4.634576622222221,6.705749599999997);\draw [line width=2pt,color=wrwrwr] (-12.674576622222219,6.725749599999998)-- (-13.694576622222222,5.685749599999998);\draw [line width=2pt,color=wrwrwr] (-12.674576622222219,6.725749599999998)-- (-11.61457662222222,5.745749599999997);\draw [line width=2pt,color=wrwrwr] (-8.634576622222221,6.725749599999998)-- (-7.654576622222221,5.725749599999998);\draw [line width=2pt,color=wrwrwr] (-4.634576622222221,6.705749599999997)-- (-3.694576622222221,5.705749599999997);\draw [line width=2pt,color=wrwrwr] (-4.634576622222221,6.705749599999997)-- (-5.674576622222221,5.725749599999998);\draw [line width=2pt,color=wrwrwr] (-13.694576622222222,5.685749599999998)-- (-14.205508117249337,4.652314944138265);\draw [line width=2pt,color=wrwrwr] (-13.694576622222222,5.685749599999998)-- (-13.174576622222219,4.705749599999997);\draw [line width=2pt,color=wrwrwr] (-11.61457662222222,5.745749599999997)-- (-12.174576622222219,4.705749599999997);\draw [line width=2pt,color=wrwrwr] (-11.61457662222222,5.745749599999997)-- (-11.174576622222219,4.705749599999997);\draw [line width=2pt,color=wrwrwr] (-9.65457662222222,5.705749599999997)-- (-10.174576622222219,4.705749599999997);\draw [line width=2pt,color=wrwrwr] (-9.65457662222222,5.705749599999997)-- (-9.174576622222222,4.705749599999997);\draw [line width=2pt,color=wrwrwr] (-7.654576622222221,5.725749599999998)-- (-8.174576622222222,4.705749599999997);\draw [line width=2pt,color=wrwrwr] (-7.654576622222221,5.725749599999998)-- (-7.174576622222221,4.705749599999997);\draw [line width=2pt,color=wrwrwr] (-5.674576622222221,5.725749599999998)-- (-6.174576622222221,4.705749599999997);\draw [line width=2pt,color=wrwrwr] (-5.674576622222221,5.725749599999998)-- (-5.174576622222221,4.705749599999997);\draw [line width=2pt,color=wrwrwr] (-3.694576622222221,5.705749599999997)-- (-4.174576622222221,4.705749599999997);\draw [line width=2pt,color=wrwrwr] (-3.694576622222221,5.705749599999997)-- (-3.1745766222222214,4.705749599999997);\draw (-11.499106844444443,6.401481733333283) node[anchor=north west] {$a^2$};\draw (-8.435558933333333,7.246104755555499) node[anchor=north west] {$G_2$};\draw (-13.97571333333333,6.344219155555506) node[anchor=north west] {$a$};\draw (-7.547988977777778,6.430113022222172) node[anchor=north west] {$b^2$};\draw (-14.548339111111108,4.526132311111075) node[anchor=north west] {$aG_2$};\draw (-13.46035013333333,4.526132311111075) node[anchor=north west] {$aG_3$};\draw (-12.55846453333333,4.5833948888888525) node[anchor=north west] {$a^2G_2$};\draw (-11.484791199999998,4.554763599999964) node[anchor=north west] {$a^2G_3$};\draw (-10.4540648,4.511816666666631) node[anchor=north west] {$bG_1$};\draw (-8.521452799999999,4.569079244444408) node[anchor=north west] {$b^2G_1$};\draw (-7.462095111111111,4.569079244444408) node[anchor=north west] {$b^2G_3$};\draw (-6.4886312888888895,4.511816666666631) node[anchor=north west] {$cG_1$};\draw (-5.4006423111111115,4.554763599999964) node[anchor=north west] {$cG_2$};\draw (-9.423338399999999,4.511816666666631) node[anchor=north west] {$bG_3$};\draw (-8.75050311111111,8.53451275555549) node[anchor=north west] {$e$};\draw (-13.174037244444442,7.389261199999942) node[anchor=north west] {$G_1$};\draw (-9.995964177777777,6.387166088888839) node[anchor=north west] {$b$};\draw (-6.001899377777779,6.387166088888839) node[anchor=north west] {$c$};\draw (-4.441494133333334,4.569079244444408) node[anchor=north west] {$c^2G_1$};\draw (-3.482345955555557,4.612026177777741) node[anchor=north west] {$c^2G_2$};\draw (-3.639818044444446,6.444428666666616) node[anchor=north west] {$c^2$};\draw (-4.513072355555557,7.303367333333276) node[anchor=north west] {$G_3$};\begin{scriptsize}\draw [fill=rvwvcq] (-8.627909955555552,7.872416266666665) circle (2.5pt);\draw [fill=rvwvcq] (-12.674576622222219,6.725749599999998) circle (2.5pt);\draw [fill=rvwvcq] (-8.634576622222221,6.725749599999998) circle (2.5pt);\draw [fill=rvwvcq] (-9.65457662222222,5.705749599999997) circle (2.5pt);\draw [fill=rvwvcq] (-4.634576622222221,6.705749599999997) circle (2.5pt);\draw [fill=rvwvcq] (-13.694576622222222,5.685749599999998) circle (2.5pt);\draw [fill=rvwvcq] (-11.61457662222222,5.745749599999997) circle (2.5pt);\draw [fill=rvwvcq] (-7.654576622222221,5.725749599999998) circle (2.5pt);\draw [fill=rvwvcq] (-3.694576622222221,5.705749599999997) circle (2.5pt);\draw [fill=rvwvcq] (-5.674576622222221,5.725749599999998) circle (2.5pt);\draw [fill=rvwvcq] (-14.205508117249337,4.652314944138265) circle (2.5pt);\draw [fill=rvwvcq] (-13.174576622222219,4.705749599999997) circle (2.5pt);\draw [fill=rvwvcq] (-12.174576622222219,4.705749599999997) circle (2.5pt);\draw [fill=rvwvcq] (-11.174576622222219,4.705749599999997) circle (2.5pt);\draw [fill=rvwvcq] (-10.174576622222219,4.705749599999997) circle (2.5pt);\draw [fill=rvwvcq] (-9.174576622222222,4.705749599999997) circle (2.5pt);\draw [fill=rvwvcq] (-8.174576622222222,4.705749599999997) circle (2.5pt);\draw [fill=rvwvcq] (-7.174576622222221,4.705749599999997) circle (2.5pt);\draw [fill=rvwvcq] (-6.174576622222221,4.705749599999997) circle (2.5pt);\draw [fill=rvwvcq] (-5.174576622222221,4.705749599999997) circle (2.5pt);\draw [fill=rvwvcq] (-4.174576622222221,4.705749599999997) circle (2.5pt);\draw [fill=rvwvcq] (-3.1745766222222214,4.705749599999997) circle (2.5pt);\end{scriptsize}\end{tikzpicture}
\end{align*}
Then $f$ moves $G_1$ to the root, dragging every vertex in order to define an isometry:
\begin{align*}
\definecolor{wrwrwr}{rgb}{0.3803921568627451,0.3803921568627451,0.3803921568627451}\definecolor{rvwvcq}{rgb}{0.08235294117647059,0.396078431372549,0.7529411764705882}\begin{tikzpicture}[line cap=round,line join=round,>=triangle 45,x=1cm,y=1cm]\clip(-15.576479150149973,2.054671431163178) rectangle (-3.2362627217656645,8.619215786901837);\draw [line width=2pt,color=wrwrwr] (-7.953434563546413,6.659800213009782)-- (-12.392816007667264,7.647212438090747);\draw [line width=2pt,color=wrwrwr] (-9.97948926487423,5.493861372385494)-- (-7.953434563546413,6.659800213009782);\draw [line width=2pt,color=wrwrwr] (-9.97948926487423,5.493861372385494)-- (-10.99948926487423,4.473861372385493);\draw [line width=2pt,color=wrwrwr] (-7.953434563546413,6.659800213009782)-- (-5.97948926487423,5.473861372385493);\draw [line width=2pt,color=wrwrwr] (-12.392816007667264,7.647212438090747)-- (-14.392816007667264,6.647212438090747);\draw [line width=2pt,color=wrwrwr] (-12.392816007667264,7.647212438090747)-- (-12.392816007667264,6.647212438090747);\draw [line width=2pt,color=wrwrwr] (-9.97948926487423,5.493861372385494)-- (-8.99948926487423,4.493861372385494);\draw [line width=2pt,color=wrwrwr] (-5.97948926487423,5.473861372385493)-- (-5.03948926487423,4.473861372385493);\draw [line width=2pt,color=wrwrwr] (-5.97948926487423,5.473861372385493)-- (-7.01948926487423,4.493861372385494);\draw [line width=2pt,color=wrwrwr] (-14.392816007667264,6.647212438090747)-- (-14.870154045665927,5.40605676309403);\draw [line width=2pt,color=wrwrwr] (-14.392816007667264,6.647212438090747)-- (-13.954498717075767,5.420143768149263);\draw [line width=2pt,color=wrwrwr] (-12.392816007667264,6.647212438090747)-- (-12.827538312657108,5.434230773204496);\draw [line width=2pt,color=wrwrwr] (-12.392816007667264,6.647212438090747)-- (-11.954143999232649,5.420143768149263);\draw [line width=2pt,color=wrwrwr] (-10.99948926487423,4.473861372385493)-- (-11.51948926487423,3.473861372385494);\draw [line width=2pt,color=wrwrwr] (-10.99948926487423,4.473861372385493)-- (-10.519489264874233,3.473861372385494);\draw [line width=2pt,color=wrwrwr] (-8.99948926487423,4.493861372385494)-- (-9.519489264874231,3.473861372385494);\draw [line width=2pt,color=wrwrwr] (-8.99948926487423,4.493861372385494)-- (-8.51948926487423,3.473861372385494);\draw [line width=2pt,color=wrwrwr] (-7.01948926487423,4.493861372385494)-- (-7.51948926487423,3.473861372385494);\draw [line width=2pt,color=wrwrwr] (-7.01948926487423,4.493861372385494)-- (-6.51948926487423,3.473861372385494);\draw [line width=2pt,color=wrwrwr] (-5.03948926487423,4.473861372385493)-- (-5.51948926487423,3.473861372385494);\draw [line width=2pt,color=wrwrwr] (-5.03948926487423,4.473861372385493)-- (-4.51948926487423,3.473861372385494);\draw (-12.280119967225398,7.295037311709919) node[anchor=north west] {$a^2$};\draw (-10.434722304989844,6.111728887070332) node[anchor=north west] {$G_2$};\draw (-14.674910826615047,7.295037311709919) node[anchor=north west] {$a$};\draw (-8.941499769135122,5.139725538259243) node[anchor=north west] {$b^2$};\draw (-15.210217018713909,5.2805955888115745) node[anchor=north west] {$aG_2$};\draw (-14.238213669902816,5.238334573645875) node[anchor=north west] {$aG_3$};\draw (-13.20986230087079,5.308769598922041) node[anchor=north west] {$a^2G_2$};\draw (-12.266032962170165,5.266508583756342) node[anchor=north west] {$a^2G_3$};\draw (-11.801161795347468,3.280240870968464) node[anchor=north west] {$bG_1$};\draw (-9.857155097725283,3.336588891189397) node[anchor=north west] {$b^2G_1$};\draw (-8.80062971858279,3.336588891189397) node[anchor=north west] {$b^2G_3$};\draw (-7.828626369771698,3.280240870968464) node[anchor=north west] {$cG_1$};\draw (-6.743926980518738,3.3225018861341638) node[anchor=north west] {$cG_2$};\draw (-10.758723421260209,3.280240870968464) node[anchor=north west] {$bG_3$};\draw (-7.913148400103097,7.295037311709919) node[anchor=north west] {$e$};\draw (-12.575947073385295,8.379736700962873) node[anchor=north west] {$G_1$};\draw (-11.223594588082905,5.167899548369709) node[anchor=north west] {$b$};\draw (-7.335581192838534,5.0833775180383105) node[anchor=north west] {$c$};\draw (-5.786010636762879,3.336588891189397) node[anchor=north west] {$c^2G_1$};\draw (-4.814007287951786,3.378849906355096) node[anchor=north west] {$c^2G_2$};\draw (-4.954877338504119,5.0833775180383105) node[anchor=north west] {$c^2$};\draw (-5.870532667094278,6.069467871904632) node[anchor=north west] {$G_3$};\begin{scriptsize}\draw [fill=rvwvcq] (-7.953434563546413,6.659800213009782) circle (2.5pt);\draw [fill=rvwvcq] (-12.392816007667264,7.647212438090747) circle (2.5pt);\draw [fill=rvwvcq] (-9.97948926487423,5.493861372385494) circle (2.5pt);\draw [fill=rvwvcq] (-10.99948926487423,4.473861372385493) circle (2.5pt);\draw [fill=rvwvcq] (-5.97948926487423,5.473861372385493) circle (2.5pt);\draw [fill=rvwvcq] (-14.392816007667264,6.647212438090747) circle (2.5pt);\draw [fill=rvwvcq] (-12.392816007667264,6.647212438090747) circle (2.5pt);\draw [fill=rvwvcq] (-8.99948926487423,4.493861372385494) circle (2.5pt);\draw [fill=rvwvcq] (-5.03948926487423,4.473861372385493) circle (2.5pt);\draw [fill=rvwvcq] (-7.01948926487423,4.493861372385494) circle (2.5pt);\draw [fill=rvwvcq] (-14.870154045665927,5.40605676309403) circle (2.5pt);\draw [fill=rvwvcq] (-13.954498717075767,5.420143768149263) circle (2.5pt);\draw [fill=rvwvcq] (-12.827538312657108,5.434230773204496) circle (2.5pt);\draw [fill=rvwvcq] (-11.954143999232649,5.420143768149263) circle (2.5pt);\draw [fill=rvwvcq] (-11.51948926487423,3.473861372385494) circle (2.5pt);\draw [fill=rvwvcq] (-10.519489264874233,3.473861372385494) circle (2.5pt);\draw [fill=rvwvcq] (-9.519489264874231,3.473861372385494) circle (2.5pt);\draw [fill=rvwvcq] (-8.51948926487423,3.473861372385494) circle (2.5pt);\draw [fill=rvwvcq] (-7.51948926487423,3.473861372385494) circle (2.5pt);\draw [fill=rvwvcq] (-6.51948926487423,3.473861372385494) circle (2.5pt);\draw [fill=rvwvcq] (-5.51948926487423,3.473861372385494) circle (2.5pt);\draw [fill=rvwvcq] (-4.51948926487423,3.473861372385494) circle (2.5pt);\end{scriptsize}\end{tikzpicture}
\end{align*}
A more precise description is the following: The tree $\mathcal{T}_3$ may be viewed as the Cayley graph of $\Z\ast(\Z/2\Z)$ with generators $-1,1\in\Z$ and $1\in(\Z/2\Z)$. Under this identification, $f$ corresponds to the action by multiplication on the left by $1\in\Z$. Moreover, observe that, for every vertex, $f$ changes its distance to the root by 1. Since $f$ is bijective, this implies that $\Gamma(G)$ is the disjoint union of $\Psi(X)$ and $f(\Psi(X))$. 

Now we will consider products of $\Gamma(G)$, in order to relate them to the products of $X$. The goal is to be able to apply Proposition \ref{Prop_mult_rad} to $\Gamma(G)^N$. From now on, if there is no ambiguity, the letter $d$ will denote the distance in whichever space we are considering.

\begin{prop}\label{prop_nec_prod_Cay}
Let $N\geq 1$ and let $\phi:X^N\times X^N\to\C$ be a radial Schur multiplier with $\phi=\dot{\phi}\circ d$. Then $\dot{\phi}(n)$ converges to some limit $c\in\C$, and the generalised Hankel matrix
\begin{align*}
H= \left(\tbinom{N+i-1}{N-1}^{\frac{1}{2}}\tbinom{N+j-1}{N-1}^{\frac{1}{2}}\mathfrak{d}_1^N\dot{\phi}(i+j)\right)_{i,j\in\N}
\end{align*}
is an element of $S_1(\ell_2(\N))$ of norm at most
\begin{align*}
3^N\left(\|\phi\|_{cb}+|c|\right).
\end{align*}
\end{prop}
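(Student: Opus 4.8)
The overall strategy is to transfer the problem from the Cayley graph $X$ to its Serre tree $\Gamma(G)=\mathcal{T}_3$, apply (the ingredients of) Theorem A there, and descend. I would use the geometric facts already set up: $\Psi\colon X\hookrightarrow\mathcal{T}_3$ is an embedding with $d_{\mathcal{T}_3}(\Psi x,\Psi y)=2d_X(x,y)$, its image $X_0$ is the set of vertices at even distance from the root, and $\mathcal{T}_3=X_0\sqcup X_1=\Psi(X)\sqcup f(\Psi(X))$, where $f$ is the parity-reversing automorphism; thus each of $X_0,X_1$, with the induced metric, is isometric to $X$ with all distances doubled. Taking products, $\mathcal{T}_3^{N}=\bigsqcup_{\varepsilon\in\{0,1\}^N}X_\varepsilon$ with $X_\varepsilon=\prod_{i=1}^N f^{\varepsilon_i}(\Psi(X))$, and on each piece $X_\varepsilon$ the metric of $\mathcal{T}_3^N$ restricts to twice that of $X^N$.

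The core step is to show that the radial function $\dot\psi$ on $\N$ given by $\dot\psi(m)=\dot\phi(\lfloor m/2\rfloor)$ defines a radial Schur multiplier $\psi=\dot\psi\circ d$ on $\mathcal{T}_3^{N}$ with $\|\psi\|_{cb}\le\|\phi\|_{cb}+|c|$ (any bound of this shape, with a bit of room, would do). Starting from a representation $\phi(x,y)=\langle P(x),Q(y)\rangle$ as in Theorem \ref{ThmGro} with $(\sup\|P\|)(\sup\|Q\|)=\|\phi\|_{cb}$, one builds functions $\widetilde P,\widetilde Q$ on $\mathcal{T}_3^{N}$ by combining $P,Q$ with "directional'' data coming from the fixed end in each factor and from the automorphism $f$ — precisely the role played by the geodesic-ray vectors $\delta_{\omega_{x_i}(k_i)}$ in the proof of Lemma \ref{Lem_suf_mult_rad}. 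On each piece $X_\varepsilon$ one recovers $\phi$ (via the identification $X_\varepsilon\cong X^N$), the value of $\psi$ between $X_\varepsilon$ and $X_{\varepsilon'}$ being controlled by evaluating $\phi$ at the nearest point of the relevant coset; telescoping over the $2^N$ pieces, exactly as in Lemma \ref{Lem_suf_mult_rad} (with the choice $\dot\psi(2k+1)=\dot\phi(k)$ making the telescoping close up to the constant $c$, which is reinstated using Lemma \ref{lemoddeven}), yields the required representation of $\psi$. I expect this step — the geometric bookkeeping of the Serre tree embedding and of the cross-piece distances in $\mathcal{T}_3^{N}$, together with the norm control — to be the main obstacle; the rest is formal.

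Granting this, the conclusion follows quickly. Since $\dot\psi$ is bounded ($\|\dot\psi\|_\infty\le\|\dot\phi\|_\infty\le\|\phi\|_{cb}$) and $\psi$ is a radial Schur multiplier on $\mathcal{T}_3^{N}$, restricting $\psi$ to a single tree factor and invoking Theorem \ref{thmHSS} shows that $\lim_n\dot\psi(2n)$ and $\lim_n\dot\psi(2n+1)$ exist; as $\dot\psi(2n)=\dot\psi(2n+1)=\dot\phi(n)$, this proves $\dot\phi(n)\to c:=\lim_n\dot\psi(2n)$, and it makes available the hypothesis of Lemma \ref{Lem_nec_mult_rad} for the multi-radial function $\widetilde\psi(n)=\dot\psi(|n|)$ on $\prod_{i=1}^N\mathcal{T}_3$ (each factor being $(q_i+1)$-regular with $q_i=2$). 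That lemma gives $T_\psi=\bigl(\mathfrak{d}_2^N\dot\psi(|m|+|n|)\bigr)_{m,n\in\N^N}\in S_1(\ell_2(\N^N))$ with $\|T_\psi\|_{S_1}\le 3^{N}\bigl(\|\psi\|_{cb}-|c_+|-|c_-|\bigr)\le 3^{N}\|\psi\|_{cb}\le 3^{N}\bigl(\|\phi\|_{cb}+|c|\bigr)$, and Lemma \ref{Lem_NN_N} (applied to $a_n=\mathfrak{d}_2^N\dot\psi(n)$) converts this into $H_\psi:=\bigl(\tbinom{N+i-1}{N-1}^{1/2}\tbinom{N+j-1}{N-1}^{1/2}\mathfrak{d}_2^N\dot\psi(i+j)\bigr)_{i,j\in\N}\in S_1(\ell_2(\N))$ with the same trace norm.

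Finally I would relate $H$ to $H_\psi$. Directly, $\mathfrak{d}_2\dot\psi(m)=\dot\phi(\lfloor m/2\rfloor)-\dot\phi(\lfloor m/2\rfloor+1)=\mathfrak{d}_1\dot\phi(\lfloor m/2\rfloor)$, and by induction $\mathfrak{d}_2^N\dot\psi(m)=\mathfrak{d}_1^N\dot\phi(\lfloor m/2\rfloor)$; in particular $\mathfrak{d}_2^N\dot\psi(2k+2l)=\mathfrak{d}_1^N\dot\phi(k+l)$. Hence the compression of $H_\psi$ to the closed span of $\{\delta_{2k}\}_{k\in\N}$, re-indexed by $\N$, has entries $\tbinom{N+2k-1}{N-1}^{1/2}\tbinom{N+2l-1}{N-1}^{1/2}\mathfrak{d}_1^N\dot\phi(k+l)$, so $H=D\,(PH_\psi P)\,D$, where $P$ is the corresponding projection and $D$ is the diagonal contraction with $D_{kk}=\bigl(\tbinom{N+k-1}{N-1}/\tbinom{N+2k-1}{N-1}\bigr)^{1/2}\le 1$. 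Therefore $H\in S_1(\ell_2(\N))$ and $\|H\|_{S_1}\le\|PH_\psi P\|_{S_1}\le\|H_\psi\|_{S_1}\le 3^{N}\bigl(\|\phi\|_{cb}+|c|\bigr)$, which is the claimed bound.
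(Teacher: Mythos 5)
The descending half of your argument is fine and essentially parallels the paper: once one has a suitable multiplier on $\mathcal{T}_3^N$, applying Lemma \ref{Lem_nec_mult_rad} (with $q_i=2$, whence the factor $3^N$), Lemma \ref{Lem_NN_N}, and then compressing to the even indices with the diagonal contraction $D$ does yield the stated matrix $H$. The genuine gap is the step you yourself flag as the main obstacle and never carry out: that $\dot\psi(m)=\dot\phi(\lfloor m/2\rfloor)$ defines a \emph{radial} Schur multiplier $\psi$ on $\mathcal{T}_3^N$ with $\|\psi\|_{cb}\le\|\phi\|_{cb}+|c|$. The sketch (``directional data'', ``evaluating $\phi$ at the nearest point of the relevant coset'', ``telescoping over the $2^N$ pieces'') does not produce a factorisation: for a cross-piece pair $(u,v)$ the relevant nearest neighbour of the coset vertex $v_i$ depends on $u_i$ as well, so it cannot be encoded in $\widetilde Q(v)$ alone, and making $\langle\widetilde P(u),\widetilde Q(v)\rangle$ equal $\dot\phi(\lfloor d(u,v)/2\rfloor)$ off the all-even pairs while controlling the norm is precisely the hard content. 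Worse, the bound you need seems to be false: take $N=1$ and $\dot\phi(0)=1$, $\dot\phi(n)=0$ for $n\ge1$, so $\|\phi\|_{cb}=1$ and $c=0$; then $\dot\psi$ is the indicator of $\{0,1\}$, and by the exact Haagerup--Steenstrup--Szwarc formula quoted in the introduction, $\|\psi\|_{cb}=\tfrac12\bigl\|\sum_{k\ge0}2^{-k}\tau^k(H_\psi)\bigr\|_{S_1}$ with $H_\psi=(\mathfrak{d}_2\dot\psi(i+j))_{i,j}$; that operator has trace $2$ but its upper-left $2\times2$ corner has determinant $-\tfrac12$, so it is not positive and its trace norm exceeds $2$, giving $\|\psi\|_{cb}>1=\|\phi\|_{cb}+|c|$. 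Since your chain only has $|c|$ of slack (you use $\|T_\psi\|_{S_1}\le 3^N(\|\psi\|_{cb}-|c_+|-|c_-|)$ with $c_+=c$, $c_-=0$), any weaker constant in the lift destroys the stated $3^N$. There is also a circularity: the telescoping that is supposed to ``close up to the constant $c$'' presupposes that $c=\lim_n\dot\phi(n)$ exists, yet you only deduce this limit afterwards from the multiplier property of $\psi$.

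The paper avoids all of this by \emph{not} asking for a radial lift. It first gets the existence of $c$ from Wysocza\'nski's theorem by restricting $\phi$ to a single copy of $X$, writes $\phi=\langle P,Q\rangle$, and sets $\tilde P(u)=P(\psi(u_1),\dots,\psi(u_N))\otimes\delta_{J(u)}$ and similarly for $\tilde Q$, where $J(u)\in\{0,1\}^N$ records the parity class of each coordinate. The resulting function $\varphi$ is multi-radial, equals $\dot\phi\bigl(\tfrac12 d(u,v)\bigr)$ when all coordinate distances are even and vanishes otherwise, and satisfies $\|\varphi\|_{cb}\le\|\phi\|_{cb}$ trivially because $\|\tilde P\|_\infty=\|P\|_\infty$. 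Then Proposition \ref{Prop_mult_rad} (the multi-radial characterisation, not Theorem \ref{thmHSS}) gives a trace-class operator with norm at most $3^N\|\phi\|_{cb}$, and the compression $V\delta_n=\delta_{2n}$ together with Lemma \ref{Lem_NN_N} turns it into $H$. If you want to salvage your outline, replace the radial lift by this parity-tensored lift; as it stands, the central step of your proposal is missing and, with the constant you require, appears not to hold.
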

\begin{proof}
By restriction, $\phi$ defines a radial Schur multiplier on $X$. Hence, by \cite[Theorem 6.1]{Wys}, $c=\lim_{k\to\infty}\dot{\phi}(k)$ exists. We shall treat first the case when $c=0$. By Theorem \ref{thmHSS}, there is a Hilbert space $\mathcal{H}$ and bounded functions $P,Q:X^N\to\mathcal{H}$ such that
\begin{align*}
\phi(x,y)=\langle P(x),Q(y)\rangle,\quad\forall x,y\in X^N.
\end{align*}
Recall that $\Gamma(G)$ is the disjoint union of $A=\Psi(X)$ and $B=f(\Psi(X))$. Define functions $J:\Gamma(G)^N\to\{0,1\}^N$ and $\psi:\Gamma(G)\to X$ by
\begin{align*}
J(u)_i &=\begin{cases}
0, & \text{if } u_i\in A\\
1, & \text{if } u_i\in B,
\end{cases}\quad \forall i\in\{1,...,N\}.\\
\psi(\omega) &=\begin{cases}
\Psi^{-1}(\omega), & \text{if } \omega\in A\\
\Psi^{-1}\circ f^{-1}(\omega), & \text{if } \omega\in B.
\end{cases}
\end{align*}
Observe that $\Gamma(G)$ is a bipartite graph, and the equality $J(u)=J(v)$ is equivalent to the fact that $d(u_i,v_i)$ is even for all $i\in\{1,...,N\}$. Consider now the Hilbert space $\tilde{\mathcal{H}}=\mathcal{H}\otimes\ell_2\left(\{0,1\}^N\right)$ and define functions $\tilde{P},\tilde{Q}:\Gamma(G)^N\to\tilde{\mathcal{H}}$ by
\begin{align*}
\tilde{P}(u) &=P(\psi(u_1),...,\psi(u_N))\otimes\delta_{J(u)},\\
\tilde{Q}(u) &=Q(\psi(u_1),...,\psi(u_N))\otimes\delta_{J(u)}.
\end{align*}
Observe that $\|\tilde{P}\|_\infty=\| P\|_\infty$ and $\|\tilde{Q}\|_\infty=\| Q\|_\infty$. Moreover, if $u,v\in\Gamma(G)^N$ satisfy $J(u)=J(v)$, then
\begin{align*}
\langle \tilde{P}(u),\tilde{Q}(v)\rangle &= \langle P(\psi(u_1),...,\psi(u_N)),Q(\psi(v_1),...,\psi(v_N))\rangle\\
&= \dot{\phi}(d(\psi(u_1),\psi(v_1))+\cdots + d(\psi(u_N),\psi(v_N)))\\
&= \dot{\phi}\left(\tfrac{1}{2}d(u_1,v_1)+\cdots + \tfrac{1}{2}d(u_N,v_N)\right)\\
&= \dot{\phi}\left(\tfrac{1}{2}d(u,v)\right).
\end{align*}
On the other hand, if $J(u)\neq J(v)$, then $\langle \tilde{P}(u),\tilde{Q}(v)\rangle=0$. We conclude that the function $\varphi:\Gamma(G)^N\times\Gamma(G)^N\to\C$ given by
\begin{align}\label{tildphiuv}
\varphi(u,v)=\langle\tilde{P}(u),\tilde{Q}(v)\rangle,\quad\forall u,v\in \Gamma(G)^N,
\end{align}
is a Schur multiplier on the product of $N$ copies of the 3-homogeneous tree, such that $\|\varphi\|_{cb}\leq\|\phi\|_{cb}$ and
\begin{align*}
\lim_{|n|\to\infty}\varphi(n)=0.
\end{align*}
Moreover, $\varphi$ is a multi-radial function. That is, there exists $\tilde{\varphi}:\N^N\to\C$ such that
\begin{align*}
\varphi(u,v)=\tilde{\varphi}(d(u_1,v_1),...,d(u_N,v_N)),
\end{align*}
for all $u,v\in\Gamma(G)^N$. Namely,
\begin{align*}
\tilde{\varphi}(n)=\begin{cases}
\dot{\phi}\left(\tfrac{1}{2}|n|\right), & \text{if } n\in(2\N)^N\\
0, & \text{otherwise}.
\end{cases}
\end{align*}
Therefore, by Proposition \ref{Prop_mult_rad}, the operator
\begin{align*}
T=\left(\sum_{I\subset[N]}(-1)^{|I|} \tilde{\varphi}(m+n+2\chi^I)\right)_{m,n\in\N^N}
\end{align*}
is an element of $\in S_1(\ell_2(\N^N))$ of norm at most $3^N\|\phi\|_{cb}$. Recall that $\chi^I\in\{0,1\}^N$ is defined as
\begin{align*}
\chi^I_i=\begin{cases} 1 & \text{if } i\in I \\
0 & \text{if } i\notin I.
\end{cases}
\end{align*}
Define now, for each $n\in\N^N$,
\begin{align*}
V\delta_n=\delta_{2n}.
\end{align*}
Then $V$ extends to an isometry on $\ell_2(\N^N)$, which implies that the operator $\tilde{T}=V^*TV$ is an element of $\in S_1(\ell_2(\N^N))$ of norm at most $3^N\|\phi\|_{cb}$. Furthermore, for all $m,n\in\N^N$,
\begin{align*}
\tilde{T}_{m,n} &=T_{2m,2n}\\
&=\sum_{I\subset[N]}(-1)^{|I|} \tilde{\varphi}\left(2(m+n+\chi^I)\right)\\
&=\sum_{I\subset[N]}(-1)^{|I|} \dot{\phi}\left(|m|+|n|+|I|\right)\\
&=\sum_{k=0}^N \tbinom{N}{k} (-1)^k \dot{\phi}\left(|m|+|n|+k\right)\\
&=\mathfrak{d}_1^N\dot{\phi}\left(|m|+|n|\right).
\end{align*}
Hence, by Lemma \ref{Lem_NN_N}, the generalised Hankel matrix
\begin{align*}
H= \left(\tbinom{N+i-1}{N-1}^{\frac{1}{2}}\tbinom{N+j-1}{N-1}^{\frac{1}{2}}\mathfrak{d}_1^N\dot{\phi}(i+j)\right)_{i,j\in\N}
\end{align*}
is an element of $S_1(\ell_2(\N))$ of norm at most $3^N\|\phi\|_{cb}$. This proves the theorem in the case $c=0$. If $c\neq 0$, we repeat the previous argument for $\phi-c$, and since the derivative of a constant function is $0$, we obtain the same conclusion with $\|H\|_{S_1}\leq 3^N\left(\|\phi\|_{cb}+|c|\right)$.
\end{proof}

This completes the proof of Theorem B, since we found a particular case for which the condition $H\in S_1(\ell_2(\N))$ is also necessary.

\section{Sufficient condition for finite dimensional CAT(0) cube complexes}\label{Sec_SCCCC}

In this section we prove Theorem C. We begin by defining median graphs. Let $X$ be (the set of vertices of) a connected graph. For all $x,y\in X$, we define
\begin{align*}
I(x,y)=\{u\in X\, :\, d(x,y)=d(x,u)+d(u,y)\},
\end{align*}
where $d:X\times X\to\N$ is the combinatorial distance. Observe that $I(x,y)$ is the union of all the vertices lying in a geodesic joining $x$ to $y$. We call it the interval between $x$ and $y$.  Now define
\begin{align*}
I(x,y,z)=I(x,y)\cap I(y,z)\cap I(z,x),
\end{align*}
for all $x,y,z\in X$. We say that $X$ is a median graph if
\begin{align*}
|I(x,y,z)|=1,\quad\forall x,y,z\in X.
\end{align*}
In that case, we call median of $x,y,z$, the unique element $\mu(x,y,z)\in I(x,y,z)$. It is not hard to check that trees, and more generally, products of trees are median. The following theorem of Chepoi relates median graphs to CAT(0) cube complexes. For details on CAT(0) cube complexes, see \cite[\S 2]{GueHig}.

\begin{thm}{\cite[Theorem 6.1]{Che}}
Median graphs are exactly the 1-skeletons of CAT(0) cube complexes.
\end{thm}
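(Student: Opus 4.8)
The plan is to prove the two inclusions separately, taking for granted two standard tools: \emph{Gromov's link condition}---a cube complex is $\mathrm{CAT}(0)$ exactly when it is simply connected and the link of each vertex is a flag simplicial complex---and \emph{Sageev's hyperplane machinery}: in a $\mathrm{CAT}(0)$ cube complex every hyperplane is two-sided and embedded and separates the complex into exactly two halfspaces, the combinatorial distance $d(x,y)$ equals the number of hyperplanes separating the vertices $x$ and $y$, each interval $I(x,y)$ is the intersection of all halfspaces containing $x$ and $y$, the convex hull of a finite set of vertices is finite, and halfspaces (being convex) satisfy the Helly property.

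\smallskip
\noindent\textbf{From a median graph to a $\mathrm{CAT}(0)$ cube complex.} Given a median graph $G$, I would build a cube complex $X(G)$ by attaching, for every induced subgraph of $G$ isomorphic to the $1$-skeleton of a $k$-cube (over all $k\ge 2$), a Euclidean cube $[0,1]^k$ along it; since uniqueness of medians makes a cube determined by its vertex set, these attachments are consistent and $X(G)$ has $1$-skeleton $G$. One then verifies Gromov's two conditions. For simple connectedness, given an edge-loop $\gamma$ and a basepoint $x_0$, choose a vertex $v$ on $\gamma$ at maximal distance from $x_0$, with neighbours $v_-,v_+$ on $\gamma$; then $d(v_-,v_+)=2$, and replacing $v$ by $\mu(v_-,x_0,v_+)$ homotopes $\gamma$ across the square on $v_-,v,v_+,\mu(v_-,x_0,v_+)$ to a loop strictly ``closer'' to $x_0$, so iterating contracts $\gamma$. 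For flagness of links, the link of a vertex $v$ has a vertex per edge at $v$ and a $k$-simplex per $(k{+}1)$-cube containing $v$, so flagness amounts to the assertion that any $m$ edges at $v$ which pairwise span squares jointly span an $m$-cube; this follows by induction on $m$ from the quadrangle (and cube) condition characterising median graphs. Gromov's criterion then gives that $X(G)$ is $\mathrm{CAT}(0)$ with $1$-skeleton $G$.

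\smallskip
\noindent\textbf{From a $\mathrm{CAT}(0)$ cube complex to a median graph.} Let $X$ be a $\mathrm{CAT}(0)$ cube complex, $G$ its $1$-skeleton, and $x,y,z\in G$. Working inside the finite convex hull of $\{x,y,z\}$, each of $I(x,y)$, $I(y,z)$, $I(z,x)$ is an intersection of finitely many halfspaces, hence so is their common intersection $I(x,y,z)$; moreover any two of the halfspaces involved meet, because each contains two of $x,y,z$ and so any two share one of these three points. By the Helly property, $I(x,y,z)\neq\varnothing$. Finally, if $m,m'\in I(x,y,z)$, then for every hyperplane two of $x,y,z$ lie on a common side, which forces both $m$ and $m'$ onto that side; hence no hyperplane separates $m$ from $m'$, so $m=m'$. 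Thus $|I(x,y,z)|=1$ and $G$ is median.

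\smallskip
\noindent\textbf{Main obstacle.} In each direction the substantive input is structural rather than formal: in the second it is the convexity of halfspaces together with the (finite) Helly property in $\mathrm{CAT}(0)$ cube complexes, and in the first it is the flag condition for links, i.e.\ bootstrapping ``pairwise compatible implies jointly compatible'' from squares up to cubes of every dimension via the median axiom. The remaining steps are routine bookkeeping with geodesics and the combinatorial metric.
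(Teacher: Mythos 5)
You should first note that the paper does not prove this statement at all: it is imported verbatim from \cite{Che} (and used only as a convenient definition of CAT(0) cube complexes via their $1$-skeleta), so your proposal can only be measured against the standard proofs in the literature. Your overall architecture is the standard one: median graph $\Rightarrow$ $\mathrm{CAT}(0)$ by filling in all cubes and verifying Gromov's criterion, and $\mathrm{CAT}(0)$ $\Rightarrow$ median via hyperplane combinatorics. The first direction is essentially sound as a sketch: the loop-contraction argument needs the usual bookkeeping (backtracks $v_-=v_+$, ties among vertices at maximal distance, a lexicographic complexity that actually decreases), and the flagness of links is exactly the point where the median axiom must be exploited in an induction showing that edges at a vertex which pairwise span squares jointly span a cube; these are fixable details, and this is how Chepoi's correspondence is usually established.

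The genuine gap is in the second direction. You take for granted that ``halfspaces (being convex) satisfy the Helly property,'' i.e.\ that pairwise intersecting halfspaces have a common vertex, and existence of the median is then immediate. But Helly number $2$ is not a consequence of convexity: already in a $\mathrm{CAT}(0)$ space the three sides of a Euclidean triangle are convex, pairwise intersecting, and have empty total intersection. For halfspaces of a $\mathrm{CAT}(0)$ cube complex the Helly property is a real theorem, and its standard proof goes \emph{through} the median property of the $1$-skeleton (median graphs have the Helly property for convex sets), or else requires machinery of comparable depth (Sageev--Roller duality: a consistent orientation of the hyperplanes differing from a vertex orientation on finitely many hyperplanes is realised by a vertex; or gate/projection lemmas, or disc diagrams). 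In fact your own reduction shows that, for finite families, Helly for halfspaces is essentially \emph{equivalent} to the statement being proved, so as written the harder half of the theorem is assumed rather than established. To repair it, prove existence of the median directly: orient each hyperplane towards the side containing at least two of $x,y,z$ (consistency of this orientation is exactly your pigeonhole observation), note that it disagrees with the orientation determined by $x$ only on the finitely many hyperplanes separating $x$ from both $y$ and $z$, and then show by induction on the number of disagreements---using the facts you already quote from \cite{Sag}, e.g.\ by exhibiting an edge at the current vertex crossing a minimal discrepant hyperplane---that such an orientation is realised by a vertex; that vertex lies in $I(x,y)\cap I(y,z)\cap I(z,x)$. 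Your uniqueness argument (no hyperplane can separate two points of $I(x,y,z)$, since two of $x,y,z$ lie on one side and intervals are contained in halfspaces) is correct and needs no change.
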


This powerful result allows us to analyse these objects from two different points of view. We shall focus mainly on the median graph structure; however, the notion hyperplane will be useful. A hyperplane in a CAT(0) cube complex is an equivalence class of edges under the equivalence relation generated by
\begin{align*}
\{x,y\}\sim \{u,v\}\quad\text{if}\quad\{x,y,v,u\}\text{ is a square}.
\end{align*}
If $H$ is a hyperplane and $\{x,y\}\in H$, we say that $H$ separates $x$ from $y$, and that $\{x,y\}$ crosses $H$. More generally, we say that a path crosses $H$ if one of its edges does.

\begin{thm}{\cite[Theorem 4.13]{Sag}}\label{thmSag}
Let $x,y$ be two vertices in a CAT(0) cube complex $X$, and let $\gamma$ be a geodesic joining $x$ and $y$. Then $\gamma$ crosses every hyperplane separating $x$ from $y$, and it does so only once. Moreover, $\gamma$ does not cross any other hyperplane.
\end{thm}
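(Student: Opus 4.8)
The plan is to work entirely with the median-graph ($1$-skeleton) structure and to realise each hyperplane as a combinatorial wall. Since $X$ is a median graph it is bipartite, so for any edge $e=\{a,b\}$ and any vertex $v$ one has $d(v,a)\neq d(v,b)$, and since $a\sim b$ in fact $|d(v,a)-d(v,b)|=1$; set
\begin{align*}
\mathfrak{h}^-(e)=\{v\in X:\ d(v,a)<d(v,b)\},\qquad \mathfrak{h}^+(e)=\{v\in X:\ d(v,b)<d(v,a)\},
\end{align*}
so that $X=\mathfrak{h}^-(e)\sqcup\mathfrak{h}^+(e)$. The structural input I would invoke — this is essentially the content of \cite[Theorem 4.13]{Sag}, and can also be extracted from Chepoi's median characterisation together with the transitivity of the Djokovi\'c--Winkler relation $\Theta$ on median graphs — is: both halfspaces $\mathfrak{h}^\pm(e)$ are convex; the unordered pair $\{\mathfrak{h}^-(e),\mathfrak{h}^+(e)\}$ depends only on the hyperplane $H$ containing $e$, not on the chosen edge $e\in H$; and an edge $\{c,d\}$ lies in $H$ if and only if $c$ and $d$ lie in different members of that pair. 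In particular ``$H$ separates $u$ from $v$'' is to be read as ``$u$ and $v$ lie in different halfspaces of $H$'', which is consistent with the adjacent case mentioned before the theorem.

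Granting this, the three assertions become bookkeeping along a geodesic $\gamma=(x=v_0,v_1,\dots,v_n=y)$. Write $e_i=\{v_{i-1},v_i\}$ and let $H_i$ be its hyperplane. Every subpath of a geodesic is a geodesic, so $d(v_j,v_k)=|j-k|$; comparing $d(v_k,v_{i-1})$ with $d(v_k,v_i)$ gives $v_0,\dots,v_{i-1}\in\mathfrak{h}(e_i)$ and $v_i,\dots,v_n$ in the opposite halfspace, where $\mathfrak{h}(e_i)$ is the halfspace of $e_i$ containing $v_{i-1}$. Thus $x$ and $y$ lie in different halfspaces of $H_i$, so $H_i$ separates $x$ from $y$; hence $\gamma$ crosses only hyperplanes separating $x$ from $y$. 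For distinctness, suppose $H_i=H_j$ with $i<j$; both edges then induce the same partition of $X$, and since $x=v_0$ lies in the $v_{i-1}$-side of $e_i$ and in the $v_{j-1}$-side of $e_j$, these are the same halfspace $A$, while $y=v_n$ lies in the $v_i$-side of $e_i$ and the $v_j$-side of $e_j$, forcing those to be the complementary halfspace $B$. But $v_i$ lies in the $v_i$-side of $e_i$, hence in $B$, and (applying the previous conclusion to $e_j$, using $i\le j-1$) also in the $v_{j-1}$-side of $e_j$, hence in $A$, contradicting $A\cap B=\varnothing$. So the $H_i$ are pairwise distinct, i.e. $\gamma$ crosses each hyperplane it meets exactly once. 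Finally, if $H$ is any hyperplane separating $x$ from $y$, with halfspaces $A\ni x$ and $B\ni y$, let $i$ be least with $v_i\in B$; then $e_i$ joins $A$ to $B$, so $e_i\in H$ by the structural input, whence $H=H_i$. Therefore $\gamma$ crosses every hyperplane separating $x$ from $y$, and by distinctness exactly once; and since every hyperplane it crosses is one of the $H_i$, each of which separates $x$ from $y$, it crosses no others.

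The genuine obstacle is the structural input of the first paragraph; everything after it is formal. Its easy half is that opposite edges $\{a,b\}$, $\{c,d\}$ of a square induce the same partition (a short distance computation), so the partition is constant along each hyperplane; the delicate half is the converse — two edges inducing the same partition are $\Theta$-equivalent, equivalently $\Theta$ is transitive on $X$ — together with convexity of the halfspaces, for which the standard route is a median argument: given $v\in\mathfrak{h}^-(e)$, $w\in\mathfrak{h}^+(e)$, analyse $\mu(v,w,a)$ and $\mu(v,w,b)$ to locate an edge of $H$ on a geodesic from $v$ to $w$, then iterate. Since this is precisely \cite[Theorem 4.13]{Sag} (and Chepoi's and Winkler's theorems), in the paper I would simply cite it rather than reproduce the proof.
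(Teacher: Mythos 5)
This statement is not proved in the paper at all: it is quoted verbatim from Sageev (\cite[Theorem 4.13]{Sag}) and used as a black box, so there is no internal proof to compare against. Your proposal is essentially consistent with that treatment: the bookkeeping you carry out along the geodesic (each $e_i$ separates $x$ from $y$ by the distance comparison, the $H_i$ are pairwise distinct by the halfspace contradiction, and every separating hyperplane equals some $H_i$ via the first index where $\gamma$ changes sides) is correct, and it is also honest in flagging that the ``structural input'' -- the pair of halfspaces depends only on the hyperplane, an edge crosses $H$ exactly when its endpoints lie in different halfspaces, convexity of halfspaces -- is precisely the nontrivial content of Sageev's theorem, equivalently of the Djokovi\'c--Winkler theory for median graphs, which you then propose to cite rather than reprove. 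So in effect both you and the paper defer the real work to the literature; your added value is a clean derivation of the geodesic statement from the wall/halfspace description, and that derivation has no gaps (note that convexity of halfspaces, which you list, is never actually used in your argument). The only caution is presentational: as written, one could mistake the argument for an independent proof, when the ``easy half'' (opposite edges of a square induce the same partition) and especially the ``delicate half'' (transitivity of $\Theta$ on median graphs) are exactly where the substance lies; since the paper only needs the statement as an external tool, citing \cite{Sag} as it does is the appropriate choice.
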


Mizuta \cite{Miz} proved that groups acting properly on finite dimensional CAT(0) cube complexes are weakly amenable by making use of their median structure. We quickly describe his construction, as it will be our main tool to prove Theorem C. Let $X$ be a median graph of dimension $N<\infty$ (as a cube complex) and $\mu:X^3\to X$ its median function. Like we did in the case of trees, we fix an infinite geodesic $\omega_o:\N\to X$. Observe that we can always assume that such a geodesic exists since the fact of adding an infinite ray starting from a vertex of the complex preserves both the fact of being median and the dimension.

\begin{lem}{\cite[Lemma 2]{Miz}}\label{lemmedian}
Let $x_1,x_2\in X$. There exists a unique point $m(x_1,x_2)\in X$ such that, for all but finitely many $z\in\omega_o$, $\mu(x_1, x_2, z)=m(x_1,x_2)$.
\end{lem}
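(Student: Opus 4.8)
The plan is to track the median $\mu(x_1,x_2,z)$ through the hyperplanes of $X$, using Theorem \ref{thmSag}. I will rely on two standard facts about a CAT(0) cube complex, both immediate consequences of Theorem \ref{thmSag}: \textbf{(a)} a vertex is determined by the collection of sides of hyperplanes containing it --- if $u,v$ lie on the same side of every hyperplane, then no hyperplane separates them, so a geodesic from $u$ to $v$ crosses no hyperplane, hence has no edge, hence $u=v$; and \textbf{(b)} the \emph{majority rule}: for all $a,b,c\in X$ and every hyperplane $H$, the median $\mu(a,b,c)$ lies on the side of $H$ containing at least two of $a,b,c$. For (b): by definition $\mu(a,b,c)\in I(a,b)\cap I(b,c)\cap I(c,a)$; choose two of the points $a,b,c$, say $p$ and $q$, lying on the majority side of $H$. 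Since $H$ does not separate $p$ from $q$, Theorem \ref{thmSag} shows that a geodesic from $p$ to $q$ has no edge in $H$; as $\mu(a,b,c)$ lies on such a geodesic, it is on the same side of $H$ as $p$ and $q$.

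Next I bring in the geodesic ray $\omega_o$. Set $k=d(x_1,x_2)$ and let $H_1,\dots,H_k$ be the (finitely many) hyperplanes separating $x_1$ from $x_2$; for every other hyperplane, $x_1$ and $x_2$ lie on the same side. For each $j$, any finite sub-path of $\omega_o$ is a geodesic, so by Theorem \ref{thmSag} it crosses $H_j$ at most once; hence $\omega_o$ contains at most one edge lying in $H_j$, and consequently there are $N_j\in\N$ and a side $S_j$ of $H_j$ with $\omega_o(n)\in S_j$ for all $n\ge N_j$ (consecutive vertices not separated by $H_j$ lie on the same side of $H_j$, since the edge they span lies in a single hyperplane, so $\omega_o$ can switch sides of $H_j$ only at that one edge). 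Put $N_0=\max\{N_1,\dots,N_k\}$.

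Now fix $n\ge N_0$ and write $m_n=\mu(x_1,x_2,\omega_o(n))$. By fact (b), for every hyperplane $H$ the vertex $m_n$ lies on the majority side of $\{x_1,x_2,\omega_o(n)\}$ with respect to $H$. If $H\notin\{H_1,\dots,H_k\}$, this is the common side of $x_1$ and $x_2$, which does not depend on $n$. If $H=H_j$, then $x_1$ and $x_2$ lie on opposite sides of $H_j$ while $\omega_o(n)\in S_j$, so the majority side is $S_j$, which again does not depend on $n$. Thus $m_n$ lies on an $n$-independent side of every hyperplane, so by fact (a) the vertex $m_n$ is the same for all $n\ge N_0$; denote it $m(x_1,x_2)$. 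Then $\mu(x_1,x_2,z)=m(x_1,x_2)$ for every $z=\omega_o(n)$ with $n\ge N_0$, that is, for all but finitely many $z\in\omega_o$; uniqueness is automatic, since any point with this property must equal $m_n$ for some large $n$.

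The delicate point is fact (b), the majority characterisation of the median; the rest is bookkeeping with Theorem \ref{thmSag}. Note that the finite dimension of $X$ is not used here (it enters only in the later estimates of Theorem C): all that matters is $d(x_1,x_2)<\infty$, so that only finitely many hyperplanes separate $x_1$ from $x_2$, together with the fact that $\omega_o$ is a genuine geodesic ray.
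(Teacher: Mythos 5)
The paper does not prove this lemma at all: it is imported verbatim as \cite[Lemma 2]{Miz}, so there is no in-paper argument to compare with, and your proposal should be judged as a self-contained proof. As such it is correct, and it is a natural hyperplane-combinatorial argument: the key step is the ``majority rule'' (b), which you derive correctly from Theorem \ref{thmSag} (the median lies on a geodesic between any two of the three points, and that geodesic crosses no hyperplane not separating them), and the stabilisation then follows because only the finitely many hyperplanes separating $x_1$ from $x_2$ can see any difference as $z$ moves along the ray, each of which is crossed by $\omega_o$ at most once. Two small points you are implicitly leaning on: first, the whole argument uses that each hyperplane partitions the vertex set into exactly two ``sides'' (halfspaces) --- this is where the pigeonhole in (b) and the phrase ``$x_1$ and $x_2$ lie on the same side'' come from; that two-sidedness is a standard theorem of Sageev (and is also how ``separates'' in Theorem \ref{thmSag} must be read for non-adjacent vertices, since the paper only defines separation across a single edge), but it is not contained in the quoted statement, so it deserves an explicit citation rather than being treated as following from Theorem \ref{thmSag} alone. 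Second, you correctly note that finite subpaths of $\omega_o$ must be genuine geodesics (distance-realising), which is how the lemma is meant; with these provisos the proof is complete, and it has the mild advantage over simply citing Mizuta of making transparent that finite dimension is irrelevant here.
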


For $x\in X$ and $k\in\N$, we put
\begin{align*}
A(x,k)=\left\{y\in X\ :\ \begin{matrix}
\exists\,\omega_x \text{ infinite geodesic such that}\\
\omega_x(0)=x,\, \omega_x(k)=y,\, |\omega_x\Delta\omega_0|<\infty
\end{matrix}
\right\}.
\end{align*}
Observe that, for a tree, $A(x,k)=\{\omega_x(k)\}$, where $\omega_x$ is the unique geodesic that satisfies $\omega_x(0)=x$ and $|\omega_x\Delta\omega_0|<\infty$. The sets $A(x,k)$ can be endowed with a polytopal structure as follows. We define $0$-polytopes as the set of vertices of $X$. For $l\in\{1,...,N-1\}$, we say that $P\subset X$ is an $l$-polytope if there exists an $(l+1)$-cube $C$, a vertex $w\in C$ and $j\in\{1,...,l\}$ such that $P$ is the set of points at distance $j$ from $w$, lying in a geodesic between $w$ and $d_C(w)$, where $d_C(w)$ is the point diagonal to $w$ with respect to $C$. For $l\in\{0,...,N-1\}$, we define $\mathcal{A}(x,k)^{(l)}$ as the set of $l$-polytopes contained in $A(x,k)$. Hence, the set of all polytopes in $A(x,k)$ is
\begin{align*}
\mathcal{A}(x,k)=\bigcup_{l=0}^{N-1}\mathcal{A}(x,k)^{(l)}.
\end{align*}
These sets of polytopes will play the role of the delta functions $\delta_{\omega_x}$ in the proof of Lemma \ref{Lem_suf_mult_rad}. More precisely, for $k\in\N$ and $l\in\{0,...,N-1\}$, define maps $f_k^{(l)}:X\to\bigoplus_{j=0}^{N-1}\ell_2(\mathcal{X}^{(j)})$, where $\mathcal{X}^{(j)}$ is the set of all $j$-polytopes in $X$, by
\begin{align*}
f_k^{(l)}(x)=\sum_{P\in \mathcal{A}(x,k)^{(l)}}\delta_P \in \ell_2(\mathcal{X}^{(l)})\subseteq \bigoplus_{j=0}^{N-1}\ell_2(\mathcal{X}^{(j)}).
\end{align*}
It follows from the definition that $\|f_k^{(l)}(x)\|^2=|\mathcal{A}(x,k)^{(l)}|$. Now put
\begin{align}\label{PkQk}
P_k(x)=\sum_{l=0}^{N-1}f_k^{(l)}(x)\quad\text{and}\quad Q_k(x)=\sum_{l=0}^{N-1}(-1)^{l}f_k^{(l)}(x).
\end{align}
Observe that, if $k\neq j$, then $\langle P_k(x),P_j(x)\rangle=0$ and $\langle Q_k(x),Q_j(x)\rangle=0$. The following result is implicit in the proof of \cite[Theorem 2]{Miz}.

\begin{prop}\label{prodPkQk}
Let $x_1,x_2\in X$ and $k_1,k_2\in\N$. Then
\begin{align*}
\langle P_{k_1}(x_1),Q_{k_2}(x_2)\rangle=\left\{\begin{array}{ll}
1 & \text{if } \exists j\geq 0,\ k_i=l_i+j\ (i=1,2)\\
0 & \text{if not}
\end{array}\right. ,
\end{align*}
where $l_i=d(x_i,m(x_1,x_2))$ ($i=1,2$) and $m(x_1,x_2)$ is the point given by Lemma \ref{lemmedian}.
\end{prop}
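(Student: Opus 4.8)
The plan is to expand the inner product into an Euler-characteristic-type alternating sum, identify the polytopal complex that appears, and invoke its contractibility. \textbf{Step 1 (reduction to an alternating count).} The functions $f_{k}^{(l)}(x)$ take values in the summand $\ell_2(\mathcal{X}^{(l)})$ of $\bigoplus_{j}\ell_2(\mathcal{X}^{(j)})$, and these summands are pairwise orthogonal; hence $\langle f_{k_1}^{(l)}(x_1),f_{k_2}^{(l')}(x_2)\rangle=0$ for $l\neq l'$, while for $l=l'$ it equals the number of $l$-polytopes contained in $A(x_1,k_1)\cap A(x_2,k_2)$, a polytope being determined by its set of vertices. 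Expanding $P_{k_1}(x_1)$ and $Q_{k_2}(x_2)$ through \eqref{PkQk},
\[
\langle P_{k_1}(x_1),Q_{k_2}(x_2)\rangle=\sum_{l=0}^{N-1}(-1)^{l}\,\#\bigl\{l\text{-polytopes }P\subseteq A(x_1,k_1)\cap A(x_2,k_2)\bigr\}.
\]

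\textbf{Step 2 (identifying the intersection).} Fix $z=\omega_o(n)$ with $n$ large enough that (a) $\mu(x_1,x_2,z)=m:=m(x_1,x_2)$ by Lemma \ref{lemmedian}, and (b) for each pair $(x,k)$ among $(x_1,k_1)$, $(x_2,k_2)$, $(m,j)$ one has $A(x,k)=\{y:\ d(x,y)=k,\ y\in I(x,z)\}$. Part (b) is a consequence of Theorem \ref{thmSag}: the set of hyperplanes separating $x$ from $\omega_o(N)$ eventually grows by one at each step, so for $N$ large every geodesic from $x$ to $\omega_o(N)$ prolongs $\omega_o$ to an infinite geodesic flowing with $\omega_o$, and conversely every such geodesic ray passes through $\omega_o(N)$. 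Using the median-graph identity $I(x_1,z)\cap I(x_2,z)=I(m,z)$ together with the convexity of intervals, one gets, for $y\in I(m,z)$, $d(x_i,y)=d(x_i,m)+d(m,y)=l_i+d(m,y)$. Hence $A(x_1,k_1)\cap A(x_2,k_2)\subseteq I(m,z)$ is empty unless $k_1-l_1=k_2-l_2=j$ for some $j\geq 0$, in which case it equals $\{y:\ d(m,y)=j,\ y\in I(m,z)\}=A(m,j)$. Since polytopes are determined by their vertices, the polytopes counted in Step 1 are exactly those of $\mathcal{A}(m,j)^{(l)}$ in the good case, and there are none otherwise, so
\[
\langle P_{k_1}(x_1),Q_{k_2}(x_2)\rangle=\begin{cases}\displaystyle\sum_{l=0}^{N-1}(-1)^{l}\,\bigl|\mathcal{A}(m,j)^{(l)}\bigr| & \text{if }k_i=l_i+j\ (i=1,2),\ j\geq 0,\\ 0 & \text{otherwise.}\end{cases}
\]

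\textbf{Step 3 (the count equals $1$), and the main difficulty.} It remains to show $\sum_{l}(-1)^{l}|\mathcal{A}(p,j)^{(l)}|=1$ for every vertex $p$ and every $j\geq 0$, i.e.\ that the polytopal complex $A(p,j)$ has Euler characteristic $1$. This is precisely the case $x_1=x_2$, $k_1=k_2$ of the Proposition, namely $\langle P_j(p),Q_j(p)\rangle=1$, and it is the geometric core of Mizuta's argument \cite[proof of Theorem 2]{Miz}; one may either quote it from there or reprove it by identifying $I(p,z)$ with the cube complex dual to the poset of hyperplanes separating $p$ from $z$, observing that $A(p,j)$ is then glued from diagonal slices of cubes (hypersimplices indexed by fixed-size order ideals), and checking that this complex is contractible. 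The routine part of the whole proof is Step 2, where the only care required is to pick one $z$ far enough out that the three descriptions of $A(x_1,k_1)$, $A(x_2,k_2)$, $A(m,j)$ hold simultaneously. The substantive obstacle is Step 3: establishing $\chi(A(p,j))=1$ in arbitrary dimension $N$, since the building blocks here are diagonal slices of cubes rather than subcubes, so that proving contractibility (or the alternating identity directly) is where the real work lies — this is exactly the key combinatorial lemma underlying \cite[Theorem 2]{Miz}.
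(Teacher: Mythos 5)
The paper itself gives no proof of Proposition \ref{prodPkQk} — it is stated as being implicit in the proof of \cite[Theorem 2]{Miz} — so there is no in-paper argument to compare yours against; what you supply is therefore more explicit than the paper. Your Steps 1--2 are a correct reduction: orthogonality of the summands $\ell_2(\mathcal{X}^{(l)})$ turns $\langle P_{k_1}(x_1),Q_{k_2}(x_2)\rangle$ into the alternating count of polytopes contained in $A(x_1,k_1)\cap A(x_2,k_2)$, and the identification of this intersection with $A(m,j)$ (empty unless $k_1-l_1=k_2-l_2=j\geq 0$) is sound; both $I(x_1,z)\cap I(x_2,z)=I(m,z)$ and $m\in I(x_i,y)$ for $y\in I(m,z)$ follow from counting separating hyperplanes, using that the median lies on the majority side of every hyperplane. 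One caveat in your justification of description (b): the claim that for a single large $n$ \emph{every} geodesic ray from $x$ flowing with $\omega_o$ passes through $\omega_o(n)$ is false in general (in the grid $\Z^2$ with $\omega_o$ the positive horizontal axis, a ray from $(0,k)$ may run at height $k$ arbitrarily long before descending and still have finite symmetric difference with $\omega_o$). The equality $A(x,k)=\{y\in I(x,\omega_o(n)):d(x,y)=k\}$ for $n$ large is nevertheless true, but its proof should use that $A(x,k)$ is finite (Lemma \ref{cardAxk}) and that only $k$ hyperplanes separate $x$ from a given $y\in A(x,k)$, so for $n$ large no hyperplane dual to an edge $\{\omega_o(t),\omega_o(t+1)\}$ with $t\geq n$ separates $x$ from such a $y$; combined with the eventual strict monotonicity of $n\mapsto d(x,\omega_o(n))$ this yields both inclusions.

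The substantive issue is the one you name yourself: Step 3 is not a proof. The identity $\sum_{l}(-1)^{l}\lvert\mathcal{A}(p,j)^{(l)}\rvert=1$ is precisely the combinatorial heart of the proposition, and you only cite \cite{Miz} or sketch a contractibility argument that is not carried out (note also that contractibility gives Euler characteristic $1$ only after one verifies that the hypersimplex slices making up $\mathcal{A}(p,j)$ really form a polyhedral complex closed under faces, so that the alternating polytope count is an Euler characteristic). Quoting this from \cite[proof of Theorem 2]{Miz} is legitimate and in fact coincides with what the paper does for the entire proposition; but as a self-contained blind proof, that alternating-sum identity is the genuine gap, and everything else in your write-up is the (correct) routine reduction around it.
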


Moreover, from the definition we have
\begin{align*}
\|P_k(x)\|^2=\sum_{l=0}^{N-1}\|f_k^{(l)}(x)\|^2=\sum_{l=0}^{N-1}|\mathcal{A}(x,k)^{(l)}|=|\mathcal{A}(x,k)|.
\end{align*}
And the same holds for $\|Q_k(x)\|^2$. The following lemma says that $|A(x,k)|=O(k^{N-1})$. We will prove that this is also true for $|\mathcal{A}(x,k)|$, and therefore, for $\|P_k(x)\|^2$ and $\|Q_k(x)\|^2$.

\begin{lem}{\cite[Lemma 5]{Miz}}\label{cardAxk}
For all $x\in X$ and $k\in\N$,
\begin{align*}
|A(x,k)|\leq\tbinom{N-1+k}{N-1}.
\end{align*}
\end{lem}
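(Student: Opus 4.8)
The plan is to convert the statement into a counting problem about hyperplanes of $X$ and then use the dimension hypothesis via Dilworth's theorem. First I would record the hyperplane picture: by Theorem \ref{thmSag}, for any vertices $u,v$ the combinatorial distance $d(u,v)$ equals the number of hyperplanes separating $u$ from $v$, and a geodesic from $u$ to $v$ crosses exactly those hyperplanes, each exactly once; write $\mathcal{H}(u|v)$ for this finite set. Fix the reference ray $\omega_o$ and let $\xi$ be its end. Say a hyperplane $H$ \emph{faces $\xi$} if it separates $x$ from $\omega_o(n)$ for all large $n$, and let $\mathcal{P}$ be the set of such hyperplanes, ordered by $H\le H'$ iff $H$ lies between $x$ and $H'$ (equivalently, the halfspace of $H'$ not containing $x$ is contained in that of $H$). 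A useful preliminary observation is that every principal down-set of $\mathcal{P}$ is finite: if $v$ is a vertex adjacent to $H$ on its far side, then every $H'\le H$ separates $x$ from $v$, so $\{H'\in\mathcal{P}:H'\le H\}\subseteq\mathcal{H}(x|v)$, which is finite.

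Next I would build an injection of $A(x,k)$ into the size-$k$ down-sets of $\mathcal{P}$. Given $y\in A(x,k)$, pick a witnessing infinite geodesic $\omega_x$ with $\omega_x(0)=x$, $\omega_x(k)=y$ and $|\omega_x\Delta\omega_o|<\infty$; for all large $n$ the vertex $\omega_o(n)$ lies on $\omega_x$ past $y$, so $y$ lies on a geodesic from $x$ to $\omega_o(n)$, whence $\mathcal{H}(x|y)\subseteq\mathcal{H}(x|\omega_o(n))$. Therefore every hyperplane in $\mathcal{H}(x|y)$ faces $\xi$, i.e. $\mathcal{H}(x|y)\subseteq\mathcal{P}$, and it is down-closed in $\mathcal{P}$: if $y$ lies beyond $H$ and $H'\le H$ then $y$ lies beyond $H'$. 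Since $|\mathcal{H}(x|y)|=d(x,y)=k$ and the map $y\mapsto\mathcal{H}(x|y)$ is injective on vertices, it suffices to bound the number of size-$k$ down-sets of $\mathcal{P}$.

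Then I would bound the width of $\mathcal{P}$ by $N$. Suppose $H,H'\in\mathcal{P}$ are incomparable. Both far halfspaces (relative to $x$) contain $\omega_o(n)$ for large $n$, so that intersection is non-empty; the two near halfspaces meet at $x$. Hence, of the four halfspace-intersections of $H$ and $H'$, the two ``diagonal'' ones are non-empty; if some intersection were empty it would have to be one of the other two, and that forces one far halfspace to contain the other, i.e. $H$ and $H'$ comparable — contradiction. So $H$ and $H'$ cross. Thus any antichain of $\mathcal{P}$ is a family of pairwise-crossing hyperplanes, which bounds a cube of that dimension in $X$ (\cite{Sag}); since $\dim X=N$, antichains have at most $N$ elements. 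By Dilworth's theorem $\mathcal{P}=C_1\sqcup\cdots\sqcup C_N$ is a union of $N$ chains, and since each $C_i$ inherits finite principal down-sets, $C_i$ is order-isomorphic to an initial segment of $\N$.

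Finally, for a down-set $S\subseteq\mathcal{P}$ each $S\cap C_i$ is a down-set of the chain $C_i$, hence an initial segment, hence determined by its cardinality $a_i$; and $S=\bigsqcup_i(S\cap C_i)$, so $S$ is recovered from $(a_1,\dots,a_N)$ with $\sum_i a_i=|S|$. Consequently the number of size-$k$ down-sets of $\mathcal{P}$ is at most
\[
\#\bigl\{(a_1,\dots,a_N)\in\N^N:\ a_1+\cdots+a_N=k\bigr\}=\tbinom{N-1+k}{N-1},
\]
which yields $|A(x,k)|\le\binom{N-1+k}{N-1}$. The main obstacle is the width bound: one must argue carefully, using the CAT(0)/median geometry, that two incomparable hyperplanes that both face the same end $\xi$ are forced to cross (the point being that disjoint hyperplanes separating a fixed vertex from a common end are necessarily nested, hence comparable); everything else is bookkeeping with Theorem \ref{thmSag} and Dilworth's theorem.
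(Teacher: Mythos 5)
Your argument is correct, but note that the paper itself gives no proof of this lemma: it is imported verbatim from Mizuta (\cite[Lemma 5]{Miz}), so what you have produced is an independent, self-contained proof rather than a reconstruction of anything in the text. Your route — send $y\in A(x,k)$ to the set $\mathcal{H}(x|y)$ of hyperplanes separating $x$ from $y$, check via Theorem \ref{thmSag} that this is an injective map into the $k$-element down-sets of the poset $\mathcal{P}$ of hyperplanes facing the end of $\omega_o$, bound the width of $\mathcal{P}$ by $N$ (incomparable hyperplanes facing the same end have all four halfspace intersections nonempty, hence cross, and a pairwise-crossing family spans a cube of dimension at most $N$), and then count down-sets in a union of $N$ chains of order type at most $\omega$ by weak compositions of $k$ — is sound; the final count $\tbinom{N-1+k}{N-1}$ is exactly the number of $(a_1,\dots,a_N)\in\N^N$ with $a_1+\cdots+a_N=k$, which is the same combinatorial quantity underlying Mizuta's bound, so in spirit the two proofs encode $A(x,k)$ by lattice points in an $N$-dimensional corner, but your mechanism (Dilworth plus down-set bookkeeping) is your own. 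Two steps deserve explicit citations if this were written out: the fact that an antichain of pairwise-crossing hyperplanes is bounded by the dimension is not stated in the paper (though the paper uses the analogous Helly-type fact via \cite[Proposition 2.8]{GueHig} in the proof of Lemma \ref{cardBi}, so it is consistent with the paper's toolkit), and Dilworth's theorem must be invoked in its infinite form for posets of finite width, which is standard (a compactness argument over the finite case) but worth saying, together with your observation that finiteness of principal down-sets makes each chain order-isomorphic to an initial segment of $\N$ so that finite down-sets of a chain are determined by their cardinality.
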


Observe that, if we assume that the degrees of the points in $X$ are uniformly bounded, we can conclude that there exists a constant $M>0$ such that every vertex belongs to at most $M$ polytopes. In what follows, we show that this is always true when we restrict ourselves to the polytopes in $\mathcal{A}(x,k)$. For $x\in X$, $k\in\N$, $y\in A(x,k)$ and $i=0,...,\min\{N-1,k\}$ define
\begin{align*}
B_i(x,y)=\{w\in A(x,k-i)\ :\ y\in A(w,i)\}.
\end{align*}

\begin{lem}\label{cardBi}
For all $x\in X$, $k\in\N$, $y\in A(x,k)$ and $i=0,...,\min\{N-1,k\}$, we have
\begin{align*}
|B_i(x,y)|\leq N^i.
\end{align*}
\end{lem}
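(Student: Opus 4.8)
The plan is to derive a concatenation property of the sets $A(\cdot,\cdot)$ and then induct on $i$. Fix $y\in A(x,k)$, so that $k=d(x,y)$. Since $X$ is a CAT(0) cube complex I use hyperplanes together with Theorem \ref{thmSag}: a geodesic crosses each hyperplane separating its endpoints exactly once and crosses no other hyperplane. A hyperplane $H$ not crossed by a tail of $\omega_o$ has a well-defined side $H^{+}$ containing $\omega_o(n)$ for all large $n$; then $v\in A(u,\ell)$ says exactly that $u$ and $v$ are separated by $\ell$ hyperplanes, each of which has $v$ on its side $H^{+}$. The key lemma is: \emph{if $v\in A(u,\ell)$ and $w\in A(v,m)$, then $w\in A(u,\ell+m)$ and $v\in I(u,w)$.} Indeed, a geodesic $u\to v$ followed by a geodesic $v\to w$ cannot recross a hyperplane $H$, since such an $H$ would be crossed within $u\to v$ (forcing $v\in H^{+}$) and within $v\to w$ (forcing $w\in H^{+}$ and $v\notin H^{+}$), a contradiction; hence the concatenation is a geodesic and the conditions on sides are inherited. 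In particular $B_i(x,y)\subseteq\{\,w\in I(x,y):d(w,y)=i\,\}$ — in fact this is an equality, so $B_i(x,y)$ is the sphere of radius $i$ about $y$ inside the finite CAT(0) cube complex $I(x,y)$, whose dimension is at most $N$.

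Next I would treat the case $i=1$: \emph{$|B_1(x,y)|\le N$.} Its elements are the neighbours $y'$ of $y$ lying in $I(x,y)$. Given two distinct such neighbours $y_1',y_2'$, the median $z=\mu(x,y_1',y_2')$ lies in $I(x,y_1')\cap I(x,y_2')\cap I(y_1',y_2')$, and a one-line distance computation gives $d(x,z)=k-2$ and $d(z,y_1')=d(z,y_2')=1$; hence $y,y_1',z,y_2'$ is a $4$-cycle with $y\ne z$, which therefore bounds a square, so the hyperplanes dual to $\{y,y_1'\}$ and $\{y,y_2'\}$ cross. Thus the hyperplanes through the edges from $y$ to the points of $B_1(x,y)$ are pairwise crossing, and a finite family of pairwise crossing hyperplanes spans a cube of $X$; since $\dim X=N$, there are at most $N$ of them.

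Finally I would induct on $i$. The base case $i=0$ is immediate, because $A(w,0)=\{w\}$ gives $B_0(x,y)=\{y\}$. For the inductive step, take $w\in B_i(x,y)$ and a geodesic $w=w_0,\dots,w_i=y$ witnessing $y\in A(w,i)$, and set $y'=w_{i-1}$. Then $y'\in A(w,i-1)$ and $y\in A(y',1)$, and applying the concatenation lemma to a geodesic $x\to w$ (which exists since $w\in A(x,k-i)$) followed by $w\to y'$ shows $y'\in A(x,k-1)$; hence $y'\in B_1(x,y)$ and $w\in B_{i-1}(x,y')$, the parameter constraint $i-1\le\min\{N-1,k-1\}$ being a consequence of $i\le\min\{N-1,k\}$. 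Therefore
\begin{align*}
B_i(x,y)\subseteq\bigcup_{y'\in B_1(x,y)}B_{i-1}(x,y'),
\end{align*}
and combining $|B_1(x,y)|\le N$ with the inductive hypothesis yields $|B_i(x,y)|\le N\cdot N^{i-1}=N^i$.

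The step I expect to require the most care is the concatenation lemma and the resulting description of $B_i(x,y)$ as a metric sphere in $I(x,y)$: the subtle point is that the spliced geodesic must still extend to an infinite geodesic that flows with $\omega_o$, which is exactly what the bookkeeping with the sides $H^{+}$ ensures (without it one could splice two geodesics ``pointing towards $\omega_o$'' and create a backtrack, as already happens in $\Z^2$). The remaining ingredients — the distance computation for the median, the fact that a $4$-cycle in a CAT(0) cube complex bounds a square, and the fact that pairwise crossing hyperplanes span a cube — are standard; see \cite{Sag}.
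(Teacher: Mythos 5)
Your proof is correct and follows essentially the same route as the paper: the base case, the bound $|B_1(x,y)|\le N$ obtained by showing that the hyperplanes dual to the edges from $y$ to the points of $B_1(x,y)$ separate $y$ from $x$ and span a common cube of dimension at most $N$, and the induction via $B_i(x,y)\subseteq\bigcup_{y'\in B_1(x,y)}B_{i-1}(x,y')$. The only difference is that you fill in details the paper delegates or asserts: the pairwise crossing of the hyperplanes is derived from a median/four-cycle computation rather than a citation, and the recursion step is justified by your concatenation lemma for the sets $A(\cdot,\cdot)$, which the paper uses implicitly.
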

\begin{proof}
We proceed by induction on $i$. Observe first that, for every $x$, $k$ and $y$ as above, $B_0(x,y)=\{y\}$, hence the result holds with equality for $i=0$. If $k\geq 1$, for every $z\in B_1(x,y)$, let $H_z$ be the hyperplane separating $z$ from $y$. Observe that, by Theorem \ref{thmSag}, $H_z$ also separates $y$ from $x$, since $z$ lies in a geodesic joining $x$ and $y$. Then \cite[Proposition 2.8]{GueHig} implies that there is a cube $C$ where all the hyperplanes $H_z$ intersect. Hence $B_1(x,y)\cup\{y\}$ is included in $C$, which is of dimension at most $N$. Therefore $|B_1(x,y)|\leq N$. Finally assume that the result holds for some $i<N-1$, and take $x\in X$, $k\geq i+1$, $y\in A(x,k)$. Then
\begin{align*}
B_{i+1}(x,y)=\bigcup_{z\in B_1(x,y)}B_i(x,z).
\end{align*}
Thus 
\begin{align*}
|B_{i+1}(x,y)|\leq  |B_1(x,y)| \left(\sup_{z\in B_1(x,y)}|B_i(x,z)|\right)\leq N N^i=N^{i+1}.
\end{align*}
\end{proof}

\begin{lem}\label{cardpoly}
There is a constant $M>0$, depending only on the dimension $N$, such that for every $x\in X$, $k\in\N$ and $y\in A(x,k)$,
\begin{align*}
\left| \{P\in\mathcal{A}(x,k)\ :\ y\in P\}\right|\leq M.
\end{align*}
\end{lem}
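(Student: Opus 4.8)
The plan is to bound the number of polytopes in $\mathcal{A}(x,k)$ containing a fixed vertex $y\in A(x,k)$ by decomposing this count according to the dimension $l$ of the polytope and the "starting vertex" of the cube witnessing the polytope. First I would recall the definition: an $l$-polytope $P$ in $A(x,k)$ is the set of points at distance $j$ from some vertex $w$, lying on a geodesic from $w$ to $d_C(w)$, inside some $(l+1)$-cube $C$, for some $j\in\{1,\dots,l\}$. If $y\in P$, then in particular $y$ lies on a geodesic from $w$ to $d_C(w)$, so $w\in A(y',\cdot)$-type relations hold; more precisely, since $P\subset A(x,k)$ and $y\in P$ is at "depth" $j$ inside $P$, the vertex $w$ is at distance $k-j$ from $x$ along a suitable geodesic, i.e.\ $w\in A(x,k-j)$, and $y\in A(w,j)$. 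Thus $w\in B_j(x,y)$ in the notation preceding Lemma \ref{cardBi}.

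The key steps, in order: (1) For each $l\in\{0,\dots,N-1\}$ and each $j\in\{1,\dots,l\}$ (and $j=0$ for $l=0$, where the polytope is just the vertex $y$ itself), show that the vertex $w$ associated to a polytope $P\ni y$ of dimension $l$ at depth $j$ lies in $B_j(x,y)$, which has cardinality at most $N^j\le N^{N-1}$ by Lemma \ref{cardBi}. (2) For each fixed such $w$, bound the number of $(l+1)$-cubes $C$ with $w\in C$: since $X$ is a CAT(0) cube complex of dimension $N$, the link condition (or directly the fact that hyperplanes through cubes at $w$ are controlled) bounds the number of cubes of dimension $l+1$ containing a fixed vertex $w$ by a constant depending only on $N$ — indeed the cubes containing $w$ correspond to collections of hyperplanes adjacent to $w$, and there are at most $N$ such hyperplanes forming any cube, hence at most $\binom{\deg}{l+1}$; one must argue the relevant degree is bounded, which follows because all hyperplanes involved must separate $y$ from $x$ and Sageev's theorem plus \cite[Proposition 2.8]{GueHig} confines them to a single cube of dimension $\le N$. (3) For each fixed $w$ and $C$, the polytope $P$ is then determined by $j$ alone (there are at most $N$ choices) — actually $P$ is the full set of depth-$j$ points, so once $w$, $C$, $j$ are fixed, $P$ is unique. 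Multiplying the three bounds gives $M = \sum_{l=0}^{N-1}\sum_{j} N^{j}\cdot(\text{number of }(l+1)\text{-cubes at }w)$, a constant depending only on $N$.

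The main obstacle will be step (2): cleanly bounding the number of cubes of a given dimension containing the vertex $w$, \emph{using only the dimension $N$} and not an a priori degree bound on $X$. The trick, as in the proof of Lemma \ref{cardBi}, is that every $(l+1)$-cube $C$ witnessing a polytope $P\ni y$ inside $A(x,k)$ has all its hyperplanes separating $x$ from $y$ (by Theorem \ref{thmSag}, since $P$ lies on geodesics from $x$-side to $y$ through $C$), and by \cite[Proposition 2.8]{GueHig} all hyperplanes separating $x$ from $y$ that are "close to $y$" in the relevant sense lie in a common cube of dimension $\le N$; hence all the relevant cubes $C$ are faces of one bounded cube, and there are at most $2^N$ of them. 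I would need to be careful that the geodesic realising $y\in P\subset A(x,k)$ genuinely forces the hyperplanes of $C$ to cross between $x$ and $y$ — this is where the precise definition of $A(x,k)$ via infinite geodesics flowing with $\omega_0$, together with Lemma \ref{lemmedian} and Proposition \ref{prodPkQk}, must be invoked to pin down $w = m(\cdot,\cdot)$-type vertices and the geodesic structure. Once that geometric confinement is established, the counting is routine and yields the desired constant $M=M(N)$.
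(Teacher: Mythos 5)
Your step (1) is sound and coincides with the paper's reduction: by \cite[Lemma 4]{Miz} (after normalising so that the cube corner $w$ is the one nearer to $x$), one gets $P\subseteq A(w,j)$ and $w\in A(x,k-j)$, i.e.\ $w\in B_j(x,y)$, and Lemma \ref{cardBi} bounds the number of such $w$. The genuine gap is in your step (2). Your confinement claim --- that \emph{every} hyperplane of a witnessing $(l+1)$-cube $C$ separates $x$ from $y$, so that \cite[Proposition 2.8]{GueHig} forces all such cubes into a single cube of dimension at most $N$ --- is false. Only the $j$ hyperplanes of $C$ separating $w$ from $y$ separate $x$ from $y$ (since $w$ lies on a geodesic from $x$ to $y$); the remaining $l+1-j\geq 1$ hyperplanes of $C$ separate $y$ from the deeper vertices of $C$ and typically leave $x$ and $y$ on the same side. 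Already for a square with corner $w$, depth-$1$ polytope $P=\{a,b\}$ and $y=a$, the hyperplane dual to the edge $\{w,b\}$ does not separate $x$ from $y$ (it separates $x$ together with $y$ from $b$ and the diagonal vertex). What is true is that each hyperplane of $C$ separates $x$ from \emph{some} vertex of $P$, but these are different target vertices for different hyperplanes, so the argument behind Lemma \ref{cardBi} (hyperplanes all separating the same pair and all dual to edges at the same vertex) does not apply, and without any degree bound on $X$ you have no a priori control on the number of cubes of a given dimension through a fixed vertex $w$. So the multiplicative count ``(choices of $w$) $\times$ (cubes at $w$) $\times$ (choices of $j$)'' is not justified as proposed.

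The paper sidesteps cube-counting entirely. From $P\subseteq A(z,i)$ with $z\in B_i(x,y)$ and $i\leq\min\{k,N-1\}$, every polytope of $\mathcal{A}(x,k)$ containing $y$ is a \emph{subset} of the single finite set $A=\bigcup_{i=0}^{\min\{k,N-1\}}\bigcup_{z\in B_i(x,y)}A(z,i)$, whose cardinality is at most $\sum_{i=0}^{N-1}N^i\binom{N-1+i}{N-1}$ by Lemmas \ref{cardBi} and \ref{cardAxk}; since distinct polytopes are distinct subsets of $A$, there are at most $2^{|A|}$ of them, which is the constant $M(N)$. If you want to rescue your plan, you would need to replace your step (2) by an argument of this type (or by some other device that does not presuppose a bound on the number of cubes through a vertex), since that is exactly the point where the absence of a degree bound bites.
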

\begin{proof}
Fix $x$, $k$ and $y$ as above and put $\mathcal{Q}=\{P\in\mathcal{A}(x,k)\ :\ y\in P\}$. If $P\in\mathcal{Q}$, then $P$ is an $l$-polytope ($0\leq l\leq N-1$) and by definition there exist an $(l+1)$-cube $C$ and $z\in C$ such that $P$ is the subset of $C$ consisting of elements at a fixed distance from $z$. Let $\tilde{z}\in C$ be the point diagonal to $z$ with respect to $C$. Without loss of generality, we may assume that $d(x,z)\leq d(x,\tilde{z})$. By \cite[Lemma 4]{Miz}, $P\subseteq A(z,i)$ and $z\in A(x,k-i)$, where $i=d(z,P)\in\{0,...,\min\{k,N-1\}\}$. This means that $z\in B_i(x,y)$. So, if we put
\begin{align*}
A=\bigcup_{i=0}^{\min\{k,N-1\}}\bigcup_{z\in B_i(x,y)}A(z,i),
\end{align*}
then $P\subseteq A$. Using Lemmas \ref{cardAxk} and \ref{cardBi}, we get
\begin{align*}
|A| &\leq\sum_{i=0}^{\min\{k,N-1\}}\sum_{z\in B_i(x,y)}|A(z,i)| \\
&\leq \sum_{i=0}^{\min\{k,N-1\}} |B_i(x,y)| \tbinom{N-1+i}{N-1} \\
&\leq \sum_{i=0}^{N-1} N^i \tbinom{N-1+i}{N-1},
\end{align*}
and this depends only on $N$. Since $\mathcal{Q}\subseteq\mathcal{P}(A)$, we have $|\mathcal{Q}|\leq 2^{|A|}$, and the result follows.
\end{proof}

\begin{lem}\label{boundPk}
Let $N\geq 1$ and let $M$ be the constant in Lemma \ref{cardpoly}. Then, for all $x\in X$ and $k\in\N$,
\begin{align*}
|\mathcal{A}(x,k)|\leq M\tbinom{N-1+k}{N-1}.
\end{align*}
Hence, the same holds for $\|P_k(x)\|^2$ and $\|Q_k(x)\|^2$.
\end{lem}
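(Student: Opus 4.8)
The plan is to bound $|\mathcal{A}(x,k)|$ by a double counting of the incidences between vertices of $A(x,k)$ and polytopes of $\mathcal{A}(x,k)$, using Lemmas \ref{cardAxk} and \ref{cardpoly} as black boxes. First I would note that every $P\in\mathcal{A}(x,k)$ is, by definition, a nonempty set of vertices contained in $A(x,k)$: a $0$-polytope is a single vertex, and for $l\geq 1$ an $l$-polytope is the set of vertices lying at a fixed distance $j\in\{1,\dots,l\}$ from some corner $w$ of an $(l+1)$-cube $C$ on a geodesic from $w$ to $d_C(w)$, which is nonempty. Hence each polytope in $\mathcal{A}(x,k)$ contains at least one vertex $y\in A(x,k)$.

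With this observation, I would write
\[
|\mathcal{A}(x,k)|\leq\sum_{y\in A(x,k)}\left|\{P\in\mathcal{A}(x,k)\ :\ y\in P\}\right|,
\]
since on the right-hand side every polytope is counted at least once. Applying Lemma \ref{cardpoly} to bound each inner cardinality by $M$, and Lemma \ref{cardAxk} to bound $|A(x,k)|$ by $\tbinom{N-1+k}{N-1}$, this immediately yields
\[
|\mathcal{A}(x,k)|\leq M\tbinom{N-1+k}{N-1}.
\]

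For the final assertion about $P_k$ and $Q_k$, recall that the pieces $f_k^{(l)}(x)$ lie in the mutually orthogonal summands $\ell_2(\mathcal{X}^{(l)})$, so
\[
\|P_k(x)\|^2=\sum_{l=0}^{N-1}\|f_k^{(l)}(x)\|^2=\sum_{l=0}^{N-1}|\mathcal{A}(x,k)^{(l)}|=|\mathcal{A}(x,k)|,
\]
and since the signs $(-1)^l$ in the definition of $Q_k(x)$ do not change the norms of these orthogonal components, $\|Q_k(x)\|^2=|\mathcal{A}(x,k)|$ as well; the same bound therefore applies to both.

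I do not expect any real obstacle: all the substantive work is front-loaded into Lemmas \ref{cardAxk} and \ref{cardpoly}, and what remains is the elementary incidence inequality above. The only point meriting a moment of care is the (routine) verification that each polytope is a nonempty subset of $A(x,k)$, which is what makes the sum over $y\in A(x,k)$ genuinely dominate $|\mathcal{A}(x,k)|$.
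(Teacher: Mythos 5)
Your proof is correct and follows essentially the same route as the paper: the paper also writes $\mathcal{A}(x,k)$ as the union over $y\in A(x,k)$ of the polytopes containing $y$ and combines Lemmas \ref{cardpoly} and \ref{cardAxk} to get $|\mathcal{A}(x,k)|\leq M|A(x,k)|\leq M\tbinom{N-1+k}{N-1}$, with the orthogonality computation for $\|P_k(x)\|^2$ and $\|Q_k(x)\|^2$ recorded just before the lemma. No gaps.
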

\begin{proof}
Let $x\in X$ and $k\in\N$. Observe that
\begin{align*}
\mathcal{A}(x,k)=\bigcup_{y\in A(x,k)}\{P\in\mathcal{A}(x,k)\ :\ y\in P\}. 
\end{align*}
Thus, by Lemmas \ref{cardpoly} and \ref{cardAxk},
\begin{align*}
|\mathcal{A}(x,k)|\leq M |A(x,k)| \leq M \tbinom{N-1+k}{N-1}.
\end{align*}
\end{proof}

\begin{proof}[Proof of Theorem C]
The proof follows the same idea as that of Lemma \ref{Lem_suf_mult_rad}, by replacing the delta functions $\delta_{\omega_x(k)}$ by the vectors $P_k(x)$ and $Q_k(x)$ defined in \eqref{PkQk}. Let $D\in\mathcal{B}(\ell_2(\N))$ be the diagonal matrix defined by
\begin{align*}
D_{i,i}=\tbinom{N+i-1}{N-1}^{-\frac{1}{2}},
\end{align*}
and put $\tilde{H}=DHD=(\mathfrak{d}_2\dot{\phi}(i+j))_{i,j\geq 0}$. Then $\tilde{H}\in S_1(\ell_2(\N))$, and by Theorem \ref{thmHSS}, we know that the limits $\lim_{n\to\infty}\dot{\phi}(2n), \lim_{n\to\infty}\dot{\phi}(2n+1)$ exist. Now take $A,B\in S_2(\ell_2(\N))$ such that $H=A^*B$  and $\|H\|_{S_1}=\|A\|_{S_2}\|B\|_{S_2}$. We have $\tilde{H}=\tilde{A}^*\tilde{B}$, where $\tilde{A}=AD\in S_2(\ell_2(\N))$ and $\tilde{B}=BD\in S_2(\ell_2(\N))$.  Define functions $P$ and $Q$ by
\begin{align*}
P(x)=\sum_{k\geq 0}P_k(x)\otimes \tilde{B}e_k,\quad Q(x)=\sum_{k\geq 0}Q_k(x)\otimes \tilde{A}e_k,\quad \forall x\in X,
\end{align*}
where $P_k$ and $Q_k$ are as in (\ref{PkQk}), and $\{e_k\}_{k\in\N}$ is the canonical orthonormal basis of $\ell_2(\N)$. By Proposition \ref{prodPkQk}, we have
\begin{align*}
\langle P_{k_1}(x),Q_{k_2}(y)\rangle=\left\{\begin{array}{ll}
1 & \text{if } \exists j\geq 0,\ k_i=l_i+j\ (i=1,2)\\
0 & \text{if not}
\end{array}\right. ,
\end{align*}
where $l_1=d(x,m(x,y))$ and $l_2=d(y,m(x,y))$. This implies in particular that $l_1+l_2=d(x,y)$. We obtain
\begin{align*}
\langle P(x),Q(y)\rangle &= \sum_{j\geq 0}\langle\tilde{A}^*\tilde{B}e_{l_1+j},e_{l_2+j}\rangle \\
&= \sum_{j\geq 0}\tilde{H}_{l_1+j,l_2+j} \\
&= \sum_{j\geq 0}\mathfrak{d}_2\dot{\phi}(d(x,y)+2j)\\
&= \sum_{j\geq 0}\dot{\phi}(d(x,y)+2j)-\dot{\phi}(d(x,y)+2(j+1))\\
&= \dot{\phi}(d(x,y)) - \lim_{n\to\infty} \dot{\phi}(d(x,y)+2n)\\
&= \dot{\phi}(d(x,y)) - \left(c_+ + c_-(-1)^{d(x,y)}\right).
\end{align*}
Moreover, by Lemma \ref{boundPk}, we have
\begin{align*}
\|P(x)\|^2 &= \sum_{k\geq 0}\|P_k(x)\|^2\|\tilde{B}e_k\|^2\\
&= \sum_{k\geq 0}|\mathcal{A}(x,k)|\|BDe_k\|^2\\
&\leq M\sum_{k\geq 0}\tbinom{N-1+k}{N-1}\tbinom{N-1+k}{N-1}^{-1}\|Be_k\|^2\\
&= M\|B\|_{S_2}^2.
\end{align*}
Similar computations show that $\|Q(y)\|^2\leq M\|A\|_{S_2}^2$, and these bounds do not depend on $x$ or $y$. This, together with Lemma \ref{lemoddeven}, implies that $\phi$ is a Schur multiplier and
\begin{align*}
\|\phi\|_{cb}\leq M \|H\|_{S_1} + |c_+| + |c_-|.
\end{align*}
\end{proof}

\section{Inclusions of sets of multipliers}\label{Sect_incl}

In this section, we show how the conditions in Theorems A, B and C relate to each other. We begin by giving a quick introduction to Besov spaces on the torus. For a more detailed treatment, we refer the reader to \cite[Appendix 2.6]{Pel}. For $n\geq 1$, let $W_n:\T\to\C$ be the polynomial whose Fourier coefficients are given by
\begin{align*}
\hat{W}_n(k)=\begin{cases}
2^{-n+1}(k-2^{n-1}),& \text{if } k\in[2^{n-1},2^n]\\
2^{-n}(2^{n+1}-k),& \text{if } k\in[2^{n},2^{n+1}]\\
0,& \text{otherwise},
\end{cases}
\end{align*}
and put $W_0(z)=1+z$. For $s\in\R$, we define the Besov space of analytic functions $B_1^s(\T)$ as the space of all series $\varphi(z)=\sum_{n\geq 0}a_nz^n$ such that
\begin{align*}
\|\varphi\|_{B_1^s}=\sum_{n\geq 0}2^{ns}\|W_n\ast\varphi\|_{L_1(\T)}<\infty.
\end{align*}
Observe that the notation is not the same as that of \cite{Pel}. Since we only deal with the particular case of analytic functions, the space $B_1^s(\T)$ corresponds to $(B_1^s)_+$ in \cite{Pel}. The following result relates Besov spaces to generalised Hankel matrices in $S_1(\ell_2(\N))$. It is a particular case of \cite[Theorem 6.8.9]{Pel}.

\begin{thm}[Peller]\label{thmPeller}
Let $\varphi:\D\to\C$ be an analytic function, and $\alpha,\beta>-\frac{1}{2}$. Then the generalised Hankel matrix 
\begin{align*}
((1+i)^\alpha(1+j)^\beta\hat{\varphi}(i+j))_{i,j\geq 0}
\end{align*}
defines an element of $S_1(\ell_2(\N))$ if and only if $\varphi\in B_{1}^{1+\alpha+\beta}(\T)$.
\end{thm}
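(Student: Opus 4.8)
The plan is to obtain this directly from Peller's general description of Schatten-class generalised Hankel operators: the statement is exactly the $p=1$, analytic specialisation of \cite[Theorem 6.8.9]{Pel}, so rather than reprove it I would import that result and spell out the specialisation and the normalisations. Peller's theorem treats, for $p\geq 1$ and exponents $\alpha,\beta>-\tfrac1p$, matrices of the form $\bigl((1+i)^\alpha(1+j)^\beta\hat{\varphi}(i+j)\bigr)_{i,j\geq 0}$ and characterises their membership in $S_p$ by the smoothness condition $\varphi\in B_p^{1/p+\alpha+\beta}(\T)$. Taking $p=1$ turns the constraint into $\alpha,\beta>-\tfrac12$ and the index into $1+\alpha+\beta$, which is precisely the assertion above. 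The unweighted case $\alpha=\beta=0$ is the classical Peller theorem $\varphi\in B_1^1(\T)\iff(\hat{\varphi}(i+j))_{i,j\geq 0}\in S_1(\ell_2(\N))$.

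For the reader's orientation I would also recall, without reproving, the mechanism behind the weighted form. Let $D^t$ denote the fractional derivative $\widehat{D^t\varphi}(n)=(1+n)^t\hat{\varphi}(n)$, which is an isomorphism $B_1^{1+\alpha+\beta}(\T)\to B_1^1(\T)$ for $t=\alpha+\beta$. The diagonally weighted matrix $\bigl((1+i+j)^{\alpha+\beta}\hat{\varphi}(i+j)\bigr)_{i,j\geq 0}$ is the ordinary Hankel matrix of $D^{\alpha+\beta}\varphi$, so classical Peller gives that it lies in $S_1$ exactly when $D^{\alpha+\beta}\varphi\in B_1^1$, i.e. when $\varphi\in B_1^{1+\alpha+\beta}$. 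One then passes between the weight $(1+i)^\alpha(1+j)^\beta$ and the diagonal weight $(1+i+j)^{\alpha+\beta}$: these are not pointwise comparable, but the corresponding Hankel-type matrices have comparable $S_1$-norms, by a weight-manipulation estimate of the same flavour as the ones used elsewhere in this paper for products of hyperbolic graphs. Classical Peller itself splits into sufficiency, via the decomposition $\varphi=\sum_{n\geq0}W_n\ast\varphi$ into dyadic blocks whose finite-rank Hankel matrices are controlled in $S_1$ by $\|W_n\ast\varphi\|_{L_1(\T)}$, and necessity, via $S_1$--$\mathcal{B}(\ell_2(\N))$ duality together with Nehari's theorem identifying bounded Hankel operators with $\mathrm{BMOA}$.

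The main obstacle is not analytic but notational, as the paper itself flags: the kernels $W_n$ used here, with their explicit triangular Fourier coefficients and $W_0(z)=1+z$, are not literally Peller's, so one must check that they define an equivalent $B_1^s(\T)$-norm. This is the standard fact that any two frequency-side dyadic partitions of unity with the stated properties (each $\hat{W}_n$ supported in $[2^{n-1},2^{n+1}]$ and $\sum_{n\geq 0}\hat{W}_n\equiv 1$ on $\N$) yield equivalent Besov norms; it affects only the constants, not the statement. Once this equivalence is recorded, \cite[Theorem 6.8.9]{Pel} applies verbatim and gives the claimed characterisation.
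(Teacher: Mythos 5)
Your proposal matches the paper exactly: the paper offers no independent proof of this statement, but simply records it as the $p=1$, analytic specialisation of \cite[Theorem 6.8.9]{Pel}, with the same remark that its $B_1^s(\T)$ corresponds to Peller's $(B_1^s)_+$ and that the choice of dyadic kernels $W_n$ only changes the norm up to equivalence. Your additional ``mechanism'' paragraph is merely expository and is not relied upon, so there is nothing to object to.
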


This result allows us to show that Besov spaces are invariant under the shift operator.

\begin{prop}\label{Prop_Bshift}
Let $s>0$ and $\sum_{n\geq 0}a_nz^n\in B_{1}^{s}(\T)$. Then the series 
\begin{align*}
\sum_{n\geq 0}a_{n+1}z^n\quad\text{and}\quad \sum_{n\geq 1}a_{n-1}z^n
\end{align*}
define elements of $B_{1}^{s}(\T)$ as well.
\end{prop}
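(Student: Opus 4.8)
The plan is to reduce the statement to Peller's theorem (Theorem \ref{thmPeller}) via the Hankel-matrix characterisation, exploiting the fact that multiplying a Hankel matrix by the shift operator preserves the trace class. Concretely, given $\varphi(z)=\sum_{n\geq 0}a_nz^n\in B_1^s(\T)$, write $s=1+\alpha+\beta$ with $\alpha,\beta>-\tfrac12$ (for instance $\alpha=\beta=\tfrac{s-1}{2}$; note $s>0$ forces $\alpha,\beta>-\tfrac12$). By Theorem \ref{thmPeller}, the matrix $H=((1+i)^\alpha(1+j)^\beta a_{i+j})_{i,j\geq 0}$ belongs to $S_1(\ell_2(\N))$. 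I then want to produce, from $H$, matrices of the same weighted-Hankel form whose symbols are the shifted sequences $(a_{n+1})$ and $(a_{n-1})$, and conclude by applying Theorem \ref{thmPeller} in the other direction.

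For the forward shift $\sum_{n\geq 0}a_{n+1}z^n$: first I would observe that $HS$ is of trace class with $\|HS\|_{S_1}\leq\|H\|_{S_1}$, where $S$ is the forward shift on $\ell_2(\N)$, and that $(HS)_{i,j}=(1+i)^\alpha(1+j)^\beta a_{i+j+1}$ — wait, more carefully, $(HS)_{i,j}=H_{i,j+1}=(1+i)^\alpha(2+j)^\beta a_{i+j+1}$, which is a weighted Hankel matrix for the sequence $(a_{n+1})_n$ but with weights $(1+i)^\alpha$ and $(2+j)^\beta$ rather than $(1+j)^\beta$. Since $\tfrac12(1+j)\leq (2+j)\leq 2(1+j)$, the diagonal operator $E$ with $E_{j,j}=(1+j)^\beta(2+j)^{-\beta}$ is bounded with bounded inverse, so $(HS)E=((1+i)^\alpha(1+j)^\beta a_{i+j+1})_{i,j}$ is again trace class (multiplication by a bounded operator preserves $S_1$). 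By Theorem \ref{thmPeller} applied with the same $\alpha,\beta$, this forces $\sum_{n\geq 0}a_{n+1}z^n\in B_1^{1+\alpha+\beta}(\T)=B_1^s(\T)$.

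For the backward shift $\sum_{n\geq 1}a_{n-1}z^n$ (i.e. the sequence $b_n=a_{n-1}$ for $n\geq 1$, $b_0=0$): here I would instead consider $S^*H$, which lies in $S_1$ with $\|S^*H\|_{S_1}\leq\|H\|_{S_1}$ and has entries $(S^*H)_{i,j}=H_{i+1,j}=(2+i)^\alpha(1+j)^\beta a_{i+j+1}$; this is not quite what we want since it realises $(a_{n+1})$ again, not $(a_{n-1})$. The right move is to pad: the matrix $\widetilde H$ defined by $\widetilde H_{i,j}=(1+i)^\alpha(1+j)^\beta b_{i+j}$ has $b_{i+j}=a_{i+j-1}$, which is the $(i-1,j)$ or $(i,j-1)$ entry pattern of the original. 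I would exhibit $\widetilde H$ as $V^*HV$ or as a one-step ``down-right shift'' of $H$ composed with diagonal weight corrections: writing $S$ for the forward shift, $S H S^* $ has $(SHS^*)_{i,j}=H_{i-1,j-1}$ for $i,j\geq1$ and zero in the first row/column, i.e. $(SHS^*)_{i,j}=(i)^\alpha(j)^\beta a_{i+j-2}$ — off by one in the index, not two. Instead $SH$ gives $(SH)_{i,j}=H_{i-1,j}=(i)^\alpha(1+j)^\beta a_{i+j-1}$ for $i\geq 1$, zero for $i=0$, which is exactly the weighted Hankel form of $(b_n)$ after absorbing the harmless weight change $i^\alpha$ vs $(1+i)^\alpha$ into a bounded diagonal (valid since $\alpha>-\tfrac12$ ensures $i^\alpha/(1+i)^\alpha$ and its inverse are bounded on $i\geq 1$, and the $i=0$ row is zero). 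Since $SH\in S_1$, Theorem \ref{thmPeller} yields $\sum_{n\geq 1}a_{n-1}z^n\in B_1^s(\T)$.

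The main obstacle is purely bookkeeping: keeping track of the index shifts in the Hankel entries and checking that each auxiliary diagonal weight operator (of the form $(1+i+c)^\gamma/(1+i)^\gamma$ for a constant $c\in\{1,2\}$ and exponent $\gamma\in\{\alpha,\beta\}>-\tfrac12$) is bounded above and below, so that left/right multiplication by it preserves the trace class. No analytic difficulty arises beyond Theorem \ref{thmPeller} and the ideal property $\mathcal B(\ell_2)\cdot S_1\cdot\mathcal B(\ell_2)\subseteq S_1$ with $\|ATB\|_{S_1}\leq\|A\|\,\|T\|_{S_1}\,\|B\|$.
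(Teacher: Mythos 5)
Your forward-shift argument is fine and is essentially the paper's: choose $\alpha=\beta=\tfrac{s-1}{2}>-\tfrac12$, note $H=((1+i)^\alpha(1+j)^\alpha a_{i+j})\in S_1$ by Theorem \ref{thmPeller}, compose with the shift and a bounded diagonal weight correction, and apply Theorem \ref{thmPeller} again (the paper uses $DS^{*}H$ where you use $HSE$; the difference is immaterial).

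The backward-shift part, however, has a genuine gap. After your bounded diagonal correction, $SH$ becomes the matrix $M$ with $M_{i,j}=(1+i)^\alpha(1+j)^\beta a_{i+j-1}$ for $i\geq 1$ and $M_{0,j}=0$ for all $j$. This is \emph{not} ``exactly the weighted Hankel form of $(b_n)$'' with $b_n=a_{n-1}$, $b_0=0$: the true weighted Hankel matrix $\widetilde H_{i,j}=(1+i)^\alpha(1+j)^\beta b_{i+j}$ has first row $\widetilde H_{0,j}=(1+j)^\beta a_{j-1}$ for $j\geq 1$, which is not zero in general — only the $(0,0)$ entry vanishes, and you have conflated $b_0=0$ with the vanishing of the entire first row. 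Since Theorem \ref{thmPeller} is a statement about the exact weighted Hankel matrix, knowing $M\in S_1$ does not by itself give $\sum_{n\geq 1}a_{n-1}z^n\in B_1^s$; you must also show that the difference $R=\widetilde H-M$, a rank-one matrix supported on the first row with entries $(1+j)^\beta a_{j-1}$, lies in $S_1$. This is precisely the extra step in the paper's proof (the operator $R$ there): it holds because $H\in S_1\subset S_2$, so its rows are square-summable and
\begin{align*}
\sum_{j\geq 1}(1+j)^{2\beta}|a_{j-1}|^2=\sum_{k\geq 0}(2+k)^{2\beta}|a_k|^2\leq C\sum_{k\geq 0}(1+k)^{2\beta}|a_k|^2<\infty,
\end{align*}
hence $R$ is a bounded rank-one, thus trace-class, operator and $\widetilde H=M+R\in S_1$. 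With that addition your argument closes; as written, the identification on which you invoke Peller's theorem for the backward shift is false.
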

\begin{proof}
Take $\alpha=\frac{s-1}{2}$. By Theorem \ref{thmPeller}, the matrix $H_{i,j}=(1+i)^\alpha(1+j)^\alpha a_{i+j}$ belongs to $S_1(\ell_2(\N))$. Let $S$ be the forward shift operator on $\ell_2(\N)$ and let $D\in\mathcal{B}(\ell_2(\N))$ be the diagonal operator given by $D_{i,i}=\left(\frac{1+i}{2+i}\right)^{\alpha}$. Then $\tilde{H}=DS^*H$ belongs to $S_1(\ell_2(\N))$, and
\begin{align*}
\tilde{H}_{i,j}=\langle H\delta_j,SD\delta_i\rangle = \left(\frac{1+i}{2+i}\right)^{\alpha}\langle H\delta_j,\delta_{i+1}\rangle = (1+i)^\alpha(1+j)^\alpha a_{i+j+1}.
\end{align*}
Again by Theorem \ref{thmPeller}, we conclude that $\sum_{n\geq 0}a_{n+1}z^n\in B_{1}^{s}(\T)$.\\
Now observe that $D^{-1}$ is a bounded operator as well, so $H'=SD^{-1}H\in S_1(\ell_2(\N))$ and
\begin{align*}
H_{i,j}'=\begin{cases}
0, & \text{if } i=0,\\
(1+i)^{\alpha}(1+j)^{\alpha}a_{i+j-1},& \text{if } i>0.
\end{cases}
\end{align*}
Consider also the rank-1 operator $R$ defined by
\begin{align*}
R_{i,j}=\begin{cases}
(1+j)^{\alpha}a_{j-1}, & \text{if } i=0, j>0,\\
0, & \text{otherwise},
\end{cases}
\end{align*}
and observe that
\begin{align*}
\|R\|_{S_1}=\left(\sum_{j\geq 1}(1+j)^{2\alpha}|a_{j-1}|^2\right)^{\frac{1}{2}}=\|H'e_0\|_2 <\infty.
\end{align*}
Hence $H'+R\in S_1(\ell_2(\N))$, and (putting $a_{-1}=0$),
\begin{align*}
H_{i,j}'+R_{i,j}=(1+i)^{\alpha}(1+j)^{\alpha}a_{i+j-1}.
\end{align*}
By Theorem \ref{thmPeller}, we have $\sum_{n\geq 1}a_{n-1}z^n\in B_{1}^{s}(\T)$.
\end{proof}

We will also need to make use of the operators $I_\alpha$ of fractional integration. For all $\alpha,s\in\R$, and $\varphi\in B_1^s$, define $I_\alpha\varphi$ by
\begin{align}\label{fract_int}
I_{\alpha}\varphi (z)=\sum_{n\geq 0}(1+n)^{-\alpha}\hat{\varphi}(n) z^n.
\end{align}
This operator satisfies $I_\alpha B_1^s = B_1^{s+\alpha}$. For a proof, see e.g. \cite[Theorem 2.4]{dlS}, where this is done in the more general context of vector-valued Besov spaces.

Let $(a_n)_{n\in\N}$ and $(b_n)_{n\in\N}$ be two sequences of complex numbers. We will write $a_n\sim b_n$ if there exist $C_1,C_2>0$ such that
\begin{align*}
C_1|b_n|\leq|a_n|\leq C_2|b_n|,\quad\forall n\in\N.
\end{align*}

\begin{lem}\label{lemH123}
Let $(a_n)_{n\in\N}$ be a sequence of complex numbers and $\alpha,\beta>-\frac{1}{2}$. Consider the matrices
\begin{align*}
H_1&= ((1+i)^\alpha(1+j)^\beta a_{i+j})_{i,j\in\N},\\
H_2&= ((1+i+j)^{\alpha+\beta}a_{i+j})_{i,j\in\N}.
\end{align*}
Then $H_1\in S_1(\ell_2(\N))$ if and only if $H_2\in S_1(\ell_2(\N))$. Moreover, when $\alpha+\beta\in\N$, these conditions are equivalent to
\begin{align}\label{genHankH3}
H_3=\left(\tbinom{\alpha+\beta+i}{\alpha+\beta}^{\frac{1}{2}}\tbinom{\alpha+\beta+j}{\alpha+\beta}^{\frac{1}{2}}a_{i+j}\right)_{i,j\in\N}\in S_1(\ell_2(\N)).
\end{align}
\end{lem}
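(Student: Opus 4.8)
The plan is to route everything through Peller's theorem (Theorem~\ref{thmPeller}), the fractional integration operators $I_\alpha$, and one elementary diagonal conjugation; a direct entrywise (Schur multiplier) comparison between $H_1$ and $H_2$ is not available, which is exactly why Peller's characterisation is the right tool. First I would dispose of a small analyticity point needed to invoke Theorem~\ref{thmPeller}: in each implication to be proved the hypothesis is that one of $H_1,H_2,H_3$ lies in $S_1(\ell_2(\N))$, hence is bounded, and reading off the $0$-th row, e.g. $(H_1)_{0,n}=(1+n)^{\beta}a_n$ or $(H_2)_{0,n}=(1+n)^{\alpha+\beta}a_n$, together with $\alpha,\beta>-\tfrac12$ (so $-\beta<\tfrac12$ and $-(\alpha+\beta)<1$), gives $|a_n|=O(n)$; thus $\varphi(z):=\sum_{n\ge0}a_nz^n$ is analytic on $\D$ and Theorem~\ref{thmPeller} applies to it.

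For the equivalence $H_1\in S_1\iff H_2\in S_1$ I would argue as follows. By Theorem~\ref{thmPeller} with exponents $\alpha,\beta$, $H_1\in S_1$ iff $\varphi\in B_1^{1+\alpha+\beta}(\T)$. On the other hand $H_2$ is precisely the ordinary Hankel matrix $(\hat\psi(i+j))_{i,j}$ of the analytic function $\psi$ determined by $\hat\psi(n)=(1+n)^{\alpha+\beta}a_n$, i.e. $\psi=I_{-(\alpha+\beta)}\varphi$; applying Theorem~\ref{thmPeller} with exponents $0,0$ yields $H_2\in S_1$ iff $\psi\in B_1^{1}(\T)$. Since $I_{\alpha+\beta}$ is a bijection from $B_1^{1}(\T)$ onto $B_1^{1+\alpha+\beta}(\T)$ (from $I_\gamma B_1^{s}=B_1^{s+\gamma}$) and $\varphi=I_{\alpha+\beta}\psi$, the two Besov conditions coincide, so $H_1\in S_1\iff H_2\in S_1$. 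The one thing to get right here — and the main obstacle — is that one must present $H_2$ through the fractionally differentiated $\psi$ with exponents $0,0$, rather than as $((1+i)^{\alpha+\beta}(1+j)^{0}a_{i+j})_{i,j}$: the latter would require $\alpha+\beta>-\tfrac12$, whereas $\alpha,\beta>-\tfrac12$ only gives $\alpha+\beta>-1$.

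Finally, for the case $N_0:=\alpha+\beta\in\N$ and the matrix $H_3$, I would use that $\binom{N_0+i}{N_0}=\tfrac{1}{N_0!}\prod_{k=1}^{N_0}(i+k)$ satisfies $c_1(1+i)^{N_0}\le\binom{N_0+i}{N_0}\le c_2(1+i)^{N_0}$ for constants $0<c_1\le c_2$ depending only on $N_0$, so that $\Delta:=\mathrm{diag}\big(\binom{N_0+i}{N_0}^{1/2}(1+i)^{-N_0/2}\big)$ is a bounded operator with bounded inverse. Writing $H_1'=\big((1+i)^{N_0/2}(1+j)^{N_0/2}a_{i+j}\big)_{i,j}$ — an instance of $H_1$ with exponents $\tfrac{N_0}{2},\tfrac{N_0}{2}\ge0$ — one has $H_3=\Delta H_1'\Delta$, and since conjugation by a bounded invertible operator preserves $S_1(\ell_2(\N))$, $H_3\in S_1\iff H_1'\in S_1$. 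Then, by the equivalence already established (applied with exponents $\tfrac{N_0}{2},\tfrac{N_0}{2}$ in place of $\alpha,\beta$), $H_1'\in S_1\iff\varphi\in B_1^{1+N_0}(\T)\iff H_1\in S_1\iff H_2\in S_1$, which closes the chain. Everything outside the fractional-integration step is routine exponent bookkeeping and elementary diagonal-operator estimates.
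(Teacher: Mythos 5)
Your proposal is correct and follows essentially the same route as the paper's proof: Peller's theorem combined with the fractional integration operators $I_{\pm(\alpha+\beta)}$ (viewing $H_2$ as the plain Hankel matrix of $\psi=I_{-(\alpha+\beta)}\varphi$ with exponents $0,0$) for the equivalence $H_1\in S_1\iff H_2\in S_1$, and the comparison $\binom{\alpha+\beta+k}{\alpha+\beta}\sim(1+k)^{\alpha+\beta}$ with a bounded, boundedly invertible diagonal conjugation for the equivalence with $H_3$. Your preliminary remark securing analyticity of $\varphi$ on $\D$ is a harmless extra detail the paper leaves implicit.
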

\begin{proof}
By Theorem \ref{thmPeller}, $H_1$ belongs to $S_1(\ell_2(\N))$ if and only if the analytic function $\varphi(z)=\sum_{n\geq 0}a_nz^n$ belongs to the Besov space $B_1^{1+\alpha+\beta}(\T)$. This is equivalent to the fact that the analytic function
\begin{align*}
I_{-\alpha-\beta}\varphi (z)=\sum_{n\geq 0}(1+n)^{\alpha+\beta}a_nz^n
\end{align*}
belongs to $B_1^1(\T)$, where $I_{-\alpha-\beta}$ is defined as in \eqref{fract_int}. And again by Theorem \ref{thmPeller}, this is equivalent to $H_2\in S_1(\ell_2(\N))$. Now suppose that $m=\alpha+\beta$ is a natural number. Then
\begin{align*}
\tbinom{m+k}{m}^{\frac{1}{2}}\sim (1+k)^\frac{m}{2},
\end{align*}
which implies that the diagonal matrix $D$ given by $D_{ii}=\binom{m+i}{m}^{-\frac{1}{2}}(1+i)^\frac{m}{2}$ defines a bounded operator on $\ell_2(\N)$, and so does $D^{-1}$. Put $\tilde{H}_1=((1+i)^\frac{m}{2}(1+j)^\frac{m}{2} a_{i+j})_{i,j\in\N}$. Observing that $\tilde{H}_1=DH_3D$ and $\alpha+\beta=\frac{m}{2}+\frac{m}{2}$, we get
\begin{align*}
H_3\in S_1(\ell_2(\N)) &\iff \tilde{H}_1\in S_1(\ell_2(\N))\\
&\iff H_2\in S_1(\ell_2(\N)).
\end{align*}
\end{proof}

Let us now introduce some notation. For each $N\geq 1$ and $\dot{\phi}:\N\to\C$, define the following infinite matrices,
\begin{align*}
A(N,\dot{\phi})&= \left((1+i+j)^{N-1}\mathfrak{d}_2^N\dot{\phi}(i+j)\right)_{i,j\in\N}\\
B(N,\dot{\phi})&= \left((1+i+j)^{N-1}\mathfrak{d}_1^N\dot{\phi}(i+j)\right)_{i,j\in\N}\\
C(N,\dot{\phi})&= \left((1+i+j)^{N-1}\mathfrak{d}_2\dot{\phi}(i+j)\right)_{i,j\in\N},
\end{align*}
and consider the following subspaces of $\ell_\infty(\N)$,
\begin{align*}
\mathcal{A}_N&=\left\{\dot{\phi}\in\ell_\infty(\N)\, :\, A(N,\dot{\phi})\in S_1(\ell_2(\N)) \right\}\\
\mathcal{B}_N&=\left\{\dot{\phi}\in\ell_\infty(\N)\, :\, B(N,\dot{\phi})\in S_1(\ell_2(\N)) \right\}\\
\mathcal{C}_N&=\left\{\dot{\phi}\in\ell_\infty(\N)\, :\, C(N,\dot{\phi})\in S_1(\ell_2(\N)) \right\}.
\end{align*}

In other words, thanks to Lemma \ref{lemH123}, a function $\dot{\phi}:\N\to\C$ belongs to $\mathcal{A}_N$ (resp. $\mathcal{B}_N$, $\mathcal{C}_N$) if and only if it satisfies the condition in Theorem A (resp. B, C). Moreover, this can be stated in terms of Besov spaces.

\begin{lem}\label{Lem_carBesov}
Let $N\geq 1$ and $\dot{\phi}:\N\to\C$ be a bounded function. Then $\dot{\phi}$ belongs to $\mathcal{A}_N$ (resp. $\mathcal{B}_N$, $\mathcal{C}_N$) if and only if the analytic function
\begin{align*}
(z^2-1)^N\sum_{n\geq 0}\dot{\phi}(n)z^n \quad\left(\text{resp. } (z-1)^N\sum_{n\geq 0}\dot{\phi}(n)z^n,\quad (z^2-1)\sum_{n\geq 0}\dot{\phi}(n)z^n\right)
\end{align*}
belongs to $B_1^N(\T)$.
\end{lem}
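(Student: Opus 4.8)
Throughout, write $\Phi(z)=\sum_{n\geq 0}\dot{\phi}(n)z^n$; since $\dot{\phi}$ is bounded this is analytic on $\D$, and so is each series $\sum_{n\geq 0}(\mathfrak{d}_r^m\dot{\phi})(n)z^n$ for $r\in\{1,2\}$ and $m\geq 1$, because the sequences $\mathfrak{d}_r^m\dot{\phi}$ are again bounded. The plan is to reduce the Besov membership of the three functions in the statement to that of the ``discrete derivative'' series
\[
\psi_{r,m}(z)=\sum_{n\geq 0}(\mathfrak{d}_r^m\dot{\phi})(n)\,z^n,
\]
and then to identify $\psi_{r,m}\in B_1^N(\T)$ with the trace-class condition defining $\mathcal{A}_N$, $\mathcal{B}_N$ or $\mathcal{C}_N$. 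The three cases correspond to $(r,m)=(2,N)$, $(1,N)$ and $(2,1)$, and the three functions in the statement are respectively $(z^2-1)^N\Phi$, $(z-1)^N\Phi$ and $(z^2-1)\Phi$.

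The first step is an elementary identity: for any bounded sequence $(a_n)_{n\geq 0}$, any $r\in\{1,2\}$ and any $m\geq 1$,
\[
(z^r-1)^m\sum_{n\geq 0}a_nz^n \;=\; R_{r,m}(z) \;+\; z^{rm}\sum_{n\geq 0}(\mathfrak{d}_r^m a)(n)\,z^n,
\]
where $R_{r,m}$ is a polynomial of degree at most $rm-1$. Indeed, expanding $(z^r-1)^m=\sum_{k=0}^m\tbinom{m}{k}(-1)^{m-k}z^{rk}$ and comparing the coefficients of $z^\ell$, for $\ell\geq rm$ the coefficient of $(z^r-1)^m\sum_n a_nz^n$ equals $\sum_{k=0}^m\tbinom{m}{k}(-1)^{m-k}a_{\ell-rk}$, and the substitution $k\mapsto m-k$ together with \eqref{hoderiv} turns this into $(\mathfrak{d}_r^m a)(\ell-rm)$; the coefficients with $\ell<rm$ make up the polynomial $R_{r,m}$. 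Next I would use two elementary facts about $B_1^N(\T)$: every polynomial belongs to $B_1^N(\T)$ (if $P$ is a polynomial then $W_n\ast P=0$ as soon as $2^{n-1}>\deg P$, so $\|P\|_{B_1^N}<\infty$), and, applying Proposition \ref{Prop_Bshift} $rm$ times (here $N\geq 1>0$, so the hypothesis $s>0$ holds), a power series $\sum_n b_nz^n$ lies in $B_1^N(\T)$ if and only if $z^{rm}\sum_n b_nz^n$ does. Feeding $a_n=\dot{\phi}(n)$ into the identity above and using these two facts, I obtain, in each of the three cases,
\[
(z^r-1)^m\Phi\in B_1^N(\T) \iff \psi_{r,m}\in B_1^N(\T).
\]

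It remains to connect $\psi_{r,m}\in B_1^N(\T)$ with the defining trace-class conditions. Take $\alpha=\beta=\tfrac{N-1}{2}$; since $N\geq 1$ this is $>-\tfrac12$, and $\alpha+\beta=N-1$. By Theorem \ref{thmPeller}, the matrix $\bigl((1+i)^\alpha(1+j)^\beta(\mathfrak{d}_r^m\dot{\phi})(i+j)\bigr)_{i,j\in\N}$ lies in $S_1(\ell_2(\N))$ if and only if $\psi_{r,m}\in B_1^{1+\alpha+\beta}(\T)=B_1^N(\T)$, and by Lemma \ref{lemH123} this is in turn equivalent to $\bigl((1+i+j)^{N-1}(\mathfrak{d}_r^m\dot{\phi})(i+j)\bigr)_{i,j\in\N}\in S_1(\ell_2(\N))$, i.e.\ to $A(N,\dot{\phi})\in S_1(\ell_2(\N))$ when $(r,m)=(2,N)$, to $B(N,\dot{\phi})\in S_1(\ell_2(\N))$ when $(r,m)=(1,N)$, and to $C(N,\dot{\phi})\in S_1(\ell_2(\N))$ when $(r,m)=(2,1)$. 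Chaining these equivalences with the previous paragraph yields exactly the three asserted characterisations. I do not expect any serious obstacle here: the only delicate points are the (routine) bookkeeping of the truncation polynomial $R_{r,m}$ and keeping in mind that all the series in play must be treated as genuine analytic functions on $\D$ rather than formal power series, which is precisely what the standing boundedness hypothesis on $\dot{\phi}$ secures.
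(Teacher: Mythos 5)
Your proof is correct and follows essentially the same route as the paper: rewrite $(z^r-1)^m\sum\dot{\phi}(n)z^n$ as a polynomial plus $z^{rm}\sum(\mathfrak{d}_r^m\dot{\phi})(n)z^n$, use that polynomials lie in $B_1^N(\T)$ together with Proposition \ref{Prop_Bshift} to discard the shift, and then apply Theorem \ref{thmPeller} (with Lemma \ref{lemH123}) to identify the Besov condition with the trace-class condition defining $\mathcal{A}_N$, $\mathcal{B}_N$, $\mathcal{C}_N$. The only cosmetic differences are that the paper extends $\dot{\phi}$ by zero to negative integers instead of isolating the truncation polynomial, treats only the $\mathcal{A}_N$ case explicitly, and leaves the appeal to Lemma \ref{lemH123} implicit, which you spell out.
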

\begin{proof}
We shall only prove the first case since the other ones are analogous. Extend $\dot{\phi}$ to $\Z$ by setting $\dot{\phi}(n)=0$ for $n<0$, and observe that
\begin{align*}
(z^2-1)^N\sum_{n\geq 0}\dot{\phi}(n)z^n &= \sum_{k=0}^N\sum_{n\geq 0}\tbinom{N}{k}(-1)^k\dot{\phi}(n)z^{n+2N-2k}\\
&= \sum_{k=0}^N\sum_{n\geq 0}\tbinom{N}{k}(-1)^k\dot{\phi}(n-2N+2k)z^{n}\\
&= \sum_{n\geq 0}\mathfrak{d}_2^N\dot{\phi}(n-2N)z^{n}.
\end{align*}
This, together with the fact that polynomials belong to $B_1^N(\T)$, implies that $(z^2-1)^N\sum_{n\geq 0}\dot{\phi}(n)z^n$ belongs to $B_1^N(\T)$ if and only if
\begin{align*}
\sum_{n\geq 2N}\mathfrak{d}_2^N\dot{\phi}(n-2N)z^{n}
\end{align*}
does. By Proposition \ref{Prop_Bshift}, this is equivalent to 
\begin{align*}
\sum_{n\geq 0}\mathfrak{d}_2^N\dot{\phi}(n)z^{n} \in B_1^N(\T),
\end{align*}
which, by Theorem \ref{thmPeller}, is equivalent to the fact that $\dot{\phi}\in \mathcal{A}_N$.
\end{proof}

The main goal in this section is to prove the following.
\begin{prop}\label{prop_inclusions}
The sets $\mathcal{A}_N$, $\mathcal{B}_N$ and $\mathcal{C}_N$ satisfy the following relations.
\begin{itemize}
\item[a)] For all $N\geq 1$, $\mathcal{A}_{N+1}\subsetneq \mathcal{A}_N$, $\mathcal{B}_{N+1}\subsetneq \mathcal{B}_N$ and $\mathcal{C}_{N+1}\subsetneq \mathcal{C}_N$.
\item[b)] For all $N\geq 2$, $\mathcal{C}_N\subsetneq \mathcal{A}_N$ and $\displaystyle\bigcap_{m\geq 1}\mathcal{A}_m\nsubseteq \mathcal{C}_N$.
\item[c)] For all $N\geq 1$, $\mathcal{B}_N\subsetneq \mathcal{A}_N$ and $\displaystyle\bigcap_{m\geq 1}\mathcal{A}_m\nsubseteq \mathcal{B}_N$. Furthermore, $\displaystyle\bigcap_{m\geq 1}\mathcal{C}_m\nsubseteq \mathcal{B}_N$.
\end{itemize}
\end{prop}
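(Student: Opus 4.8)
The plan is to reduce every statement to a question about Besov spaces via Lemma \ref{Lem_carBesov}, where $\dot\phi\in\mathcal{A}_N$ (resp. $\mathcal{B}_N$, $\mathcal{C}_N$) means $(z^2-1)^N F\in B_1^N(\T)$ (resp. $(z-1)^N F\in B_1^N(\T)$, $(z^2-1)F\in B_1^N(\T)$), where $F(z)=\sum_{n\ge0}\dot\phi(n)z^n$. The key elementary tools are: the inclusion $B_1^{N+1}(\T)\subset B_1^N(\T)$ (immediate from the definition, since $2^{n(N+1)}\ge 2^{nN}$); the shift-invariance of $B_1^s(\T)$ from Proposition \ref{Prop_Bshift}; the factorisation $z^2-1=(z-1)(z+1)$ together with the fact that multiplication by the invertible-on-$\overline{\D}$ factor $(z+1)$, or by $(z+1)^{-1}$, preserves $B_1^N(\T)$ (this is a standard fact: $B_1^N(\T)$ is a module over the Wiener-type algebra containing $(1+z)^{\pm1}$; alternatively translate to the Hankel picture via Theorem \ref{thmPeller} and use that multiplication by such a symbol is bounded on $S_1$); and the fractional integration operators $I_\alpha:B_1^s\to B_1^{s+\alpha}$.

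For part (a): the inclusions $\mathcal{A}_{N+1}\subset\mathcal{A}_N$, etc., follow by writing $(z^2-1)^{N+1}F=(z^2-1)\cdot(z^2-1)^N F$ and noting $(z^2-1)^N F\in B_1^{N+1}\subset B_1^N$ forces, after multiplying by the polynomial $(z^2-1)$ and using $B_1^N\subset B_1^{N-1}$... — more cleanly: if $(z^2-1)^{N+1}F\in B_1^{N+1}$ then $(z^2-1)^N F = I_1\big[\text{something}\big]$; the honest route is that $(z^2-1)^{N+1}F\in B_1^{N+1}\Rightarrow (z^2-1)^NF\in B_1^N$ because dividing by $(z^2-1)$ is dividing by $(z-1)(z+1)$ and, in the Hankel/Besov language, the $(z-1)$ factor corresponds to the finite difference $\mathfrak d_1$, which relates $B_1^{N+1}$ to $B_1^N$ exactly as in Lemma \ref{lema_nconv}/Lemma \ref{lemsumbinom}. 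For the strictness, I would exhibit explicit $\dot\phi$: a natural candidate is $\dot\phi(n)=(1+n)^{-N}$ up to logarithmic corrections, or better, use the Besov description directly — pick an analytic $g\in B_1^N\setminus B_1^{N+1}$ (these exist by standard lacunary-series examples, e.g. $g(z)=\sum_k 2^{-kN}z^{2^k}$) and set $F=g/(z^2-1)^N$; then $\dot\phi$ is bounded (one checks the coefficient decay) and lands in $\mathcal{A}_N\setminus\mathcal{A}_{N+1}$. The same construction with $(z-1)^N$, resp. $(z^2-1)$, handles $\mathcal{B}_N,\mathcal{C}_N$.

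For parts (b) and (c): the inclusion $\mathcal{B}_N\subset\mathcal{A}_N$ is immediate since $(z^2-1)^NF=(z+1)^N(z-1)^NF$ and multiplication by $(z+1)^N$ preserves $B_1^N$; likewise $\mathcal{C}_N\subset\mathcal{A}_N$ for $N\ge2$ follows from $(z^2-1)^NF=(z^2-1)^{N-1}\cdot(z^2-1)F$ together with the fact that multiplying a $B_1^N$ function by the polynomial $(z^2-1)^{N-1}$ keeps it in $B_1^N$ — here one uses that polynomials are multipliers of $B_1^N$, proved as in the proof of Lemma \ref{Lem_carBesov}. For the non-inclusions $\bigcap_{m\ge1}\mathcal{A}_m\nsubseteq\mathcal{C}_N$ and $\bigcap_{m\ge1}\mathcal{A}_m\nsubseteq\mathcal{B}_N$ and $\bigcap_m\mathcal{C}_m\nsubseteq\mathcal{B}_N$, I would construct a single $\dot\phi$ that is "smooth at infinity in the $\mathfrak d_2^m$ sense for every $m$" yet fails the $\mathfrak d_1^N$ (resp. $\mathfrak d_2$) condition. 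The cleanest choice: take $\dot\phi(n)=(-1)^n a_n$ where $(a_n)$ decays fast (so all $\mathfrak d_2^m\dot\phi$ decay fast, putting $\dot\phi$ in every $\mathcal{A}_m$ and every $\mathcal{C}_m$), but the alternating sign makes $\mathfrak d_1\dot\phi(n)=(-1)^n(a_n+a_{n+1})$ not summable against the binomial weights — indeed $|\mathfrak d_1^N\dot\phi(n)|\sim 2^N|a_n|$ does not gain any decay, so choosing $a_n\sim (1+n)^{-1/2}(\log n)^{-1}$-type (summable against weight $(1+n)^{N-1}$? no) — rather choose $a_n$ so that $\sum (1+n)^{N-1}|a_n|=\infty$ while $(a_n)$ is still bounded and $\mathfrak d_2$-smooth; e.g. $a_n\sim (1+n)^{-N}$ makes $\dot\phi\in\bigcap_m\mathcal{A}_m\cap\bigcap_m\mathcal{C}_m$ but $\mathfrak d_1^N\dot\phi(n)\sim (1+n)^{-N}$, and $\sum(1+n)^{N-1}(1+n)^{-N}=\sum(1+n)^{-1}=\infty$ — almost, one needs a sharper weight, so take $a_n\sim (1+n)^{-N}(\log(2+n))^{-2}$, which is $\mathfrak d_2$-smooth of every order, bounded, in $\mathcal{C}_m$ for all $m$, yet $B(N,\dot\phi)$ has diagonal $\sim(1+n)^{N-1}(1+n)^{-N}(\log n)^{-2}=(1+n)^{-1}(\log n)^{-2}$ which \emph{is} summable — so one must instead make the weight just barely fail, e.g. $a_n\sim(1+n)^{-N}$ without logs gives diagonal of $B(N,\dot\phi)$ equal to $\sim(1+n)^{-1}$, not summable, hence $B(N,\dot\phi)\notin S_1$ since a trace-class operator has summable diagonal; and one checks $\mathfrak d_2^m a$ and $\mathfrak d_2 a$ decay like $(1+n)^{-N-1}$ and $(1+n)^{-N-1}$ respectively, fast enough for all the $\mathcal A_m$ and $\mathcal C_m$ weights.

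\textbf{Expected main obstacle.} The routine reductions (parts a, and the inclusions in b, c) are straightforward once one has "multiplication by $(1\pm z)^{\pm1}$ preserves $B_1^N$" and the shift lemma. The delicate point is the non-inclusions: one must produce explicit bounded sequences $\dot\phi$ whose \emph{second} discrete derivatives (of all orders) decay fast enough against the binomial weights — so that $\dot\phi$ lies in $\bigcap_m\mathcal A_m$ (or $\bigcap_m\mathcal C_m$) — while the \emph{first} derivative (of order $N$) does not, and verifying the failure via the "trace class $\Rightarrow$ summable diagonal" criterion. Getting the decay exponents and logarithmic refinements exactly right, and confirming that the candidate genuinely sits in the infinite intersection (which requires estimating $\mathfrak d_2^m\dot\phi$ uniformly and summing the binomial-weighted series for every $m$), is the part that needs care; the natural examples are alternating-sign or polynomially-decaying sequences, and the verification is an exercise in the formula \eqref{hoderiv} and Lemma \ref{lemH123}.
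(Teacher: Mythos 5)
Your reduction to the Besov/Hankel picture is reasonable in spirit, but several of your key steps are wrong or missing, and the missing ones are exactly where the paper has to work. First, $(z+1)$ is \emph{not} invertible on $\overline{\D}$ (it vanishes at $-1\in\T$), and multiplication by $(z+1)^{-1}$ does not preserve $B_1^N(\T)$; more generally, division by $(z^2-1)$ is not legitimate, since multiplication by $(z-1)$ can map a function outside $B_1^{N+1}$ into $B_1^{N+1}$ (e.g.\ $g=\sum_n z^n\notin B_1^{N+1}$ but $(z-1)g=-1$ is a polynomial). This undermines both your sketch of the inclusions in (a) and, more seriously, your strictness argument: if you set $F=g/(z^2-1)^N$ with $g$ lacunary, the coefficients of $(1-z^2)^{-N}$ grow like $n^{N-1}$, so for $N\geq 2$ the resulting $\dot\phi=\hat F$ is \emph{unbounded} (check $\hat F(2^m)\sim 2^{m(N-1)}$), hence not in $\mathcal{A}_N\subset\ell_\infty(\N)$ at all; and even ignoring boundedness, $g\notin B_1^{N+1}$ does not imply $(z^2-1)g\notin B_1^{N+1}$, so membership in $\mathcal{A}_N\setminus\mathcal{A}_{N+1}$ is not established. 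The paper's actual difficulty is that the natural bounded examples $(-1)^n(n+1)^{-\alpha-1}$ (resp.\ $\ii^n(n+1)^{-\alpha-1}$) only separate $\mathcal{B}_N$ from $\mathcal{B}_{N+2}$ (there is a gap of one in the exponent, Lemma \ref{lem_phi_alpha}); strictness at consecutive levels is then forced by a bootstrap: the identity of Lemma \ref{lem(n+1)sum} applied to $\dot\psi(n)=(n+1)\mathfrak{d}_1\dot\phi(n)$ shows $\mathcal{B}_{N+1}=\mathcal{B}_N$ would imply $\mathcal{B}_N=\mathcal{B}_{N+2}$, a contradiction. Nothing in your plan replaces this step, and your lacunary shortcut does not.

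The non-inclusions also have concrete errors. Your candidate $\dot\phi(n)=(-1)^n a_n$ with $a_n\sim(1+n)^{-N}$ is plausibly in $\bigcap_m\mathcal{A}_m$ and not in $\mathcal{B}_N$, but it is \emph{not} in every $\mathcal{C}_m$: membership in $\mathcal{C}_m$ involves only the first derivative $\mathfrak{d}_2\dot\phi\sim(1+n)^{-N-1}$ against the growing weight $(1+n)^{m-1}$, whose diagonal $(1+n)^{m-N-2}$ is not even summable for $m>N$. So your single example does not give $\bigcap_m\mathcal{C}_m\nsubseteq\mathcal{B}_N$; the paper uses $\dot\phi(n)=(-1)^n$, for which $\mathfrak{d}_2\dot\phi\equiv0$ (so $C(m,\dot\phi)=0$ for all $m$) while the diagonal of $B(N,\dot\phi)$ is of size $(1+n)^{N-1}2^N$ and cannot be summable. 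Moreover, you give no example at all for $\bigcap_m\mathcal{A}_m\nsubseteq\mathcal{C}_N$ (the tension there is between the first and the higher $\mathfrak{d}_2$-derivatives, not between $\mathfrak{d}_1$ and $\mathfrak{d}_2$; the paper builds a function with $\mathfrak{d}_2\dot\phi(n)=(n+1)^{-N-\frac12}$ exactly, Lemma \ref{lem_CN+1CN}, and uses Bonsall's two-sided criterion), and you do not address strictness of $\mathcal{C}_N\subsetneq\mathcal{A}_N$, for which the paper again uses a bootstrap (Corollary \ref{cor_ANCN}: $\mathcal{A}_N=\mathcal{C}_N$ would force $\dot\phi\in\mathcal{C}_N\iff\mathfrak{d}_2^{j(N-1)}\dot\phi\in\mathcal{C}_N$ for all $j$, contradicting Lemma \ref{lem_d2inCN}). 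In short: the routine inclusions are fine (and are obtained in the paper simply by restricting multipliers), but the strictness statements and the three non-inclusions require the specific examples and the two bootstrap arguments above, none of which your proposal supplies in working form.
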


We will concentrate first in the strict inclusion $\mathcal{B}_{N+1}\subsetneq \mathcal{B}_N$ by studying the function
\begin{align}\label{goodphi(-1)n}
\dot{\phi}(n)=\frac{(-1)^n}{(n+1)^{\alpha+1}},
\end{align}
for different values of $\alpha\geq 0$. The inclusion $\mathcal{A}_{N+1}\subsetneq \mathcal{A}_N$ follows by the same arguments, considering the function $n\mapsto\frac{\ii^n}{(n+1)^{\alpha+1}}$ instead. The verifications will be left to the reader. 

\begin{lem}\label{lemc_n_sim}
Let $\alpha>0$ and define
\begin{align*}
a_n=\frac{(-1)^n}{(n+1)^\alpha},\quad\forall n\in\N.
\end{align*}
Then $(\mathfrak{d}_1^ma)_n\sim \frac{1}{(n+1)^\alpha}$ for all $m\in\N$.
\end{lem}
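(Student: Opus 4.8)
The plan is to reduce the statement to the closed form for the iterated discrete derivative supplied by Lemma \ref{lemsumbinom}, and then estimate a sum of positive terms by its extreme entries.

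First I would treat $m\geq 1$. Applying Lemma \ref{lemsumbinom} to the sequence $(a_n)$ gives
\begin{align*}
(\mathfrak{d}_1^m a)_n
= \sum_{j=0}^{m}\tbinom{m}{j}(-1)^j a_{n+j}
= \sum_{j=0}^{m}\tbinom{m}{j}(-1)^j\frac{(-1)^{n+j}}{(n+j+1)^\alpha}
= (-1)^n\sum_{j=0}^{m}\frac{\tbinom{m}{j}}{(n+j+1)^\alpha},
\end{align*}
where the last step uses $(-1)^j(-1)^{n+j}=(-1)^n$. The crucial point is that after this sign cancellation we are left with $(-1)^n$ times a sum of strictly positive terms, so that
\begin{align*}
\left|(\mathfrak{d}_1^m a)_n\right| = \sum_{j=0}^{m}\tbinom{m}{j}(n+j+1)^{-\alpha}.
\end{align*}

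Next I would bound this finite sum from above and below. Since $(n+j+1)^{-\alpha}\leq (n+1)^{-\alpha}$ for $0\leq j\leq m$ and $\sum_{j=0}^{m}\tbinom{m}{j}=2^m$, we get $\left|(\mathfrak{d}_1^m a)_n\right|\leq 2^m(n+1)^{-\alpha}$; retaining only the $j=0$ summand gives $\left|(\mathfrak{d}_1^m a)_n\right|\geq (n+1)^{-\alpha}$. Hence $(n+1)^{-\alpha}\leq \left|(\mathfrak{d}_1^m a)_n\right|\leq 2^m(n+1)^{-\alpha}$ for every $n$, which is precisely $(\mathfrak{d}_1^m a)_n\sim (n+1)^{-\alpha}$ with constants $C_1=1$ and $C_2=2^m$ depending only on $m$. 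The case $m=0$ is immediate, since $\left|(\mathfrak{d}_1^0 a)_n\right|=(n+1)^{-\alpha}$ exactly. There is no genuine obstacle here: the only point demanding a little care is the sign bookkeeping that converts the alternating binomial sum into a sum of positive terms, after which the comparison of a fixed-length sum with its largest and smallest entries is elementary.
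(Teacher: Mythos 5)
Your proposal is correct and follows exactly the paper's argument: apply Lemma \ref{lemsumbinom}, cancel the signs to get $(-1)^n$ times a positive sum, and compare that sum with $(n+1)^{-\alpha}$ (the paper states the final comparison as ``$\sim$'' without writing out the constants $1$ and $2^m$, which you supply explicitly). Nothing is missing.
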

\begin{proof}
By Lemma \ref{lemsumbinom},
\begin{align*}
(\mathfrak{d}_1^ma)_n &=\sum_{k=0}^m\tbinom{m}{k}(-1)^k  \frac{(-1)^{n+k}}{(n+k+1)^\alpha}\\
&=(-1)^n\sum_{k=0}^m\tbinom{m}{k} \frac{1}{(n+k+1)^\alpha}\\
&\sim \frac{1}{(n+1)^\alpha}.
\end{align*}
\end{proof}

Observe that the sets $\mathcal{A}_N$, $\mathcal{B}_N$ and $\mathcal{C}_N$ are defined in terms of Hankel matrices, but in general it is not easy to determine if a sequence $(a_n)$ defines a Hankel matrix in $S_1(\ell_2(\N))$. The following theorem of Bonsall \cite{Bon} provides a sufficient condition.

\begin{thm}{\cite[Theorem 3.1]{Bon}}\label{thmBonsall}
Let $(a_n)$ be a sequence of complex numbers converging to $0$ and such that
\begin{align*}
\sum_{n\geq 2} |a_{n-1}-a_n| n\log n <\infty.
\end{align*}
Then the Hankel matrix $(a_{i+j})_{i,j\in\N}$ belongs to $S_1(\ell_2(\N))$.
\end{thm}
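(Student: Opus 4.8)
The plan is to realise $\Gamma=(a_{i+j})_{i,j\in\N}$ as an absolutely convergent (in $S_1$) series of elementary ``triangular'' Hankel matrices, and to control the $S_1$-norm of each summand. Since $\sum_{n\ge 2}|a_{n-1}-a_n|\,n\log n<\infty$ forces $\sum_n|a_{n-1}-a_n|<\infty$, and since $a_n\to 0$, the telescoping identity $a_n=\sum_{m>n}(a_{m-1}-a_m)$ holds with absolute convergence. Writing $b_m=a_{m-1}-a_m$, we get $a_{i+j}=\sum_{m>i+j}b_m$, hence
\[
\Gamma=\sum_{m\ge 1}b_m\,\Gamma^{(m)},\qquad \Gamma^{(m)}_{i,j}=\begin{cases}1&\text{if }i+j\le m-1,\\ 0&\text{otherwise.}\end{cases}
\]
It therefore suffices to establish a bound of the form $\|\Gamma^{(m)}\|_{S_1}\le C\,m\log(m+2)$: then $\sum_{m\ge 1}|b_m|\,\|\Gamma^{(m)}\|_{S_1}\le C|b_1|+C\sum_{m\ge 2}|b_m|\,m\log m<\infty$, which both shows the series converges in the Banach space $S_1(\ell_2(\N))$ and, upon inspecting matrix entries, that its sum is $\Gamma$.

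The crux is thus the estimate $\|\Gamma^{(m)}\|_{S_1}\lesssim m\log m$. Note $\Gamma^{(m)}$ is supported on the block $\{0,\dots,m-1\}^2$; composing with the flip $J$ on $\C^m$, $J\delta_k=\delta_{m-1-k}$, converts the anti-diagonal pattern into a triangular one, $(\Gamma^{(m)}J)_{i,j}=\mathbb 1[i\le j]$, i.e. $\Gamma^{(m)}J=U_m$, the $m\times m$ all-ones upper-triangular matrix; since $J$ is unitary, $\|\Gamma^{(m)}\|_{S_1}=\|U_m\|_{S_1}$. I would bound $\|U_m\|_{S_1}$ by a dyadic decomposition: the main diagonal contributes the identity $I_m$ with $\|I_m\|_{S_1}=m$, while the strictly-upper-triangular part splits, at each scale $s=1,\dots,\lceil\log_2 m\rceil$, into $2^{s-1}$ blocks of size $\sim m/2^s$ occupying pairwise disjoint sets of rows and of columns, each block being an all-ones matrix. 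An all-ones $k\times k$ matrix is $\mathbf 1\mathbf 1^{*}$, rank one of $S_1$-norm $k$, and a row/column-disjoint family is block-diagonal, so its $S_1$-norm is the sum; hence scale $s$ contributes $\le 2^{s-1}\cdot(m/2^s)=m/2$, and summing over the $\sim\log_2 m$ scales gives $\|U_m\|_{S_1}\le m+\tfrac12 m\log_2 m\lesssim m\log(m+2)$. (Equivalently one can diagonalise $U_m^{*}U_m$, a rank-one perturbation of a tridiagonal Toeplitz matrix, to get the singular values $\sigma_k(U_m)=\tfrac12\csc\!\big(\tfrac{(2k-1)\pi}{2(2m+1)}\big)$ and sum them; both routes give the same order.)

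Combining the two steps finishes the proof: $\|\Gamma\|_{S_1}\le\sum_{m\ge 1}|b_m|\,\|\Gamma^{(m)}\|_{S_1}<\infty$, so $\Gamma\in S_1(\ell_2(\N))$. The only genuinely nontrivial ingredient is the $m\log m$ bound for the truncated all-ones Hankel matrix, a ``discrete Hilbert matrix''-type estimate; the rest (telescoping, the flip, the triangle inequality in $S_1$, and completeness of $S_1$) is routine. The small-$m$ range — for instance $m=1$, where $\|\Gamma^{(1)}\|_{S_1}=1$ — causes no trouble and is absorbed into the constant.
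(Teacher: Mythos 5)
Your argument is correct, and since the paper itself offers no proof of this statement (it is quoted from Bonsall's article \cite{Bon}), there is nothing internal to compare it with; what you give is a sound, self-contained proof along the route an expert would naturally take (and, in spirit, close to Bonsall's own method of decomposing into elementary Hankel matrices). The three pillars all check out: (1) the telescoping $a_n=\sum_{m>n}(a_{m-1}-a_m)$ is legitimate because the hypothesis gives $\sum_m|a_{m-1}-a_m|<\infty$ and $a_n\to 0$, and it yields $\Gamma=\sum_{m\ge1}b_m\Gamma^{(m)}$ with $\Gamma^{(m)}_{i,j}=\mathbb 1[i+j\le m-1]$; (2) the flip identity $\Gamma^{(m)}J=U_m$ is exact, so $\|\Gamma^{(m)}\|_{S_1}=\|U_m\|_{S_1}$; (3) the dyadic bound is valid: at each scale the blocks are all-ones rectangles of rank one (a $p\times q$ all-ones block has $S_1$-norm $\sqrt{pq}\le\frac{p+q}{2}$), the row sets at a fixed scale are disjoint subsets of $\{0,\dots,m-1\}$, so each scale contributes at most $m/2$ and the $\lceil\log_2 m\rceil$ scales plus the diagonal give $\|U_m\|_{S_1}\le m+\tfrac12 m\lceil\log_2 m\rceil\lesssim m\log(m+2)$ (your alternative via the exact singular values $\tfrac12\csc\big(\tfrac{(2k-1)\pi}{2(2m+1)}\big)$ is also correct and shows the $m\log m$ growth is sharp). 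Finally, absolute convergence of $\sum_m|b_m|\,\|\Gamma^{(m)}\|_{S_1}$ follows from the hypothesis since $m\log(m+2)\lesssim m\log m$ for $m\ge2$ and the $m=1$ term is a single finite summand, and $S_1$-convergence implies entrywise convergence, so the sum is indeed $(a_{i+j})_{i,j\in\N}$. The only cosmetic caveat is that for $m$ not a power of $2$ the blocks have slightly unequal sizes, which your $\sqrt{pq}$ bound absorbs without changing the constant's order.
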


\begin{lem}\label{lem_phi_alpha}
Let $\alpha\geq 0$ and $m\in\N$. Define $\dot{\phi}:\N\to\C$ by
\begin{align*}
\dot{\phi}(n)=\frac{(-1)^n}{(n+1)^{\alpha+1}}.
\end{align*}
Then
\begin{align*}
((1+i+j)^{s}\mathfrak{d}_1^m\dot{\phi}(i+j))_{i,j\in\N}\notin &S_1(\ell_2(\N)),\quad\forall s\geq\alpha\\
((1+i+j)^{s}\mathfrak{d}_1^m\dot{\phi}(i+j))_{i,j\in\N}\in &S_1(\ell_2(\N)),\quad\forall s<\alpha-1.
\end{align*}
\end{lem}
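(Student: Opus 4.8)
The plan is to work directly with the sequence $b_n:=(1+n)^{s}\,\mathfrak{d}_1^m\dot{\phi}(n)$ and the Hankel matrix $\Gamma_b=(b_{i+j})_{i,j\in\N}$, whose membership in $S_1(\ell_2(\N))$ is exactly what is being claimed or denied. First I would record, using Lemma~\ref{lemsumbinom} (equivalently formula \eqref{hoderiv}), that
\[
\mathfrak{d}_1^m\dot{\phi}(n)=(-1)^n e_n,\qquad e_n:=\sum_{k=0}^m\tbinom{m}{k}\frac{1}{(n+k+1)^{\alpha+1}}>0,
\]
so that $b_n=(-1)^n(1+n)^{s}e_n$. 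Applying Lemma~\ref{lemc_n_sim} with exponent $\alpha+1>0$ gives $e_n\sim(n+1)^{-(\alpha+1)}$ with constants depending only on $m$ and $\alpha$, hence
\[
|b_n|\sim(1+n)^{\,s-\alpha-1},\qquad\forall n\in\N.
\]
The key feature to keep in mind is that $b_n$ strictly alternates in sign, so consecutive terms never partially cancel.

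For the range $s\geq\alpha$ I would argue by contradiction. Suppose $\Gamma_b\in S_1(\ell_2(\N))$. Every trace-class operator $T$ has absolutely summable diagonal, $\sum_i|T_{ii}|\leq\|T\|_{S_1}$ (immediate from a Schmidt decomposition of $T$; cf.\ \cite[\S 2.4]{Mur}). Applying this to $\Gamma_b$ yields $\sum_i|b_{2i}|\leq\|\Gamma_b\|_{S_1}$, and applying it to $S^*\Gamma_b$ (still trace class, with $S^*$ the backward shift on $\ell_2(\N)$) yields $\sum_i|b_{2i+1}|\leq\|\Gamma_b\|_{S_1}$; altogether $\sum_{n}|b_n|<\infty$. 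But $s\geq\alpha$ means $s-\alpha-1\geq-1$, so $|b_n|\geq c\,(1+n)^{-1}$ for some $c>0$ and $\sum_n|b_n|=\infty$, a contradiction. Hence $\Gamma_b\notin S_1(\ell_2(\N))$.

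For the range $s<\alpha-1$ I would apply Bonsall's theorem (Theorem~\ref{thmBonsall}) to $(b_n)$. Since $s-\alpha-1<0$ we have $b_n\to0$. Because $b_{n-1}$ and $b_n$ have opposite signs,
\[
|b_{n-1}-b_n|=n^{s}e_{n-1}+(1+n)^{s}e_n\sim(1+n)^{\,s-\alpha-1},
\]
so $\sum_{n\geq2}|b_{n-1}-b_n|\,n\log n\sim\sum_{n\geq2}n^{\,s-\alpha}\log n$, which converges precisely when $s-\alpha<-1$. Bonsall's theorem then gives $\Gamma_b\in S_1(\ell_2(\N))$.

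The work here is bookkeeping rather than conceptual: the main point is that the alternating signs force $|b_{n-1}-b_n|$ to be of the same order as $|b_n|$ itself (not of the order of a discrete derivative of $|b_n|$), which is what simultaneously makes the non-membership argument work for all $s\geq\alpha$ and the Bonsall estimate fail exactly at $s=\alpha-1$. One must also make the two $\sim$-comparisons uniform down to small values of $n$, so that the constants entering Bonsall's hypothesis and the lower bound $|b_n|\gtrsim(1+n)^{-1}$ are legitimate; both follow from the explicit formula for $e_n$.
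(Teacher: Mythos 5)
Your proposal is correct and follows essentially the same route as the paper: both use Lemma~\ref{lemc_n_sim} to get $\mathfrak{d}_1^m\dot{\phi}(n)\sim(n+1)^{-\alpha-1}$, deduce non-membership for $s\geq\alpha$ from the fact that a trace-class operator has absolutely summable diagonal, and deduce membership for $s<\alpha-1$ from Bonsall's criterion (Theorem~\ref{thmBonsall}). The only cosmetic differences are that the paper settles for the triangle-inequality upper bound on $|b_{n-1}-b_n|$ rather than the exact order from the sign alternation, and it does not need your extra diagonal estimate via $S^*\Gamma_b$, since the even-indexed diagonal already diverges.
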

\begin{proof}
Consider first $s\geq\alpha$ and observe that the series $\sum_{i\geq 0}(2i+1)^{s}\mathfrak{d}_1^m\dot{\phi}(2i)$ diverges because $\mathfrak{d}_1^m\dot{\phi}(n)\sim (n+1)^{-\alpha-1}$, by Lemma \ref{lemc_n_sim}. This implies that
\begin{align*}
((1+i+j)^{s}\mathfrak{d}_1^m\dot{\phi}(i+j))_{i,j\in\N}\notin S_1(\ell_2(\N)),
\end{align*}
since otherwise its diagonal would be an element of $\ell_1$. Now suppose that $s<\alpha-1$. Then, again by Lemma \ref{lemc_n_sim},
\begin{align*}
\left|n^s\mathfrak{d}_1^m\dot{\phi}(n-1)-(n+1)^s\mathfrak{d}_1^m\dot{\phi}(n)\right| 
&\leq n^s\left|\mathfrak{d}_1^m\dot{\phi}(n-1)\right| + (n+1)^s\left|\mathfrak{d}_1^m\dot{\phi}(n)\right|\\
&\sim n^s n^{-\alpha-1}.
\end{align*}
Hence
\begin{align*}
\sum_{n\geq 2} \left|n^s\mathfrak{d}_1^m\dot{\phi}(n-1)-(n+1)^s\mathfrak{d}_1^m\dot{\phi}(n)\right| n\log n \leq C\sum_{n\geq 2} n^{s-\alpha}\log n <\infty.
\end{align*}
By Theorem \ref{thmBonsall}, $((1+i+j)^{s}\mathfrak{d}_1^m\dot{\phi}(i+j))_{i,j\in\N}\in S_1(\ell_2(\N))$.
\end{proof}

The following is a direct consequence of Lemma \ref{lem_phi_alpha}.

\begin{cor}\label{corB_N-B_N+2}
Let $N\geq 1$ and $\alpha\in(N,N+1]$. Then the function $\dot{\phi}:\N\to\C$ given by
\begin{align*}
\dot{\phi}(n)=\frac{(-1)^n}{(n+1)^{\alpha+1}},\quad\forall n\in\N,
\end{align*}
belongs to $\mathcal{B}_N\setminus \mathcal{B}_{N+2}$.
\end{cor}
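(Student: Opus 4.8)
The plan is to obtain the corollary as an immediate consequence of Lemma~\ref{lem_phi_alpha}, using the very same value of $\alpha$ in both statements. First I would recall that, by definition, a bounded function $\dot{\phi}$ lies in $\mathcal{B}_N$ exactly when $B(N,\dot{\phi})=\bigl((1+i+j)^{N-1}\mathfrak{d}_1^N\dot{\phi}(i+j)\bigr)_{i,j\in\N}$ belongs to $S_1(\ell_2(\N))$, and that the function $\dot{\phi}(n)=(-1)^n/(n+1)^{\alpha+1}$ is obviously bounded by $1$, so the $\ell_\infty$ membership is harmless.

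For the inclusion $\dot{\phi}\in\mathcal{B}_N$ I would invoke the second assertion of Lemma~\ref{lem_phi_alpha} with $m=N$ and exponent $s=N-1$: its hypothesis $s<\alpha-1$ amounts to $\alpha>N$, which holds because $\alpha\in(N,N+1]$; hence $B(N,\dot{\phi})\in S_1(\ell_2(\N))$, i.e. $\dot{\phi}\in\mathcal{B}_N$. For the non-membership $\dot{\phi}\notin\mathcal{B}_{N+2}$ I would invoke the first assertion of the same lemma with $m=N+2$ and $s=(N+2)-1=N+1$: its hypothesis $s\geq\alpha$ amounts to $\alpha\leq N+1$, which again holds because $\alpha\in(N,N+1]$; hence $B(N+2,\dot{\phi})\notin S_1(\ell_2(\N))$, so $\dot{\phi}\notin\mathcal{B}_{N+2}$.

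There is essentially no obstacle here: the entire content is the bookkeeping check that the two ranges of $s$ supplied by Lemma~\ref{lem_phi_alpha} (namely $s<\alpha-1$ and $s\geq\alpha$) are met simultaneously by the two relevant exponents $N-1$ and $N+1$ precisely when $\alpha$ ranges over the half-open interval $(N,N+1]$ — the left endpoint being excluded so that the strict inequality $N-1<\alpha-1$ survives, and the right endpoint included since $N+1\geq\alpha$ is not strict. The companion statement $\mathcal{A}_{N+1}\subsetneq\mathcal{A}_N$ referred to earlier is handled in exactly the same way, replacing $(-1)^n$ by $\ii^n$ and $\mathfrak{d}_1$ by $\mathfrak{d}_2$ (one would first note the analogue of Lemma~\ref{lemc_n_sim} for $\mathfrak{d}_2$, which follows from the same binomial expansion), so I would not reproduce that argument.
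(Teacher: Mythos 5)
Your proof is correct and is exactly the paper's argument: the paper states Corollary~\ref{corB_N-B_N+2} as a direct consequence of Lemma~\ref{lem_phi_alpha}, and your bookkeeping (taking $m=N$, $s=N-1$ with $s<\alpha-1$ from $\alpha>N$, and $m=N+2$, $s=N+1$ with $s\geq\alpha$ from $\alpha\leq N+1$) is precisely the verification that makes this work.
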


Observe that, by restriction, Theorem B implies that for all $N\geq 1$, $\mathcal{B}_{N+2}\subseteq \mathcal{B}_{N+1}\subseteq \mathcal{B}_{N}$. Corollary \ref{corB_N-B_N+2} says that one of these inclusions is strict. In order to show that both of them are, we will use the following identity.

\begin{lem}\label{lem(n+1)sum}
Let $(a_n)_{n\in\N}$ be a sequence of complex numbers. For all $n,m\in\N$,
\begin{align*}
(n+1)\sum_{k=0}^m\tbinom{m}{k}(-1)^ka_{n+k}= &\sum_{k=0}^m\tbinom{m}{k}(-1)^k(n+k+1)a_{n+k}\\
& + m\sum_{k=0}^{m-1}\tbinom{m-1}{k}(-1)^ka_{n+k+1}.
\end{align*}
\end{lem}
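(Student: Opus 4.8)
The plan is to prove the identity by a direct algebraic manipulation that isolates the ``extra'' term. First I would transpose the first sum on the right-hand side to the left, reducing the claim to
\begin{align*}
(n+1)\sum_{k=0}^m\tbinom{m}{k}(-1)^ka_{n+k} - \sum_{k=0}^m\tbinom{m}{k}(-1)^k(n+k+1)a_{n+k} = m\sum_{k=0}^{m-1}\tbinom{m-1}{k}(-1)^ka_{n+k+1}.
\end{align*}
Since $(n+1)-(n+k+1)=-k$, the left-hand side collapses to $-\sum_{k=0}^m\tbinom{m}{k}(-1)^k k\,a_{n+k}$, and the $k=0$ term drops out, leaving $-\sum_{k=1}^m\tbinom{m}{k}(-1)^k k\,a_{n+k}$.

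Next I would invoke the elementary binomial identity $k\tbinom{m}{k}=m\tbinom{m-1}{k-1}$ (valid for $k\ge 1$), which turns the sum into $-m\sum_{k=1}^m\tbinom{m-1}{k-1}(-1)^k a_{n+k}$. Reindexing by $j=k-1$ gives $-m\sum_{j=0}^{m-1}\tbinom{m-1}{j}(-1)^{j+1}a_{n+j+1}=m\sum_{j=0}^{m-1}\tbinom{m-1}{j}(-1)^{j}a_{n+j+1}$, which is exactly the desired right-hand side. This closes the proof.

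There is no real obstacle here; the only point requiring care is the bookkeeping of signs and the shift of summation index when applying $k\tbinom{m}{k}=m\tbinom{m-1}{k-1}$. One could alternatively read the statement, via Lemma \ref{lemsumbinom}, as the discrete Leibniz-type rule relating $(n+1)(\mathfrak{d}_1^m a)_n$ to a weighted $m$-th difference plus $m(\mathfrak{d}_1^{m-1}a)_{n+1}$, but the bare binomial computation above is the shortest route and is what I would write out.
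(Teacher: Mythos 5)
Your proof is correct and is essentially the same as the paper's: both arguments reduce the identity to the absorption identity $k\tbinom{m}{k}=m\tbinom{m-1}{k-1}$ (the paper writes it as $m\tbinom{m-1}{k}=(k+1)\tbinom{m}{k+1}$ and reindexes), differing only in whether one subtracts the weighted sum from the left-hand side or rewrites the second right-hand sum and adds. The sign and index bookkeeping in your version checks out.
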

\begin{proof}
Observe that the identity $m\tbinom{m-1}{k}=(k+1)\tbinom{m}{k+1}$ implies
\begin{align*}
m\sum_{k=0}^{m-1}\tbinom{m-1}{k}(-1)^ka_{n+k+1}=-\sum_{k=1}^{m}k\tbinom{m}{k}(-1)^ka_{n+k}.
\end{align*}
Hence
\begin{align*}
\sum_{k=0}^m\tbinom{m}{k}(-1)^k (n+k+1)a_{n+k} + m&\sum_{k=0}^{m-1}\tbinom{m-1}{k}(-1)^ka_{n+k+1}\\
&= \sum_{k=0}^m\tbinom{m}{k}(-1)^k(n+k+1-k)a_{n+k}\\
&= (n+1)\sum_{k=0}^m\tbinom{m}{k}(-1)^ka_{n+k}.
\end{align*}
\end{proof}

%

\begin{lem}\label{lem_Bn+1BN}
For all $N\geq 1$, $\mathcal{B}_{N+1}\subsetneq \mathcal{B}_N$.
\end{lem}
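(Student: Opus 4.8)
The plan is to combine the already-known inclusion $\mathcal{B}_{N+1}\subseteq\mathcal{B}_N$ (recorded in the discussion right after Corollary~\ref{corB_N-B_N+2}, where it is deduced from Theorem~B by restricting a radial multiplier on a product of $N+1$ trees to a sub-product of $N$ of them) with one explicit witness: I claim that
\[
\dot\phi_0(n)=\frac{(-1)^n}{(n+1)^{N+1}},\qquad n\in\N,
\]
belongs to $\mathcal{B}_N\setminus\mathcal{B}_{N+1}$. This is exactly the borderline exponent $\alpha=N$ that is excluded from Corollary~\ref{corB_N-B_N+2} (which therefore only separates $\mathcal{B}_N$ from $\mathcal{B}_{N+2}$); the new point is to push the example down one level.

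First I would check that $\dot\phi_0\notin\mathcal{B}_{N+1}$. This is immediate from Lemma~\ref{lem_phi_alpha}, applied with $\alpha=N$, $m=N+1$ and $s=N$ (so $s\ge\alpha$): the matrix $B(N+1,\dot\phi_0)=\bigl((1+i+j)^{N}\mathfrak d_1^{N+1}\dot\phi_0(i+j)\bigr)_{i,j}$ is not of trace class. Alternatively, by Lemma~\ref{lemsumbinom} its diagonal equals $(1+2i)^{N}\mathfrak d_1^{N+1}\dot\phi_0(2i)$, which is comparable to $i^{-1}$ and hence not summable, whereas the diagonal of a trace-class operator is.

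The substantial part is to show $\dot\phi_0\in\mathcal{B}_N$, that is, $B(N,\dot\phi_0)=\bigl((1+i+j)^{N-1}\mathfrak d_1^{N}\dot\phi_0(i+j)\bigr)_{i,j}\in S_1(\ell_2(\N))$; here Lemma~\ref{lem_phi_alpha} gives nothing, since $s=N-1$ is precisely the excluded value $\alpha-1$. By Lemma~\ref{lemsumbinom},
\[
\mathfrak d_1^{N}\dot\phi_0(n)=\sum_{k=0}^{N}\tbinom Nk(-1)^k\frac{(-1)^{n+k}}{(n+k+1)^{N+1}}=(-1)^n c(n),\qquad c(n)=\sum_{k=0}^{N}\tbinom Nk\frac{1}{(n+k+1)^{N+1}}>0,
\]
so $B(N,\dot\phi_0)_{i,j}=(-1)^{i+j}b_{i+j}$ with $b_n=(1+n)^{N-1}c(n)$. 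Conjugating by the self-adjoint unitary $U=\mathrm{diag}\bigl((-1)^i\bigr)_{i\ge0}$ on $\ell_2(\N)$ replaces $B(N,\dot\phi_0)$ by the genuine Hankel matrix $(b_{i+j})_{i,j}$, with the same $S_1$-norm. An elementary estimate (bounding each summand $(1+n)^{N-1}(n+k+1)^{-(N+1)}$ and its first difference) gives $b_n\to0$, $b_n\sim(n+1)^{-2}$ and $|b_{n-1}-b_n|=O\bigl((n+1)^{-3}\bigr)$, hence $\sum_{n\ge2}|b_{n-1}-b_n|\,n\log n=O\bigl(\sum_n n^{-2}\log n\bigr)<\infty$. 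Bonsall's theorem (Theorem~\ref{thmBonsall}) then yields $(b_{i+j})_{i,j}\in S_1(\ell_2(\N))$, so $B(N,\dot\phi_0)\in S_1(\ell_2(\N))$ and, $\dot\phi_0$ being bounded, $\dot\phi_0\in\mathcal{B}_N$. Since $\dot\phi_0\in\mathcal{B}_N\setminus\mathcal{B}_{N+1}$ and $\mathcal{B}_{N+1}\subseteq\mathcal{B}_N$, the inclusion is strict; as this holds for every $N\ge1$, both inclusions $\mathcal{B}_{N+2}\subseteq\mathcal{B}_{N+1}\subseteq\mathcal{B}_N$ are strict.

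The step I expect to be the main obstacle is this borderline membership $\dot\phi_0\in\mathcal{B}_N$: applying Bonsall's criterion directly to $B(N,\dot\phi_0)$ fails, because the oscillating sign makes the consecutive differences of its symbol of order $n^{-2}$ rather than $n^{-3}$, so stripping the sign off by a unitary conjugation before invoking Theorem~\ref{thmBonsall} is what makes the estimate go through. One can also argue more structurally using the identity of Lemma~\ref{lem(n+1)sum}, which relates $(n+1)\mathfrak d_1^{N}\dot\phi(n)$ to $\mathfrak d_1^{N}\bigl[(\,\cdot+1)\dot\phi\bigr](n)$ and $\mathfrak d_1^{N-1}\dot\phi(n+1)$ and thereby compares membership in $\mathcal{B}_{N+1}$ with membership in $\mathcal{B}_N$ plus a regularity condition on $n\mapsto(n+1)\dot\phi(n)$; but the explicit function above already suffices.
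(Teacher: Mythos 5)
Your proof is correct, but it follows a genuinely different route from the paper. The paper argues by contradiction and bootstrapping: assuming $\mathcal{B}_{N+1}=\mathcal{B}_N$, it applies the identity of Lemma \ref{lem(n+1)sum} to $a_n=\mathfrak{d}_1\dot{\phi}(n)$, together with the shift invariance of Besov spaces (Lemma \ref{Lem_carBesov} and Proposition \ref{Prop_Bshift}), to show that the auxiliary function $\dot{\psi}(n)=(n+1)\mathfrak{d}_1\dot{\phi}(n)$ again lies in $\mathcal{B}_N=\mathcal{B}_{N+1}$, and then that $\dot{\phi}\in\mathcal{B}_{N+2}$; this forces $\mathcal{B}_N=\mathcal{B}_{N+2}$, contradicting Corollary \ref{corB_N-B_N+2}. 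You instead produce an explicit witness at the borderline exponent, $\dot{\phi}_0(n)=(-1)^n(n+1)^{-(N+1)}$: non-membership in $\mathcal{B}_{N+1}$ is indeed immediate from Lemma \ref{lem_phi_alpha} (or from the divergence of the diagonal), while membership in $\mathcal{B}_N$ sits exactly in the range $\alpha-1\leq s<\alpha$ that Lemma \ref{lem_phi_alpha} leaves open, and your way of closing it is sound: by Lemma \ref{lemsumbinom} the symbol of $B(N,\dot{\phi}_0)$ is $(-1)^{i+j}b_{i+j}$ with $b_n=(1+n)^{N-1}\sum_{k=0}^N\binom{N}{k}(n+k+1)^{-(N+1)}$, conjugation by the self-adjoint unitary $\mathrm{diag}((-1)^i)$ removes the sign without changing the $S_1$-norm, and the mean-value estimate $|b_{n-1}-b_n|=O(n^{-3})$ lets Bonsall's criterion (Theorem \ref{thmBonsall}) apply. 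What each approach buys: yours yields an explicit function separating \emph{consecutive} levels $\mathcal{B}_N$ and $\mathcal{B}_{N+1}$ (the paper's explicit examples only separate levels two apart), at the price of a delicate borderline trace-class estimate; the paper's soft bootstrap avoids any sharp estimate but gives no explicit separating function.

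One small inaccuracy worth fixing: the inclusion $\mathcal{B}_{N+1}\subseteq\mathcal{B}_N$ is indeed recorded in the paper right after Corollary \ref{corB_N-B_N+2}, but it is obtained by restriction on products of copies of the Cayley graph of $(\Z/3\Z)\ast(\Z/3\Z)\ast(\Z/3\Z)$ via Propositions \ref{prop_suff_prod_hyp} and \ref{prop_nec_prod_Cay}, not on products of trees; restriction to a product of $N$ trees would only give the $\mathcal{A}_N$ condition (with $\mathfrak{d}_2$), not the $\mathcal{B}_N$ condition. This does not affect your argument, since you only invoke the inclusion itself.
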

\begin{proof}
Since, by Propositions \ref{prop_suff_prod_hyp} and \ref{prop_nec_prod_Cay}, $\mathcal{B}_N$ corresponds to the set of radial Schur multipliers on the product of $N$ copies of the Cayley graph of $(\Z/3\Z)\ast(\Z/3\Z)\ast(\Z/3\Z)$, we have $\mathcal{B}_{N+1}\subseteq \mathcal{B}_N$. Suppose now that there exists $N\geq 1$ such that $\mathcal{B}_{N+1}=\mathcal{B}_N$, and take $\dot{\phi}\in \mathcal{B}_N$. Then the Hankel matrix
\begin{align*}
\left((i+j+1)^{N}\mathfrak{d}_1^{N+1}\dot{\phi}(i+j)\right)_{i,j\in\N}
\end{align*}
belongs to $S_1(\ell_2(\N))$. On the other hand, applying Lemma \ref{lem(n+1)sum} to the sequence $a_n=\mathfrak{d}_1\dot{\phi}(n)$, we obtain
\begin{align*}
(n+1)\sum_{k=0}^{N}\tbinom{N}{k}(-1)^{k}\mathfrak{d}_1\dot{\phi}(n+k)&=\sum_{k=0}^N\tbinom{N}{k}(-1)^k\dot{\psi}(n+k)\\
&\qquad + N\sum_{k=0}^{N-1}\tbinom{N-1}{k}(-1)^k\mathfrak{d}_1\dot{\phi}(n+k+1),
\end{align*}
where $\dot{\psi}(n)=(n+1)\mathfrak{d}_1\dot{\phi}(n)$. 
By Lemma \ref{lemsumbinom}, this may be rewritten as
\begin{align}\label{(i+j+1)sum}
(i+j+1)\mathfrak{d}_1^{N+1}\dot{\phi}(i+j)=&\mathfrak{d}_1^{N}\dot{\psi}(i+j) + N\mathfrak{d}_1^{N}\dot{\phi}(i+j+1),
\end{align}
Since $\dot{\phi}\in \mathcal{B}_N$, by Lemma \ref{Lem_carBesov} together with Proposition \ref{Prop_Bshift}, the matrix
\begin{align*}
\left((i+j+1)^{N-1}\mathfrak{d}_1^N\dot{\phi}(i+j+1)\right)_{i,j\in\N}
\end{align*}
belongs to $S_1(\ell_2(\N))$. Hence, by \eqref{(i+j+1)sum}, the same holds for
\begin{align*}
\left((i+j+1)^{N-1}\mathfrak{d}_1^{N}\dot{\psi}(i+j)\right)_{i,j\in\N}.
\end{align*}
This says that $\dot{\psi}$ belongs to $\mathcal{B}_N$, which we have assumed is equal to $\mathcal{B}_{N+1}$. Thus
\begin{align*}
\left((i+j+1)^{N}\mathfrak{d}_1^{N+1}\dot{\psi}(i+j)\right)_{i,j\in\N}\in S_1(\ell_2(\N)).
\end{align*}
Again, by Lemmas \ref{lem(n+1)sum} and \ref{lemsumbinom},
\begin{align*}
(i+j+1)^{N+1}\mathfrak{d}_1^{N+2}\dot{\phi}(i+j)=&(i+j+1)^{N}\mathfrak{d}_1^{N+1}\dot{\psi}(i+j)\\
&\qquad +(N+1)(i+j+1)^{N}\mathfrak{d}_1^{N+1}\dot{\phi}(i+j+1),
\end{align*}
Hence,
\begin{align*}
\left((i+j+1)^{N+1}\mathfrak{d}_1^{N+2}\dot{\phi}(i+j)\right)_{i,j\in\N}\in S_1(\ell_2(\N)),
\end{align*}
which means that $\dot{\phi}\in \mathcal{B}_{N+2}$. Since $\dot{\phi}$ is arbitrary, this implies that $\mathcal{B}_N=\mathcal{B}_{N+2}$, which contradicts Corollary \ref{corB_N-B_N+2}. We conclude that $\mathcal{B}_{N+1}\neq \mathcal{B}_N$.
\end{proof}

We have proved that $\mathcal{B}_{N+1}\subsetneq \mathcal{B}_N$. The same kind of argument shows that $\mathcal{A}_{N+1}\subsetneq \mathcal{A}_N$. In order to prove that $\mathcal{C}_{N+1}\subsetneq \mathcal{C}_N$, we shall use another result of Bonsall.

\begin{thm}{\cite[Corollary 3.3]{Bon}}\label{thmBon2}
Let $(a_n)_{n\in\N}$ be a sequence of real numbers such that, for $m=0,1,2$,
\begin{align*}
(\mathfrak{d}_1^ma)_n\geq 0,\quad\forall n\in\N.
\end{align*}
Then the Hankel matrix $(a_{i+j})_{i,j\in\N}$ belongs to $S_1(\ell_2(\N))$ if and only if
\begin{align*}
\sum_{n\geq 0} a_n <\infty.
\end{align*}
\end{thm}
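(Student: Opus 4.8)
The plan is to prove the equivalence directly, bypassing Peller's theorem; in fact the argument will yield the quantitative bounds $\tfrac12\sum_{n\ge 0}a_n\le\|H\|_{S_1}\le\sum_{n\ge 0}a_n$, where $H=(a_{i+j})_{i,j\in\N}$, which in particular recovers the cited result of Bonsall. Necessity of $\sum_n a_n<\infty$ is soft: for any $T\in S_1(\ell_2(\N))$ and any orthonormal systems $(e_i),(f_i)$ one has $\sum_i|\langle Te_i,f_i\rangle|\le\|T\|_{S_1}$ (write $T=V|T|$ and apply Cauchy--Schwarz to $|\langle Te_i,f_i\rangle|\le\||T|^{1/2}e_i\|\,\||T|^{1/2}V^*f_i\|$); applying this to the pairs $(\delta_i),(\delta_i)$ and $(\delta_i),(\delta_{i+1})$ gives $\sum_i a_{2i}\le\|H\|_{S_1}$ and $\sum_i a_{2i+1}\le\|H\|_{S_1}$, hence $\sum_{n\ge 0}a_n\le 2\|H\|_{S_1}<\infty$.

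For sufficiency, suppose $\sum_n a_n<\infty$, so $a_n\to 0$ and $(\mathfrak{d}_1 a)_n=a_n-a_{n+1}\to 0$. Set $c_m=(\mathfrak{d}_1^2 a)_m\ge 0$. Telescoping twice yields $a_n-a_{n+1}=\sum_{l\ge n}c_l$ and then $a_n=\sum_{m\ge n}(m-n+1)c_m$ for every $n$; consequently $a_{i+j}=\sum_{m\ge 0}c_m\max\{m+1-i-j,0\}$, that is, $H=\sum_{m\ge 0}c_m\,R_m$, where $R_m$ denotes the finitely supported Hankel matrix with entries $(R_m)_{i,j}=\max\{m+1-i-j,0\}$ (supported on $\{0,\dots,m\}^2$).

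The crux, and what I expect to be the main obstacle, is the estimate $\|R_m\|_{S_1}\le\tbinom{m+2}{2}$. I would prove it by a change of basis: let $L$ be the $(m+1)\times(m+1)$ all-ones lower-triangular matrix ($L_{i,j}=1$ iff $0\le j\le i\le m$); then $(L^2)_{i,j}=\#\{k:j\le k\le i\}=\max\{i-j+1,0\}$, and the unitary permutation reversing the first index ($i\mapsto m-i$) carries $L^2$ onto $R_m$. Hence $\|R_m\|_{S_1}=\|L^2\|_{S_1}\le\|L\|_{S_2}^2=\#\{(i,j):0\le j\le i\le m\}=\tbinom{m+2}{2}$, using $\|AB\|_{S_1}\le\|A\|_{S_2}\|B\|_{S_2}$. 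The hypothesis $\mathfrak{d}_1^2 a\ge 0$ is exactly what makes the coefficients $c_m$ nonnegative, so that $\sum_m c_m\|R_m\|_{S_1}$ is controlled by $\sum_n a_n$.

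Indeed, by Fubini, $\sum_{m\ge 0}c_m\tbinom{m+2}{2}=\sum_{m\ge 0}c_m\sum_{n=0}^{m}(m+1-n)=\sum_{n\ge 0}a_n<\infty$, so the series $\sum_m c_m R_m$ converges in $S_1(\ell_2(\N))$, its limit has $(i,j)$-entry $a_{i+j}$, and $\|H\|_{S_1}\le\sum_{n\ge 0}a_n$; together with the first paragraph this proves the theorem. A word on why one must work this hard: decomposing $H$ into single anti-diagonals $(\mathbf{1}[i+j=m])_{i,j}$ (each of $S_1$-norm $m+1$), or into the staircase indicators $(\mathbf{1}[i+j\le m])_{i,j}$, introduces a spurious logarithmic factor — the staircase matrix is unitarily equivalent to a triangular all-ones matrix, whose $S_1$-norm is of order $m\log m$ — so one is forced to differentiate $a$ twice rather than once before decomposing, and to estimate the resulting blocks $R_m$ through the Hilbert--Schmidt factorization $R_m\simeq L\cdot L$ instead of by the triangle inequality.
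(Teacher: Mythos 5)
Your proposal is correct, and there is nothing in the paper to compare it with: the paper only quotes this statement from Bonsall (\cite[Corollary 3.3]{Bon}) and gives no proof, so your argument stands on its own. I checked the steps: the necessity half is the standard inequality $\sum_i|\langle Te_i,f_i\rangle|\le\|T\|_{S_1}$ for orthonormal systems, applied to the diagonal and first subdiagonal (this uses only $a_n\ge 0$), giving $\tfrac12\sum_n a_n\le\|H\|_{S_1}$. For sufficiency, the double telescoping $a_n=\sum_{m\ge n}(m-n+1)c_m$ with $c_m=(\mathfrak{d}_1^2a)_m\ge 0$ is legitimate because $\sum a_n<\infty$ forces $a_n\to 0$ and hence $(\mathfrak{d}_1a)_n\to 0$, so $H=\sum_m c_m R_m$ entrywise with $(R_m)_{i,j}=(m+1-i-j)_+$; the key bound $\|R_m\|_{S_1}\le\binom{m+2}{2}$ via $R_m=PL^2$ with $P$ a coordinate reversal and $L$ the all-ones lower triangular matrix, together with $\|L^2\|_{S_1}\le\|L\|_{S_2}^2$, is correct; and the Tonelli computation $\sum_m c_m\binom{m+2}{2}=\sum_n a_n$ shows the series converges absolutely in $S_1$ to $H$, yielding $\|H\|_{S_1}\le\sum_n a_n$. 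Your closing remark is also accurate and explains the role of the convexity hypothesis: decomposing along first differences into staircase indicators costs a factor of order $\log m$ per block (the triangular all-ones matrix has trace norm $\asymp m\log m$), which is exactly why the paper's Theorem~\ref{thmBonsall} carries the $n\log n$ weight, whereas the second-difference decomposition removes it under convexity. So your argument is a complete, elementary, and quantitative proof of the cited result, recovering the two-sided estimate $\tfrac12\sum_n a_n\le\|H\|_{S_1}\le\sum_n a_n$.
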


\begin{lem}\label{lem_d1d2}
Let $f:[0,\infty)\to\C$ be a smooth function. Define $\dot{\phi}:\N\to\C$ by $\dot{\phi}(n)=f(n)$. Then, for all $n\in\N$ and $m\geq 1$,
\begin{align}\label{form_d2}
\mathfrak{d}_2^m\dot{\phi}(n) &=(-1)^m\int_0^2\cdots\int_0^2 f^{(m)}(n+t_1+\cdots +t_m)\,dt_1\cdots dt_m.
\end{align}
\end{lem}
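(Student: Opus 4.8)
The plan is a straightforward induction on $m$, with the fundamental theorem of calculus doing all the work at each step.

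For the base case $m=1$, I would simply write $\mathfrak{d}_2\dot{\phi}(n)=f(n)-f(n+2)$ and apply the fundamental theorem of calculus to the smooth function $t\mapsto f(n+t)$, giving $f(n+2)-f(n)=\int_0^2 f'(n+t_1)\,dt_1$, hence $\mathfrak{d}_2\dot{\phi}(n)=-\int_0^2 f'(n+t_1)\,dt_1$, which is exactly the claimed identity with $(-1)^1=-1$.

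For the inductive step, assume the formula holds for some $m\geq 1$. Using $\mathfrak{d}_2^{m+1}\dot{\phi}(n)=\mathfrak{d}_2^m\dot{\phi}(n)-\mathfrak{d}_2^m\dot{\phi}(n+2)$ and the induction hypothesis applied to both terms, I would combine the two $m$-fold integrals into one:
\[
\mathfrak{d}_2^{m+1}\dot{\phi}(n)=(-1)^m\int_0^2\!\!\cdots\!\int_0^2\Big[f^{(m)}\big(n+\textstyle\sum_{i=1}^m t_i\big)-f^{(m)}\big(n+2+\textstyle\sum_{i=1}^m t_i\big)\Big]\,dt_1\cdots dt_m.
\]
Now, for fixed $t_1,\dots,t_m$, the fundamental theorem of calculus applied to $s\mapsto f^{(m)}\big(n+s+\sum_{i=1}^m t_i\big)$ gives that the bracketed expression equals $-\int_0^2 f^{(m+1)}\big(n+\sum_{i=1}^{m+1}t_i\big)\,dt_{m+1}$. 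Substituting this in and invoking Fubini's theorem—legitimate since $f^{(m+1)}$ is continuous, hence bounded, on the compact box $[0,2]^{m+1}$ shifted by $n$—yields $(-1)^{m+1}\int_0^2\cdots\int_0^2 f^{(m+1)}\big(n+\sum_{i=1}^{m+1}t_i\big)\,dt_1\cdots dt_{m+1}$, which completes the induction.

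There is no real obstacle here: the only points requiring a word of care are the sign bookkeeping (each application of the fundamental theorem contributes a factor $-1$) and the appeal to Fubini to iterate the $(m+1)$-fold integral, both of which are immediate.
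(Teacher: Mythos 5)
Your proof is correct and follows essentially the same route as the paper: induction on $m$ with the fundamental theorem of calculus supplying each integral. The only (immaterial) difference is that you peel off the outermost difference, writing $\mathfrak{d}_2^{m+1}\dot{\phi}(n)=\mathfrak{d}_2^m\dot{\phi}(n)-\mathfrak{d}_2^m\dot{\phi}(n+2)$, whereas the paper applies $\mathfrak{d}_2^m$ to $g(t)=f(t)-f(t+2)$; both reduce to the same computation.
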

\begin{proof}
We proceed by induction on $m$. For $m=1$, we have
\begin{align*}
\mathfrak{d}_2\dot{\phi}(n)=f(n)-f(n+2)=-\int_0^2f'(n+t)\,dt.
\end{align*}
Now suppose that \eqref{form_d2} holds for some $m\geq 1$. Then
\begin{align*}
\mathfrak{d}_2^{m+1}\dot{\phi}(n) &= \mathfrak{d}_2^{m}\mathfrak{d}_2\dot{\phi}(n) \\
&= (-1)^m\int_0^2\cdots\int_0^2 g^{(m)}(n+t_1+\cdots +t_m)\,dt_1\cdots dt_m,
\end{align*}
with $g(t)=f(t)-f(t+2)$. Since
\begin{align*}
g^{(m)}(t)=f^{(m)}(t)-f^{(m)}(t+2) = -\int_0^2 f^{(m+1)}(t+s)\,ds,
\end{align*}
we obtain
\begin{align*}
\mathfrak{d}_2^{m+1}\dot{\phi}(n) = (-1)^{m+1}\int_0^2\cdots\int_0^2 f^{(m+1)}(n+t_1+\cdots +t_{m+1})\,dt_1\cdots dt_{m+1},
\end{align*}
which concludes the proof.
\end{proof}

\begin{lem}\label{lem_CN+1CN}
Let $N\geq 1$ and $\dot{\phi}:\N\to\C$ be given by
\begin{align*}
\dot{\phi}(n)=\begin{cases}
-\displaystyle\sum_{j=0}^{k-1}(2j+1)^{-N-\frac{1}{2}},& n=2k\\
-\displaystyle\sum_{j=1}^{k}(2j)^{-N-\frac{1}{2}},& n=2k+1.
\end{cases}
\end{align*}
Then $\dot{\phi}\in \mathcal{C}_N\setminus \mathcal{C}_{N+1}$. Moreover, if $N\geq 2$, $\dot{\phi}\in \displaystyle\bigcap_{m\geq 1}\mathcal{A}_m$.
\end{lem}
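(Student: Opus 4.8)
My plan is to exploit the fact that $\dot\phi$ has been designed so that its first discrete derivative is an exact power. First I would check, by telescoping the two sums defining $\dot\phi$, that
\begin{align*}
\mathfrak{d}_2\dot\phi(n)=\dot\phi(n)-\dot\phi(n+2)=(n+1)^{-N-\frac12}\qquad\text{for all }n\in\N.
\end{align*}
In particular the two defining series converge since $N+\frac12>1$, so $\dot\phi\in\ell_\infty(\N)$. From this the first two assertions are short. Since, by the identity above, $C(N,\dot\phi)_{i,j}=(1+i+j)^{N-1}(1+i+j)^{-N-\frac12}=(1+i+j)^{-3/2}$ and the sequence $a_n=(n+1)^{-3/2}$ is non-negative, non-increasing and convex, hence $(\mathfrak{d}_1^m a)_n\geq0$ for $m=0,1,2$, and satisfies $\sum_na_n<\infty$, Theorem~\ref{thmBon2} gives $C(N,\dot\phi)\in S_1(\ell_2(\N))$, i.e. $\dot\phi\in\mathcal C_N$. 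On the other hand $C(N+1,\dot\phi)_{i,j}=(1+i+j)^{N}(1+i+j)^{-N-\frac12}=(1+i+j)^{-1/2}$, whose diagonal $\bigl((2i+1)^{-1/2}\bigr)_{i\in\N}$ is not summable; since the diagonal of a trace-class operator always lies in $\ell_1$, this forces $C(N+1,\dot\phi)\notin S_1(\ell_2(\N))$, that is, $\dot\phi\notin\mathcal C_{N+1}$.

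For the ``moreover'' part, assume $N\geq2$ and fix $m\geq1$. I would compute $\mathfrak{d}_2^m\dot\phi$ by applying Lemma~\ref{lem_d1d2} (with $m-1$ in place of $m$) to the $C^\infty$ function $g(x)=(x+1)^{-N-\frac12}$, which by the first paragraph restricts to $\mathfrak{d}_2\dot\phi$ on $\N$. Using $\mathfrak{d}_2^m\dot\phi=\mathfrak{d}_2^{m-1}(\mathfrak{d}_2\dot\phi)$ and $g^{(k)}(x)=(-1)^k\gamma_k(x+1)^{-N-k-\frac12}$ with $\gamma_k=\prod_{l=0}^{k-1}\bigl(N+\tfrac12+l\bigr)>0$, the signs cancel and one obtains, for $m\geq2$,
\begin{align*}
\mathfrak{d}_2^m\dot\phi(n)=\gamma_{m-1}\int_{[0,2]^{m-1}}\bigl(n+1+{\textstyle\sum_{i=1}^{m-1}t_i}\bigr)^{-N-m+\frac12}\,dt_1\cdots dt_{m-1},
\end{align*}
while for $m=1$ this reduces to the identity $\mathfrak{d}_2\dot\phi(n)=(n+1)^{-N-\frac12}$ already recorded. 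Hence $a_n:=(1+n)^{m-1}\mathfrak{d}_2^m\dot\phi(n)=G(n+1)$, where $G(x)=\gamma_{m-1}x^{m-1}\int_{[0,2]^{m-1}}\bigl(x+{\textstyle\sum_{i=1}^{m-1}t_i}\bigr)^{-N-m+\frac12}\,dt$ is of class $C^1$ on $[1,\infty)$ (for $m=1$, $G(x)=x^{-N-\frac12}$). Differentiating under the integral sign and using $x\leq x+\sum_it_i\leq x+2(m-1)$ yields $|G(x)|\leq C_mx^{-N-\frac12}$ and $|G'(x)|\leq C_mx^{-N-\frac32}$ for $x\geq1$, with $C_m$ depending only on $m$ and $N$. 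In particular $a_n\to0$ and $|a_{n-1}-a_n|\leq\sup_{[n,n+1]}|G'|\leq C_mn^{-N-\frac32}$, so
\begin{align*}
\sum_{n\geq2}|a_{n-1}-a_n|\,n\log n\leq C_m\sum_{n\geq2}n^{-N-\frac12}\log n<\infty,
\end{align*}
using $N+\tfrac12\geq\tfrac52$. By Theorem~\ref{thmBonsall}, $A(m,\dot\phi)=(a_{i+j})_{i,j\in\N}\in S_1(\ell_2(\N))$, i.e. $\dot\phi\in\mathcal A_m$; as $m\geq1$ was arbitrary, $\dot\phi\in\bigcap_{m\geq1}\mathcal A_m$.

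The step I expect to be the main obstacle is this last one. One cannot deduce $A(m,\dot\phi)\in S_1$ from the mere comparability $a_n\asymp(1+n)^{-N-\frac12}$, since membership in $S_1$ is not stable under entrywise comparison of Hankel matrices; the integral representation furnished by Lemma~\ref{lem_d1d2} is needed precisely to extract the regularity of $(a_n)$ that controls the consecutive differences $a_{n-1}-a_n$, which is exactly the hypothesis of Bonsall's criterion (Theorem~\ref{thmBonsall}). The remaining ingredients — the telescoping computation of $\mathfrak{d}_2\dot\phi$ and the elementary estimates on $G$ and $G'$ — are routine.
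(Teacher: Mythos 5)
Your proof is correct and relies on the same ingredients as the paper's: the telescoping identity $\mathfrak{d}_2\dot{\phi}(n)=(n+1)^{-N-\frac{1}{2}}$, Theorem \ref{thmBon2} for membership in $\mathcal{C}_N$, divergence of the diagonal (equivalently, the ``only if'' part of Theorem \ref{thmBon2}) for non-membership in $\mathcal{C}_{N+1}$, and Lemma \ref{lem_d1d2} combined with Theorem \ref{thmBonsall} for the statement about $\bigcap_m\mathcal{A}_m$. The only genuine difference is in the last step, where your treatment is actually more careful than the paper's: the paper deduces from the integral representation that $\mathfrak{d}_2^m\dot{\phi}(n)\sim(1+n)^{-N+\frac{1}{2}-m}$ and then applies Bonsall's criterion to the model sequence $a_n=(1+n)^{-N+\frac{1}{2}}$, leaving implicit exactly the point you flag, namely that comparability of the entries does not by itself control the consecutive differences required in Theorem \ref{thmBonsall}; you instead apply the criterion to the actual entries $(1+n)^{m-1}\mathfrak{d}_2^m\dot{\phi}(n)$, with the needed difference bound coming from differentiating the integral representation. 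A minor further difference: you use the weight $(1+n)^{m-1}$, matching the definition of $\mathcal{A}_m$, while the paper works with the stronger weight $(1+n)^{m}$ (which is why its estimate requires $N\geq 2$); your bound would in fact yield the conclusion already for $N\geq 1$, and in particular on the stated range $N\geq 2$, so nothing is lost.
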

\begin{proof}
Observe that $\dot{\phi}$ is bounded and
\begin{align*}
\dot{\phi}(n)-\dot{\phi}(n+2)=(n+1)^{-N-\frac{1}{2}},\quad\forall n\in\N.
\end{align*}
Therefore, for all $m\in\N$,
\begin{align*}
(n+1)^{m}\mathfrak{d}_2\dot{\phi}(n)=(n+1)^{m-N-\frac{1}{2}},\quad\forall n\in\N.
\end{align*}
If $m<N+\frac{1}{2}$, then $a_n=(n+1)^{m-N-\frac{1}{2}}$ satisfies the hypotheses of Theorem \ref{thmBon2}, and hence, $(a_{i+j})_{i,j\in\N}\in S_1(\ell_2(\N))$ if and only if $m-N-\frac{1}{2}<-1$. Therefore
\begin{align*}
\left((i+j+1)^{N-1}\mathfrak{d}_2\dot{\phi}(i+j)\right)_{i,j\in\N}&\in S_1(\ell_2(\N))\\
\left((i+j+1)^{N}\mathfrak{d}_2\dot{\phi}(i+j)\right)_{i,j\in\N}&\notin S_1(\ell_2(\N)).
\end{align*}
This means that $\dot{\phi}\in \mathcal{C}_N\setminus \mathcal{C}_{N+1}$. Now let $\Gamma(z)=\int_0^\infty t^{z-1}e^{-1}\,dt$ be the Gamma function. By Lemma \ref{lem_d1d2} applied to the function $f(t)=(t+1)^{-N-\frac{1}{2}}$,
\begin{align*}
&\mathfrak{d}_2^{m+1}\dot{\phi}(n)\\
&=\frac{\Gamma(N+\frac{1}{2}+m)}{\Gamma(N+\frac{1}{2})}\int_0^2\cdots\int_0^2 (1+n+t_1+\cdots +t_m)^{-N-\frac{1}{2}-m}\,dt_1\cdots dt_m.
\end{align*}
for all $m\geq 0$. This gives
\begin{align*}
\frac{2^m\Gamma(\alpha+m)}{\Gamma(\alpha)}(n+1+2m)^{-N-\frac{1}{2}-m} &\leq \mathfrak{d}_2^{m+1}\dot{\phi}(n) \\
&\leq \frac{2^m\Gamma(\alpha+m)}{\Gamma(\alpha)}(n+1)^{-N-\frac{1}{2}-m},
\end{align*}
which implies that $\mathfrak{d}_2^m\dot{\phi}(n)\sim (1+n)^{-N+\frac{1}{2}-m}$ for all $m\geq 1$. Hence
\begin{align*}
(1+n)^{m}\mathfrak{d}_2^m\dot{\phi}(n)\sim (1+n)^{-N+\frac{1}{2}},\quad\forall m\geq 1.
\end{align*}
Putting $a_n=(1+n)^{-N+\frac{1}{2}}$, we get
\begin{align*}
a_n-a_{n+1}\sim (1+n)^{-N-\frac{1}{2}}.
\end{align*}
Thus
\begin{align*}
\sum_{n\geq 2} |a_{n-1}-a_n| n\log n \leq C \sum_{n\geq 2} n^{-N+\frac{1}{2}}\log n,
\end{align*}
and this is finite whenever $N\geq 2$. By Theorem \ref{thmBonsall}, $\dot{\phi}\in \mathcal{A}_m$, which concludes the proof.
\end{proof}

Now we deal with the strict inclusion $\mathcal{C}_N\subsetneq \mathcal{A}_N$. For this purpose, we shall consider the same function as in \eqref{goodphi(-1)n}, but without the alternating factor $(-1)^n$. The reason for this is that, in this case, each derivation will increase the decay rate at infinity.

\begin{lem}\label{lem_d2inCN}
Let $N\geq 2$ and $\alpha\in(0,N-1)$. Define $\dot{\phi}:\N\to\C$ by 
\begin{align*}
\dot{\phi}(n)=\frac{1}{(n+1)^\alpha},\quad\forall n\in\N.
\end{align*}
Then $\dot{\phi}\notin \mathcal{C}_N$ and $\mathfrak{d}_2^m\dot{\phi}\in \mathcal{C}_N$ for all $m>N-\alpha$.
\end{lem}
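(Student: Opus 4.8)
The plan is to reduce both assertions to the asymptotics of the iterated discrete derivatives $\mathfrak{d}_2^m\dot\phi$, which follow from Lemma \ref{lem_d1d2}. Applying that lemma to $f(t)=(1+t)^{-\alpha}$, for which $f^{(m)}(t)=(-1)^m\frac{\Gamma(\alpha+m)}{\Gamma(\alpha)}(1+t)^{-\alpha-m}$, gives
\begin{align*}
\mathfrak{d}_2^m\dot\phi(n)=\frac{\Gamma(\alpha+m)}{\Gamma(\alpha)}\int_0^2\cdots\int_0^2(1+n+t_1+\cdots+t_m)^{-\alpha-m}\,dt_1\cdots dt_m,
\end{align*}
and since the integrand lies between $(1+n+2m)^{-\alpha-m}$ and $(1+n)^{-\alpha-m}$ while $1+n+2m\le(2m+1)(1+n)$, one obtains $\mathfrak{d}_2^m\dot\phi(n)\sim(1+n)^{-\alpha-m}$ for every $m\ge1$; in particular $\mathfrak{d}_2^m\dot\phi\in\ell_\infty(\N)$.

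For the first claim I would look at the diagonal of $C(N,\dot\phi)=\big((1+i+j)^{N-1}\mathfrak{d}_2\dot\phi(i+j)\big)_{i,j\in\N}$: its $i$-th entry is $(1+2i)^{N-1}\mathfrak{d}_2\dot\phi(2i)$, which by the $m=1$ case above is $\sim(1+2i)^{N-2-\alpha}$, a positive sequence that is not summable because $\alpha<N-1$ makes the exponent strictly larger than $-1$. Since the diagonal of a trace-class operator is always absolutely summable (the fact already used in the proof of Lemma \ref{lem_phi_alpha}), it follows that $C(N,\dot\phi)\notin S_1(\ell_2(\N))$, that is, $\dot\phi\notin\mathcal{C}_N$.

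For the second claim, fix $m>N-\alpha$ and set $c_n=(1+n)^{N-1}\mathfrak{d}_2^{m+1}\dot\phi(n)$, so that $C(N,\mathfrak{d}_2^m\dot\phi)=(c_{i+j})_{i,j\in\N}$. By the asymptotics above $c_n\sim(1+n)^{N-2-\alpha-m}$, and since $m>N-\alpha$ forces the exponent to be $<-2$, we get $c_n\to0$. To apply Bonsall's Theorem \ref{thmBonsall} it remains to control $\sum_{n\ge2}|c_{n-1}-c_n|\,n\log n$. Writing $c_n$ as a constant times $h(1+n)$, where $h(t)=t^{N-1}\int_0^2\cdots\int_0^2(t+s_1+\cdots+s_{m+1})^{-\alpha-m-1}\,ds_1\cdots ds_{m+1}$, one has $c_{n-1}-c_n$ equal to a constant times $-\int_n^{n+1}h'(t)\,dt$; differentiating under the integral sign and bounding the two resulting terms by the same two-sided estimate on the integrand shows that there is a constant $C$ with $|h'(t)|\le C\,t^{N-3-\alpha-m}$ for all $t\ge1$, hence $|c_{n-1}-c_n|\le C' n^{N-3-\alpha-m}$. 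Consequently $\sum_{n\ge2}|c_{n-1}-c_n|\,n\log n\le C''\sum_{n\ge2}n^{N-2-\alpha-m}\log n<\infty$ because $N-2-\alpha-m<-1$. Theorem \ref{thmBonsall} then yields $C(N,\mathfrak{d}_2^m\dot\phi)\in S_1(\ell_2(\N))$, so $\mathfrak{d}_2^m\dot\phi\in\mathcal{C}_N$.

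The only place that needs a little care is the bound on the first differences $c_{n-1}-c_n$: the weight $(1+n)^{N-1}$ contributes nothing to the decay by itself, and one must check that $h'$ genuinely gains an extra power of $t$ — which it does, both from differentiating $t^{N-1}$ and from differentiating the integral. Everything else is a direct consequence of Lemma \ref{lem_d1d2} and the trace-class criteria already available. Incidentally, the argument works whenever $m>N-1-\alpha$; the stated hypothesis $m>N-\alpha$ is used only to keep the relevant exponents comfortably negative.
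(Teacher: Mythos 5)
Your proof is correct and follows essentially the same route as the paper: the asymptotics $\mathfrak{d}_2^m\dot{\phi}(n)\sim(1+n)^{-\alpha-m}$ obtained from Lemma \ref{lem_d1d2}, divergence of the positive diagonal of $C(N,\dot{\phi})$ to get $\dot{\phi}\notin\mathcal{C}_N$, and Bonsall's criterion (Theorem \ref{thmBonsall}) to get $\mathfrak{d}_2^m\dot{\phi}\in\mathcal{C}_N$. The only deviation is at one step: where you gain an extra power by estimating $|c_{n-1}-c_n|$ via the derivative of $h$ (whence your remark that $m>N-1-\alpha$ would suffice), the paper simply bounds the difference by the triangle inequality, which already yields summability under the stated hypothesis $m>N-\alpha$.
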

\begin{proof}
As in the proof of Lemma \ref{lem_CN+1CN}, we see that $\mathfrak{d}_2^m\dot{\phi}(n)\sim (1+n)^{-\alpha-m}$ for all $m\in\N$. Hence
\begin{align*}
\sum_{j\geq 0}(1+2j)^{N-1} \mathfrak{d}_2\dot{\phi}(2j) =\infty.
\end{align*}
This proves that $\dot{\phi}\notin \mathcal{C}_N$, since otherwise the trace of the matrix $C(N,\dot{\phi})$ would be finite. Now take $m>N-\alpha$ and observe that
\begin{align*}
\sum_{n\geq 2}&\left|(n+1)^{N-1}\mathfrak{d}_2^{m+1}\dot{\phi}(n)-n^{N-1}\mathfrak{d}_2^{m+1}\dot{\phi}(n-1)\right| n \log n \\
&\leq \sum_{n\geq 3}\left|\mathfrak{d}_2^{m+1}\dot{\phi}(n-1)\right| n^{N} \log (n-1) + \sum_{n\geq 2} \left| \mathfrak{d}_2^{m+1}\dot{\phi}(n-1)\right| n^{N} \log n\\
&\leq 2 \sum_{n\geq 2}\left|\mathfrak{d}_2^{m+1}\dot{\phi}(n-1)\right| n^N\log n\\
&\leq C \sum_{n\geq 2}n^{N-\alpha-m-1}\log n,
\end{align*}
and this is finite because $N-\alpha-m<0$. By Theorem \ref{thmBonsall}, the matrix
\begin{align*}
\left((1+i+j)^{N-1}\mathfrak{d}_2^{m+1}\dot{\phi}(i+j)\right)_{i,j\in\N}
\end{align*}
belongs to $S_1(\ell_2(\N))$, which means that $\mathfrak{d}_2^m\dot{\phi}\in \mathcal{C}_N$.
\end{proof}

\begin{cor}\label{cor_ANCN}
For all $N\geq 2$, $\mathcal{C}_N\subsetneq \mathcal{A}_N$.
\end{cor}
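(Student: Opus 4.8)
The plan is to prove the inclusion $\mathcal{C}_N\subseteq\mathcal{A}_N$ first, and then its strictness, for $N\ge2$.

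\textbf{Inclusion.} I would pass to the Besov-space description of Lemma~\ref{Lem_carBesov}: writing $\Phi(z)=\sum_{n\ge0}\dot\phi(n)z^n$, a bounded $\dot\phi$ lies in $\mathcal{C}_N$ exactly when $(z^2-1)\Phi(z)\in B_1^N(\T)$, and in $\mathcal{A}_N$ exactly when $(z^2-1)^N\Phi(z)\in B_1^N(\T)$. Since $B_1^N(\T)$ is a vector space which, by Proposition~\ref{Prop_Bshift}, is stable under multiplication by $z$ (the operation $\sum a_nz^n\mapsto\sum_{n\ge1}a_{n-1}z^n$), it is stable under multiplication by every polynomial. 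Applying this to the polynomial $(z^2-1)^{N-1}$ gives $(z^2-1)^N\Phi(z)=(z^2-1)^{N-1}\bigl((z^2-1)\Phi(z)\bigr)\in B_1^N(\T)$ whenever $(z^2-1)\Phi(z)\in B_1^N(\T)$; hence $\mathcal{C}_N\subseteq\mathcal{A}_N$.

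\textbf{Strictness.} Here I would argue by contradiction, exploiting a bootstrap identity between the two classes. Comparing the definitions of the matrices $A(N,\cdot)$ and $C(N,\cdot)$, for any bounded function $\dot\psi$ one has $((1+i+j)^{N-1}\mathfrak{d}_2^N\dot\psi(i+j))_{i,j}=C(N,\mathfrak{d}_2^{N-1}\dot\psi)$ (using $\mathfrak{d}_2^N=\mathfrak{d}_2\circ\mathfrak{d}_2^{N-1}$, and that $\mathfrak{d}_2^{N-1}\dot\psi$ is again bounded), so
\[
\dot\psi\in\mathcal{A}_N\iff \mathfrak{d}_2^{N-1}\dot\psi\in\mathcal{C}_N .
\]
Assume $\mathcal{A}_N=\mathcal{C}_N$. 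Then this equivalence reads $\dot\psi\in\mathcal{A}_N\iff\mathfrak{d}_2^{N-1}\dot\psi\in\mathcal{A}_N$, and iterating it yields $\dot\psi\in\mathcal{A}_N\iff\mathfrak{d}_2^{\,k(N-1)}\dot\psi\in\mathcal{A}_N$ for every $k\ge0$. Now fix $\alpha\in(0,N-1)$ (nonempty because $N\ge2$) and take $\dot\phi(n)=(n+1)^{-\alpha}$, the function of Lemma~\ref{lem_d2inCN}. On one hand $\dot\phi\notin\mathcal{C}_N=\mathcal{A}_N$, so $\mathfrak{d}_2^{\,k(N-1)}\dot\phi\notin\mathcal{A}_N=\mathcal{C}_N$ for all $k$. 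On the other hand, since $N\ge2$ we have $k(N-1)\to\infty$, so for $k$ large enough that $k(N-1)>N-\alpha$, Lemma~\ref{lem_d2inCN} gives $\mathfrak{d}_2^{\,k(N-1)}\dot\phi\in\mathcal{C}_N$ — a contradiction. Therefore $\mathcal{A}_N\neq\mathcal{C}_N$, and combined with the inclusion we obtain $\mathcal{C}_N\subsetneq\mathcal{A}_N$.

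\textbf{Main obstacle.} The delicate point is the hypothesis $N\ge2$: it is precisely what makes $\mathfrak{d}_2^{N-1}$ nontrivial, so that iterating it drives the order of differentiation to infinity; for $N=1$ the operator $\mathfrak{d}_2^{N-1}$ is the identity and the argument collapses (indeed $\mathcal{C}_1=\mathcal{A}_1$). One also has to be a little careful that every iterate $\mathfrak{d}_2^{\,k(N-1)}\dot\phi$ stays bounded so that the bootstrap equivalence applies at each stage, but this is immediate from $\mathfrak{d}_2^m\dot\phi(n)\sim(1+n)^{-\alpha-m}\to0$. A more computational alternative, avoiding the contradiction argument, would be to verify directly—using the asymptotics in Lemma~\ref{lem_d1d2} and Theorem~\ref{thmBonsall}—that $\mathfrak{d}_2^m\dot\phi$ (or the function from Lemma~\ref{lem_CN+1CN} with parameter $N-1$) already lies in $\mathcal{A}_N\setminus\mathcal{C}_N$ for a suitable $m$; this works but forces one to track the borderline exponents, which is exactly what the bootstrap sidesteps.
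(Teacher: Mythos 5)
Your proof is correct. The strictness half is essentially the paper's own argument: the identity $A(N,\dot\psi)=C(N,\mathfrak{d}_2^{N-1}\dot\psi)$, the bootstrap under the assumption $\mathcal{A}_N=\mathcal{C}_N$, and the contradiction with Lemma~\ref{lem_d2inCN} for $\dot\phi(n)=(n+1)^{-\alpha}$, $\alpha\in(0,N-1)$, is exactly what the paper does (your extra remarks on boundedness of the iterates and on the role of $N\ge 2$ are correct but not strictly needed, since $\mathfrak{d}_2^{m}$ of a bounded function is always bounded). Where you genuinely diverge is the inclusion $\mathcal{C}_N\subseteq\mathcal{A}_N$: the paper gets it in one line from Theorems A and C together with the geometric fact that a product of $N$ trees (of minimum degree at least $3$) is an $N$-dimensional CAT(0) cube complex, i.e.\ it routes the inclusion through the Schur multiplier characterisations, whereas you stay entirely on the harmonic-analysis side, using Lemma~\ref{Lem_carBesov} plus the shift-invariance of $B_1^N(\T)$ from Proposition~\ref{Prop_Bshift} to multiply $(z^2-1)\Phi$ by the polynomial $(z^2-1)^{N-1}$. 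Your route is self-contained within Section~\ref{Sect_incl} (no circularity, since Lemma~\ref{Lem_carBesov} and Proposition~\ref{Prop_Bshift} are proved before the corollary) and makes clear that the inclusion is a purely function-theoretic fact about weighted Hankel/Besov conditions, resting ultimately on Peller's theorem; the paper's route is shorter given the machinery already built, and has the conceptual appeal of explaining the inclusion by the geometry (every multiplier statement for $N$-dimensional cube complexes specialises to products of $N$ trees). Either argument is acceptable.
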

\begin{proof}
The inclusion $\mathcal{C}_N\subset \mathcal{A}_N$ is given by Theorems A and C, together with the fact that a product of $N$ trees is an $N$-dimensional CAT(0) cube complex. Now observe that, by definition, for every $\dot{\phi}\in\ell_\infty(\N)$, $\dot{\phi}\in \mathcal{A}_N$ if and only if $\mathfrak{d}_2^{N-1}\dot{\phi}\in \mathcal{C}_N$. If we assume that $\mathcal{A}_N=\mathcal{C}_N$, then by induction,
\begin{align*}
\dot{\phi}\in \mathcal{C}_N \iff \mathfrak{d}_2^{j(N-1)}\dot{\phi}\in \mathcal{C}_N,
\end{align*}
for all $j\in\N$. Taking $j$ big enough, this contradicts Lemma \ref{lem_d2inCN}. Therefore $\mathcal{A}_N\neq \mathcal{C}_N$.
\end{proof}

\begin{proof}[Proof of Proposition \ref{prop_inclusions}]
The inclusion $\mathcal{B}_{N+1}\subsetneq \mathcal{B}_N$ is given by Lemma \ref{lem_Bn+1BN}, and the fact that $\mathcal{A}_{N+1}\subsetneq \mathcal{A}_N$ follows analogously. Lemma \ref{lem_CN+1CN} shows that $\mathcal{C}_{N+1}\subsetneq \mathcal{C}_N$ . This proves part (a). Part (b) corresponds to Corollary \ref{cor_ANCN} together with Lemma \ref{lem_CN+1CN}. Finally, the inclusion $\mathcal{B}_N\subset \mathcal{A}_N$ is given by Theorems A and B, together with the fact that every tree is a hyperbolic graph. Moreover, by taking $\dot{\phi}(n)=(-1)^n$, we see that all the entries of the matrix $C(m,\dot{\phi})$ ($m\geq 1$) are $0$, and on the other hand, the trace of $B(N,\dot{\phi})$ does not converge. This proves that $\displaystyle\dot{\phi}\in \bigcap_{m\geq 1} \mathcal{C}_m\setminus \mathcal{B}_N$, for all $N\geq 1$. Hence $\displaystyle\bigcap_{m\geq 1} \mathcal{C}_m\nsubseteq \mathcal{B}_N$. Since $\mathcal{C}_m\subseteq \mathcal{A}_m$, this implies that $\displaystyle\bigcap_{m\geq 1} \mathcal{A}_m\nsubseteq \mathcal{B}_N$.
\end{proof}

\subsection*{Acknowledgements}
I am grateful to Mikael de la Salle for many interesting discussions and for his valuable comments on the different versions of this article. I also thank the anonymous referee for their very useful remarks and suggestions. This work was supported by CONICYT - Becas Chile (72160472) and the ANR project GAMME (ANR-14-CE25-0004).

\bibliographystyle{plain} 
\bibliography{Bibliography}

\end{document}